\begin{document}

\theoremstyle{plain}
\newtheorem{C}{Convention}
\newtheorem*{SA}{Standing Assumption}
\newtheorem{theorem}{Theorem}[section]
\newtheorem{condition}{Conditions}[section]
\newtheorem{lemma}[theorem]{Lemma}
\newtheorem{proposition}[theorem]{Proposition}
\newtheorem{corollary}[theorem]{Corollary}
\newtheorem{claim}[theorem]{Claim}
\newtheorem{definition}[theorem]{Definition}
\newtheorem{Ass}[theorem]{Assumption}
\newcommand{\q}{Q}
\theoremstyle{definition}
\newtheorem{remark}[theorem]{Remark}
\newtheorem{note}[theorem]{Note}
\newtheorem{example}[theorem]{Example}
\newtheorem{assumption}[theorem]{Assumption}
\newtheorem*{notation}{Notation}
\newtheorem*{assuL}{Assumption ($\mathbb{L}$)}
\newtheorem*{assuAC}{Assumption ($\mathbb{AC}$)}
\newtheorem*{assuEM}{Assumption ($\mathbb{EM}$)}
\newtheorem*{assuES}{Assumption ($\mathbb{ES}$)}
\newtheorem*{assuM}{Assumption ($\mathbb{M}$)}
\newtheorem*{assuMM}{Assumption ($\mathbb{M}'$)}
\newtheorem*{assuL1}{Assumption ($\mathbb{L}1$)}
\newtheorem*{assuL2}{Assumption ($\mathbb{L}2$)}
\newtheorem*{assuL3}{Assumption ($\mathbb{L}3$)}
\newtheorem{charact}[theorem]{Characterization}
\newcommand{\notiz}{\textup} 
\renewenvironment{proof}{{\parindent 0pt \it{ Proof:}}}{\mbox{}\hfill\mbox{$\Box\hspace{-0.5mm}$}\vskip 16pt}
\newenvironment{proofthm}[1]{{\parindent 0pt \it Proof of Theorem #1:}}{\mbox{}\hfill\mbox{$\Box\hspace{-0.5mm}$}\vskip 16pt}
\newenvironment{prooflemma}[1]{{\parindent 0pt \it Proof of Lemma #1:}}{\mbox{}\hfill\mbox{$\Box\hspace{-0.5mm}$}\vskip 16pt}
\newenvironment{proofcor}[1]{{\parindent 0pt \it Proof of Corollary #1:}}{\mbox{}\hfill\mbox{$\Box\hspace{-0.5mm}$}\vskip 16pt}
\newenvironment{proofprop}[1]{{\parindent 0pt \it Proof of Proposition #1:}}{\mbox{}\hfill\mbox{$\Box\hspace{-0.5mm}$}\vskip 16pt}
\newcommand{\s}{\u s}

\newcommand{\Law}{\ensuremath{\mathop{\mathrm{Law}}}}
\newcommand{\loc}{{\mathrm{loc}}}
\newcommand{\Log}{\ensuremath{\mathop{\mathscr{L}\mathrm{og}}}}
\newcommand{\Meixner}{\ensuremath{\mathop{\mathrm{Meixner}}}}
\newcommand{\of}{[\hspace{-0.06cm}[}
\newcommand{\gs}{]\hspace{-0.06cm}]}

\let\MID\mid
\renewcommand{\mid}{|}

\let\SETMINUS\setminus
\renewcommand{\setminus}{\backslash}

\def\stackrelboth#1#2#3{\mathrel{\mathop{#2}\limits^{#1}_{#3}}}

\renewcommand{\theequation}{\thesection.\arabic{equation}}
\numberwithin{equation}{section}

\newcommand\llambda{{\mathchoice
      {\lambda\mkern-4.5mu{\raisebox{.4ex}{\scriptsize$\backslash$}}}
      {\lambda\mkern-4.83mu{\raisebox{.4ex}{\scriptsize$\backslash$}}}
      {\lambda\mkern-4.5mu{\raisebox{.2ex}{\footnotesize$\scriptscriptstyle\backslash$}}}
      {\lambda\mkern-5.0mu{\raisebox{.2ex}{\tiny$\scriptscriptstyle\backslash$}}}}}

\newcommand{\prozess}[1][L]{{\ensuremath{#1=(#1_t)_{0\le t\le T}}}\xspace}
\newcommand{\prazess}[1][L]{{\ensuremath{#1=(#1_t)_{0\le t\le T^*}}}\xspace}

\newcommand{\tr}{\operatorname{tr}}
\newcommand{\lijepoa}{{\mathscr{A}}}
\newcommand{\lijepob}{{\mathscr{B}}}
\newcommand{\lijepoc}{{\mathscr{C}}}
\newcommand{\lijepod}{{\mathscr{D}}}
\newcommand{\lijepoe}{{\mathscr{E}}}
\newcommand{\lijepof}{{\mathscr{F}}}
\newcommand{\lijepog}{{\mathscr{G}}}
\newcommand{\lijepok}{{\mathscr{K}}}
\newcommand{\lijepoo}{{\mathscr{O}}}
\newcommand{\lijepop}{{\mathscr{P}}}
\newcommand{\lijepoh}{{\mathscr{H}}}
\newcommand{\lijepom}{{\mathscr{M}}}
\newcommand{\lijepou}{{\mathscr{U}}}
\newcommand{\lijepov}{{\mathscr{V}}}
\newcommand{\lijepoy}{{\mathscr{Y}}}
\newcommand{\cF}{{\mathscr{F}}}
\newcommand{\cG}{{\mathscr{G}}}
\newcommand{\cH}{{\mathscr{H}}}
\newcommand{\cM}{{\mathscr{M}}}
\newcommand{\cD}{{\mathscr{D}}}
\newcommand{\bD}{{\mathbb{D}}}
\newcommand{\bF}{{\mathbb{F}}}
\newcommand{\bG}{{\mathbb{G}}}
\newcommand{\bH}{{\mathbb{H}}}
\newcommand{\dd}{\operatorname{d}\hspace{-0.05cm}}
\newcommand{\ddd}{\operatorname{d}}
\newcommand{\er}{{\mathbb{R}}}
\newcommand{\ce}{{\mathbb{C}}}
\newcommand{\erd}{{\mathbb{R}^{d}}}
\newcommand{\en}{{\mathbb{N}}}
\newcommand{\de}{{\mathrm{d}}}
\newcommand{\im}{{\mathrm{i}}}
\newcommand{\indik}{{\mathbf{1}}}
\newcommand{\D}{{\mathbb{D}}}
\newcommand{\E}{E}
\newcommand{\N}{{\mathbb{N}}}
\newcommand{\Q}{{\mathbb{Q}}}
\renewcommand{\P}{{\mathbb{P}}}
\newcommand{\ud}{\operatorname{d}\!}
\newcommand{\ii}{\operatorname{i}\kern -0.8pt}
\newcommand{\cadlag}{c\`adl\`ag }
\newcommand{\p}{P}
\newcommand{\F}{\mathbf{F}}
\newcommand{\1}{\mathbf{1}}
\newcommand{\f}{\mathscr{F}^{\hspace{0.03cm}0}}
\newcommand{\lle}{\langle\hspace{-0.085cm}\langle}
\newcommand{\rre}{\rangle\hspace{-0.085cm}\rangle}
\newcommand{\llbr}{[\hspace{-0.085cm}[}
\newcommand{\rrbr}{]\hspace{-0.085cm}]}

\def\EM{\ensuremath{(\mathbb{EM})}\xspace}

\newcommand{\la}{\langle}
\newcommand{\ra}{\rangle}

\newcommand{\Norml}[1]{%
{|}\kern-.25ex{|}\kern-.25ex{|}#1{|}\kern-.25ex{|}\kern-.25ex{|}}

\title[Cylindrical Martingale Problems]{Cylindrical Martingale Problems Associated with L\'evy Generators} 
\author[D. Criens]{David Criens}
\address{D. Criens - Technical University of Munich, Center for Mathematics, Germany}
\email{david.criens@tum.de}

\keywords{Cylindrical martingale problem, L\'evy generator, Markov property, Cameron-Martin-Girsanov formula, stochastic partial differential equation\vspace{1ex}}

\subjclass[2010]{60J25, 60H15, 60G48}

\thanks{D. Criens - Technical University of Munich, Center for Mathematics, Germany,  \texttt{david.criens@tum.de}.}

\date{\today}
\maketitle

\frenchspacing
\pagestyle{myheadings}

\begin{abstract}
We introduce and discuss L\'evy-type cylindrical martingale problems on separable reflexive Banach spaces.
Our main observations are the following:
Cylindrical martingale problems have a one-to-one relation to weak solutions of stochastic partial differential equations, 
well-posed problems possess the strong Markov property and a Cameron-Martin-Girsanov-type formula holds.
As applications, we derive existence and uniqueness results.
\end{abstract}

\section{Introduction}
We study a generalized martingale problem (GMP) associated to a L\'evy-type generator on a separable and reflexive Banach space, which allows for the possibility of explosion to an absorbing state. The generator has a linear unbounded part and non-linearities, which satisfy mild boundedness conditions.

Beside pure mathematical interest, GMPs are interesting due to their one-to-one relation to semilinear stochastic partial differential equations (SPDEs) driven by L\'evy noise. 
Typically, SPDEs are studied under Lipschitz-type assumptions on the coefficients and many arguments are tailor-made for this case.
In contrast, by using the GMP formulation it is possible to prove properties of SPDEs under minimal regularity assumptions on the coefficients. 
Let us give an overview on our results, which are collected in Section \ref{sec: MR}.

In Section \ref{sec: MP}, we show that well-posed GMPs have the strong Markov property.
This observation suggests future analysis using tools from Markov process theory. 

In Section~\ref{sec: CMG}, we give a Cameron-Martin-Girsanov-type (CMG) theorem that relates two solutions of GMPs with different drift and jump coefficients in the spirit of the classical Girsanov theorem for semimartingales.
We provide an explicit formula for the Radon-Nikodym density. If one of the solutions is conservative, our CMG theorem gives a formula for the distribution of the explosion time under the other solution. For an application of such a formula in a one-dimensional diffusion case we refer to \cite{Karatzas2016}.
Girsanov-type theorems in general have far-reaching consequences. For instance, we deduce that two well-posed conservative GMPs which can be related via a CMG formula are locally equivalent. Furthermore, we connect their existence and uniqueness properties.

In Section \ref{sec: relation SDE} we formalize the relation of GMPs and SPDEs
and  in Section \ref{sec: appl} we apply our previous results to deduce deterministic conditions for the existence and uniqueness of weak solutions to SPDEs with additive noise.

Let us comment on the existing literature.
The conservative diffusion case was systematically discussed by Kunze \cite{EJP2924} and Ondrej{\'a}t \cite{Ondrejat2005}. Our results extend their work to non-conservative setups and allow for the presence of jumps. 

In general, for Girsanov-type theorems and the equivalence of laws of stochastic processes there exists a vast literature. 
We mention just a few closely related papers:
For a continuous setting in an arbitrary dimension, a CMG formula is studied by Mikulevi{\v{c}}ius and Rozovskii \cite{Mikulevicius1994}. In a finite-dimensional jump-diffusion setting, Cheridito, Filipovi\'c and Yor \cite{CFY} give a CMG formula up to explosion, which is similar to ours. The CMG formula presented in this article and all its consequences are new for infinite-dimensional setups. 

Martingale problems in the context of evolution equations with continuous noise are also studied by Goldys, R\"ockner and Zhang \cite{Goldys20091725}.
In the case of L\'evy-driven equations, Mytnik \cite{Mytnik2002} uses martingale problems to derive existence results for SPDEs driven by stable noise.

Finally, we comment on our proofs, which are given in Section \ref{sec: pf}.
The observation that well-posed martingale problems have the strong Markov property goes back to Stroock and Varadhan \cite{SV69-1}. 
The crucial point is to show that GMPs are determined by a countable set of test functions. This is clear in finite-dimensional conservative settings, but
requires more attention in an infinite-dimensional setup and in non-conservative cases.
To overcome this problem, we first show that explosion can only happen in a continuous manner. This allows us to relate solutions to GMPs to families of stopped finite-dimensional semimartingales. This observation will also be important in the proof of the CMG theorem, since it allows us to apply finite-dimensional Girsanov-type theorems.
In continuous settings it can be shown more directly that the set of linear and quadratic test functions determines the martingale problem.

Our strategy to prove the CMG theorem goes back to Jacod and M\'emin \cite{JM76}. 
The idea is to use a local change of measure, Girsanov-type theorems and a strong version of uniqueness, which is called local uniqueness in the monographs \cite{J79, JS}.
In our infinite-dimensional setting we cannot define the usual candidate density process for the local change of measure immediately from the coordinate process. Instead, we study the jump measure associated to our GMP and use extension results for cylindrical continuous local martingales, see \cite{Ondrejat2005}.
In comparison to finite-dimensional cases, Girsanov-type theorems are less well-established in infinite dimensions, see \cite{daprato2013} for a result for cylindrical Brownian motion. However, due to our observation that GMPs are connected to families of finite-dimensional semimartingales, we can reduce the necessary applications to well-understood situations, see \cite{J79,JS}.
Another difficulty in our settings is that local uniqueness is only studied in conservative cases and in finite-dimensional non-conservative settings, see \cite{EK, J79, JS}. 
We derive the necessary results on local uniqueness by adjusting the arguments presented in the monographs \cite{J79, JS} to our setup. 

The relation between martingale problems and SPDEs is proven via the representation theorem for cylindrical continuous local martingales as given by Ondrej{\'a}t \cite{Ondrejat2005} and the representation theorem for Poisson random measures as given by Jacod \cite{J79}. Here, we use the relation of GMPs and finite-dimensional semimartingales and our previous study of the jump measure associated to a GMP.

\section{Generalized Martingale Problems} \label{2.1}
Let \(\mathbb{B}\) be a real separable reflexive Banach space. 
In general, we will denote any norm by \(\|\cdot\|\).
If not stated otherwise, we call a function with values in a Banach space measurable if it is weakly measurable.
If the function is separably valued, Pettis' measurability theorem implies that it is also strongly measurable. Furthermore, if the Banach space is separable, then weak, strong and usual measurability are equivalent, see \cite{hytönen2016analysis}.
Let \(\mathbb{B}^*\) be the (topological) dual of \(\mathbb{B}\) and equip it with the operator norm.
For \(x^*\in \mathbb{B}^*\) and \(x \in \mathbb{B}\) we write
\[
x^*(x) \triangleq \langle x, x^*\rangle.
\]
It is well-known that \(\mathbb{B}^*\) is also a real separable reflexive Banach space.
We note that
\begin{align*}d_\mathbb{B}(x, y) \triangleq \frac{\|x - y\|}{1 + \|x - y\|},\quad x, y\in \mathbb{B},\end{align*} 
is a metric on \(\mathbb{B}\) such that \(\mathbb{B}\) is Polish when equipped with \(d_\mathbb{B}\), see \cite[Lemma 3.6]{aliprantis2013infinite}.
We equip \(\mathbb{B}\) with the topology induced by \(d_\mathbb{B}\), which equals the usual norm topology.

Let \(\Delta\) be a point outside \(\mathbb{B}\) and denote 
\(\mathbb{B}_\Delta \triangleq \mathbb{B} \cup \{\Delta\}\). 
As pointed out in \cite[Remark 2.8]{van2011markov}, when equipped with the metric \(d \colon \mathbb{B}_\Delta\times \mathbb{B}_\Delta \to [0,1]\) defined by
\[
d (x, y) \triangleq d_\mathbb{B}(x, y) \1_\mathbb{B} (x) \1_\mathbb{B} (y) + |\1_\Delta (x) - \1_\Delta (y)|,\quad x, y \in \mathbb{B}_\Delta,
\]
the space \(\mathbb{B}_\Delta\) is Polish. We equip \(\mathbb{B}_\Delta\) with the topology induced by \(d\).

We set \(\|\Delta\| \triangleq \infty\).
If not mentioned otherwise, any measurable function \(f \colon \mathbb{B} \times \bullet \to \mathbb{R}\) is extended to \(\mathbb{B}_\Delta \times \bullet\) by setting \(f(\Delta, \cdot) \triangleq 0\).

We define \(\Omega\) to be the space of all \cadlag functions \(\alpha \colon [0, \infty)\to \mathbb{B}_\Delta\) such that \(\alpha(t-) = \Delta\) or \(\alpha(t) = \Delta\) implies \(\alpha(s) = \Delta\) for all \(s \geq t\). 
The coordinate process \(X\) on \(\Omega\) is defined by \(X_t(\alpha) = \alpha(t)\) for all \(\alpha \in \Omega\). Moreover, we denote \[\mathscr{F} \triangleq \sigma(X_s, s \in [0, \infty)), \quad\mathscr{F}_t \triangleq \sigma(X_s, s \in [0, t]), \quad \F\triangleq (\mathscr{F}_t)_{t \in [0, \infty)}.\]
For a \cadlag function \(\omega \colon [0, \infty) \to \mathbb{B}_\Delta\), we define the explosion time \begin{align*}
\tau_{\Delta} (\omega) \triangleq \inf(t \in [0, \infty) \colon \omega(t-) = \Delta \textup{ or } \omega(t) = \Delta),\end{align*}
 and for \(n \in \mathbb{N}\) the first approach times
\begin{equation}\begin{split}
\label{eq:tau n}
\tau^*_n (\omega) &\triangleq  \inf(t \in [0, \infty) \colon \|\omega(t-)\| \geq n \textup{ or } \|\omega(t)\| \geq n),\\
\tau_n (\omega) &\triangleq \tau^*_n (\omega) \wedge n.
\end{split}
\end{equation}
For \(\omega \in \Omega\), we have 
\begin{align*}
\tau_{\Delta} (\omega) = \inf(t \in [0, \infty) \colon \omega(t) = \Delta).
\end{align*}
By \cite[Proposition 2.1.5]{EK}, \(\tau_{\Delta}, \tau_n^*\) and \(\tau_n\) are \(\F\)-stopping times. Furthermore, we note that \(\tau_n (\omega) \nearrow \tau_{\Delta}(\omega)\) as \(n \to \infty\) for all \(\omega \in \Omega\).

For an \(\F\)-stopping time \(\xi\) we denote
\[
\mathscr{F}_{\xi} \triangleq \left\{ A \in \mathscr{F} \colon A \cap \{\xi \leq t\} \in \mathscr{F}_t \textup{ for all } t \in [0, \infty)\right\}.
\]
It is well-known that \(\mathscr{F}_\xi\) is a \(\sigma\)-field. We define \(\mathscr{F}_{\xi-}\) to be the \(\sigma\)-field generated by \(\mathscr{F}_0\) and the events of the form
\(
A \cap \{t < \xi\}\) for \(t \in [0, \infty)\) and \(A \in \mathscr{F}_t.
\)
By \cite[Proposition 25]{10.2307/2373011}, we have
\begin{align*}
\mathscr{F}_{\tau_\Delta-} = \mathscr{F}.
\end{align*}

An operator \(Q \colon \mathbb{B}^* \to \mathbb{B}\) is called positive, if \(\la Q x^*, x^*\ra \geq 0\) for all \(x^* \in \mathbb{B}^*\), and called symmetric, if \(\la Q x^*, y^*\ra = \la Q y^*, x^*\ra\) for all \(x^*, y^* \in \mathbb{B}^*\).         
We denote by \(S^+(\mathbb{B}^*, \mathbb{B})\) the set of all linear, bounded, positive and symmetric operators \(\mathbb{B}^* \to \mathbb{B}\).     
For a topological space \(E\) we denote the corresponding Borel \(\sigma\)-field by \(\mathscr{B}(E)\).

Next, we introduce the parameters for our generalized martingale problem:
\begin{enumerate}
	\item[(i)]
	Let \(A \colon D(A) \subseteq \mathbb{B} \to \mathbb{B}\) be a linear, densely defined and closed operator.
	Here, \(D(A)\) denotes the domain of the operator \(A\).
	\item[(ii)]
	Let \(b \colon \mathbb{B} \to\mathbb{B}\) be Borel and such that for all bounded sequences \((y^*_n)_{n \in \mathbb{N}} \subset \mathbb{B}^*\) and all bounded sets \(G \in \mathscr{B}(\mathbb{B})\) it holds that 
	\begin{align*}
	\sup_{n \in \mathbb{N}} \sup_{x \in G} |\la b(x), y^*_n\ra| < \infty.
	\end{align*}
	\item[(iii)] 
	Let \(a\colon \mathbb{B}  \to S^+(\mathbb{B}^*, \mathbb{B})\) be bounded on bounded subsets of \(\mathbb{B}\) and such that \(x \mapsto a(x) y^*\) is Borel for all \(y^* \in \mathbb{B}^*\). Here, bounded refers to the operator norm.
	\item[(iv)]
	Let \(K\) be a Borel transition kernel from \(\mathbb{B}\) into \(\mathbb{B}\), such that for all bounded sequences \((y^*_n)_{n \in \mathbb{N}} \subset \mathbb{B}^*\), all bounded sets \(G \in \mathscr{B}(\mathbb{B})\) and all \(\epsilon > 0\) it holds that
	\begin{align}\label{eq: assp K}
	\sup_{n \in \mathbb{N}} &\sup_{x \in G} \int \1_{\{\|y\| \leq \epsilon\}} |\la y, y^*_n\ra|^2K(x, \dd y) < \infty,\\
	\label{eq: int ball around origin}
	&\ \sup_{x \in G} K(x, \{z \in \mathbb{B} \colon \|z\| \geq \epsilon\}) < \infty,
	\end{align}
	and \(K(\cdot, \{0\}) = 0\).
	\item[(v)]
	Let \(\eta\) be a probability measure on \((\mathbb{B}_\Delta, \mathscr{B}(\mathbb{B}_\Delta))\). 
\end{enumerate}
The probability measure \(\eta\) serves as the initial law. In typical applications the (usually unbounded) operator \(A\) is an infinitesimal generator of a \(C_0\)-semigroup on~\(\mathbb{B}\).

Let us justify the conditions \eqref{eq: assp K} and \eqref{eq: int ball around origin}.
It is well-known that a measure \(F\) on \((\mathbb{R}^d, \mathscr{B}(\mathbb{R}^d))\) is a L\'evy measure if and only if \(F(\{0\}) = 0\) and \(\int 1 \wedge \|x\|^2 F(\dd x) < \infty.\)
Since intuitively \(K(\cdot, \dd x)\) plays the role of a L\'evy measure, it would be natural to impose assumptions on the map 
\(
x \mapsto  \int \left(1\wedge \|y\|^2\right) K(x, \dd y).
\)
However, there exist reflexive Banach spaces such that not all L\'evy measures integrate \(1 \wedge \|y\|^2\), see \cite{araujo1978, PeterG2003}.
But, as stated in \cite[Proposition 5.4.5]{linde1983probability}, every L\'evy measure \(F\) satisfies
\begin{align*}
\sup_{\|y^*\| \leq 1} \int \1_{\{\|x\| \leq 1\}} |\la x, y^*\ra|^2 F(\dd x) + F(\{z \in \mathbb{B} \colon \|z\| \geq \epsilon\}) < \infty
\end{align*} for all \(\epsilon > 0\).

Let \(A^*\) be the Banach adjoint of \(A\).
Since \(\mathbb{B}\) is assumed to be reflexive, \(A^*\) is densely defined and closed, see \cite[Theorem 5.29]{kato1980}. 

Let \(C^2_{c}(\mathbb{R}^d)\) be the set of twice continuously differentiable functions \(\mathbb{R}^d\to \mathbb{R}\) with compact support. 
The set of test functions for our generalized martingale problem consists of cylindrical functions:
\[\mathcal{C} \triangleq \left\{g(\langle \cdot, y^*_1\rangle, ..., \langle \cdot, y^*_n\rangle) \colon g \in C^2_c(\mathbb{R}^n), y^*_1, ..., y^*_n \in D(A^*), n \in \mathbb{N}\right\}.\]
To clarify our convention, we stress that for \(f = g(\la \cdot, y^*_1\ra, ..., \la \cdot, y^*_n\ra)\), where \(g \colon \mathbb{R}^n \to \mathbb{R}\) is Borel and \(y^*_1, ..., y^*_n \in \mathbb{B}^*\), we set \(f(\Delta) \triangleq 0\). 
In the same case, if \(g\) is twice continuously differentiable, we write \(\partial_{i} f\) for the partial derivative \((\partial_{i} g)(\la \cdot, y^*_1\ra, ..., \la \cdot, y^*_n\ra)\) and define \(\partial^2_{ij} f\) in the same manner.

For a normed space \(E\), we call a bounded Borel function \(h\colon E \to E\) a truncation function on \(E\), if there exists an \(\epsilon > 0\) such that \(h(x) = x\) on the set \(\{x \in E\colon \|x\| \leq \epsilon\}\). Throughout the article we fix a truncation function \(h\) on \(\mathbb{B}\).

For \(f = g(\langle \cdot, y^*_1\rangle, ..., \langle \cdot, y^*_n\rangle) \in \mathcal{C}\) we set
\begin{equation}\label{K}
\begin{split}
\mathcal{K}f (x) \triangleq  \sum_{i = 1}^n (& \langle x, A^* y^*_i\rangle + \langle b(x), y^*_i \rangle)\partial_i f(x) +  \frac{1}{2} \sum_{i=1}^n\sum_{j = 1}^n \langle a(x)y^*_i, y^*_j\rangle \partial^2_{ij}f(x) 
\\&+ \int \left( f(x + y) - f(x) - \sum_{i = 1}^n \langle h(y), y^*_i\rangle \partial_i f(x) \right) K(x, \dd y)
\end{split}
\end{equation}
if \(x \in \mathbb{B}\) and \(\mathcal{K}f(\Delta) \triangleq 0\).

We first define a \emph{stopped martingale problem}. Let \(\xi\) be an \(\F\)-stopping time.
\begin{definition} 
	We call a probability measure \(P\) on \((\Omega, \mathscr{F})\) a \emph{solution to the martingale problem (MP) \((A, b,a,K, \eta, \xi)\)}, if the following hold:
	\begin{enumerate}
		\item[\textup{(i)}]
		\(P \circ X^{-1}_0 = \eta\). 
		\item[\textup{(ii)}]
		For all \(f \in \mathcal{C}\) the process
		\begin{equation}\label{f - K}
		\begin{split}
		M^f_{\cdot \wedge \xi} \triangleq f(X_{\cdot \wedge \xi}) - f(X_0) - \int_0^{\cdot\wedge \xi} \mathcal{K}f (X_{s-})\dd s,
		\end{split}
		\end{equation}
		is a local \((\F, P)\)-martingale. 
	\end{enumerate}
	We say that the MP has a \emph{unique} solution, if all solutions coincide on \(\mathscr{F}_{\xi}\). 
	We denote the set of solutions by \(\mathcal{M}(A,b,a, K, \eta, \xi)\).
\end{definition}

Next, we also introduce an MP up to explosion. We will call it the \emph{generalized} martingale problem. Following the notation of Jacod and Shiryaev \cite{JS}, we denote the Dirac measure by \(\varepsilon\).
\begin{definition}
	\begin{enumerate}
		\item[\textup{(i)}]
		We call a probability measure \(P\) on \((\Omega, \mathscr{F})\) a \emph{solution to the GMP \((A,b, a, K, \eta, \tau_\Delta-)\)}, if \(P\) solves the GMP \((A, a, b,K, \eta, \tau_n)\) for all \(n \in \mathbb{N}\). The set of solutions is denoted by \(\mathcal{M}(A, b,a, K, \eta, \tau_\Delta-)\).
		\item[\textup{(ii)}]	
		We say that the GMP \((A, b, a, K, \eta, \tau_{\Delta}-)\) satisfies \emph{uniqueness}, if all solutions coincide on \(\mathscr{F}_{\tau_\Delta-} = \mathscr{F}\).
		If there exists a unique solution for all \(\eta \in \{\varepsilon_{x}, x \in \mathbb{B}_\Delta\}\), we call the GMP
		\((A, b, a, K, \tau_{\Delta}-)\) \emph{well-posed}.	
		If there exists a unique solution for all initial laws \(\eta\), then we call the GMP \emph{completely well-posed}.
		If \(P\) solves a GMP and \(P(\tau_{\Delta} = \infty) = 1\) we call \(P\) \emph{conservative}.
	\end{enumerate}
\end{definition} 
For a Polish space \(E\), we define \(D([0, \infty), E)\) to be the space of all \cadlag functions \([0, \infty) \to E\).
The space \(D([0, \infty), E)\) equipped with the Skorokhod topology is itself Polish. Moreover, it holds that \(\mathscr{B}(D([0, \infty), E)) = \sigma(X_t, t \in [0, \infty))\), where \(X\) denotes the coordinate process on \(D([0, \infty), E)\). For details we refer to \cite[Theorem 3.5.6, Proposition 3.7.1]{EK}.
We introduce the following short notation: \begin{align*}
(\mathbb{D},\mathscr{D}) \triangleq (D([0, \infty), \mathbb{B}), \mathscr{B}(D([0, \infty), \mathbb{B}))).
\end{align*}
\begin{remark}
	Solutions to MPs with \(\xi \triangleq \infty\) or conservative solutions can be viewed as probability measures on \((\mathbb{D},\mathscr{D})\).
	In these cases, we remove \(\xi\) and \(\tau_\Delta-\) from all notations.
\end{remark}
\begin{remark}\label{rem: trivial}
	The Dirac measure on the constant path \(\omega^\Delta_t \equiv \Delta\)
	is the unique solution of the GMP \((A, b, a, K, \varepsilon_\Delta, \xi)\) for all \(\F\)-stopping times \(\xi\). In particular, this probability measure is the unique solution of the GMP \((A, b, a, K, \varepsilon_\Delta, \tau_\Delta-)\).
\end{remark}

\section{Main Results}\label{sec: MR}
In this section we state our main results.
The section is split into four parts. 
In Section \ref{sec: MP} we discuss the strong Markov property of well-posed problems.
Then, in Section \ref{sec: CMG}, we relate two solutions to GMPs via a Cameron-Martin-Girsanov formula. 
A one-to-one correspondence of conservative solutions to GMPs and (analytically and probabilistically) weak solutions to SDEs is presented in Section \ref{sec: relation SDE}. Finally, in Section \ref{sec: appl}, we give existence and uniqueness results as applications.
\subsection{The Markov Property}\label{sec: MP} 
A Hausdorff space that is the image of a Polish space under a continuous bijection is called Lusin space. Any Polish space is a Lusin space and, more generally, any Borel subset of a Polish space, seen as a subspace, is a Lusin space, see \cite[Example 8.6.11]{cohn13}. The Borel \(\sigma\)-field of a Lusin space is countably generated, see \cite[Lemma 8.6.12]{cohn13}.

Note that \begin{align*}
\Omega = \left\{\omega \in D([0, \infty), \mathbb{B}_\Delta) \colon\ \  \!\begin{aligned}  &\tau_\Delta (\omega) = \infty, \textup{ or } \tau_{\Delta}(\omega) < \infty \text{ and}\\& \omega(\tau_{\Delta}(\omega) + s)= \Delta \text{ for all } s \in \mathbb{Q}_+\end{aligned}\right\},\end{align*}
where we denote the non-negative rational numbers by \(\mathbb{Q}_+\). Thus, the space
\(\Omega\), seen as a subspace of \(D([0, \infty), \mathbb{B}_\Delta)\), is a Lusin space. Furthermore, by \cite[Lemma 7.2.2]{cohn13}, \[\mathscr{B}(\Omega) = \mathscr{B}(D([0, \infty), \mathbb{B}_\Delta)) \cap\Omega = \mathscr{F}.\] Here, \(\mathscr{B}(D([0, \infty), \mathbb{B}_\Delta)) \cap\Omega\) denotes the trace of \(\mathscr{B}(D([0, \infty), \mathbb{B}_\Delta))\) on \(\Omega\).
Due to this observation, the following lemma is classical, see \cite[Theorem II.89.1]{RW1}.
\begin{lemma}\label{lem: cond prob}
	Let \(P\) be a probability measure on \((\Omega, \mathscr{F})\) and suppose that \(\mathscr{G}\) is a sub-\(\sigma\)-field of \(\mathscr{F}\).
	Then there exists a regular conditional probability \(P(\cdot |\mathscr{G}) (\cdot)\) of \(P\) given \(\mathscr{G}\), that is, a function \(P(\cdot |\mathscr{G})(\cdot) \colon \mathscr{F} \times \Omega \to [0, 1]\) such that the following holds:
	\begin{enumerate}
		\item [\textup{(i)}] For all \(\omega \in \Omega\) the map \(G \mapsto P (G|\mathscr{G})(\omega)\) is a probability measure on \((\Omega, \mathscr{F})\).
		\item [\textup{(ii)}] For all \(G \in \mathscr{F}\) the function \(\omega \mapsto P(G |\mathscr{G})(\omega)\) is a \(P\)-version of \(P(G | \mathscr{G})\).
	\end{enumerate}
	Additionally, if \(\mathscr{G}\) is countably generated, then there exists a \(P\)-null set \(N \in \mathscr{G}\) such that for all \(\omega \in \complement N\) it holds that \(P(G|\mathscr{G})(\omega) = \1_G(\omega)\) for all \(G \in \mathscr{G}\).
\end{lemma}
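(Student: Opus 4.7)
The plan is to reduce part (i)--(ii) to a classical existence theorem on standard Borel spaces and to obtain the additional claim from Dynkin's \(\pi\)-\(\lambda\) uniqueness lemma applied to the countable generators of \(\mathscr{G}\).

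First I would invoke the structural observation established just before the lemma: \(\Omega\) is a Borel subset of the Polish space \(D([0,\infty),\mathbb{B}_\Delta)\), and \(\mathscr{F} = \mathscr{B}(\Omega)\). Hence \((\Omega,\mathscr{F})\) is standard Borel, and the existence of a regular conditional probability \(P(\cdot\mid\mathscr{G})(\cdot)\) with properties (i) and (ii) is exactly the content of the Rogers--Williams theorem cited in the statement. I would merely quote it and proceed.

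For the additional claim I would argue as follows. Write \(\mathscr{G} = \sigma(\mathcal{E})\) with \(\mathcal{E}\) countable; by closing \(\mathcal{E}\) under finite intersections and adjoining \(\Omega\), we may assume that \(\mathcal{E}\) is a countable \(\pi\)-system generating \(\mathscr{G}\). For every \(E \in \mathcal{E}\), property (ii) together with \(E \in \mathscr{G}\) gives \(P(E\mid\mathscr{G})(\cdot) = \mathbf{1}_E(\cdot)\) \(P\)-a.s.; crucially, both functions are \(\mathscr{G}\)-measurable, so the exceptional set \(N_E\) may be taken inside \(\mathscr{G}\). Setting \(N \triangleq \bigcup_{E \in \mathcal{E}} N_E\) yields a \(P\)-null set in \(\mathscr{G}\). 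For any fixed \(\omega \in \complement N\), both \(G \mapsto P(G\mid\mathscr{G})(\omega)\) (a probability measure by (i)) and \(G \mapsto \mathbf{1}_G(\omega) = \varepsilon_\omega(G)\) are probability measures on \((\Omega,\mathscr{F})\) agreeing on the \(\pi\)-system \(\mathcal{E}\). By Dynkin's uniqueness lemma they agree on \(\sigma(\mathcal{E}) = \mathscr{G}\), which is the desired identity \(P(G\mid\mathscr{G})(\omega) = \mathbf{1}_G(\omega)\) for all \(G \in \mathscr{G}\).

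The only mildly delicate point---and essentially the sole reason the additional claim is not entirely automatic---is the measurability requirement that \(N\) lie in \(\mathscr{G}\) and not merely in \(\mathscr{F}\). This is handled at the step where one picks the \(\mathscr{G}\)-measurable representatives of the almost-sure equalities \(P(E\mid\mathscr{G}) = \mathbf{1}_E\) on the countable generating family, ensuring that their union is automatically \(\mathscr{G}\)-measurable. Beyond this, the argument is a direct application of standard measure-theoretic results, consistent with the paper describing the lemma as ``classical''.
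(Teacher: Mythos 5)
Your argument is correct and is exactly the standard proof underlying the reference the paper gives: the paper offers no proof of its own but simply cites \cite[Theorem II.89.1]{RW1}, whose content is precisely your two steps (existence of a regular conditional probability on a standard Borel space, plus the countable $\pi$-system/Dynkin argument with the exceptional sets taken in $\mathscr{G}$ via $\mathscr{G}$-measurability of the versions). Nothing further is needed.
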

We denote by \(\theta_\cdot (\cdot) \colon [0, \infty) \times \Omega \to \Omega\) the shift operator \(\theta_t \omega = \omega(\cdot + t)\). If \(E\) is a topological space and \(\{P_x, x \in E\}\) is a family of probability measure on \((\Omega, \mathscr{F})\), we say that \(E \ni x \mapsto P_x\) is Borel, if the map \(E \ni x \mapsto P_x(G)\) is Borel for all \(G \in \mathscr{F}\).

In the finite-dimensional diffusion setting of Stroock and Varadhan, the strong Markov property is related to the well-posedness of the martingale problem. As the following theorem shows, this is also the case in our setting:
\begin{theorem}\label{theo:markov}
	Suppose that the GMP \((A, b, a, K, \tau_{\Delta}-)\) is well-posed and let \(P_{x}\) be the unique solution of the GMP \((A, b, a, K, \varepsilon_{x}, \tau_{\Delta}-)\) for \(x \in \mathbb{B}_\Delta\). The family \(\{P_x, x \in \mathbb{B}_\Delta\}\) is a strong Markov family in the following sense: the mapping \(\mathbb{B}_\Delta \ni x \mapsto P_x\) is Borel and for all \(x \in \mathbb{B}_\Delta\) and all \(\F\)-stopping times \(\xi\) there exists a \(P_{x}\)-null set \(N \in \mathscr{F}_\xi\) such that for all \(\omega \in \complement N \cap \{\xi < \infty\}\) and all \(F \in \mathscr{F}\)
	\[
	P_{x} \left(\theta^{-1}_\xi F\big|\mathscr{F}_\xi\right)(\omega) = P_{X_{\xi(\omega)}(\omega)} (F).
	\]
	Moreover, for all probability measures \(\eta\) on \((\mathbb{B}_\Delta, \mathscr{B}(\mathbb{B}_\Delta))\) the probability measure 
	\begin{align}\label{eq: int sol}
	P_\eta \triangleq \int P_x \eta (\dd x)
	\end{align}
	on \((\Omega, \mathscr{F})\) is the unique solution to the GMP \((A, b, a, K, \eta, \tau_{\Delta}-)\), and for all \(\F\)-stopping times~\(\xi\) there exists a \(P_\eta\)-null set \(N \in \mathscr{F}_\xi\) such that for all \(\omega \in \complement N \cap \{\xi < \infty\}\) and all \(F \in \mathscr{F}\)
	\begin{align}\label{eq: markov initial arbi}
	P_{\eta} \left(\theta^{-1}_\xi F\big|\mathscr{F}_\xi\right)(\omega) = P_{X_{\xi(\omega)} (\omega)} (F). 
	\end{align}
\end{theorem}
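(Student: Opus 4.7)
The plan follows the Stroock--Varadhan strategy adapted to our infinite-dimensional, non-conservative setting. The first and most delicate step is to establish \emph{countable determinacy} of the GMP: namely, to identify a countable family $\mathcal{C}_0 \subset \mathcal{C}$ such that a probability measure $P$ on $(\Omega,\mathscr{F})$ solves the GMP $(A,b,a,K,\eta,\tau_\Delta-)$ if and only if $P\circ X_0^{-1}=\eta$ and, for every $f\in\mathcal{C}_0$ and $n\in\mathbb{N}$, the stopped process $M^f_{\cdot\wedge\tau_n}$ defined in \eqref{f - K} is an $(\F,P)$-martingale. The construction of $\mathcal{C}_0$ uses separability of $\mathbb{B}^*$ and a core-type density argument to select countably many $y^*_1,y^*_2,\dots\in D(A^*)$, together with countably many compactly supported $C^2$ functions $g$ in each finite number of variables. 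Verifying that $\mathcal{C}_0$ determines the GMP rests on the continuity of explosion (flagged in the introduction and to be proven elsewhere in the paper) so that one may localise at each $\tau_n$, and on the integrability assumptions \eqref{eq: assp K}--\eqref{eq: int ball around origin}, which control the non-local term in $\mathcal{K}f$ uniformly on bounded sets and permit passage to the limit in $f$.

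Granted countable determinacy, the Borel measurability of $x\mapsto P_x$ is routine. The set
\[
\Gamma \triangleq \bigl\{(x,P)\in \mathbb{B}_\Delta\times\mathcal{P}(\Omega)\colon P\in\mathcal{M}(A,b,a,K,\varepsilon_x,\tau_\Delta-)\bigr\}
\]
is Borel, as it is defined by the initial-law condition $P\circ X_0^{-1}=\varepsilon_x$ together with countably many Borel conditions expressing that the processes $M^f_{\cdot\wedge\tau_n}$, $f\in\mathcal{C}_0$, are $(\F,P)$-martingales. By well-posedness $\Gamma$ is the graph of $x\mapsto P_x$; since $\mathcal{P}(\Omega)$ is Polish and $\Gamma$ is Borel, the map $x\mapsto P_x$ is Borel. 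The probability measure $P_\eta$ in \eqref{eq: int sol} is then well defined, and by integrating the martingale identities against $\eta$ one sees that $P_\eta$ solves the GMP $(A,b,a,K,\eta,\tau_\Delta-)$; uniqueness of $P_\eta$ follows by disintegrating any candidate solution along its initial law and invoking uniqueness of each $P_x$.

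For the strong Markov property itself I would fix $x\in\mathbb{B}_\Delta$ and an $\F$-stopping time $\xi$, and invoke Lemma \ref{lem: cond prob} to produce a regular conditional probability $P_x(\,\cdot\,|\mathscr{F}_\xi)(\cdot)$. For $\omega\in\{\xi<\infty\}$ define the push-forward $Q_\omega\triangleq P_x(\theta_\xi^{-1}\,\cdot\,|\mathscr{F}_\xi)(\omega)$. Optional stopping applied to the local martingales $M^f_{\cdot\wedge\tau_n}$ under $P_x$ shows that, for each individual $f\in\mathcal{C}_0$ and $n\in\mathbb{N}$, the shifted and stopped process $(M^f_{\cdot\wedge\tau_n})\circ\theta_\xi$ is a $Q_\omega$-martingale for $P_x$-a.e.\ $\omega$; the countability of $\mathcal{C}_0$ allows these a.s.\ statements to be combined into a single $P_x$-null set $N\in\mathscr{F}_\xi$. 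Since also $Q_\omega\circ X_0^{-1}=\varepsilon_{X_{\xi(\omega)}(\omega)}$ holds by construction, countable determinacy yields $Q_\omega\in\mathcal{M}(A,b,a,K,\varepsilon_{X_{\xi(\omega)}(\omega)},\tau_\Delta-)$ off $N$. Well-posedness then forces $Q_\omega = P_{X_{\xi(\omega)}(\omega)}$, which is the asserted identity. The case of $P_\eta$ reduces to that of $P_x$ by conditioning on $X_0$ using the integration formula \eqref{eq: int sol}.

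The principal obstacle is the countable-determinacy step. In the finite-dimensional conservative setting this is essentially immediate, but in our framework it requires both a delicate choice of test functions compatible with the (unbounded) domain $D(A^*)$ and the non-trivial input that explosion occurs in a continuous manner, so that the localising sequence $\tau_n$ exhausts $\tau_\Delta$ in a way compatible with the martingale formulation. Once countable determinacy is secured, the measurable-selection and regular-conditional-probability arguments above are adaptations of the classical Markovian machinery.
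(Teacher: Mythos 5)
Your proposal follows essentially the same route as the paper: countable determinacy of the GMP (the paper's Proposition \ref{coro:tau}, built on the continuity of explosion and the approximation lemmas), Borel measurability of \(x\mapsto P_x\) via a Borel-graph/Kuratowski argument (Lemma \ref{lem: kern Borel}), regular conditional probabilities combined with optional stopping to show that \(P_x(\theta_\xi^{-1}\cdot\,|\mathscr{F}_\xi)(\omega)\) solves the GMP started at \(X_{\xi(\omega)}(\omega)\) (Lemma \ref{lem: Mark prep}), and disintegration for the statements about \(P_\eta\). The only place where your sketch compresses genuinely nontrivial work is the optional-stopping step, where one must carefully relate \(\tau_n\circ\theta_\xi\) to stopping times on the original space (the paper's \(\sigma_t\) construction and the identity \(\sigma_r=\sigma_r\wedge\tau_{2n}\) on \(\{\xi<\tau_n\}\)), but the idea is the correct one.
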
 
The theorem can be viewed as a generalization of \cite[Theorem 10.1.1]{SV}, \cite[Theorem 4.1]{Stroock75} and \cite[Theorem 4.2]{EJP2924} to a setting of arbitrary dimension which includes jumps and allows for explosion to an absorbing state.
A detailed proof is given in Section \ref{sec: proof MP} below.

\subsection{Cameron-Martin-Girsanov Pairs}\label{sec: CMG}
In this section we give a Cameron-Martin-Girsanov-type theorem for GMPs.
We set \[\mathscr{F}_{t+} \triangleq \bigcap_{s > t} \mathscr{F}_s,\qquad \F^+ \triangleq (\mathscr{F}_{t+})_{t \in [0, \infty)}.\] 
\begin{definition}
	Let \(P\) and \(Q\) be two probability measures defined on \((\Omega, \mathscr{F})\). We say that \((Q, P)\) is a \emph{Cameron-Martin-Girsanov pair}, if there exists a non-negative local \((\F^+, P)\)-martingale \(Z^*\) starting at \(1\) such that for all \(\F\)-stopping times \(\rho\) and all \(G \in \mathscr{F}_\rho\)
	\begin{align}\label{eq: CMG formula}
	Q(G \cap \{\tau_\Delta > \rho\}) = \E^P\left[Z^*_\rho \1_{G \cap \{\tau_\Delta > \rho\}}\right].
	\end{align}
	We call \(Z^*\) the \emph{CMG density} of \(P\) and \(Q\) and \eqref{eq: CMG formula} the \emph{CMG formula}.
\end{definition}
\begin{remark}
	CMG formulas are for instance known for solution measures to one-dimensional diffusion-type SDEs, see \cite{Karatzas2016}, and in finite-dimensional jump-diffusion settings, see \cite{CFY}.
	In the case where  \(P\) and \(Q\) are solutions to GMPs and \(P\) is conservative, the CMG formula gives a representation of the \(Q\)-distribution of the explosion time as a \(P\)-expectation.
\end{remark}
Let \(c\colon \mathbb{B} \to \mathbb{B}^*\) be Borel such that \(\la ac, c\ra\) is bounded on bounded subsets of \(\mathbb{B}\). Let us stress that \(ac = a(\cdot)(c(\cdot))\) is Borel, see \cite[Proposition 1.1.28]{hytönen2016analysis}. Therefore, also \(\la ac, c\ra\) is Borel.
Moreover, let \(Y \colon \mathbb{B} \times \mathbb{B}\to (0, \infty)\) be Borel such that for all \(x \in \mathbb{B}\) and all \(y^* \in \mathbb{B}^*\)
\[
\int |\la h(y), y^*\ra| | Y(x, y) - 1| K(x, \dd y) + \int \left(1 - \sqrt{Y(x, y)}\right)^2 K(x, \dd y) < \infty.
\]
Since \(\mathbb{B}\) is assumed to be reflexive, these integrability conditions suffice to define the integral 
\(
\int h(y) (Y(\cdot, y) - 1) K(\cdot, \dd y)
\)
as a Pettis integral, see \cite[Proposition 3.4]{Kazimierz2002531}.
We further suppose that the map
\begin{align*}
x \mapsto  \int \left(1 - \sqrt{Y(x, y)}\right)^2 K(x, \dd y)
\end{align*}
is Borel and bounded on bounded subsets of \(\mathbb{B}\), and that the map \begin{align*}x \mapsto  \int h(y) (Y(x, y) - 1)K(x, \dd y)\end{align*} is Borel and for
all bounded sequences \((y^*_n)_{n \in \mathbb{N}} \subset \mathbb{B}^*\), all bounded sets \(G \in \mathscr{B}(\mathbb{B})\) and all \(\epsilon > 0\) it holds that 
\begin{align*}
\sup_{n \in \mathbb{N}} &\sup_{x \in G} \int \1_{\{\|y\| \leq \epsilon\}} |\la y, y^*_n\ra|^2 Y(x, y) K(x, \dd y) < \infty,\\ &\ \ \sup_{x \in G} \int \1_{\{\|y\| \geq \epsilon\}} Y(x, y) K(x, \dd y) < \infty.
\end{align*}
We denote 
\begin{equation}\label{eq: b', Y'}
\begin{split}
b' (x) & \triangleq b(x) + a(x)c(x) + \int h(y) (Y(x, y) - 1) K(x, \dd y),
\\ K' (x, \dd y) &\triangleq Y(x, y) K(x, \dd y).
\end{split}
\end{equation}
Furthermore, we assume that for all bounded sequences \((y^*_n)_{n \in \mathbb{N}} \subset \mathbb{B}^*\) and all bounded sets \(G \in \mathscr{B}(\mathbb{B})\) it holds that 
\begin{align}\label{eq: bdd cond b'}
\sup_{n \in \mathbb{N}} \sup_{x \in G} |\la b'(x), y^*_n\ra| < \infty.
\end{align}

The following theorem can be viewed as a generalization of \cite[Theorem 6.4.2]{SV} and \cite[Theorem 2.4]{CFY} to a setting of arbitrary dimension. We note that in \cite{CFY} the martingale problem also includes a killing rate, which is not included in our setting.
\begin{theorem}\label{theo:main1}
	Suppose that the GMP \((A, b ', a, K', \tau_\Delta-)\) is well-posed and that \(\eta\) is a probability measure on \((\mathbb{B}, \mathscr{B}(\mathbb{B}))\). Then there exists a unique solution \(Q_\eta\) to the GMP \((A, b', a, K', \eta, \tau_{\Delta}-)\). Suppose that \(P_\eta\in \mathcal{M}(A, b, a, K, \eta, \tau_\Delta-)\), then \((Q_\eta, P_\eta)\) is a CMG pair.
	Moreover, the CMG density is \(P\)-indistinguishable from the process \(Z^*\) given in Lemma \ref{lem: candidate density} below.
\end{theorem}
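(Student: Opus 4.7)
The plan follows the Jacod--M\'emin strategy outlined in the introduction. Since the GMP \((A, b', a, K', \tau_\Delta-)\) is well-posed, Theorem~\ref{theo:markov} immediately produces a Borel family \(\{Q_x\}_{x \in \mathbb{B}_\Delta}\) of unique solutions, and \(Q_\eta \triangleq \int Q_x\, \eta(\dd x)\) is the unique element of \(\mathcal{M}(A, b', a, K', \eta, \tau_\Delta-)\). The remaining task is to identify \(Q_\eta\), stopping time by stopping time, with the measure obtained from \(P_\eta\) by locally changing measure through \(Z^*\), and then to upgrade this to the CMG formula \eqref{eq: CMG formula}.

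I would begin by reading off structural information about \(X\) under \(P_\eta\) directly from the GMP equation \eqref{f - K}: testing against cylinders \(g(\la\cdot, y_1^*\ra, \dots, \la\cdot, y_m^*\ra)\) shows that for every finite collection \(y_1^*, \dots, y_m^* \in D(A^*)\) the stopped vector process \((\la X_{\cdot\wedge\tau_n}, y_i^*\ra)_{i=1}^m\) is a finite-dimensional \((\F, P_\eta)\)-semimartingale whose differential characteristics can be extracted from \(\mathcal{K}\), with jump compensator \(K(X_{s-}, \dd y)\, \dd s\). Combining the jump measure of \(X\) with the cylindrical continuous martingale part of \(X\), which is accessible via the extension results of \cite{Ondrejat2005}, one builds the candidate density \(Z^*\) of Lemma~\ref{lem: candidate density} as the stochastic exponential encoding the drift shift \(ac\) and the jump rescaling \(Y\); the integrability assumptions placed on \(c, a, Y\) and \(K\) before \eqref{eq: b', Y'} are exactly what is required to make \(Z^*\) a non-negative local \((\F^+, P_\eta)\)-martingale starting at~\(1\). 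For each \(n\) I would localise by defining \(\dd Q^{(n)} \triangleq Z^*_{\tau_n}\, \dd P_\eta\) on \(\mathscr{F}_{\tau_n}\) and apply a finite-dimensional Girsanov-type theorem from \cite{J79, JS} cylinder-by-cylinder; together with \eqref{eq: b', Y'} this reassembles the operator \eqref{K} associated to \((A, b', a, K')\) and shows \(Q^{(n)} \in \mathcal{M}(A, b', a, K', \eta, \tau_n)\).

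The family \(\{Q^{(n)}\}_{n \in \N}\) and \(Q_\eta\) are then linked by a local uniqueness statement for the GMP \((A, b', a, K', \tau_\Delta-)\): adapting the arguments of \cite{J79, JS} to the present non-conservative infinite-dimensional setup, well-posedness forces every solution of the stopped MP \((A, b', a, K', \eta, \tau_n)\) to coincide with \(Q_\eta\) on \(\mathscr{F}_{\tau_n}\), so \(Q^{(n)} = Q_\eta\) on \(\mathscr{F}_{\tau_n}\) for every \(n\). For an \(\F\)-stopping time \(\rho\) and \(G \in \mathscr{F}_\rho\), combining \(G \cap \{\tau_n \geq \rho\} \in \mathscr{F}_{\tau_n \wedge \rho}\) with optional stopping of \(Z^*\) on \(\of 0, \tau_n \gs\) and sending \(n \to \infty\) through the monotone identity \(\{\tau_n \geq \rho\} \nearrow \{\tau_\Delta > \rho\}\) yields \(Q_\eta(G \cap \{\tau_\Delta > \rho\}) = \E^{P_\eta}[Z^*_\rho\, \1_{G \cap \{\tau_\Delta > \rho\}}]\), i.e.~\eqref{eq: CMG formula}; indistinguishability of the density follows because \(Z^*\) is the process explicitly produced in Lemma~\ref{lem: candidate density}.

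The main obstacle is this local uniqueness step: local uniqueness as formulated in \cite{J79, JS} is only available in conservative or finite-dimensional settings, so it must be re-derived under the cylindrical integrability and boundedness conditions imposed on \((A, b', a, K')\). A secondary technical point is that the finite-dimensional Girsanov theorem has to be applied uniformly across cylinders and reassembled through the cylindrical calculus into the operator \(\mathcal{K}\) attached to \((A, b', a, K')\); the integrability assumption \eqref{eq: bdd cond b'} on \(b'\) is exactly what ensures this reassembly produces a valid generator in the sense of items (i)--(iv) of Section~\ref{2.1}.
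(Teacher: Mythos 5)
Your proposal follows essentially the same route as the paper's proof: reduction of the GMP to stopped finite-dimensional semimartingales (Lemma \ref{lem: equi JS}), construction of \(Z^*\) from \(\mu^X\) and the Ondrej\'at extension of the cylindrical continuous martingale part (Lemmata \ref{lem: comp}--\ref{lem: candidate density}), identification of \(\dd Q^{(n)} = Z^*_{\tau_n}\dd P_\eta\) as a solution of the stopped MP via finite-dimensional Girsanov theorems (Lemma \ref{lem: Q n sol}), the equivalence of well-posedness and (complete) local well-posedness (Proposition \ref{prop:locuni}), and optional stopping plus a monotone limit in \(n\). The one slip is the limit identity: \(\bigcup_{n}\{\tau_n \geq \rho\}\) need not equal \(\{\tau_\Delta > \rho\}\) (e.g.\ when \(\tau_n = \rho = \tau_\Delta\)), so you must work with the strict inequality \(\{\tau_n > \rho\} \nearrow \{\tau_\Delta > \rho\}\) as the paper does; with that correction the argument is the paper's.
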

A detailed proof is given in Section \ref{sec: pf CMG} below.
\begin{remark}
	In the setting of Theorem \ref{theo:main1}, \(P_\eta\) and \(Q_\eta\) are equivalent on \(\mathscr{F}_{\tau_n}\) for all \(n \in \mathbb{N}\). This follows either from the theorem itself together with the Lemmata \ref{lem:tau} and \ref{lem: candidate density} below, or from Proposition \ref{prop:locuni} and the Lemmata \ref{lem: candidate density} and \ref{lem: Q n sol} below.
\end{remark}
An in-depth study of the CMG density allows us to obtain the following uniqueness result. A detailed proof is given in Section \ref{sec: pf coro uni} below.
\begin{proposition}\label{theo:main2}
	Let \(\eta\) be an arbitrary probability measure on \((\mathbb{B}, \mathscr{B}(\mathbb{B}))\).
	If the GMP \((A, b, a, K, \tau_{\Delta}-)\) is well-posed, then the GMP \((A,b',a, K', \eta, \tau_\Delta -)\) satisfies uniqueness.
	Conversely, if the GMP \((A, b', a, K', \tau_{\Delta}-)\) is well-posed, then the GMP \((A, b, a, K, \eta, \tau_{\Delta}-)\) satisfies uniqueness.
\end{proposition}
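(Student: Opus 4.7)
\emph{Overall approach.} I would prove both implications by a single mechanism, exploiting the involutive nature of the CMG data transformation and invoking Theorem \ref{theo:main1} together with the remark following it. Writing \((c, Y)\) for the pair used in \eqref{eq: b', Y'} to pass from \((b, K)\) to \((b', K')\), the inverse assignment is effected by \((\tilde c, \tilde Y) \triangleq (-c, 1/Y)\): substituting into \eqref{eq: b', Y'} (read with primes) gives \(\tilde Y K' = Y^{-1}\, Y K = K\) and
\[
b' + a\tilde c + \int h(y)(\tilde Y(\cdot, y) - 1)\, K'(\cdot, dy) = b' - ac + \int h(y)(1 - Y(\cdot, y))\, K(\cdot, dy) = b,
\]
after cancelling \(ac\) and pairing \(\int h(y)(Y-1) K(\cdot, dy)\) against \(\int h(y)(1-Y) K(\cdot, dy)\). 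My first task is to verify that the standing integrability and boundedness requirements on \((c, Y)\) relative to \((b, a, K)\) are inherited by \((\tilde c, \tilde Y)\) relative to \((b', a, K')\); this is routine via the identities \((1 - Y^{-1/2})^2\, Y = (1 - Y^{1/2})^2\), \(|Y^{-1} - 1|\, Y = |Y - 1|\), and \(\tilde Y K' = K\), which each convert an expression involving the inverted data into one already controlled by hypothesis. Once this symmetry is established, the two implications of the proposition become formally equivalent.

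\emph{Main argument.} It therefore suffices to prove that if the GMP \((A, b', a, K', \tau_\Delta-)\) is well-posed, then the GMP \((A, b, a, K, \eta, \tau_\Delta-)\) satisfies uniqueness. Given two solutions \(P, \widetilde P \in \mathcal M(A, b, a, K, \eta, \tau_\Delta-)\), Theorem \ref{theo:main1} produces a single \(Q_\eta \in \mathcal M(A, b', a, K', \eta, \tau_\Delta-)\), and both \((Q_\eta, P)\) and \((Q_\eta, \widetilde P)\) are CMG pairs whose densities are indistinguishable from the canonical process \(Z^*\) of Lemma \ref{lem: candidate density}. The crucial point is that \(Z^*\) is built as a pathwise functional of the coordinate process \(X\) and the data \((c, Y, a, K)\) alone, so one and the same \(\F\)-adapted process plays the role of density in both CMG relations. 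By the remark following Theorem \ref{theo:main1}, \(P\) and \(\widetilde P\) are each equivalent to \(Q_\eta\) on \(\mathscr F_{\tau_n}\) for every \(n \in \N\); combined with the CMG formula \eqref{eq: CMG formula} at \(\rho = \tau_n\), this forces the \(\mathscr F_{\tau_n}\)-Radon--Nikodym derivatives of \(P\) and of \(\widetilde P\) with respect to \(Q_\eta\) both to coincide with the common function \((Z^*_{\tau_n})^{-1}\). Hence \(P(G) = \int_G (Z^*_{\tau_n})^{-1}\, dQ_\eta = \widetilde P(G)\) for every \(G \in \mathscr F_{\tau_n}\).

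\emph{Passage to \(\mathscr F\) and principal obstacle.} Since \(\tau_n \nearrow \tau_\Delta\) and the generators of \(\mathscr F_{\tau_\Delta -}\), namely \(\mathscr F_0\) together with events of the form \(A \cap \{t < \tau_\Delta\}\) with \(A \in \mathscr F_t\), each belong to some \(\mathscr F_{\tau_n}\) (take \(n\) so that \(\tau_n \geq t\) on the event in question), one has \(\mathscr F = \mathscr F_{\tau_\Delta -} = \sigma(\bigcup_n \mathscr F_{\tau_n})\), so equality of \(P\) and \(\widetilde P\) on each \(\mathscr F_{\tau_n}\) upgrades to equality on \(\mathscr F\). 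The main obstacle I foresee is the claim that the density \(Z^*\) is genuinely canonical in \(X\): this hinges on the fact, implicit in \eqref{f - K} and \eqref{K}, that the semimartingale characteristics of \(X\) under any solution to the GMP are prescribed by \((A, b, a, K)\), so that the continuous martingale part and the jump measure of \(X\) entering the stochastic-exponential formula for \(Z^*\) are the same pathwise object under \(P\) and \(\widetilde P\). A secondary subtlety is the inversion symmetry of the Pettis integral \(\int h(y)(Y - 1) K(\cdot, dy)\), whose existence under the inverted data \((\tilde c, \tilde Y, b', a, K')\) requires a separate justification in the Banach-space setting, since Pettis integrals lack the absolute-convergence safety net available in finite dimensions.
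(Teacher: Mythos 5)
Your overall strategy coincides with the paper's: compare both solutions \(P,\widetilde P\) of the GMP \((A,b,a,K,\eta,\tau_\Delta-)\) to the unique solution \(Q_\eta\) of the primed problem via the CMG formula at \(\rho=\tau_n\), invert the (positive) density, and conclude by a monotone class argument on \(\bigcup_n\mathscr F_{\tau_n}\); the converse implication is indeed handled by the involution \((c,Y)\mapsto(-c,Y^{-1})\), exactly as in the paper. However, there is a genuine gap at the step you yourself flag as the ``main obstacle'' and then dismiss. The process \(Z^*\) of Lemma \ref{lem: candidate density} is \emph{not} a pathwise functional of \(X\): its ingredients \(\int_0^{\cdot\wedge\tau_n}\la \dd\widehat{X}^{n,c}_s,c(\widehat{X}_{s-})\ra\) and \((Y-1)\star(\mu^X-\nu^X)\) are stochastic integrals, i.e.\ limits in probability defined only up to null sets of the underlying measure, and \(\widehat{X}^{n,c}\) itself is the continuous martingale part of a semimartingale decomposition taken \emph{under a fixed measure}. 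Having the same characteristics under \(P\) and \(\widetilde P\) does not by itself hand you one common \(\mathscr F_{\tau_n}\)-measurable random variable that is simultaneously a \(P\)-version of \(Z_{\tau_n}\) and a \(\widetilde P\)-version of \(\widetilde Z_{\tau_n}\); without such a common version the identities \(E^P[Z_{\tau_n}\1_G]=Q_\eta(G)=E^{\widetilde P}[\widetilde Z_{\tau_n}\1_G]\) cannot be inverted to give \(P(G)=\widetilde P(G)\).

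The paper's proof consists essentially of closing exactly this gap. Lemma \ref{lem: V approx} approximates the compensated jump integral by explicit finite sums minus elementary Lebesgue integrals, which \emph{are} \(\mathscr F_{\tau_n}\)-measurable pathwise functionals, and Lemma \ref{lem: simple fct} approximates \(c(X_-)\) by simple predictable integrands, so that the continuous stochastic integral becomes a limit of Riemann-type sums in increments of \(\widehat{X}^{n,c}\), which in turn are identified (via the canonical decomposition and Lemma \ref{lem: V approx} again) with fixed \(\mathscr F_{\tau_n}\)-measurable random variables. The equivalence of \(P\) and \(\widetilde P\) on \(\mathscr F_{\tau_n}\) (both being equivalent to \(Q_\eta\) there) is then used to transfer convergence in \(P\)-probability of the \emph{same} approximating sequence to convergence in \(\widetilde P\)-probability, which produces the required common version. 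Your proposal would need to supply this approximation argument, or an equivalent one, to be complete. A minor additional slip: a generator \(A\cap\{t<\tau_\Delta\}\) of \(\mathscr F_{\tau_\Delta-}\) need not lie in any single \(\mathscr F_{\tau_n}\), since the index \(n\) with \(\tau_n>t\) depends on \(\omega\); one only has \(\mathscr F=\sigma\bigl(\bigcup_n\mathscr F_{\tau_n}\bigr)\), which is still enough for the monotone class step.
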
	
We also give an existence result.
Suppose that the GMP \((A, b, a, K, \tau_\Delta-)\) is well-posed, let \(x \in \mathbb{B}\) and take \(P_x \in \mathcal{M}(A, b, a, K, \varepsilon_x, \tau_\Delta-)\). We will see in the Lemmata \ref{lem: candidate density} and \ref{lem: Q n sol} below that there exists a local \(P_x\)-martingale \(Z^*\) starting at 1 such that \(Z^*_{\cdot \wedge \tau_n}\) is a non-negative uniformly integrable \(P_x\)-martingale for all \(n \in \mathbb{N}\) and such that the probability measure \(Q_x^n\) defined by \(Q_x^n(G) \triangleq E^{P_x}[ \1_GZ_{\tau_n}^*]\) solves the MP \((A, b', a, K', \varepsilon_x, \tau_n)\).
In a conservative setting we can now give a precise condition for the existence of a solution to the GMP \((A, b', a, K', \varepsilon_x, \tau_\Delta-)\).

\begin{proposition}\label{coro:main2}
	Suppose that the GMP \((A, b, a, K, \tau_\Delta-)\) is well-posed. For all \(x \in \mathbb{B}\), denote by \(P_x\) the solution to the GMP \((A, b, a, K, \varepsilon_x, \tau_{\Delta}-)\) and assume that \(P_x(\tau_\Delta = \infty) = 1\). The following are equivalent:
	\begin{enumerate}
		\item[\textup{(i)}]
		For all probability measures \(\eta\) on \((\mathbb{B}, \mathscr{B}(\mathbb{B}))\) the GMP \((A, b', a, K', \eta, \tau_\Delta-)\) has a conservative solution.
		\item[\textup{(ii)}]
		It holds that \(\lim_{n \to \infty} Q_x^n (\tau_n \leq t) = 0\) for all \(t \in [0, \infty)\) and \(x \in \mathbb{B}\).
	\end{enumerate}
	In particular, if one of the above conditions holds, then the GMP \((A, b', a, K', \tau_\Delta-)\) is completely well-posed.
\end{proposition}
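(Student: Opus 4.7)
My strategy is to use the CMG density $Z^*$ of Lemma~\ref{lem: candidate density} and the stopped measures $Q_x^n$ of Lemma~\ref{lem: Q n sol} as a bridge, exploiting the fact that condition (ii) is precisely the ``no explosion under $Q_x^n$'' needed to glue the family $\{Q_x^n\}$ into a single conservative solution. A preliminary observation is the compatibility $Q_x^m|_{\mathscr{F}_{\tau_n}}=Q_x^n|_{\mathscr{F}_{\tau_n}}$ for $n\le m$: since $Z^*_{\cdot\wedge\tau_m}$ is a uniformly integrable $P_x$-martingale and $\tau_n\le\tau_m$, optional sampling gives $E^{P_x}[Z^*_{\tau_m}\mid\mathscr{F}_{\tau_n}]=Z^*_{\tau_n}$, whence $Q_x^m(G)=E^{P_x}[\1_G Z^*_{\tau_m}]=E^{P_x}[\1_G Z^*_{\tau_n}]=Q_x^n(G)$ for every $G\in\mathscr{F}_{\tau_n}$.

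For (i)$\Rightarrow$(ii), let $Q_x$ be a conservative solution of the GMP $(A,b',a,K',\varepsilon_x,\tau_\Delta-)$. The plan is to deduce $Q_x|_{\mathscr{F}_{\tau_n}}=Q_x^n|_{\mathscr{F}_{\tau_n}}$ from uniqueness of the stopped MP $(A,b',a,K',\varepsilon_x,\tau_n)$. Proposition~\ref{theo:main2} already provides uniqueness of the GMP $(A,b',a,K',\varepsilon_x,\tau_\Delta-)$, and combined with existence from (i) this yields well-posedness of the primed GMP and hence the strong Markov property via Theorem~\ref{theo:markov}; any prolongation of a stopped-MP solution by $Q_{X_{\tau_n}}$ after time $\tau_n$ then produces a solution of the GMP, and two such prolongations must agree on $\mathscr{F}$, whence on $\mathscr{F}_{\tau_n}$. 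Since $\{\tau_n\le t\}\in\mathscr{F}_{\tau_n}$, this forces $Q_x^n(\tau_n\le t)=Q_x(\tau_n\le t)\downarrow 0$ by conservativeness of $Q_x$ and $\tau_n\nearrow\tau_\Delta$.

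For (ii)$\Rightarrow$(i), I first use (ii) to upgrade $Z^*$ from a local to a true $P_x$-martingale. The decomposition
\begin{align*}
1=E^{P_x}\bigl[Z^*_{t\wedge\tau_n}\bigr]=E^{P_x}\bigl[Z^*_t\,\1_{\{\tau_n>t\}}\bigr]+Q_x^n(\tau_n\le t),
\end{align*}
together with $\{\tau_n>t\}\uparrow\Omega$ $P_x$-a.s.\ (by conservativeness of $P_x$), monotone convergence, and (ii), gives $E^{P_x}[Z^*_t]=1$ for every $t$. Next, I extend the consistent family $\{Q_x^n|_{\mathscr{F}_{\tau_n}}\}$ to a probability $Q_x$ on $(\Omega,\mathscr{F})$: because $\mathscr{F}=\mathscr{F}_{\tau_\Delta-}=\sigma(\bigcup_n\mathscr{F}_{\tau_n})$ and $(\Omega,\mathscr{F})$ is a standard Borel space, a standard projective-limit extension applies, the countable-additivity input on the generating algebra $\bigcup_n\mathscr{F}_{\tau_n}$ being supplied by (ii). Since $\{\tau_\Delta\le t\}=\bigcap_{n>t}\{\tau_n\le t\}$, (ii) gives $Q_x(\tau_\Delta\le t)=0$, so $Q_x$ is conservative. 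The MP condition \eqref{f - K} stopped at $\tau_n$ is $\mathscr{F}_{\tau_n}$-measurable and so transfers from $Q_x^n$ to $Q_x$, showing $Q_x\in\mathcal{M}(A,b',a,K',\varepsilon_x,\tau_\Delta-)$. For a general initial law I set $Q_\eta\triangleq\int Q_x\,\eta(\mathrm{d} x)$. The concluding ``in particular'' statement then combines existence from (i) with uniqueness from Proposition~\ref{theo:main2}, applied for every initial law $\eta$.

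The most delicate step is the extension in (ii)$\Rightarrow$(i). Consistent families on an increasing sequence of sub-$\sigma$-fields do not automatically glue into a probability on the full $\mathscr{F}$, and I would exploit both the Lusin structure of $\Omega$ and the fact that each $Q_x^n$ is already a probability on the entire space $(\Omega,\mathscr{F})$ (not merely on $\mathscr{F}_{\tau_n}$) in order to verify countable additivity on the generating algebra; condition (ii) is precisely what rules out mass escaping through explosion under the candidate limit measure.
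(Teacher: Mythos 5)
Your proposal is correct and follows essentially the same route as the paper: (i)$\Rightarrow$(ii) via uniqueness of the stopped problems (the paper invokes local well-posedness, Proposition \ref{prop:locuni}, which is exactly the prolongation argument you sketch), and (ii)$\Rightarrow$(i) by gluing the consistent family $\{Q_x^n\}$ into a conservative solution and then integrating over $\eta$. The ``delicate extension step'' you flag is precisely the paper's Proposition \ref{theo:TET}, a c\`adl\`ag version of Stroock--Varadhan's Theorem 1.3.5 proved via Tulcea's theorem with condition (ii) as the hypothesis $\lim_n P_n^o(\rho_n\le t)=0$; your preliminary upgrade of $Z^*$ to a true martingale is valid but not needed, since the extension is carried out on $(\mathbb{D},\mathscr{D})$ where conservativeness is automatic.
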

The previous proposition can be viewed as a generalization of \cite[Corollary 10.1.2]{SV} to a setting of arbitrary dimension which includes jumps. The proof
relies on an extension theorem and is given in Section \ref{sec: pf coro cons existence} below.

In a non-conservative setting one can try to conclude existence from an extension argument in a larger path space. However, in this case one has to prove that the extension is supported on \((\Omega, \mathscr{F})\), see \cite{pinsky1995positive} for details in a finite-dimensional diffusion setting.
We do not address this question here and leave it for future research.

Nevertheless, let us give some conditions. 
The next proposition can be viewed as a generalization of \cite[Theorem 6.4.2]{SV}.
\begin{proposition}\label{coro: well-posed equivalence}
	Suppose that the mappings \begin{align}\label{eq: bdd assp}
	x \mapsto \la a(x)c(x), c(x)\ra, \qquad x \mapsto \int \left(1 - \sqrt{Y(x, y)}\right)^2 K(x, \dd y)
	\end{align}
	are bounded (on \(\mathbb{B}\)). The following are equivalent:
	\begin{enumerate}
		\item [\textup{(i)}]
		The MP \((A, b, a, K)\) is well-posed.
		\item [\textup{(ii)}]
		The MP \((A, b', a, K')\) is well-posed.
	\end{enumerate}
\end{proposition}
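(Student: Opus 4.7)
The map $(b, K) \mapsto (b', K')$ defined by \eqref{eq: b', Y'} via $(c, Y)$ is involutive: substituting $(c, Y) \mapsto (-c, 1/Y)$ in \eqref{eq: b', Y'} recovers $(b, K)$ from $(b', K')$, and the two quantities in \eqref{eq: bdd assp} are invariant under this substitution, since $\la a(-c), -c\ra = \la ac, c\ra$ and $\int (1 - 1/\sqrt{Y})^2 Y\,dK = \int (\sqrt Y - 1)^2 dK$. Hence it suffices to prove the single implication \textup{(i)} $\Rightarrow$ \textup{(ii)}.

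Assume \textup{(i)} and fix $x \in \mathbb{B}$. Let $P_x$ be the unique solution of MP $(A, b, a, K, \varepsilon_x)$ (viewed on $(\Omega, \mathscr{F})$ as a conservative measure) and let $Z^*$ denote the candidate density process from Lemma \ref{lem: candidate density}. The decisive step is to show that, under \eqref{eq: bdd assp}, $Z^*$ is upgraded from a uniformly integrable martingale on each $\llbr 0, \tau_n \rrbr$ to a true $(\F^+, P_x)$-martingale on $[0, \infty)$. In terms of the cylindrical projections $\la X, y_i^* \ra$, the Hellinger-type characteristic of $Z^*$ equals $\tfrac{1}{2}\int_0^\cdot \la a(X_{s-}) c(X_{s-}), c(X_{s-}) \ra ds + \int_0^\cdot \int (1 - \sqrt{Y(X_{s-}, y)})^2 K(X_{s-}, dy)\,ds$, which by \eqref{eq: bdd assp} grows at most linearly in $t$; a classical exponential-martingale criterion for jump semimartingales (e.g., Jacod--Shiryaev \cite{JS}, Theorem III.5.34, applied to the finite-dimensional projections that determine $Z^*$) then yields $\E^{P_x}[Z^*_t] = 1$ for every $t \geq 0$. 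Since $P_x$ is conservative, $\tau_n \nearrow \infty$ $P_x$-a.s., so $(Z^*_{t \wedge \tau_n})_n$ is UI and converges to $Z^*_t$ in $L^1(P_x)$. Defining $Q_x$ by the consistent prescription $dQ_x|_{\mathscr{F}_t} = Z^*_t\,dP_x|_{\mathscr{F}_t}$ yields a probability measure with $Q_x(\tau_\Delta = \infty) = 1$, and the finite-dimensional Girsanov transformation already used in the proof of Theorem \ref{theo:main1} verifies that $Q_x$ solves the stopped MP $(A, b', a, K', \varepsilon_x, \tau_n)$ for every $n$, hence $Q_x \in \mathcal{M}(A, b', a, K', \varepsilon_x)$.

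For uniqueness, let $\tilde Q_x$ be any solution of MP $(A, b', a, K', \varepsilon_x)$. The reverse candidate density, built from the data $(-c, 1/Y)$, equals $1/Z^*$ and, by the same martingale criterion, is a true $\tilde Q_x$-martingale. Then $\tilde P_x$ defined by $d\tilde P_x|_{\mathscr{F}_t} = Z^{*-1}_t\, d\tilde Q_x|_{\mathscr{F}_t}$ is a well-defined probability measure on $(\Omega, \mathscr{F})$, and the reverse Girsanov transformation shows $\tilde P_x \in \mathcal{M}(A, b, a, K, \varepsilon_x)$. Well-posedness in \textup{(i)} forces $\tilde P_x = P_x$, and inverting the density relation gives $\tilde Q_x|_{\mathscr{F}_t} = Z^*_t\, dP_x|_{\mathscr{F}_t} = Q_x|_{\mathscr{F}_t}$ for every $t$, so $\tilde Q_x = Q_x$.

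The main obstacle is the upgrade of $Z^*$ to a genuine $P_x$-martingale on $[0, \infty)$ under \eqref{eq: bdd assp}: Lemma \ref{lem: candidate density} only supplies the stopped property, and one must control the global Hellinger characteristic in the infinite-dimensional cylindrical setting before the standard exponential-martingale criterion applies. Once this is available, the rest of the argument is a two-sided application of the Girsanov machinery developed earlier in the paper together with the assumed well-posedness.
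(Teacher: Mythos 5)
Your proposal follows the same skeleton as the paper's proof: the global bounds \eqref{eq: bdd assp} upgrade the candidate density $Z^*$ of Lemma \ref{lem: candidate density} from a uniformly integrable martingale on each $\of 0,\tau_n\gs$ to a true martingale on $[0,\infty)$ (the paper invokes \cite[Lemma 8.8, Theorem 8.25]{J79}, which is exactly the Lépingle--Mémin-type criterion you have in mind), the measure $Q_x$ is then defined through the density and shown via the Girsanov computations of Lemma \ref{lem: Q n sol} to solve the primed MP, and the converse implication is obtained from the involution $(c,Y)\mapsto(-c,Y^{-1})$, under which \eqref{eq: bdd assp} is invariant. Two points of divergence. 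First, for uniqueness the paper simply cites Proposition \ref{theo:main2}, whereas you re-run a reverse Girsanov argument (transform an arbitrary solution $\tilde Q_x$ back to a solution of the unprimed MP and invoke (i)); this is workable but requires you to justify that the reverse density built from $(-c,Y^{-1})$ under $\tilde Q_x$ is indeed the reciprocal of the forward density (a Yor-formula computation on each stochastic interval), a step you assert rather than prove, and it is slightly circular to speak of ``$1/Z^*$'' before mutual absolute continuity is established.

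Second, and more substantively, you gloss over the passage from the consistent family $\dd Q_x|_{\mathscr{F}_t}=Z^*_t\,\dd P_x|_{\mathscr{F}_t}$ to an actual probability measure on $(\mathbb{D},\mathscr{D})$. A consistent family of measures on an increasing family of $\sigma$-fields does not extend to the generated $\sigma$-field in general; on the canonical Skorokhod space it does, but this is precisely the content of Proposition \ref{theo:TET}, which the paper proves via Tulcea's theorem and invokes at this point (and which you would need a second time to define $\tilde P_x$ in your uniqueness step). Your ``consistent prescription yields a probability measure'' is therefore the one genuinely unsupported assertion in the write-up; with Proposition \ref{theo:TET} supplied, the rest of the argument goes through.
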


Finally, we study local equivalence and local absolute continuity of two GMPs.
Let \(P\) and \(Q\) be probability measures on \((\Omega, \mathscr{F})\). We say that \(Q\) is locally absolutely continuous w.r.t. \(P\) if \(Q\) is absolutely continuous w.r.t. \(P\) on \(\mathscr{F}_t\) for all \(t \in [0, \infty)\).
We say that \(P\) and \(Q\) are locally equivalent if \(Q\) is equivalent to \(P\) on \(\mathscr{F}_t\) for all \(t \in [0, \infty)\). A proof for the following observation can be found in Section \ref{sec: pf coro bdd coefficients} below.
\begin{proposition}\label{coro:main1}
	Assume that the GMP \((A, b, a, K, \tau_{\Delta}-)\) or \((A, b', a, K', \tau_{\Delta}-)\) is well-posed and that \(\eta\) is a probability measure on \((\mathbb{B}, \mathscr{B}(\mathbb{B}))\). Let
	\(P\) be a solution to the GMP \((A, b,a, K, \eta, \tau_\Delta-)\) and \(Q\) be a solution to the GMP \((A, b', a, K', \eta, \tau_\Delta-)\). 
	\begin{enumerate}
		\item[\textup{(i)}] Suppose that either \(P\) or \(Q\) is conservative. Then \(P\) and \(Q\) are locally equivalent if and only if both \(P\) and \(Q\) are conservative.
		\item[\textup{(ii)}] If \(Q\) is conservative, then \(Q\) is locally absolutely continuous w.r.t. \(P\).
	\end{enumerate}
\end{proposition}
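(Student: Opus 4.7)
The plan is to derive both parts as direct consequences of the Cameron--Martin--Girsanov pairing from Theorem \ref{theo:main1}, combined with the uniqueness transfer provided by Proposition \ref{theo:main2}. The essential preliminary step is to record that the parameter transformation $(b,K)\mapsto(b',K')$ induced by $(c,Y)$ is involutive: setting $\tilde c \triangleq -c$ and $\tilde Y(x,y)\triangleq 1/Y(x,y)$, the identity $K(x,dy)=\tilde Y(x,y)\,K'(x,dy)$ together with \eqref{eq: b', Y'} shows that applying the transformation to $(b',K')$ with $(\tilde c,\tilde Y)$ returns $(b,K)$, while $Y K = K'$ ensures that the integrability and boundedness hypotheses on $(\tilde c,\tilde Y)$ relative to $(b',a,K')$ reduce to the corresponding ones on $(c,Y)$ relative to $(b,a,K)$. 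This makes Theorem \ref{theo:main1} applicable in either direction, and Proposition \ref{theo:main2} then guarantees that $P$ and $Q$ are the unique solutions to their respective GMPs.

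Next I would turn the well-posedness hypothesis into a CMG identity. If the GMP $(A,b',a,K',\tau_\Delta-)$ is well-posed, Theorem \ref{theo:main1} produces a CMG pair $(Q,P)$ with density $Z^*$, so
\[
Q(G\cap\{\tau_\Delta>t\}) = E^{P}\!\left[Z^*_t\,\1_{G\cap\{\tau_\Delta>t\}}\right]
\]
for all $t\geq 0$ and $G\in\mathscr{F}_t$; if instead $(A,b,a,K,\tau_\Delta-)$ is well-posed, the involutivity step lets me apply the same theorem with roles exchanged, producing a symmetric identity
\[
P(G\cap\{\tau_\Delta>t\}) = E^{Q}\!\left[\tilde Z^*_t\,\1_{G\cap\{\tau_\Delta>t\}}\right]
\]
with some density $\tilde Z^*$. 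In either case Lemma \ref{lem: candidate density} exhibits the CMG density as a stochastic exponential whose jumps equal $Y-1>-1$, so it is strictly positive on $\{t<\tau_\Delta\}$. For part (ii) assume $Q$ is conservative, hence $Q(\tau_\Delta>t)=1$. If the first formula applies and $P(G)=0$, its right-hand side vanishes, giving $Q(G)=Q(G\cap\{\tau_\Delta>t\})=0$. If the second formula applies, $P(G)=0$ forces $E^{Q}[\tilde Z^*_t\,\1_G]=0$, and strict positivity of $\tilde Z^*_t$ on the $Q$-full event $\{\tau_\Delta>t\}$ yields $Q(G)=0$. Hence $Q\ll P$ on $\mathscr{F}_t$ for every $t$.

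For part (i) the nontrivial implication is: if $P$ is conservative and $P\sim Q$ on every $\mathscr{F}_t$, then $Q$ is conservative. Since $\{\tau_\Delta>t\}\in\mathscr{F}_t$ and $P(\tau_\Delta>t)=1$, local equivalence immediately yields $Q(\tau_\Delta>t)=1$, and letting $t=n\to\infty$ gives $Q(\tau_\Delta=\infty)=1$. Conversely, if both measures are conservative, applying (ii) in both directions (possible because of the involutivity preparation) produces local absolute continuity in each direction and hence local equivalence. The main obstacle is the involutivity check, which is bookkeeping in nature but indispensable whenever one does not know \emph{a priori} which of the two GMPs is the well-posed one; once it is in place the rest is a short manipulation of the CMG identity and the strict positivity of the density.
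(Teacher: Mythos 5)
Your proposal is correct and follows essentially the same route as the paper: both rest on the CMG formula of Theorem \ref{theo:main1}, the strict positivity of the density from Lemma \ref{lem: candidate density}, and the symmetry of the transformation under $(-c, Y^{-1})$, which the paper also invokes (your involutivity check makes explicit what the paper states in one line). The only cosmetic difference is that you obtain local equivalence for two conservative solutions by applying (ii) in both directions, whereas the paper reads it off directly from the positivity of $Z_t$ in the CMG identity; both are immediate.
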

If both \(P\) and \(Q\) are not conservative, the question when \(P\) and \(Q\) are locally equivalent is open (in general). For results in this direction for the case of one-dimensional It\^o-diffusions we refer to Mijatovi\'c and Urusov \cite{MU(2012)} and for a discussion of a finite-dimensional It\^o-diffusion case we refer to Ruf \cite{RufSDE}.

\subsection{MPs and SDEs}\label{sec: relation SDE}
In this section we discuss a one-to-one correspondence between solutions to MPs and analytically and probabilistically weak solutions to SDEs.
We start by formally introducing SDEs. The random drivers will be a cylindrical Brownian motion and a (homogeneous) Poisson random measure.

For completeness, let us recall the definitions.
Suppose that \(\mathbb{H}\) is a real Banach space and denote its (topological) dual by \(\mathbb{H}^*\). Below \(\mathbb{H}\) will be a Hilbert space, which explains our choice of notation. However, to define the CMG density, we also need the definition of a cylindrical continuous local martingale on a Banach space:
We call a family \(M \triangleq \{M(y^*), y^* \in \mathbb{H}^*\}\) a cylindrical continuous local martingale if for all \(y \in \mathbb{H}^*\) the process \(M(y^*)\) is a real-valued continuous local martingale and \(y^* \mapsto M(y^*)\) is linear. 

A cylindrical Brownian motion \(W\) with covariance \(U \in S^+(\mathbb{H}^*, \mathbb{H})\) is a cylindrical continuous local martingale such that 
\(
\lle W(y^*)\rre = \int_0^\cdot \la U y^*, y^*\ra \dd s\) for all \(y^* \in \mathbb{H}^*,
\)
where \(\lle \cdot \rre\) denotes the predictable quadratic variation process.

Let \(E\) be a Lusin space and \(\mathscr{E} \triangleq \mathscr{B}(E)\).
A (homogeneous) Poisson random measure \(\mu\)  on a filtered probability space \((\Omega^o, \mathscr{F}^o, (\mathscr{F}^o_t)_{t \in [0, \infty)}, P^o)\) is an integer-valued random measure on \([0, \infty) \times E\) such that 
\begin{enumerate}
	\item[(a)] The non-negative measure \(p(\cdot) \triangleq E^{P^o} [\mu(\cdot)]\) on \(([0, \infty) \times E, \mathscr{B}([0, \infty)) \otimes \mathscr{E})\) has a decomposition \(p (\dd t, \dd x) = \dd t \otimes F(\dd x)\), where \(F\) is a \(\sigma\)-finite measure on \((E, \mathscr{E})\).
	\item[(b)] For all \(s \in [0, \infty)\) and every \(G \in \mathscr{B}([0, \infty)) \otimes \mathscr{E}\) with \(G \subseteq (s, \infty) \times E\) and \(p(G) < \infty\), the random variable \(\mu(\cdot, G)\) is \(P^o\)-independent of \(\mathscr{F}^o_s\).
\end{enumerate}
The measure \(p\) is called the intensity measure of \(\mu\). Moreover, we note that the intensity measure is the predictable compensator of the random measure \(\mu\), see \cite[Proposition II.1.21]{JS} or Section \ref{sec: st int} below.

Now, we assume that \(\mathbb{H}\) is a real separable Hilbert space. With a minor abuse of notation we denote the corresponding scalar product by \(\langle \cdot, \cdot \rangle\). Moreover, we identify \(\mathbb{H}^*\) with \(\mathbb{H}\).
The parameters for an SDE are the following:
\begin{enumerate}
	\item[(i)] Let \(U \in S^+(\mathbb{H}, \mathbb{H})\) be the covariance of a cylindrical Brownian motion.
	\item[(ii)] Let \(p = \dd t \otimes F\) be the intensity measure of a Poisson random measure on~\(E\).
	\item[(iii)] Let \(A \colon D(A) \subseteq \mathbb{B} \to \mathbb{B}\) be a linear, densely defined and closed operator.
	\item[(iv)] Let \(b \colon \mathbb{B} \to \mathbb{B}\) be Borel and such that for all bounded sequences \((y^*_n)_{n \in \mathbb{N}} \subset \mathbb{B}^*\) and all bounded sets \(G \in \mathscr{B}(\mathbb{B})\) it holds that 
	\begin{align*}
	\sup_{n \in \mathbb{N}} \sup_{x \in G} |\la b(x), y^*_n\ra| < \infty.
	\end{align*}
	\item[(v)] Let \(\sigma \colon \mathbb{B} \to L(\mathbb{H}, \mathbb{B})\), where \(L(\mathbb{H}, \mathbb{B})\) denotes the set of all linear bounded operators from \(\mathbb{H}\) to \(\mathbb{B}\), be bounded on bounded subsets of \(\mathbb{B}\) and such that the map \(x \mapsto \sigma^* (x) y^*\) is Borel for all \(y^*\in \mathbb{B}^*\).
	Here, \(\sigma^* (x) \colon \mathbb{B}^* \to \mathbb{H}\) denotes the adjoint of \(\sigma(x)\).
	\item[(vi)] Let \(\delta \colon \mathbb{B} \times E \to \mathbb{B}\) be Borel and such that for all bounded sequences \((y^*_n)_{n \in \mathbb{B}} \subset \mathbb{B}^*\), all bounded sets \(G \in \mathscr{B}(\mathbb{B})\) and all \(\epsilon >0\) it holds that 
	\begin{equation}\label{eq: int intensity measure}
	\begin{split}
	\sup_{n \in \mathbb{N}}&\sup_{x \in G} \int \1_{\{\|\delta(x, y)\| \leq \epsilon\}} |\la \delta (x, y), y^*_n\ra|^2 F(\dd y) < \infty,\\
	&\ \ \sup_{x \in G} F(\{y \in E \colon \|\delta (x, y)\| \geq \epsilon\}) < \infty.
	\end{split}
	\end{equation}
	\item[(vii)] Let \(\eta\) be a probability measure on \((\mathbb{B}, \mathscr{B}(\mathbb{B}))\).
\end{enumerate}
Recall that \(h\) is a truncation function on \(\mathbb{B}\). We denote \(h' (x) \triangleq x - h(x)\).

The following solution concept is in the spirit of an analytically and probabilistically weak solution: 
\begin{definition}
	We call a quadruple \(((\Omega^o, \mathscr{F}^o, \F^o, P^o), Y, W, \mu)\) a solution to the SDE associated with \((U, p, A, b, \sigma, \delta, \eta)\) if
	\begin{enumerate}
		\item[\textup{(i)}] \((\Omega^o, \mathscr{F}^o, \F^o, P^o)\) is a filtered probability space which supports a \cadlag \(\F^o\)-adapted \(\mathbb{B}\)-valued process \(Y\), a cylindrical \((\F^o, P^o)\)-Brownian motion \(W\) with covariance \(U\) and an \((\F^o, P^o)\)-Poisson random measure \(\mu\) with intensity measure~\(p\).
		Moreover, \(\mathscr{F}^o\) is \(P^o\)-complete and \(\F^o\) is \(P^o\)-augmented, see Section \ref{sec: st int} below.
		\item[\textup{(ii)}] \(P^o \circ Y^{-1}_0 = \eta\).
		\item[\textup{(iii)}] For all \(y^* \in D(A^*)\) it holds \(P^o\)-a.s.
		\begin{equation}\label{eq: SSPDE}
		\begin{split}
		\la Y, y^*\ra = \la Y_0, y^*\ra &+ \int_0^\cdot \left(\la Y_{s-}, A^* y^*\ra + \la b(Y_{s-}), y^*\ra \right)\dd s + \int_0^\cdot \la \dd W_s, \sigma^* (Y_{s-}) y^*\ra 
		\\&+ \la h(\delta(\cdot, Y_-)), y^*\ra \star (\mu - p) + \la h'( \delta(\cdot, Y_-)), y^*\ra \star \mu. 
		\end{split}
		\end{equation}
	\end{enumerate}
The process \(Y\) is called a \emph{solution process}.
	The law of a solution process, seen as a probability measure on \((\mathbb{D},\mathscr{D})\), is called a \emph{solution measure} to the SDE.
	We say that a SDE satisfies \emph{uniqueness} if all solution measures coincide.
\end{definition}
For the last two integrals in \eqref{eq: SSPDE} we use the classical notation of Jacod and Shiryaev \cite{JS}. The definitions of all (stochastic) integrals are recalled in Section \ref{sec: st int} below.
Let us emphasis that, based on the local boundedness assumptions on \(\sigma\) and \(\delta\), the integrals are well-defined. 

The case where the driving Brownian motion is Banach space valued is included in this setting, see Remark \ref{rem: RKHS} below.
\begin{remark}
	In the case where \(A\) is a \(C^0\)-semigroup one typically speaks of a (semilinear) stochastic partial differential equation (SPDE). 
	Besides the concept of (analytically and probabilistically) weak solutions to SPDEs, a frequently used solution concept are so-called mild solutions, see, e.g., \cite{DePrato,roeckner15,mandrekar2014stochastic,peszat2007stochastic}. 
	In many cases, the solution concepts are equivalent, see \cite[Appendix G]{roeckner15} and \cite[Section 9.3]{peszat2007stochastic}.
\end{remark}

The following relations between laws of SDEs and solutions to MPs hold.
A proof is given in Section \ref{sec: pf equi GMP SDE} below.
\begin{theorem}\label{theo: GMP SDE}
	The set of solution measures to the SDE \((U, p, A, b, \sigma, \delta, \eta)\) coincides with the set of solutions to the MP \((A, b, \sigma U \sigma^*, K, \eta)\), where
	\begin{align}\label{eq: K SDE}
	K(\cdot,G) \triangleq \int \1_G (\delta(\cdot, y)) F(\dd y), \ G \in \mathscr{B}(\mathbb{B}), 0 \not \in G.
	\end{align}
	In particular, the SDE \((U, p, A, b, \sigma, \delta, \eta)\) has a solution if and only if the MP \((A, b,  \sigma U \sigma^*, K, \eta)\) has a solution.
\end{theorem}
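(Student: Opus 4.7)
The plan is to establish the two inclusions separately. The direction SDE \(\Rightarrow\) MP is a direct application of It\^o's formula for semimartingales with jumps to a solution of the SDE, reading off the drift, diffusion and integral pieces of \(\mathcal{K}f\) from the right-hand side of \eqref{eq: SSPDE}. The converse direction, which is the substantive one, starts from a solution \(P\) of the MP on \((\mathbb{D},\mathscr{D})\), identifies the semimartingale characteristics of \(\langle X, y^*\rangle\) for each \(y^*\in D(A^*)\), and then reconstructs a cylindrical Brownian motion and a Poisson random measure on an extension of \((\mathbb{D},\mathscr{D},P)\) using the representation theorems of Ondrej\'at (for cylindrical continuous local martingales) and Jacod (for Poisson random measures) flagged in the introduction.

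For SDE \(\Rightarrow\) MP, fix a solution \(((\Omega^o,\mathscr{F}^o,\F^o,P^o),Y,W,\mu)\) and a test function \(f=g(\langle\cdot,y_1^*\rangle,\ldots,\langle\cdot,y_n^*\rangle)\in\mathcal{C}\). Each coordinate \(\langle Y,y_i^*\rangle\) is a real-valued semimartingale whose canonical decomposition is read off from \eqref{eq: SSPDE}: its drift is \(\int_0^\cdot(\langle Y_{s-},A^*y_i^*\rangle+\langle b(Y_{s-}),y_i^*\rangle)ds\), its continuous martingale part \(\int_0^\cdot\langle dW_s,\sigma^*(Y_{s-})y_i^*\rangle\) has predictable bracket \(\int_0^\cdot\langle \sigma(Y_{s-})U\sigma^*(Y_{s-})y_i^*,y_j^*\rangle ds\) with the \(j\)-th such integral, and its jump part is driven by \(\mu\). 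Applying the finite-dimensional It\^o formula to \(g(\langle Y,y_1^*\rangle,\ldots,\langle Y,y_n^*\rangle)\) and using the change of variables \eqref{eq: K SDE} to rewrite the \(\mu\)-integrals against \(F\) as integrals against the image measure \(K(Y_{s-},\cdot)\) produces exactly \eqref{K}, modulo a local martingale.

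For MP \(\Rightarrow\) SDE, let \(P\in\mathcal{M}(A,b,\sigma U\sigma^*,K,\eta)\) on \((\mathbb{D},\mathscr{D})\). By judicious choice of test functions in \(\mathcal{C}\) together with localization along \(\tau_n\) (the route emphasized in the introduction via the relation of solutions to MPs with finite-dimensional stopped semimartingales), one shows that for every \(y^*\in D(A^*)\) the process \(\langle X,y^*\rangle\) is a \(P\)-semimartingale with drift \(\int_0^\cdot(\langle X_{s-},A^*y^*\rangle+\langle b(X_{s-}),y^*\rangle)ds\); that the predictable bracket between the continuous martingale parts of \(\langle X,y^*\rangle\) and \(\langle X,z^*\rangle\) is \(\int_0^\cdot\langle \sigma(X_{s-})U\sigma^*(X_{s-})y^*,z^*\rangle ds\); and that the compensator of the jump measure \(\mu^X\) of \(X\) is \(K(X_{s-},dy)\otimes ds\). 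The family \(\{N^{y^*}\}_{y^*\in D(A^*)}\) of continuous martingale parts is linear in \(y^*\) and, by the boundedness of \(\sigma U\sigma^*\) on bounded sets, extends to a cylindrical continuous local martingale on \(\mathbb{B}^*\); Ondrej\'at's representation theorem then produces, on an enlargement \((\Omega^o,\mathscr{F}^o,\F^o,P^o)\) of \((\mathbb{D},\mathscr{D},P)\), a cylindrical \(U\)-Brownian motion \(W\) satisfying \(N^{y^*}=\int_0^\cdot\langle dW_s,\sigma^*(X_{s-})y^*\rangle\).

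The main obstacle is the construction of a Poisson random measure \(\mu\) on \([0,\infty)\times E\) with intensity \(dt\otimes F\) on (a possibly further enlarged version of) the same space, such that \(\mu^X\) coincides with the image of \(\mu\) under \((t,y)\mapsto(t,\delta(X_{t-},y))\). This is where Jacod's representation theorem \cite{J79} enters, together with a measurable selection on the Lusin space \(E\) to invert \(\delta(x,\cdot)\) and an independent auxiliary Poisson random measure to absorb the part of \(F\) that \(\delta(x,\cdot)\) annihilates. A subsidiary technicality is the extension of the MP identity from \(\mathcal{C}\) to the bounded Borel test functions needed to identify the compensator of \(\mu^X\); this is handled by a monotone class argument using the integrability assumptions \eqref{eq: assp K}, \eqref{eq: int ball around origin} and the localization along \(\tau_n\). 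Once \(W\) and \(\mu\) are in place, reassembling the decomposition of \(\langle X,y^*\rangle\) from the drift, the integral against \(W\), and the compensated resp.\ uncompensated integrals against \(\mu\) yields \eqref{eq: SSPDE}, exhibiting \(P\) as the solution measure of the quadruple \(((\Omega^o,\mathscr{F}^o,\F^o,P^o),X,W,\mu)\).
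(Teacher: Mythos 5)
Your proposal is correct and follows essentially the same route as the paper: both directions reduce to the equivalence between the MP and prescribed semimartingale characteristics of the coordinates \(\langle X, y^*\rangle\) (the paper's Lemma \ref{lem: equi JS} together with Lemma \ref{lem: comp} for the compensator of \(\mu^X\)), and the converse direction is completed exactly as you describe, via Ondrej\'at's representation theorem for the cylindrical continuous martingale part and Jacod's representation theorem for the integer-valued random measure on an extension of the canonical space. The only cosmetic difference is that for SDE \(\Rightarrow\) MP the paper reads off the characteristics of \(\langle Y, y^*\rangle\) from Jacod's Theorem 14.80 and transfers them to the canonical space rather than applying It\^o's formula directly, which amounts to the same computation.
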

Let us also deduce some results for SDEs. We denote by \(\mathbf{D}\) the filtration on \((\mathbb{D},\mathscr{D})\) generated by the coordinate process \(X\), i.e. \(\mathbf{D} \triangleq(\mathscr{D}_t)_{t \in [0, \infty)}\) with \(\mathscr{D}_t \triangleq \sigma(X_s, s \in [0, t])\).
For a \(\mathbf{D}\)-stopping time \(\xi\) the \(\sigma\)-field \(\mathscr{D}_\xi\) is defined in the same manner as \(\mathscr{F}_\xi\) is defined for an \(\F\)-stopping time \(\xi\).

Resulting from Theorem \ref{theo:main1} and Theorem \ref{theo: GMP SDE} we obtain that unique solution measures to SDEs form a strong Markov family.
\begin{corollary}
	Suppose that for all \(x \in \mathbb{B}\) the SDE \((U, p, A, b, \sigma, \delta, \varepsilon_x)\) has a unique solution measure \(P_x\). Then the family \(\{P_x, x \in \mathbb{B}\}\) is strong Markov in the following sense: the mapping \(\mathbb{B} \ni x \mapsto P_x\) is Borel, and for all \(P_x\)-a.s. finite \(\mathbf{D}\)-stopping times \(\xi\) we have \(P_x\)-a.s. for all \(F \in \mathscr{D}\)
	\[
	P_x \left( \theta^{-1}_\xi F\big| \mathscr{D}_\xi\right) = P_{X_\xi} (F).
	\]
	Moreover, for all probability measures \(\eta\) on \((\mathbb{B}, \mathscr{B}(\mathbb{B}))\) the SDE \((U, p, A, b, \sigma, \delta, \eta)\) has the unique solution measure \eqref{eq: int sol}.
\end{corollary}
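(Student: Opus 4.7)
The plan is to combine the bijection between SDEs and MPs from Theorem \ref{theo: GMP SDE} with the strong Markov theorem for well-posed GMPs (Theorem \ref{theo:markov}). Setting $a \triangleq \sigma U \sigma^*$ and letting $K$ be defined by \eqref{eq: K SDE}, Theorem \ref{theo: GMP SDE} identifies the unique SDE solution measure $P_x$ with the unique solution of the MP $(A, b, a, K, \varepsilon_x)$ on $(\mathbb{D}, \mathscr{D})$. I view each $P_x$ equivalently as a probability measure on $(\Omega, \mathscr{F})$ supported on $\{\tau_\Delta = \infty\}$, so it is conservative.

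Next, I would verify well-posedness of the GMP $(A, b, a, K, \tau_\Delta-)$ on all of $\mathbb{B}_\Delta$. For $x = \Delta$, Remark \ref{rem: trivial} provides the unique (trivial) solution. For $x \in \mathbb{B}$, existence is given by the conservative $P_x$ from the previous step. For uniqueness, given any candidate $Q \in \mathcal{M}(A, b, a, K, \varepsilon_x, \tau_\Delta-)$, I would stop $Q$ at $\tau_n$: upon freezing the path beyond $\tau_n$, the stopped measure becomes a conservative MP solution with initial law $\varepsilon_x$ and hence, via Theorem \ref{theo: GMP SDE} and SDE uniqueness, is forced to agree with the $\tau_n$-stopped version of $P_x$ on $\mathscr{F}_{\tau_n}$. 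Passing to the limit $n \to \infty$ via $\mathscr{F}_{\tau_\Delta-} = \mathscr{F}$ yields $Q = P_x$, so the GMP is well-posed.

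Finally, Theorem \ref{theo:markov} now supplies both the Borel dependence $\mathbb{B}_\Delta \ni x \mapsto P_x$ and the strong Markov identity at every $\F$-stopping time on $(\Omega, \mathscr{F})$, as well as the mixture formula \eqref{eq: int sol} for an arbitrary initial distribution $\eta$. Since each $P_x$ is conservative, every $\mathbf{D}$-stopping time on $\mathbb{D}$ admits an extension to an $\F$-stopping time on $\Omega$ agreeing with it on conservative paths, so the strong Markov identity on $(\mathbb{D}, \mathscr{D})$ is inherited from its counterpart on $(\Omega, \mathscr{F})$, and the mixture representation for general $\eta$ follows by a further application of Theorem \ref{theo: GMP SDE} with initial law $\eta$. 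The main obstacle I anticipate lies in the second step, because SDE uniqueness is conservative by nature and must be upgraded to uniqueness against \emph{all} GMP candidates, including potentially non-conservative ones; the $\tau_n$-stopping argument handles this by matching every stopped GMP solution with a conservative SDE solution through Theorem \ref{theo: GMP SDE}.
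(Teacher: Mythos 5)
Your overall strategy---identifying the SDE solution measures with conservative MP solutions via Theorem \ref{theo: GMP SDE} and then invoking the strong Markov machinery of Theorem \ref{theo:markov}---is the same as the paper's. However, the step you yourself flag as the main obstacle contains a genuine error. You propose to prove uniqueness for the GMP \((A,b,a,K,\varepsilon_x,\tau_\Delta-)\) by stopping an arbitrary candidate \(Q\) at \(\tau_n\) and ``freezing the path beyond \(\tau_n\)'', claiming the resulting measure is a conservative solution of the MP \((A,b,a,K,\varepsilon_x)\). It is not: under the frozen measure the coordinate process is constant on \(\gs \tau_n, \infty\of\) while the compensating integral \(\int_0^{\cdot}\mathcal{K}f(X_{s-})\,\dd s\) in \eqref{f - K} keeps running, so the difference between the frozen \(M^f\) and the genuinely stopped \(M^f_{\cdot\wedge\tau_n}\) is the drift \((t-t\wedge\tau_n)\,\mathcal{K}f(X_{\tau_n})\), which destroys the local martingale property. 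Freezing only reproduces a solution of the \emph{stopped} MP \((A,b,a,K,\varepsilon_x,\tau_n)\), which is what you started with. To make this step work you must instead \emph{concatenate}: glue \(Q\) restricted to \(\mathscr{F}_{\tau_n}\) with the conservative solution \(P_{X_{\tau_n}}\) restarted at time \(\tau_n\), exactly as in the construction of \(\mathsf{Q}_x\) in the proof of Proposition \ref{prop:locuni} (which requires Lemma \ref{lem: kern Borel} for the measurability of \(y\mapsto P_y\) and Lemma \ref{JS lemma} for pasting stopping times). The concatenated measure is conservative (by Lemma \ref{lem:tau} no explosion occurs before \(\tau_n\), and \(P_{X_{\tau_n}}\) never explodes), solves the MP \((A,b,a,K,\varepsilon_x)\), hence equals \(P_x\) by the SDE-uniqueness hypothesis together with Theorem \ref{theo: GMP SDE}; then \(Q=P_x\) on \(\mathscr{F}_{\tau_n}\) for every \(n\), and a monotone class argument finishes.

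A second, smaller issue: Theorem \ref{theo:markov} is formulated for \(\F\)-stopping times on \((\Omega,\mathscr{F})\), and your assertion that every \(\mathbf{D}\)-stopping time ``admits an extension to an \(\F\)-stopping time'' is not automatic, since \(\mathbb{D}=\{\tau_\Delta=\infty\}\) does not belong to any \(\mathscr{F}_t\), so one cannot simply declare the extension to be \(\infty\) off \(\mathbb{D}\). The cleaner (and intended) argument avoids the state space \(\mathbb{B}_\Delta\) altogether: rerun Lemma \ref{lem: Mark prep} and the ensuing conditioning argument directly on \((\mathbb{D},\mathscr{D},\mathbf{D})\). For a \(P_x\)-a.s.\ finite \(\mathbf{D}\)-stopping time \(\xi\), the regular conditional probability \(P_x(\theta_\xi^{-1}\,\cdot\,|\mathscr{D}_\xi)(\omega)\) is a.s.\ a conservative solution of the MP \((A,b,a,K,\varepsilon_{X_{\xi(\omega)}(\omega)})\), and the hypothesis of unique SDE solution measures is precisely the uniqueness needed to identify it with \(P_{X_{\xi(\omega)}(\omega)}\); uniqueness against non-conservative candidates is never needed. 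As the paper remarks in Section \ref{sec: pf coro equivalence bdd coe}, in conservative cases all the preparatory results hold with \(\F^P\) replaced by the (augmented, right-continuous) filtration generated by the coordinate process on \(\mathbb{D}\).
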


Let \(b', c\) and \(Y\) be as in the previous section with \(a\) replaced by \(\sigma U \sigma^*\) and \(K\) given by \eqref{eq: K SDE}, i.e.
\[
b' =  b + \sigma U \sigma^* c + \int h(\delta(\cdot, y)) \left(Y(\cdot, \delta(\cdot, y)) - 1 \right)) F(\dd y).
\]
Now, the following corollary is an immediate consequence of the Propositions \ref{theo:main2}, \ref{coro: well-posed equivalence} and \ref{coro:main1} and Theorem \ref{theo: GMP SDE}.
\begin{corollary}\label{coro: existence SDE}
	Let \(E'\) be a Lusin space and \(\mathscr{E}' \triangleq \mathscr{B}(E')\). Suppose that there exists an intensity measure \(p' = \dd t \otimes F'\) on \(([0, \infty) \times E', \mathscr{B}([0, \infty)) \otimes\mathscr{E}')\) and a Borel function \(\delta' \colon \mathbb{B} \times E' \to \mathbb{B}\) such that
	\[
	\int \1_G(\delta (\cdot, x)) Y(\cdot, \delta (\cdot, x)) F(\dd x) = \int \1_G (\delta'(\cdot, x)) F'(\dd x),\quad G \in \mathscr{B}(\mathbb{B}), 0 \not \in G.
	\]
	If for all \(x \in \mathbb{B}\) the SDE \((U, p, A, b, \sigma, \delta, \varepsilon_x)\) has a unique solution, then for all \(x \in \mathbb{B}\) the SDE \((U, p', A, b', \sigma, \delta', \varepsilon_x)\) satisfies uniqueness. If, additionally, the mappings
	\[
	x \mapsto \la \sigma (x) U \sigma^*(x) c(x), c(x)\ra,\qquad x \mapsto \int \left(1 - \sqrt{Y(x, \delta(x, y))} \right)^2 F(\dd y)
	\]
	are bounded (on \(\mathbb{B}\)), then for all \(x \in \mathbb{B}\) the SDE \((U, p', A, b', \sigma, \delta', \varepsilon_x)\) has a unique solution.
	Moreover, in this case the solution measures to the SDEs \((U, p, A, b, \sigma, \delta, \varepsilon_x)\) and  \((U, p', A, b', \sigma, \delta', \varepsilon_x)\) are locally equivalent (w.r.t. \(\mathbf{D}\)). 
\end{corollary}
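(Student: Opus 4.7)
The plan is to translate each claim about SDEs to a statement about (generalized) martingale problems via Theorem \ref{theo: GMP SDE}, and then invoke the corresponding result from Section \ref{sec: CMG}.

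The crucial preparatory step is the coefficient dictionary. Writing $a \triangleq \sigma U \sigma^*$ and $K(\cdot, G) \triangleq \int \1_G(\delta(\cdot, y)) F(\dd y)$, as in Theorem \ref{theo: GMP SDE}, the hypothesis of the corollary is the change-of-variables identity
\[
\int_G Y(\cdot, y) K(\cdot, \dd y) = \int \1_G(\delta(\cdot, x)) Y(\cdot, \delta(\cdot, x)) F(\dd x) = \int \1_G(\delta'(\cdot, x)) F'(\dd x),
\]
which shows that the MP kernel attached by Theorem \ref{theo: GMP SDE} to the primed SDE agrees with the kernel $K'$ from \eqref{eq: b', Y'}. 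The same change of variables identifies the SDE drift $b + \sigma U \sigma^* c + \int h(\delta(\cdot, y))(Y(\cdot, \delta(\cdot, y))-1) F(\dd y)$ with the MP drift $b'$ of \eqref{eq: b', Y'}, and rewrites the finiteness and measurability conditions on $Y$ demanded in Section \ref{sec: CMG} in their $(F, \delta)$-form used here. In particular, the primed SDE $(U, p', A, b', \sigma, \delta', \eta)$ corresponds under Theorem \ref{theo: GMP SDE} precisely to the MP $(A, b', a, K', \eta)$ produced by the CMG construction applied to $(A, b, a, K, \eta)$.

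With this dictionary in hand, each of the three assertions is an application of the cited result together with Theorem \ref{theo: GMP SDE}. For the uniqueness assertion, unique solvability of the SDE $(U, p, A, b, \sigma, \delta, \varepsilon_x)$ for every $x$ translates into well-posedness of the MP $(A, b, a, K)$ and hence, via the localization of GMP solutions to MPs stopped at $\tau_n$, into well-posedness of the GMP $(A, b, a, K, \tau_\Delta-)$ in the class required by Proposition \ref{theo:main2}; that proposition then forces uniqueness of the GMP $(A, b', a, K', \eta, \tau_\Delta-)$, which restricted to its conservative subclass is, again by Theorem \ref{theo: GMP SDE}, SDE uniqueness for the primed system. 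For the existence assertion the two boundedness hypotheses in the corollary match, by the same change of variables, those in \eqref{eq: bdd assp}, so Proposition \ref{coro: well-posed equivalence} promotes well-posedness of the MP $(A, b, a, K)$ to well-posedness of the MP $(A, b', a, K')$, which through Theorem \ref{theo: GMP SDE} gives existence and uniqueness for the primed SDE. Finally, because SDE solution measures are supported on $(\mathbb{D}, \mathscr{D})$ and hence conservative, Proposition \ref{coro:main1}(i) immediately yields local equivalence on $\mathbf{D}$.

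The main obstacle I foresee is bookkeeping rather than any deep difficulty: one has to verify that the change-of-variables identifications transport measurability, integrability and local boundedness requirements correctly, and that the MP versus GMP distinction on the SDE side of Theorem \ref{theo: GMP SDE} is handled cleanly, which is possible because SDE solutions are automatically conservative.
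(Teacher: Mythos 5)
Your proposal is correct and follows exactly the route the paper takes: the paper states this corollary as an immediate consequence of Propositions \ref{theo:main2}, \ref{coro: well-posed equivalence} and \ref{coro:main1} together with Theorem \ref{theo: GMP SDE}, and your coefficient dictionary (identifying $a = \sigma U \sigma^*$, the kernel $K'$ via the change-of-variables hypothesis, and the drift $b'$) is precisely the translation needed to invoke them. The only step you make more explicit than the paper is the passage from uniqueness for the (conservative) SDE/MP to the well-posedness hypothesis of Proposition \ref{theo:main2} and the handling of conservativity when applying Proposition \ref{coro:main1}, which the paper leaves implicit.
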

The first part of the previous corollary is in the spirit of \cite[Corollary 14.82]{J79}. 
In the following section we discuss a simplified situation.
\subsection{Applications: Existence and Uniqueness Results for MPs and SDEs}\label{sec: appl}
In this section we use our CMG formula to transfer existence and uniqueness results from SDEs of the type
\begin{align*}
\dd Y_t = A Y_{t-} \dd t + \dd L_t,
\end{align*}
to SDEs of the type 
\begin{align}\label{eq: SDE uni cont}
\dd Y_t = \left(A Y_{t-} + c(Y_{t-})\right) \dd t + \dd L_t,
\end{align}
where \(L\) is a L\'evy process. 
This is a classical application of Girsanov-type theorems. 

In the case where \(L\) is a cylindrical Brownian motion, a similar application is shown in \cite{roeckner15}. In this continuous setting, under additional assumptions, Kunze \cite{doi:10.1080/17442508.2012.712973} proved a more general existence and uniqueness result and Da Prato, Flandoli, Priola and R\"ockner \cite{daprato2013,DaPrato2015} showed that SDEs of the type \eqref{eq: SDE uni cont} are even pathwise unique.

We stress that our CMG formula also applies to situations where we have a term \(\gamma(Y_{t-}) \dd L_t\) instead of only \(\dd L_t\). Thus, the technique is in general very robust. We only want to illustrate the idea and therefore we restrict ourselves to a simple setup.

We impose the following additional assumptions:
Suppose that \(\mathbb{B} =  \mathbb{H} = E\) is a separable Hilbert space.
Let \(A\) be the generator of a pseudo-contraction semigroup \(S = (S_t)_{t \in [0, \infty)}\), i.e. \(S\) is a \(C^0\)-semigroup on \(\mathbb{H}\) and there is a \(\beta \in \mathbb{R}\) such that \(\|S_t\| \leq e^{\beta t}\) for all \(t \in [0, \infty)\).
Let \(F\) be a measure on \((\mathbb{H}, \mathscr{B}(\mathbb{H}))\) such that \(F(\{0\}) = 0\) and 
\(
\int \|x\|^2 F(\dd x) < \infty.
\)
In this case, \(\dd t \otimes F\) is the intensity measure of a homogeneous Poisson random measure on \(\mathbb{H}\), see \cite[Theorem 3.2.11]{mandrekar2014stochastic}.
We set \(b(x) \triangleq b\) for \(b \in \mathbb{H}\), \(a (x) \triangleq a\), where \(a \in S^+(\mathbb{H}, \mathbb{H})\) is a trace class operator from \(\mathbb{H}\) to \(\mathbb{H}\), and
\(K(y, \dd t, \dd x) \triangleq \dd t \otimes F(\dd x)\). The following lemma is a consequence of existence and uniqueness results for SDEs as given in \cite{doi:10.1080/17442501003624407} together with a classical Yamada-Watanabe-type argument. We give the details in Section \ref{sec: pf lemma well-posed} below. 
\begin{lemma}\label{lem: linear well-posed}
	The MP \((A, b, a, K)\) is well-posed.
\end{lemma}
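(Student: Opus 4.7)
The plan is to reduce the martingale problem to the associated SDE via Theorem \ref{theo: GMP SDE} and then invoke known existence and pathwise uniqueness results for the resulting linear equation. Concretely, set $E = \mathbb{H}$, $\sigma(x) \equiv \mathrm{id}_\mathbb{H}$, $U = a$, intensity measure $p = dt \otimes F$ and jump coefficient $\delta(x, y) \triangleq y$. Then $\sigma U \sigma^* = a$ and the kernel defined via \eqref{eq: K SDE} reproduces the $K$ in the lemma, so by Theorem \ref{theo: GMP SDE} it suffices to prove existence and uniqueness in law, for every $\varepsilon_x$ initial law, of solutions to
\[
dY_t = (A Y_{t-} + b) \, dt + dW_t + h(y) \star (\mu - p) + h'(y) \star \mu,
\]
a linear SDE with additive L\'evy noise.

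Next I would invoke the existence and pathwise uniqueness theory from \cite{doi:10.1080/17442501003624407}. Because the drift, diffusion and jump coefficients outside the unbounded linear term $Ay$ are constant and hence trivially Lipschitz, because $A$ generates a pseudo-contraction semigroup, and because the noise has finite second moments (by $\int \|x\|^2 F(dx) < \infty$ and the trace-class assumption on $a$), the cited results yield a unique (up to indistinguishability) mild solution on any stochastic basis carrying $W$ and $\mu$, for any given $\mathscr{F}^o_0$-measurable initial condition. In particular, weak existence holds and pathwise uniqueness holds.

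I would then bridge the mild and analytically weak formulations. Testing the mild variation-of-constants formula against $S^*_{t-s} y^*$ for $y^* \in D(A^*)$ and using the semigroup identity on $D(A^*)$ (together with Fubini / stochastic Fubini for the Brownian and Poisson integrals, justified by the second-moment assumptions) produces exactly \eqref{eq: SSPDE}, and conversely any analytically weak solution can be shown to be mild by the standard argument; see the remark citing \cite[Appendix G]{roeckner15} and \cite[Section 9.3]{peszat2007stochastic}. Therefore existence and pathwise uniqueness transfer to the analytically weak solution concept.

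Finally I would run a Yamada-Watanabe-type argument: weak existence plus pathwise uniqueness implies uniqueness in law for every initial law, so in particular for each $\varepsilon_x$ the solution measure is unique. Via Theorem \ref{theo: GMP SDE} this uniqueness and existence transfer to the martingale problem $(A, b, a, K, \varepsilon_x)$ for every $x \in \mathbb{H}$, giving well-posedness. The main technical nuisance I expect is the identification of mild solutions with analytically weak solutions in our setting: this requires care with the stochastic Fubini theorem for the Poisson integral $h(y) \star (\mu - p)$, but the $L^2$-integrability encoded in $\int \|y\|^2 F(dy) < \infty$ and the pseudo-contraction bound on $S$ make the standard arguments go through without modification.
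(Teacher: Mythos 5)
Your proposal is correct and follows essentially the same route as the paper: reduce to the SDE $\dd Y_t = AY_{t-}\dd t + \dd L_t$ via Theorem \ref{theo: GMP SDE}, cite \cite{doi:10.1080/17442501003624407} for existence and pathwise uniqueness, and pass to uniqueness in law by a Yamada--Watanabe argument. The only place the paper does more work than you indicate is the Yamada--Watanabe step itself, which it carries out by hand (coupling the two solutions through regular conditional probabilities given the noise and verifying that the driving process remains a L\'evy process on the coupled space), since no off-the-shelf statement is invoked there.
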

Let \(b'\) and \(K'\) be given as in \eqref{eq: b', Y'}. 
As an immediate consequence of Proposition \ref{theo:main2}, Proposition \ref{coro: well-posed equivalence} and Corollary \ref{coro: existence SDE}, we obtain the following
\begin{corollary}\label{coro: appl uni}
	\begin{enumerate}
		\item [\textup{(i)}]
		The GMP \((A, b', a, K', \eta, \tau_{\Delta}-)\) satisfies uniqueness. If the mappings \eqref{eq: bdd assp} are bounded (on \(\mathbb{H}\)), then the MP \((A, b', a, K')\) is well-posed.
		\item[\textup{(ii)}] Suppose that there exists a Borel mapping \(\delta' \colon \mathbb{H} \times \mathbb{H} \to \mathbb{H}\) and a measure \(F'\) on \((\mathbb{H}, \mathscr{B}(\mathbb{H}))\) such that 
		\begin{align}\label{eq: comp formula}
		\int_G Y(x, y) F(\dd y) = \int \1_G (\delta' (\cdot, y)) F'(\dd y),\quad G \in \mathscr{B}(\mathbb{B}), 0 \not \in G.
		\end{align}
		Moreover, assume that \(F'\) integrates \(1 \wedge \|x\|^2\), \(F'(\{0\}) = 0\) and that \(\delta'\) satisfies \eqref{eq: int intensity measure} with \(\delta\) replaced by \(\delta'\). For all \(x \in \mathbb{H}\) let \(l(x) = x\) be the identity on \(\mathbb{H}\). Set \(p' \triangleq \dd t\otimes F'\).
		Then the SDE \((a, p', A, b', l, \delta', \eta)\) satisfies uniqueness. In particular, if the mappings \eqref{eq: bdd assp} are bounded (on \(\mathbb{H}\)), then the SDE \((a, p', A, b', l, \delta, \eta)\) has a unique solution.
	\end{enumerate}
\end{corollary}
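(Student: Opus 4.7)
The plan is to assemble the corollary from the three ingredients already cited---Proposition \ref{theo:main2}, Proposition \ref{coro: well-posed equivalence}, and Corollary \ref{coro: existence SDE}---together with the well-posedness input from Lemma \ref{lem: linear well-posed}. The role of Lemma \ref{lem: linear well-posed} is to supply the common hypothesis that the GMP $(A, b, a, K, \tau_\Delta -)$ is well-posed, which all three results require in some form.

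For part (i), once well-posedness of the GMP $(A, b, a, K, \tau_\Delta -)$ is in hand, Proposition \ref{theo:main2} applies verbatim and yields uniqueness of the GMP $(A, b', a, K', \eta, \tau_\Delta -)$ for every initial law $\eta$ on $(\mathbb{B}, \mathscr{B}(\mathbb{B}))$, which is the first claim. For the second claim I invoke Proposition \ref{coro: well-posed equivalence}: the boundedness assumptions in \eqref{eq: bdd assp} are precisely those of that proposition, so its equivalence (i)$\Leftrightarrow$(ii) upgrades well-posedness of $(A, b, a, K)$ to well-posedness of $(A, b', a, K')$.

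For part (ii) I pass to the SDE side through Theorem \ref{theo: GMP SDE}. In the present constant-coefficient setting I take $\sigma = l$ the identity on $\mathbb{H}$, $U = a$ and $\delta(x, y) = y$ with $F$ as given; then $\sigma U \sigma^* = a$ and $K(\cdot, G) = F(G)$, so Theorem \ref{theo: GMP SDE} identifies the solution measures of the SDE $(a, p, A, b, l, \delta, \varepsilon_x)$ with solutions of the MP $(A, b, a, K, \varepsilon_x)$. Well-posedness of the MP therefore supplies a unique solution of this SDE for each $x \in \mathbb{B}$. With $\delta(x,y)=y$ the identity \eqref{eq: comp formula} is exactly the compensating-measure identity that Corollary \ref{coro: existence SDE} requires, and applying that corollary yields uniqueness for the SDE $(a, p', A, b', l, \delta', \eta)$; under the additional boundedness in \eqref{eq: bdd assp} it moreover produces a unique solution, which is the last assertion.

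The only substantive point that is not a direct substitution is the implicit passage from well-posedness of $(A, b, a, K)$ (a conservative problem on $\mathbb{D}$) to well-posedness of the associated GMP. Since Lemma \ref{lem: linear well-posed} delivers, for each initial point, a conservative solution, existence for the GMP is automatic; for uniqueness, any GMP solution stopped at $\tau_n$ is a solution of the stopped MP, which must agree with the stopped conservative solution by the uniqueness built into the lemma. Passing to the limit $n \to \infty$ on $\mathscr{F}_{\tau_\Delta -} = \mathscr{F}$ then identifies every GMP solution with the conservative one, so the GMP is well-posed. With that bookkeeping step done, the rest of the corollary is a direct appeal to the three cited results.
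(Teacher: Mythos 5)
Your assembly of the corollary is exactly the paper's: the text derives it as an immediate consequence of Proposition \ref{theo:main2}, Proposition \ref{coro: well-posed equivalence} and Corollary \ref{coro: existence SDE}, with Lemma \ref{lem: linear well-posed} supplying the well-posedness of the base problem, and gives no further argument. Your instinct to flag the passage from well-posedness of the conservative MP \((A,b,a,K)\) to well-posedness of the GMP \((A,b,a,K,\tau_\Delta-)\) is good --- this is indeed the hypothesis that Proposition \ref{theo:main2} actually requires, and the paper glosses over it.

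However, the bridging argument you offer for uniqueness does not work as written. You claim that a GMP solution stopped at \(\tau_n\) solves the stopped MP \((A,b,a,K,\varepsilon_x,\tau_n)\) and ``must agree with the stopped conservative solution by the uniqueness built into the lemma.'' The uniqueness in Lemma \ref{lem: linear well-posed} is uniqueness of the \emph{unstopped} problem (\(\xi=\infty\)): all of its solutions coincide on \(\mathscr{F}\). A solution of the stopped MP need not be a solution of the unstopped one, so that uniqueness does not apply to it; what you are invoking is precisely \emph{local} well-posedness, which Proposition \ref{prop:locuni} derives \emph{from} GMP well-posedness --- the very thing you are trying to establish --- and not from conservative MP well-posedness. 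The circle can be broken in two ways: either read Lemma \ref{lem: linear well-posed} as asserting well-posedness of the GMP (as the paper implicitly does when it feeds Proposition \ref{theo:main2} into the proof of Proposition \ref{coro: well-posed equivalence}), or supply a non-explosion argument showing that \emph{every} solution of the GMP \((A,b,a,K,\varepsilon_x,\tau_\Delta-)\) is conservative (available here because \(b\), \(a\) and \(K\) are constant and \(\int \|x\|^2 F(\dd x)<\infty\)); a conservative GMP solution does solve the unstopped MP, so the lemma's uniqueness then identifies it with \(P_x\). With that repair, the rest of your argument --- Proposition \ref{theo:main2} for the first claim of (i), Proposition \ref{coro: well-posed equivalence} for the second, and Theorem \ref{theo: GMP SDE} together with Corollary \ref{coro: existence SDE} for (ii) --- is exactly the intended proof.
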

The formula \eqref{eq: comp formula} clearly holds in the continuous case and also if \(Y(x, y) = Y(y)\) and 
\(
\int \left(1 \wedge \|x\|^2\right)Y(x) F(\dd x) < \infty.
\)
In the next section we prove our results.

\section{Proofs}\label{sec: pf}
\subsection{Some Preparations}\label{sec: prep}
We start with some technical results.
The main observations in this section are that GMPs explode in a continuous manner, see
Lemma \ref{lem:tau} below, and that GMPs are determined by a countable set of test functions, see Proposition \ref{coro:tau} below.
\subsubsection{A Short Recap on Stochastic Integration}\label{sec: st int}
For two random times \(\rho\) and \(\tau\) on \((\Omega, \mathscr{F})\) we define the stochastic interval
\[
\of \rho, \tau\gs \triangleq \{(\omega, t) \in \Omega \times [0, \infty) \colon \rho(\omega) \leq t \leq \tau(\omega)\}.
\]
In the same manner, we define the stochastic intervals \(\of \rho, \tau\of, \gs \rho, \tau\gs, \gs \rho, \tau \of\). Moreover, we set \(\of \tau\gs \triangleq \of \tau, \tau\gs\).

Let \(P\) be a probability measure on \((\Omega, \mathscr{F})\). The set of all \(P\)-null sets is denoted by \(\mathscr{N}\).
We define
\(
\mathscr{F}_t^P \triangleq \sigma(\mathscr{F}_{t+}, \mathscr{N}).
\)
In fact, it holds that \(\mathscr{F}^P_t = \bigcap_{s > t} \sigma(\mathscr{F}_s, \mathscr{N})\), see \cite[Lemma 6.8]{Kallenberg}.
Denote \(\mathscr{F}^P\triangleq \sigma(\mathscr{F}, \mathscr{N})\).
The filtration \(\F^P = (\mathscr{F}^P_t)_{t \in [0, \infty)}\) on \((\Omega, \mathscr{F}^P)\) is called the \(P\)-augmentation of \(\F\).

The following material concerning random measures is taken from \cite{JS,liptser1989theory}.
The \(\F^P\)-predictable \(\sigma\)-field is denoted by \(\mathscr{P}^P\) and the \(\F^P\)-optional \(\sigma\)-field is denoted by \(\mathscr{O}^P\).
Let \(E\) be a Lusin space and set \(\mathscr{E} \triangleq \mathscr{B}(E)\).
We say that a random measure \(\mu\) on \(([0, \infty) \times E, \mathscr{B}([0, \infty)) \otimes \mathscr{E})\) is \(\F^P\)-predictable (resp. \(\F^P\)-optional) if for all non-negative \(\mathscr{P}^P\otimes \mathscr{E}\)-measurable (resp. \(\mathscr{O}^P\otimes \mathscr{E}\)-measurable) functions \(U\) the process 
\[
W \star \mu_\cdot \triangleq \int_0^\cdot \int W(s, x) \mu(\dd s, \dd x)
\]
is \(\F^P\)-predictable (resp. \(\F^P\)-optional).
We define the Dol\'eans measure \(M^P_\mu\) by
\[
M^P_\mu(\dd \omega, \dd t, \dd x) \triangleq \mu(\omega, \dd t, \dd x) P(\dd \omega).
\]
We say that \(M^P_\mu\) is \(\mathscr{P}^P\)-\(\sigma\)-finite if there exist a sequences \((\Omega_{n})_{n \in \mathbb{N}} \subset \mathscr{P}^P \otimes \mathscr{E}\) such that \(\bigcup_{n \in \mathbb{N}} \Omega_n = \Omega \times [0, \infty) \times E\) and
\(
M^P_\mu(\Omega_{n} ) < \infty\) for all \(n \in \mathbb{N}.
\)
Clearly, this is equivalent to the existence of a sequence \((\Omega^*_n)_{n \in \mathbb{N}} \subset \mathscr{P}^P \otimes \mathscr{E}\) such that \(\Omega^*_n \nearrow \Omega \times [0, \infty) \times E\) as \(n \to \infty\) and
\(
M^P_\mu(\Omega^*_{n} ) < \infty\) for all \(n \in \mathbb{N}.
\)
Indeed, \(\Omega^*_n \triangleq \bigcup_{k = 1}^n \Omega_k\) is such a sequence.
An \(\F^P\)-predictable random measure \(\nu\) is called an \(\F^P\)-predictable \(P\)-compensator of an \(\F^P\)-optional random measure \(\mu\) with \(\mathscr{P}^P\)-\(\sigma\)-finite Dol\'eans measure \(M^P_\mu\) if 
for all non-negative \(\mathscr{P}^P\otimes \mathscr{E}\)-measurable functions \(W\) it holds that \(
M^P_\mu (W) = M^P_\nu( W).
\)
It is classical that each \(\F^P\)-optional random measure \(\mu\) with \(\mathscr{P}^P\)-\(\sigma\)-finite Dol\'eans measure \(M^P_\mu\) has an \(\F^P\)-predictable \(P\)-compensator which is unique to a \(P\)-null set. 
Suppose that 
\begin{align}\label{eq: rmj}
\mu(\omega, \dd t, \dd x) \triangleq \sum_{s \in [0, \infty)} \1_D (\omega, s) \varepsilon_{(s, \beta_s(\omega))} (\dd t,\dd x),
\end{align}
where \(D\) is an \(\F^P\)-thin set with \((\omega, 0) \not \in D\) and \(\beta\) is an \(E\)-valued \(\F^P\)-optional process.
Here, a set \(D \subset \Omega \times [0, \infty)\) is called \(\F^P\)-thin, if \(D = \bigcup_{n \in \mathbb{N}} \of \rho_n\gs\) for a sequence \((\rho_n)_{n \in \mathbb{N}}\) of \(\F^P\)-stopping times. 
In this case, it is well-known that \(\mu\) is an \(\F^P\)-optional random measure. Furthermore, suppose that \(M^P_\mu\) is \(\mathscr{P}^P\)-\(\sigma\)-finite and denote the \(\F^P\)-predictable \(P\)-compensator of \(\mu\) by \(\nu\).
Define \(G_\textup{loc}(\mu)\) to be the set of all real-valued \(\mathscr{P}^P \otimes \mathscr{E}\)-measurable functions \(V\) such that 
\begin{align}\label{eq: jump jump int} 
\left(\sum_{s \in [0, \cdot]} \left(V(\omega, s, \beta_s(\omega)) \1_D(\omega, s) - \int V(\omega, s, x) \nu(\omega, \{s\}, \dd x)\right)^2 \right)^{\frac{1}{2}}\end{align}
is \(\F^P\)-locally \(P\)-integrable. 
If \(V \in G_\textup{loc}(\mu)\), then there exists a discontinuous local \((\F^P, P)\)-martingale \(V \star (\mu - \nu)\) whose jump process is \(P\)-indistinguishable from \[V(\cdot, \cdot, \beta) \1_D - \int V(\cdot, \cdot, x) \nu(\cdot,\{\cdot\}, \dd x).\] 

Let us also recall the definition of a stochastic integral w.r.t. a cylindrical continuous local \((\F^P, P)\)-martingale, following the exposition given in \cite{Mikulevicius1998}.
We stress that we only consider functionals as integrands. The case where the integrand takes values in an arbitrary Banach space is much more delicate.

Let \(K\colon \Omega \times [0, \infty) \to S^+(\mathbb{B}^*, \mathbb{B})\) be such that \(K y^*\) is \(\F^P\)-predictable for all \(y^* \in \mathbb{B}^*\).
We denote by \(L^2_\textup{loc}(K)\) the space of all \(\F^P\)-predictable functions \(f^* \colon \Omega \times [0, \infty) \to\mathbb{B}^*\) such that \(P\)-a.s.
\begin{align}\label{eq: f int}
\int_0^t \la K_s f_s^*, f^*_s\ra \dd s < \infty, \quad  t \in [0, \infty).
\end{align}
Let \(M\) be a cylindrical continuous local \((\F^P, P)\)-martingale with
\[
\lle M (y^*)\rre = \int_0^\cdot \la K_s y^*, y^*\ra \dd s.
\] 
Due to \cite[Proposition 9]{Mikulevicius1998}
and \cite[Propositions IV.2.4 and IV.2.5]{RY} we have the following
\begin{lemma}\label{lem: definition si}
	For all \(f^* \in L^2_\textup{loc}(K)\) there exists a continuous local \((\F^P, P)\)-martingale \(\int_0^\cdot \la \dd M_s, f^*_s \ra\), which is unique up to \(P\)-indistinguishability, such that 
	\begin{align}\label{eq: qv}
	\left \la\hspace{-0.2cm} \left \la \int_0^\cdot \la \dd M_s, f^*_s \ra \right\ra\hspace{-0.2cm} \right\ra = \int_0^\cdot \la K_sf^*_s, f^*_s \ra \dd s
	\end{align}
	up to \(P\)-evanescence.
	Furthermore, for \(\lambda \in \mathbb{R}\) and \(g^* \in L^2_\textup{loc}(K)\) we have \(\lambda f^* + g^* \in L^2_\textup{loc}(K)\) and 
	\[
	\int_0^\cdot \la \dd M_s, \lambda f^*_s + g^*_s \ra = \lambda \int_0^\cdot \la \dd M_s, f^*_s \ra + \int_0^{\cdot} \la \dd M_s, g^*_s \ra
	\]
	up to \(P\)-evanescence,
	and for all \(\F^P\)-stopping times \(\xi\) it holds that 
	\[
	\int_0^{\cdot \wedge \xi} \la \dd M_s, f^*_s \ra = \int_0^\cdot \la \dd M_s, f^*_s \1_{\{s \leq \xi\}} \ra = \int_0^\cdot \la \dd M_{s \wedge \xi}, f^*_s\ra
	\]
	up to \(P\)-evanescence.
	Finally, if \((f^n)_{n \in \mathbb{N}} \subset L^2_\textup{loc}(K)\) is a sequence such that 
	\begin{align*}
	\int_0^t \langle K_s (f^*_s - f^n_s), f^*_s - f^n_s\rangle \dd s \xrightarrow{n \to \infty} 0 
	\end{align*}
	in \(P\)-probability, then 
	\begin{align*}
	\sup_{s \in [0, t]} \left| \int_0^s \langle \dd M_r, f^*_r\rangle - \int_0^s \langle \dd M_r, f^n_r\rangle \right| \xrightarrow{n \to \infty} 0
	\end{align*}
	in \(P\)-probability.
\end{lemma}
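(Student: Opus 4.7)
The plan is to construct the integral in the classical way, first for simple integrands, then extend by an isometry and finally localize. All the nontrivial analytic content is borrowed from \cite{Mikulevicius1998} and \cite{RY}; our task is only to check that those constructions apply verbatim to the cylindrical situation at hand and to verify the listed properties.

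First I would treat the square-integrable case. Assume that $f^* \in L^2(K)$ in the sense that $E^P \int_0^\infty \la K_s f^*_s, f^*_s\ra \ddd s < \infty$, and also that $M(y^*)$ is a square-integrable martingale for each $y^* \in \mathbb{B}^*$. For an elementary integrand of the form $f^* = \sum_{i=1}^k y^*_i\indik_{\rrbr s_i, t_i \rrbr} \indik_{A_i}$ with $A_i \in \mathscr{F}^P_{s_i}$ and $y^*_i \in \mathbb{B}^*$, I define
\begin{equation*}
\int_0^\cdot \la \dd M_s, f^*_s \ra \triangleq \sum_{i=1}^k \indik_{A_i} \bigl( M_{\cdot \wedge t_i}(y^*_i) - M_{\cdot \wedge s_i}(y^*_i)\bigr),
\end{equation*}
and by direct computation together with bilinearity of the predictable quadratic covariation the identity \eqref{eq: qv} holds. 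This yields the isometry
\begin{equation*}
E^P\left[\left(\int_0^\infty \la \dd M_s, f^*_s\ra\right)^2\right] = E^P\left[\int_0^\infty \la K_s f^*_s, f^*_s\ra \dd s\right],
\end{equation*}
so the integral extends by continuity to every $f^* \in L^2(K)$ once one knows that elementary integrands are dense there. This density is the main analytic input and is precisely the content of \cite[Proposition 9]{Mikulevicius1998}; it relies on the separability of $\mathbb{B}^*$ together with an approximation of predictable $\mathbb{B}^*$-valued functions by simple ones in the seminorm induced by $\la K \cdot, \cdot\ra$.

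Next I would localize. For $f^* \in L^2_\textup{loc}(K)$ the $\F^P$-stopping times
\begin{equation*}
\xi_n \triangleq \inf\left(t \ge 0 \colon \int_0^t \la K_s f^*_s, f^*_s\ra \dd s \ge n\right)\wedge n
\end{equation*}
increase to $\infty$ $P$-a.s. by \eqref{eq: f int}, and there are further stopping times $\rho_m \nearrow \infty$ reducing each $M(y^*)$ to a square-integrable martingale; after a standard diagonalization the integrals $\int_0^{\cdot \wedge \xi_n \wedge \rho_m}\la \dd M_s, f^*_s\ra$ are well-defined in the previous paragraph and are consistent in $n, m$ because of the stopping identity for elementary integrands. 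This produces the global continuous local martingale and \eqref{eq: qv} is inherited from the square-integrable case by stopping. Uniqueness up to $P$-indistinguishability is immediate, since two candidates differ by a continuous local martingale starting at zero with zero quadratic variation.

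Finally I would record the three properties. Linearity is obvious for elementary integrands and passes to the limit by the isometry; the stopping formula follows because both sides agree on elementary integrands and both are continuous in $f^*$ w.r.t.\ the seminorm in \eqref{eq: f int}. For the last (continuity) assertion, by localization it suffices to treat $L^2(K)$, and there it is a direct consequence of the isometry together with the Burkholder-Davis-Gundy inequality applied to $\int_0^\cdot \la \dd M_s, f^*_s - f^n_s\ra$, as stated in \cite[Propositions IV.2.4 and IV.2.5]{RY}. The main obstacle, as mentioned, is the density of elementary integrands, which is what makes \cite{Mikulevicius1998} indispensable in this cylindrical setting; every other step is a routine transcription of the finite-dimensional theory.
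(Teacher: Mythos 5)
Your proposal is correct and follows essentially the same route as the paper, which gives no argument of its own but simply invokes \cite[Proposition 9]{Mikulevicius1998} (density of elementary integrands and the resulting isometry/extension) together with \cite[Propositions IV.2.4 and IV.2.5]{RY} for the continuity statement; your write-up is a faithful elaboration of exactly those ingredients. The only slight imprecision is your justification of uniqueness: two continuous local martingales satisfying \eqref{eq: qv} need not have a difference with vanishing quadratic variation (one would need control of their covariation, e.g.\ via the characterization \(\lle N, M(y^*)\rre = \int_0^\cdot \la K_s f^*_s, y^*\ra \dd s\)), but uniqueness is anyway automatic from your construction as a ucp limit of elementary integrals.
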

\begin{remark}\label{rem: RKHS}
	The proof of \cite[Proposition 9]{Mikulevicius1998} relies on the following facts (see \cite[Proposition 2, Corollary 3]{Mikulevicius1998}): For \(K \in S^+(\mathbb{B}^*, \mathbb{B})\) there exists a completion \(\mathbb{H}\) of \(K \mathbb{B}^*\) w.r.t. the scalar product \((\cdot, \cdot)\) defined by
	\(
	(Kx^*, Ky^*) \triangleq \la K x^*, y^*\ra\) for \(x^*, y^* \in \mathbb{B}^*,
	\)
	such that \(\mathbb{H} \subseteq \mathbb{B}\) and the natural embedding \(i_{K} \colon \mathbb{H} \hookrightarrow \mathbb{B}\) is continuous.
	Moreover, \(\mathbb{H}\) is a separable Hilbert space, which is typically called the reproducing kernel Hilbert space associated with \(K\).
	The operator \(K\) is the covariance of a Gaussian measure on \(\mathbb{B}\) if and only if the embedding \(i_K\) is \(\gamma\)-radonifying, see \cite{J2005}. 
	In this case, if \(W\) is a cylindrical Brownian motion with covariance \(K\), then
	\(
	W^\mathbb{H} i^*_K x^* \triangleq W( x^*)\)
	uniquely extends to a cylindrical Brownian motion with identity covariance on the Hilbert space \(\mathbb{H}\).
\end{remark}

\subsubsection{Continuous Explosion}\label{sec: cont expl}
Let \(\tau_n\) be defined as in \eqref{eq:tau n}.
Obviously, \(\tau_n \leq \tau_\Delta\). In fact, if \(P\) solves a GMP up to \(\tau_\Delta\) and \(P\)-a.s. \(X_0 \not = \Delta\), then the inequality is strict up to a \(P\)-null set.
The proof is inspired by ideas given in \cite{CFY}.
\begin{lemma}\label{lem:tau}
	Suppose that \(P\) is a probability measure on \((\Omega, \mathscr{F})\) such that for all \(f = g(\la \cdot, y^*\ra)\) with \(g \in C^2_c(\mathbb{R})\) and \(y^* \in D(A^*)\) the process \(M^f_{\cdot \wedge \tau_n}\) is a local \((\F, P)\)-martingale. Then
	\begin{align}\label{eq: tau n eta}
	P(\tau_n < \tau_\Delta) =  P (X_0 \not = \Delta).\end{align}
	In particular, if \(P\)-a.s. \(X_0 \not = \Delta\), then \(P\)-a.s. \(X_{\tau_{\Delta}-} = \Delta\) on \(\{\tau_{\Delta} < \infty\}\).
\end{lemma}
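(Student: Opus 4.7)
The plan is to reduce the identity to showing $P(\tau_n = \tau_\Delta, X_0 \neq \Delta) = 0$, which combined with the pointwise inequality $\tau_n \leq \tau_\Delta$ and the fact that $\tau_n = 0 = \tau_\Delta$ on $\{X_0 = \Delta\}$ yields the identity. Because $\tau_n = \tau_n^* \wedge n$ and $\tau_n^* \leq \tau_\Delta$ always, the event $\{\tau_n = \tau_\Delta\}$ coincides with $\{\tau_n^* = \tau_\Delta \leq n\}$: these are the paths that stay inside the ball $\{\|x\| < n\}$ throughout $[0, \tau_\Delta)$ and then transition directly to $\Delta$ at $\tau_\Delta$. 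My strategy is to detect such ``unexpected'' transitions by testing the martingale hypothesis against smooth bumps $f_R \in \mathcal{C}$ which, pointwise as $R \to \infty$, approximate the indicator $\1_\mathbb{B}$ on $\mathbb{B}_\Delta$.

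Concretely, I fix $y^* \in D(A^*)$ with $\|y^*\| \leq 1$ (available by density of $D(A^*)$ in $\mathbb{B}^*$) and $\phi \in C^2_c(\mathbb{R})$ with $\phi \equiv 1$ on $[-1, 1]$, $\phi \equiv 0$ off $[-2, 2]$ and $0 \leq \phi \leq 1$. For $R > n$ set $f_R(x) \triangleq \phi(\langle x, y^* \rangle / R)$, so $f_R \in \mathcal{C}$ and $f_R(\Delta) = 0$. The key computation is that on $\{\|x\| < n\}$ one has $|\langle x, y^* \rangle| < n < R$, so $\phi'(\langle x, y^*\rangle/R) = \phi''(\langle x, y^*\rangle/R) = 0$; this kills the drift and diffusion part of $\mathcal{K}f_R$, leaving only $\int (f_R(x+y) - 1) K(x, dy)$. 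Since the integrand vanishes whenever $\|y\| \leq R - n$ (then $|\langle x+y, y^*\rangle| \leq R$), I obtain
\[ |\mathcal{K}f_R(x)| \leq K(x, \{\|y\| > R - n\}) \quad \text{for } \|x\| < n. \]
By \eqref{eq: int ball around origin} this is uniformly bounded in $x$ and in $R$, and by continuity of the measure from above it tends to $0$ pointwise as $R \to \infty$.

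Since $|f_R| \leq 1$ and the above bound is deterministic, $M^{f_R}_{\cdot \wedge \tau_n}$ is a bounded local martingale, hence a uniformly integrable true martingale. Taking $E^P$ yields
\[ E^P[f_R(X_{\tau_n})] - E^P[f_R(X_0)] = E^P\left[\int_0^{\tau_n} \mathcal{K}f_R(X_{s-}) \, ds\right]. \]
Dominated convergence as $R \to \infty$ sends the right-hand side to $0$; $E^P[f_R(X_0)] \to P(X_0 \neq \Delta)$ because $f_R \to \1_\mathbb{B}$ pointwise on $\mathbb{B}_\Delta$; and analogously $E^P[f_R(X_{\tau_n})] \to P(X_{\tau_n} \neq \Delta) = P(\tau_n < \tau_\Delta)$ (using $\tau_n \leq \tau_\Delta$), giving the stated identity. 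For the ``in particular'' part I intersect the resulting $P$-null sets over $n \in \mathbb{N}$ to get that $P$-a.s.\ on $\{X_0 \neq \Delta\}$ one has $\tau_n < \tau_\Delta$ for every $n$; on $\{\tau_\Delta < \infty\}$, taking any $n > \tau_\Delta$ forces $\tau_n = \tau_n^* < \tau_\Delta$, and monotonicity of $n \mapsto \tau_n^*$ propagates this to all $n$, meaning the norm of $X$ explodes continuously at $\tau_\Delta$. The main technical point is the simultaneous control of $f_R$ and $\mathcal{K}f_R$ on $\{\|x\| < n\}$ enabling dominated convergence; this is precisely what the flat bump construction delivers in combination with the standing local boundedness hypothesis on $K$.
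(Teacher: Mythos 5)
Your proof is correct and follows essentially the same route as the paper's: both use cylindrical test functions $g(\la\cdot,y^*\ra)$ with $g$ a bump equal to $1$ on a large interval (your $\phi(\cdot/R)$ versus the paper's $g_k$), observe that on $\{\|x\|< n\}$ only the large-jump part of $\mathcal{K}$ survives, bound it uniformly via \eqref{eq: int ball around origin}, and pass to the limit by dominated convergence in the martingale identity. The only differences are cosmetic (rescaling a fixed bump instead of taking a sequence of bumps), so no further comparison is needed.
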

\begin{proof}
	For all \(k \in \mathbb{N}\) let \(g_k \colon \mathbb{R} \to [0, 1]\) be in \(C^2_c(\mathbb{R})\) such that \(g_k = 1\) on \((- k, k)\). 
	Fix \(y^* \in D(A^*)\) such that \(\|y^*\| = 1\).
	Set \[G_{k, n} \triangleq \{z \in \mathbb{B} \colon \|z\| \geq  k - n\}\] and note that for all \(k > n\), \(y \in \complement G_{k, n}\) and \(x \in \{z \in \mathbb{B} \colon \|z\| \leq n\}\) we have
	\begin{align*}
	g_k(\la x + y, y^*\ra) - g_k(\la x, y^*\ra) &= 0,\\
	\partial g_k (\la x, y^*\ra) &=  0,\\ 
	\partial^2 g_k (\la x, y^*\ra) &= 0.\end{align*}
	Hence, if we set \(f_k \triangleq g_k(\la \cdot, y^*\ra) \in \mathcal{C}\), then for all \(k > n\) we have
	\begin{align*}
	M^{f_k}_{\cdot \wedge \tau_n} 
	&= g_k(\la X_{\cdot \wedge \tau_n}, y^* \ra) - g_k(\la X_0, y^* \ra) 
	\\&\qquad\quad- \int_0^{\cdot \wedge \tau_n} \int_{G_{k, n}}  \left(g_k(\la X_{s-} + x, y^*\ra) - 1 \right)K(X_{s-}, \dd x) \dd s.
	\end{align*}
	By assumption, \(M^{f_k}_{\cdot \wedge \tau_n}\) is a local \((\F, P)\)-martingale. In fact, due to the uniform bound
	\begin{align}\label{eq:uniformbound}
	|M^{f_k}_{t \wedge \tau_n(\omega)}(\omega)| \leq 2 + 2 n \sup_{\|z\| \leq n}K(z, G_{k, n}), \quad t \in [0, \infty),\omega \in \Omega,
	\end{align}
	which is finite due to \eqref{eq: int ball around origin}, it is even an \((\F, P)\)-martingale. 
	For all \(\epsilon > 0\), \(k \geq n + \epsilon\) and \((\omega, t) \in \of 0, \tau_n\gs\) we have
	\begin{align*}
	\int_{G_{k, n}} | g_k(\la X_{t-}(\omega) + x, y^*\ra) - 1| K(X_{t-}(\omega), \dd x) &\leq 2 \sup_{\|z\| \leq n} K(z, G_{k, n}) 
	\\&\leq 2 \sup_{\| z\| \leq n} K(z, \{x \in \mathbb{B} \colon \|x\| \geq \epsilon\}), 
	\end{align*}
	which is finite due to \eqref{eq: int ball around origin}.
	Moreover, for all \((\omega, t) \in \of 0, \tau_n\gs\)
	\begin{align*}
	\int_{G_{k, n}} | g_k(\la X_{t-}(\omega) + x, y^*\ra) - 1| K&(X_{t-}(\omega), \dd x) 
\\&	\leq 2 K(X_{t-}(\omega), G_{k, n}) \xrightarrow{k \to \infty} 0.
	\end{align*}
	Hence, by the dominated convergence theorem, for all \(\omega \in \Omega\)
	\begin{align*}
	\int_0^{\tau_n(\omega)} \int_{G_{k, n}} | g_k(\la X_{s-}(\omega) + x, y^*&\ra) - 1| K(X_{s-}(\omega), \dd x) \dd s \xrightarrow{k \to \infty} 0.
	\end{align*}
	Therefore, we obtain that for all \(\omega \in \Omega\) 
	\begin{align*} \lim_{k \to \infty} M^{f_k}_{n \wedge \tau_n(\omega)}(\omega) &=  \1_\mathbb{B} (X_{\tau_n(\omega)}(\omega)) - \1_{\mathbb{B}}(X_0(\omega))
	\\&= \1_{\{\tau_n(\omega) < \tau_{\Delta}(\omega)\}} - \1_\mathbb{B}(X_0(\omega)).\end{align*}
	For all \(\epsilon > 0\) and \(k \geq n + \epsilon\) we deduce from \eqref{eq:uniformbound} that 
	\[
	|M^{f_k}_{n \wedge \tau_n(\omega)} (\omega)| \leq 2 + 2 n \sup_{\| z \| \leq n} K(z, \{x \in \mathbb{B} \colon \|x\| \geq \epsilon\}) < \infty,
	\]
	see \eqref{eq: int ball around origin}.
	Therefore, by the dominated convergence theorem and the martingale property of \(M^{f^k}_{\cdot \wedge \tau_n}\), we obtain
	\begin{align*}
	P(\tau_n < \tau_{\Delta}) - P(X_0 \not = \Delta) &= \lim_{k \to \infty} E^P \left[ M^{f_k}_{n \wedge \tau_n}\right] = \lim_{k \to \infty} E^P\left[M^{f_k}_0\right] = 0.
	\end{align*}
	This concludes the proof of formula \eqref{eq: tau n eta}. 
\end{proof}
\begin{remark}\label{rem: exp time pred}
	If \(P\in \mathcal{M}(A, b, a, K, \eta, \tau_{\Delta}-)\) with \(\eta(\mathbb{B}) = 1\), then \(\tau_{\Delta}\) is an \(\F^P\)-predictable time.
	To see this, note that Lemma \ref{lem:tau} yields that \(\tau_{\Delta}\) can be \(P\)-announced by \((\tau_n)_{n \in \mathbb{N}}\).
	Hence,
	\cite[Theorem IV.71]{DellacherieMeyer78} yields that \(\tau_{\Delta}\) is \(\F^P\)-predictable.
\end{remark}
\subsubsection{An Approximation Lemma}\label{sec: approx lem}
Let \(C^2_b(\mathbb{R}^d)\) be the set of all bounded twice continuously differentiable functions \(\mathbb{R}^d\to \mathbb{R}\) with bounded gradient and bounded Hessian matrix.
Denote 
\[
\mathcal{B} \triangleq \left\{g(\langle \cdot, y^*_1\rangle, ..., \langle \cdot, y^*_m\rangle) \colon g \in C^2_b(\mathbb{R}^m), y^*_1, ..., y^*_m \in D(A^*), m \in \mathbb{N}\right\}.
\]
For \(f \in \mathcal{B}\) we define \(M^f_{\cdot \wedge \tau_n}\) as in \eqref{f - K}.
\begin{lemma}\label{lem: mart}
	For all \(f \in \mathcal{B}\) the function \(M^f_{\cdot \wedge \tau_n}\) is uniformly bounded on \(\Omega \times [0, \infty)\). Moreover, for any probability measure on \((\Omega, \mathscr{F})\) and any \(f \in \mathcal{B}\) there exists a sequence \((f_k)_{k \in \mathbb{N}}\subset \mathcal{C}\) such that, for all \(t \in [0, \infty)\), \(M_{t \wedge \tau_n}^{f_k} \to M^f_{t \wedge \tau_n}\) identically and in \(L^1\) as \(k \to \infty\).
\end{lemma}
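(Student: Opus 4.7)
The plan has two independent parts: a uniform boundedness estimate and an explicit approximation by compactly supported cylindrical functions.

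For the first part, fix $f = g(\langle\cdot,y_1^*\rangle,\dots,\langle\cdot,y_m^*\rangle) \in \mathcal{B}$. On the stochastic interval $\of 0,\tau_n\gs$ the norm of the coordinate process is bounded by $n$, so $|\langle X_{s-}, y_i^*\rangle| \le n\|y_i^*\|$. Using that $g$, $\partial_i g$ and $\partial^2_{ij}g$ are bounded, it suffices to control $\mathcal{K}f(X_{s-})$ on $\{\|x\|\le n\}$. The drift pieces $\langle x, A^* y_i^*\rangle$ and $\langle b(x), y_i^*\rangle$ are bounded by the standing assumption (ii) on $b$, the diffusion piece by the local boundedness in (iii), and the integral piece is controlled by splitting at $\|y\|\le \epsilon$: for the small-jump part a second-order Taylor expansion of $g$ in the cylindrical coordinates together with assumption \eqref{eq: assp K} gives a finite bound uniform in $x\in\{\|\cdot\|\le n\}$, and for the large-jump part the integrand is bounded in modulus by $2\|g\|_\infty + \|h\|_\infty\sum_i\|y_i^*\|\|\partial_i g\|_\infty$ and $K(x,\{\|y\|\ge\epsilon\})$ is bounded on $\{\|\cdot\|\le n\}$ by \eqref{eq: int ball around origin}. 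Combining the pieces, $|\mathcal{K}f(X_{s-}(\omega))|\le C_{f,n}$ on $\of 0,\tau_n\gs$, which, together with $\|f\|_\infty<\infty$, yields a uniform bound of $M^f_{\cdot\wedge\tau_n}$ on $\Omega\times[0,\infty)$.

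For the second part I build $f_k$ by a standard cut-off. Pick $\chi\in C^\infty_c(\mathbb{R}^m)$ with $\chi\equiv 1$ on the unit ball and supported in the ball of radius $2$, and set $\chi_k(u)\triangleq \chi(u/k)$ and $g_k\triangleq \chi_k g$. Then $g_k\in C^2_c(\mathbb{R}^m)$, $g_k\equiv g$ on $\{|u|\le k\}$, and the triples $(g_k,\nabla g_k,\nabla^2 g_k)$ are uniformly bounded in $k$ (because $\|\nabla\chi_k\|_\infty=O(1/k)$ and $\|\nabla^2\chi_k\|_\infty=O(1/k^2)$). Define $f_k\triangleq g_k(\langle\cdot,y_1^*\rangle,\dots,\langle\cdot,y_m^*\rangle)\in\mathcal{C}$.

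The pointwise ("identical") convergence $M^{f_k}_{t\wedge\tau_n}(\omega)\to M^f_{t\wedge\tau_n}(\omega)$ then follows by inspecting the three constituents. Fix $\omega\in\Omega$ and $t\in[0,\infty)$. Because $\sup_{s\le \tau_n(\omega)}\max_i|\langle X_s(\omega),y_i^*\rangle|\le n\max_i\|y_i^*\|$, for $k$ large (depending on $\omega,n,y_i^*$) one has $f_k(X_{s\wedge\tau_n}(\omega))=f(X_{s\wedge\tau_n}(\omega))$ and $\partial_i f_k(X_{s-}(\omega))=\partial_i f(X_{s-}(\omega))$ for every $s\le t$; hence the drift and diffusion parts of $\mathcal{K}f_k(X_{s-})$ stabilise. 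For the jump integral, since $f_k\to f$ pointwise on $\mathbb{B}$ and the $g_k$ share uniform bounds, Taylor expansion as in Part 1 (with the constants independent of $k$) produces a $K(X_{s-}(\omega),\cdot)$-integrable majorant $J$ for the integrands $f_k(X_{s-}+y)-f_k(X_{s-})-\sum_i\langle h(y),y_i^*\rangle\partial_i f_k(X_{s-})$. Dominated convergence in $y$ then yields the pointwise convergence of the compensator integrand, and a second dominated convergence in $s\in[0,t\wedge\tau_n(\omega)]$ gives the convergence of $\int_0^{t\wedge\tau_n}\mathcal{K}f_k(X_{s-})\,ds$. The $L^1(P)$ convergence for arbitrary $P$ on $(\Omega,\mathscr{F})$ is then an immediate corollary: Part 1 applied uniformly to the sequence $(f_k)$, which has uniform bounds on $g_k$ and its first two derivatives, gives $|M^{f_k}_{t\wedge\tau_n}|\le C$ uniformly in $k$ and $\omega$, so the bounded convergence theorem finishes the job.

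The main obstacle is exactly the non-standard form of the jump kernel $K$: because the paper's assumption replaces the usual L\'evy condition $\int(1\wedge\|y\|^2)K(x,dy)<\infty$ by the weaker cylindrical version \eqref{eq: assp K}–\eqref{eq: int ball around origin}, one cannot dominate the second-order Taylor remainder by $\|y\|^2$. The cure is to expand only along the finitely many directions $y_1^*,\dots,y_m^*$, giving a remainder controlled by $\sum_i\langle y,y_i^*\rangle^2$ on $\{\|y\|\le\epsilon\}$, which is precisely the quantity made integrable by \eqref{eq: assp K}. Once this observation is in place, all approximations reduce to routine dominated convergence.
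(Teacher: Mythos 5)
Your proposal is correct and follows essentially the same route as the paper: a Taylor expansion along the finitely many cylindrical directions to bound the small-jump part via \eqref{eq: assp K} and the large-jump part via \eqref{eq: int ball around origin}, yielding the uniform bound, and then the same multiplicative cut-off $f_k = (g\cdot\chi_k)(\langle\cdot,y_1^*\rangle,\dots,\langle\cdot,y_m^*\rangle)$ with dominated convergence for the pointwise and $L^1$ limits. The only cosmetic difference is that you conclude the $L^1$ convergence of the compensator term from the uniform bound plus bounded convergence, whereas the paper invokes monotone convergence of $E^P[\int_0^{t\wedge\tau_n}K(X_{s-},G_{k,n})\,\dd s]$; both are valid.
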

\begin{proof}
	Let \(f \in \mathcal{B}\) and suppose that \(f = g(\la \cdot, y^*_1\ra, ..., \la \cdot, y^*_m\ra)\).
	Since \(h\) is a truncation function, there exists an \(\epsilon > 0\) such that \(h(x) = x\) on \(\{y \in \mathbb{B}\colon \|y\| \leq \epsilon \}\).
	By Taylor's theorem we obtain that for all \(z \in \mathbb{B}\) and \(x \in \{y\in \mathbb{B} \colon \|y\| \leq \epsilon\}\)
	\begin{equation}\label{taylor}
	\begin{split}
	&\left| f(z + x) - f(z) -  \sum_{i = 1}^m \langle h(x), y^*_i\rangle \partial_i f(z) \right|
	\\&\quad\qquad\qquad \leq \left(\frac{1}{2} \sum_{i = 1}^m \sum_{j = 1}^m \|\partial^2_{ij} g\|_\infty \right) \max_{i =1,..., m} |\la x, y^*_i\ra|^2.
	\end{split}
	\end{equation}
	Hence, for all \((\omega, s) \in \of 0, \tau_n\gs\)
	\begin{align*}
	\int&\left|f (X_{s-} (\omega) +x) - f(X_{s-}(\omega)) - \sum_{i = 1}^m \langle h(x), y^*_i\rangle \partial_i f(X_{s-}(\omega))\right| K(X_{s-}(\omega), \dd x) 
	\\&\hspace{2.5cm}\leq \textup{const. } \left(1 + \max_{i = 1, ..., m} \|y^*_i\|\right) \sup_{\|z\| \leq n} K(z, \{x \in \mathbb{B} \colon \|x\| \geq \epsilon\}) \\&\hspace{3cm}\quad \quad +\textup{const. } \max_{i =1, ..., m} \sup_{\|z\| \leq n} \int \1_{\{\|x\| \leq \epsilon\}} |\la x, y^*_i\ra|^2 K(z, \dd x),
	\end{align*}
	where the constants depend on \(g, h\) and \(m\).
	Therefore, we obtain the uniform bound
	\begin{equation}\label{uni bound}
	\begin{split}
\left|M^{f}_{\cdot \wedge \tau_n}\right| \leq 2 \|g\|_\infty &+ n \sum_{i = 1}^m \|\partial_i g\|_\infty \left(n \|A^* y^*_i\| 
	+ \sup_{\|x\| \leq n} |\la b(x),y^*_i\ra|\right) 
	\\&+ \frac{n}{2} \sum_{i = 1}^m\sum_{j = 1}^m \sup_{\|x\| \leq n} \la a(x) y^*_i, y^*_j\ra \|\partial^2_{ij} g\|_\infty
	\\&+ \textup{const.} \left(1 + \max_{i = 1, ..., m} \|y^*_i\|\right) \sup_{\|z\| \leq n} K(z, \{x \in \mathbb{B} \colon \|x\| \geq \epsilon\}) \\&+ \textup{const.} \max_{i =1, ..., m} \sup_{\|z\| \leq n} \int \1_{\{\|x\| \leq \epsilon\}} |\la x, y^*_i\ra|^2 K(z, \dd x),
	\end{split}
	\end{equation}
	where the constants depend on \(n, g, h\) and \(m\).
	This proves the first claim of the lemma.
	
	For \(k \in \mathbb{N}\) let \(g_k \colon \mathbb{R}^m \to [0, 1]\) be in \(C^2_c(\mathbb{R}^m)\) such that \(g_k = 1\) on \(\{z \in \mathbb{R}^m \colon \sum_{i = 1}^m |z^i| < k\}\). 
	Then, \(gg_k \in C^2_c(\mathbb{R}^m)\).
	Set 
	\begin{align}\label{eq: fk}
	f^k \triangleq g(\langle \cdot, y^*_1\rangle, ..., \langle \cdot, y^*_m\rangle) g_k(\langle \cdot, y^*_1\rangle, ..., \langle \cdot, y^*_m\rangle).
	\end{align}
	W.l.o.g. we may assume that \(\sum_{j = 1}^{m} \|y^*_j\| > 0\). Set 
	\[
	G_{k, n} \triangleq \left\{z \in \mathbb{B} \colon \|z\| \geq k \left(\sum_{j = 1}^{m} \|y^*_j\|\right)^{-1} - n\right\}.
	\]
	Then, for all \(k > n \sum_{j = 1}^{m} \|y^*_j\|\), \(y \in \complement G_{k, n}\) and \(x \in \{z \in \mathbb{B}\colon \|z\| \leq n\}\) we have 
	\begin{align*}
	f^k(x + y) - f(x + y) &= 0,\\
	\partial_i f^k (x) - \partial_i f (x) &= 0,\\\partial^2_{ij} f^k(x) - \partial^2_{ij} f (x)&= 0.\end{align*}
	Let \(t \in [0, \infty)\) be fixed. We obtain for \(k > n \sum_{j = 1}^{m} \|y^*_j\|\)
	\begin{align*}
	|M^{f^k}_{t \wedge \tau_n} - M^f_{t \wedge \tau_n}| &\leq | f^k (X_{t \wedge \tau_n}) - f (X_{t \wedge \tau_n})| 
	\\&\quad + \int_0^{t \wedge \tau_n} \int_{G_{k, n}} | f^k(X_{s-} + x) - f (X_{s-} + x)| K(X_{s-}, \dd x) \dd s.
	\end{align*}
	Note that the first term converges to zero as \(k \to \infty\). Since it is bounded by \(2 \|g\|_\infty\), by the dominated convergence theorem, it also converges in \(L^1\) to zero as \(k \to \infty\).
	Arguing as in the proof of Lemma \ref{lem:tau}, 
	we deduce that the second term converges to zero as \(k \to \infty\). In particular, we have
	\begin{align*}
	E^P &\left[ \int_0^{t \wedge \tau_n}\int_{G_{k, n}} | f^k(X_{s-} + x) - f (X_{s-} + x)| K(X_{s-}, \dd x) \dd s\right] \\&\qquad\qquad\qquad\qquad\qquad\qquad\leq 2 \|g\|_\infty E^P \left[ \int_{0}^{t \wedge \tau_n} K(X_{s-}, G_{k, n})\dd s\right] \to 0
	\end{align*}
	as \(k \to \infty\) by the monotone convergence theorem.
	This concludes the proof.
\end{proof}
\begin{corollary}\label{coro: mart C}
	If \(P\) is a solution to the GMP \((A, b, a, K, \eta, \tau_{\Delta}-)\), then
	\(M^{f}_{\cdot \wedge \tau_n}\) is an \((\F, P)\)-martingale for all \(f \in \mathcal{B}\).	
\end{corollary}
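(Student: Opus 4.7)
The plan is to reduce the claim to the defining martingale property of the GMP for test functions in $\mathcal{C}$ via the approximation supplied by Lemma \ref{lem: mart}. Fix $f \in \mathcal{B}$ and invoke Lemma \ref{lem: mart} to obtain a sequence $(f_k)_{k \in \mathbb{N}} \subset \mathcal{C}$ such that, for every $t \in [0, \infty)$, $M^{f_k}_{t \wedge \tau_n} \to M^{f}_{t \wedge \tau_n}$ both pointwise on $\Omega$ and in $L^1(P)$ as $k \to \infty$. Since $P \in \mathcal{M}(A, b, a, K, \eta, \tau_{\Delta}-)$ solves the MP $(A, b, a, K, \eta, \tau_n)$ by definition, each $M^{f_k}_{\cdot \wedge \tau_n}$ is a local $(\F, P)$-martingale. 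The inclusion $\mathcal{C} \subset \mathcal{B}$ allows me to apply the uniform boundedness statement of Lemma \ref{lem: mart} to each $f_k$ as well, upgrading $M^{f_k}_{\cdot \wedge \tau_n}$ to a uniformly bounded $(\F, P)$-martingale.

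Next, observe that $M^{f}_{\cdot \wedge \tau_n}$ is $\F$-adapted directly from its defining expression \eqref{f - K}, and that it is uniformly bounded, hence integrable, again by the first half of Lemma \ref{lem: mart}. For arbitrary $0 \le s \le t$ and $G \in \mathscr{F}_s$, the bounded-martingale property of each $M^{f_k}_{\cdot \wedge \tau_n}$ gives
\begin{equation*}
E^P\bigl[M^{f_k}_{t \wedge \tau_n} \1_G\bigr] = E^P\bigl[M^{f_k}_{s \wedge \tau_n} \1_G\bigr].
\end{equation*}
Passing to the limit $k \to \infty$ on both sides, which is permitted by the $L^1(P)$ convergence provided by Lemma \ref{lem: mart}, yields
\begin{equation*}
E^P\bigl[M^{f}_{t \wedge \tau_n} \1_G\bigr] = E^P\bigl[M^{f}_{s \wedge \tau_n} \1_G\bigr],
\end{equation*}
which is precisely the martingale identity for $M^{f}_{\cdot \wedge \tau_n}$.

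I do not anticipate any genuine obstacle here, since the substantive analytic work has already been absorbed into Lemma \ref{lem: mart}: Taylor truncation of the cylindrical test function, the dominated-convergence handling of the jump integral, and the pathwise uniform bound were all established there. The corollary is really a bookkeeping consequence of the principle that an $L^1$-convergent sequence of uniformly bounded martingales with an adapted and bounded limit process inherits the martingale property. The only point to verify carefully is that $\mathcal{C} \subseteq \mathcal{B}$, so that the uniform-bound conclusion of Lemma \ref{lem: mart} applies to the approximants themselves and elevates their local-martingale property to a true bounded-martingale property before the limit is taken.
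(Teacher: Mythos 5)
Your proof is correct and follows essentially the same route as the paper: approximate $f \in \mathcal{B}$ by $(f_k) \subset \mathcal{C}$ via Lemma \ref{lem: mart}, upgrade each local martingale $M^{f_k}_{\cdot \wedge \tau_n}$ to a true martingale by boundedness (using $\mathcal{C} \subseteq \mathcal{B}$), and pass the martingale identity through the $L^1$ limit. The only difference is that you make explicit the bounded-local-martingale-to-martingale step, which the paper leaves implicit.
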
	
\begin{proof}
	Note the following elementary fact: If \((Y^n)_{n \in \mathbb{N}}\) is a sequence of \((\F, P)\)-martingales and \(Y\) is an \(\F\)-adapted and \(P\)-integrable process such that \(Y^n_t \to Y_t\) in \(L^1\) as \(n \to \infty\) for all \(t \in [0, \infty)\), then \(Y\) is an \((\F, P)\)-martingale. To see this, it suffices to note that for all \(s \in [0, t]\) and \(A \in \mathscr{F}_s\)
	\begin{align*}
	E^P \left[\1_A (Y_t - Y_s)\right] = \lim_{n \to \infty} E^P \left[\1_A (Y^n_t - Y^n_s)\right] = 0.
	\end{align*}
	Now, the claim follows from Lemma \ref{lem: mart}.
\end{proof}
\subsubsection{GMPs and Semimartingales}\label{sec: mp semi}
In this section we reformulate the GMP in terms of classical semimartingale theory.
We shortly recall the concept of semimartingale characteristics, see \cite{JS} for more details. An \(m\)-dimensional semimartingale \(Y\) is an \(\mathbb{R}^m\)-valued \cadlag adapted process which has a decomposition \[Y = Y_0 + M + B,\] where \(Y_0\) is measurable w.r.t. the initial \(\sigma\)-field, \(M\) is a local martingale starting at 0, and \(B\) is a \cadlag adapted process of finite variation which also starts at 0. Let \(k \colon \mathbb{R}^m \to \mathbb{R}^m\) be a truncation function. The modified semimartingale \[Y(k) \triangleq Y - \sum_{s \leq \cdot} (\Delta Y_s - k(\Delta Y_s))\] is a so-called special semimartingale, i.e. it admits a unique decomposition \[Y(k) = Y_0 + M(k) + B(k),\] where \(M(k)\) is a local martingale starting at 0, and \(B(k)\) is a \cadlag predictable process of finite variation which starts at 0. 
The local martingale \(M(k)\) has a unique decomposition into a continuous local martingale \(Y^c\) and a purely discontinuous local martingale. The continuous local martingale \(Y^c\) is independent of the choice of \(k\). We denote \(C \triangleq \lle Y^c \rre\).
Moreover, it is well-known that the integer-valued random measure \(\mu^Y\) associated to the jumps of \(Y\), i.e.
\[
\mu^Y(\dd t, \dd x) \triangleq  \sum_{s \in [0, \infty)} \1_{\{\Delta Y_s \not = 0\}}  \varepsilon_{(s, \Delta Y_s)}(\dd t, \dd x),
\] has a \(\mathscr{P}\)-\(\sigma\)-finite Dol\'eans measure. We denote the compensator of \(\mu^Y\) by \(\nu\). The triplet \((B(k), C, \nu)\) is called the semimartingale characteristics of \(Y\).

Let us introduce some additional notation. We set 
\begin{align}\label{eq: gamma}
\Gamma \triangleq \bigcap_{n \in \mathbb{N}} \{\tau_n < \tau_{\Delta}\}
\end{align}
and define the process 
\begin{align}\label{eq: hat X}
\widehat{X} \triangleq \begin{cases} X \1_{\of 0, \tau_{\Delta}\of},&\textup{on } \Gamma,\\
\widehat{x},&\textup{otherwise},
\end{cases}
\end{align}
where \(\widehat{x}\) is an arbitrary element in \(\mathbb{B}\).
We stress that \(\widehat{X}\) is still not a \cadlag \(\mathbb{B}\)-valued process because \(\widehat{X}_{\tau_{\Delta}-} = \Delta\) is possible on \(\{\tau_{\Delta} < \infty\}\).
However, if \(Q\) is a probability measure on \((\Omega, \mathscr{F})\) such that \(\Gamma\) is \(Q\)-full, then \(\widehat{X}_{\cdot \wedge \tau_n}\) is a \cadlag \(\mathbb{B}\)-valued \(\F^Q\)-adapted process and \(\widehat{X}_{\cdot \wedge \tau_n}\) and \(X_{\cdot \wedge \tau_n}\) are \(Q\)-indistinguishable.
Moreover, for all \(\omega \in \mathbb{D}\) it holds that \(\widehat{X}(\omega) = X(\omega)\).

Recall that a process with values in the complex plane is called local martingale if both its real and its imaginary part are (real-valued) local martingales.

\begin{lemma}\label{lem: equi JS}
	Suppose that \(\eta\) is a probability measure on \((\mathbb{B}_\Delta, \mathscr{B}(\mathbb{B}_\Delta))\) such that \(\eta(\mathbb{B}) = 1\).
	For a probability measure \(P\) on \((\Omega, \mathscr{F})\) the following are equivalent:
	\begin{enumerate}
		\item[\textup{(i)}]\(P\) is a solution to the GMP \((A, b, a, K, \eta, \tau_\Delta -)\).
		\item[\textup{(ii)}] For all \(n \in \mathbb{N}\), \(g \in C^2_c(\mathbb{R})\) and \(y^* \in D(A^*)\) it holds that \(P \circ X^{-1}_0 = \eta\) and \(M^f_{\cdot \wedge \tau_n}\), given by \eqref{f - K} with \(f = g(\la \cdot, y^*\ra)\), is an \((\F, P)\)-martingale.
		\item[\textup{(iii)}] For all \(n \in \mathbb{N}\) and \(u \in \mathbb{R}\) it holds that \(P \circ X^{-1}_0 = \eta\), \(P(\tau_n < \tau_\Delta) = 1\) and the process
		\begin{align*}
		M^*_{\cdot \wedge \tau_n} \triangleq e^{iu \la \widehat{X}_{\cdot \wedge \tau_n}, y^* \ra} &- V^*_{\cdot \wedge \tau_n},
		\end{align*}
		where \(i \triangleq \sqrt{-1}\) and \(V^*_{\cdot \wedge \tau_n}\) is defined to be
		\begin{align*}
		&\int_0^{\cdot \wedge \tau_n}e^{iu \la \widehat{X}_{s-}, y^*\ra} \left( iu \left(\langle \widehat{X}_{s-}, A^* y^*\ra + \la b(X_{s-}), y^*\ra \right) - \frac{u^2}{2}  \langle  a(\widehat{X}_{s-}) y^*, y^*\ra \right)\dd s\\&\hspace{0.9cm}+ \int_0^{\cdot \wedge \tau_n}e^{iu \la \widehat{X}_{s-}, y^*\ra} \int \left(e^{i u \la x, y^*\ra} - 1 - i u \langle h(x), y^*\ra \right)K(\widehat{X}_{s-}, \dd x) \dd s,
		\end{align*}
		is a complex-valued local \((\F^P, P)\)-martingale.
		\item[\textup{(iv)}] For all \(n \in \mathbb{N}\) and \(y^* \in D(A^*)\) it holds that \(P \circ X^{-1}_0 = \eta\), \(P(\tau_n < \tau_\Delta) = 1\) and that \(\langle \widehat{X}_{\cdot \wedge \tau_n}, y^*\rangle\) is an \((\F^P, P)\)-semimartingale with semimartingale characteristics
		\begin{equation}\label{eq:smc}
		\begin{split}
		B(k) &= \int_0^{\cdot \wedge \tau_n} \left( \langle \widehat{X}_{s-}, A^* y^*\rangle + \langle b(\widehat{X}_{s-}), y^*\rangle \right)\dd s 
		\\&\quad\quad + \int_0^{\cdot \wedge \tau_n} \int \left(k(\langle x, y^*\rangle) - \langle h(x), y^*\rangle\right) K(\widehat{X}_{s-}, \dd x) \dd s,
		\\
		C&= \int_0^{\cdot \wedge \tau_n} \langle a(\widehat{X}_{s-}) y^*, y^*\rangle \dd s,
		\\
		\nu([0, \cdot ], G) &= \int_0^{\cdot \wedge \tau_n} \int \1_G(\langle x, y^*\rangle) K(\widehat{X}_{s-}, \dd x) \dd s,\ G \in \mathscr{B}(\mathbb{R}), 0 \not \in G,
		\end{split}
		\end{equation}
		corresponding to the truncation function \(k \colon \mathbb{R} \to \mathbb{R}\).
		\item[\textup{(v)}] For all \(n, d \in \mathbb{N}\) and \(y^*_1, ..., y^*_d \in D(A^*)\) it holds that \(P \circ X^{-1}_0 = \eta\), \(P(\tau_n < \tau_\Delta) = 1\) and \((\langle \widehat{X}_{\cdot \wedge \tau_n}, y^*_1\rangle, ..., \la \widehat{X}_{\cdot \wedge \tau_n}, y^*_d\ra)\) is an \((\F^P, P)\)-semimartingale with semimartingale characteristics
		\begin{equation*}
		\begin{split}
		B^j(k) &= \int_0^{\cdot \wedge \tau_n} \left( \langle \widehat{X}_{s-}, A^* y^*_j\rangle + \langle b(\widehat{X}_{s-}), y^*_j\rangle \right)\dd s 
		\\&\quad\quad + \int_0^{\cdot \wedge \tau_n} \int \left(k^j(\langle x, y^*_1\rangle, ..., \la x, y^*_d\ra) - \langle h(x), y^*_j\rangle\right) K(\widehat{X}_{s-}, \dd x) \dd s,
		\\
		C^{ij}&= \int_0^{\cdot \wedge \tau_n} \langle a(\widehat{X}_{s-}) y^*_i, y^*_j\rangle \dd s,
		\\
		\nu([0, \cdot ], G) &= \int_0^{\cdot \wedge \tau_n} \int \1_G(\langle x, y^*_1\rangle, ..., \la x, y^*_d\ra) K(\widehat{X}_{s-}, \dd x) \dd s, G \in \mathscr{B}(\mathbb{R}^d), 0 \not \in G,
		\end{split}
		\end{equation*}
		corresponding to the truncation function \(k \colon \mathbb{R}^d \to \mathbb{R}^d\).
	\end{enumerate}	
\end{lemma}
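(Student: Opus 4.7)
My plan is to close the cycle $(\text{i}) \Rightarrow (\text{ii}) \Rightarrow (\text{iii}) \Rightarrow (\text{iv}) \Rightarrow (\text{v}) \Rightarrow (\text{i})$, with Lemma \ref{lem:tau} invoked throughout to ensure $P(\tau_n < \tau_\Delta) = 1$ (so that $\widehat{X}_{\cdot \wedge \tau_n}$ and $X_{\cdot \wedge \tau_n}$ are $P$-indistinguishable $\mathbb{B}$-valued processes) and the Jacod--Shiryaev characterization of semimartingale characteristics via complex exponential martingales \cite[Theorem II.2.42]{JS} as the bridge between the martingale formulations (i), (ii) and the semimartingale formulations (iv), (v).

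The implication $(\text{i}) \Rightarrow (\text{ii})$ is immediate: $f = g(\la \cdot, y^*\ra) \in \mathcal{C}$, so $M^f_{\cdot \wedge \tau_n}$ is a local martingale by definition, and Lemma \ref{lem: mart} provides a uniform bound that upgrades it to a true martingale. For $(\text{ii}) \Rightarrow (\text{iii})$, the approximation argument of Lemma \ref{lem: mart} (which uses the martingale property only for one-variable test functions from $\mathcal{C}$) extends the martingale property of $M^f_{\cdot \wedge \tau_n}$ from $g \in C^2_c(\mathbb{R})$ to $g \in C^2_b(\mathbb{R})$. Taking $g(z) = \cos(uz)$ and $g(z) = \sin(uz)$ separately and combining real and imaginary parts via the identities $iu \cos(uz) - u \sin(uz) = iu e^{iuz}$ and $-u^2 \cos(uz) - iu^2 \sin(uz) = -u^2 e^{iuz}$ shows, after a direct algebraic rearrangement, that the complex process $M^*_{\cdot \wedge \tau_n}$ is a complex-valued local martingale, which is (iii).

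The one-dimensional implication $(\text{iii}) \Rightarrow (\text{iv})$ is a direct application of \cite[Theorem II.2.42]{JS} to the real-valued \cadlag process $\la \widehat{X}_{\cdot \wedge \tau_n}, y^*\ra$. For $(\text{iv}) \Rightarrow (\text{v})$, I exploit that $D(A^*)$ is a vector space: for any $y^*_1, \dots, y^*_d \in D(A^*)$ and any $u \in \mathbb{R}^d$, the element $\sum_j u_j y^*_j$ again belongs to $D(A^*)$, so applying (iv) to $\sum_j u_j y^*_j$ and using the bilinearity of $\la a(\widehat{X}_{s-})\cdot,\cdot\ra$ together with the linearity of $\la \cdot, \sum_j u_j y^*_j\ra$ to expand the one-dimensional characteristics, the scalar version of \cite[Theorem II.2.42]{JS} (used backwards) yields a local martingale of precisely the form needed by the multi-dimensional version of \cite[Theorem II.2.42]{JS}, which in turn delivers the joint characteristics claimed in (v) for $(\la \widehat{X}_{\cdot \wedge \tau_n}, y^*_j\ra)_{j=1}^d$.

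The final step $(\text{v}) \Rightarrow (\text{i})$ is where the main bookkeeping lies. Given $f = g(\la \cdot, y^*_1\ra, \dots, \la \cdot, y^*_d\ra) \in \mathcal{C}$, I set $Z \triangleq (\la \widehat{X}_{\cdot \wedge \tau_n}, y^*_j\ra)_{j=1}^d$, apply It\^o's formula to $g(Z)$, and insert the explicit characteristics $B(k), C, \nu$ from (v). Combining the drift $\sum_j \int \partial_j g(Z_-) \dd B^j(k)$, the bracket term $\tfrac12 \sum_{i,j} \int \partial^2_{ij} g(Z_-) \dd C^{ij}$, and the $\nu$-compensator of $(g(Z_- + x) - g(Z_-) - \sum_j \partial_j g(Z_-) k^j(x))$, one sees that the truncation $k$ cancels against the term $k^j - \la h(\cdot), y^*_j\ra$ hidden inside $B(k)$; the combined drift collapses to exactly $\int_0^{\cdot \wedge \tau_n} \mathcal{K} f(\widehat{X}_{s-}) \dd s$, while the remaining continuous and compensated-jump parts form a local martingale. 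Hence $M^f_{\cdot \wedge \tau_n}$ is a local martingale, which by Lemma \ref{lem: mart} is actually a true martingale, giving (i). The principal obstacle here is verifying the integrability required to split the large and small jump parts (so that the compensated-jump integral is well-defined); this follows from the compact support of $g$ together with \eqref{eq: assp K}--\eqref{eq: int ball around origin} and the local boundedness hypotheses on $b$ and $a$.
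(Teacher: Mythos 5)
Your proposal is correct and uses essentially the same ingredients as the paper's proof: Lemma \ref{lem:tau} for \(P(\tau_n<\tau_\Delta)=1\), Lemma \ref{lem: mart} to pass between \(C^2_c\) and \(C^2_b\) test functions and to upgrade local to true martingales, and \cite[Theorem II.2.42]{JS} (equivalently, the It\^o computation it encodes) to bridge the exponential-martingale and characteristics formulations; the paper merely organizes the implications differently ((i) \(\Leftrightarrow\) (v), (iv) \(\Leftrightarrow\) (iii) \(\Rightarrow\) (i) \(\Rightarrow\) (ii) \(\Rightarrow\) (iii)) and places your linear-combination step (iv) \(\Rightarrow\) (v) inside its proof of (iii) \(\Rightarrow\) (i). Your single cycle is a valid and equivalent arrangement.
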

\begin{proof}
	We prove the following implications:
	\begin{center}
		(i) \(\Longleftrightarrow\) (v),\qquad (iv) \(\Longleftrightarrow\) (iii) \(\Longrightarrow\) (i) \(\Longrightarrow\) (ii) \(\Longrightarrow\) (iii).
	\end{center}
	Let us first establish the equivalence (i) \(\Longleftrightarrow\) (v).
	Suppose that (i) holds. Observe that Corollary \ref{coro: mart C}, together with \cite[Lemma II.67.10]{RW1}, implies that for all \(f \in \mathcal{B}\) the process \(M^f_{\cdot \wedge \tau_n}\) is an \((\F^P, P)\)-martingale.
	Thus, since \(\widehat{X}_{\cdot \wedge \tau_n}\) and \(X_{\cdot \wedge \tau_n}\) are \(P\)-indistinguishable, the process
	\begin{align}\label{eq: sm mp}
	f(\widehat{X}_{\cdot \wedge \tau_n}) - f(\widehat{X}_0) - \int_0^{\cdot \wedge \tau_n} \mathcal{K} f (\widehat{X}_{s-})\dd s
	\end{align}
	is an \((\F^P, P)\)-martingale. Finally, \cite[Theorem II.2.42]{JS} and Lemma \ref{lem:tau} imply (v).
	Suppose now that (v) holds. Since \(X_{\cdot \wedge \tau_n}\) and \(\widehat{X}_{\cdot \wedge \tau_n}\) are \(P\)-indistinguishable, \cite[Theorem II.2.42]{JS}, together with Lemma \ref{lem: mart}, implies that for all \(f \in \mathcal{C}\) the process \(M^f_{\cdot \wedge \tau_n}\) is an \((\F^P, P)\)-martingale. 
	The tower rule and the fact that \(M^f_{\cdot \wedge \tau_n}\) is \(\F\)-adapted imply that \(M^f_{\cdot \wedge \tau_n}\) is also an \((\F, P)\)-martingale. Therefore, (i) follows.

	The equivalence (iv) \(\Longleftrightarrow\) (iii) follows directly from \cite[Theorem II.2.42]{JS}.
	\\
	\noindent
	The implication (i) \(\Longrightarrow\) (ii) follows from Corollary \ref{coro: mart C}.
	
	Suppose that (ii) holds. In this case, by Lemma \ref{lem: mart}, for all \(f = g(\la X_{\cdot \wedge \tau_n}, y^*\ra)\) with \(g \in C^2_b(\mathbb{R})\) the process \(M^f_{\cdot \wedge \tau_n}\) is an \((\F, P)\)-martingale. By \cite[Lemma 67.10]{RW1}, it is also an \((\F^P, P)\)-martingale. 
	Now, since by Lemma \ref{lem:tau} the processes \(\widehat{X}_{\cdot \wedge \tau_n}\) and \(X_{\cdot \wedge \tau_n}\) are \(P\)-indistinguishable, (iii) follows from Euler's formula \(e^{ix} = \cos(x) + i \sin (x)\) and a short computation. 
	
	It remains to prove the implication (iii) \(\Longrightarrow\) (i).
	Let \(y^*_1, ..., y^*_d \in D(A^*)\) and \(\lambda_1, ..., \lambda_d \in \mathbb{R}\) for an arbitrary \(d \in \mathbb{N}\). Set \(y^* \triangleq \sum_{k = 1}^d \lambda_k y^*_k \in D(A^*)\) and \(u \triangleq 1\).
	Now, we see that \(M^*_{\cdot \wedge \tau_n}\) equals 
	\[
	\widetilde{M}^*_{\cdot \wedge \tau_n} \triangleq e^{i \sum_{k = 1}^d \lambda_k \langle \widehat{X}_{\cdot \wedge \tau_n}, y^*_k\ra} - \int_0^{\cdot \wedge \tau_n} e^{i \sum_{k = 1}^d \lambda_k \la \widehat{X}_{s-}, y^*_k\ra}\widetilde{V}^*_s \dd s,
	\]
	with  
	\begin{align*}
	\widetilde{V}^*_s &\triangleq i \sum_{k = 1}^d \lambda_k \left(\langle \widehat{X}_{s-}, A^* y^*_k\ra + \la b(\widehat{X}_{s-}), y^*_k\ra \right) 
	- \frac{1}{2} \sum_{k = 1}^d \sum_{j = 1}^d \lambda_k \lambda_j  \langle c(\widehat{X}_{s-}) y^*_k, y^*_j\ra \\&\hspace{1.4cm}+ \int \left(e^{i \sum_{k = 1}^d \lambda_k \la x, y^*_k\ra} - 1 - i \sum_{k = 1}^d \lambda_k \langle h(x), y^*_k\ra \right)K(\widehat{X}_{s-}, \dd x).
	\end{align*}
	Since we suppose (iii), the process \(\widetilde{M}^*_{\cdot \wedge \tau_n}\) is a local \((\F^P, P)\)-martingale.
	Due to \cite[Theorem III.2.42]{JS}, this implies that for each \(f \in \mathcal{C}\) the process \(\eqref{eq: sm mp}\) is a local \((\F^P, P)\)-martingale. Since \(\widehat{X}_{\cdot \wedge \tau_n}\) and \(X_{\cdot \wedge \tau_n}\) are \(P\)-indistinguishable, together with Lemma~\ref{lem: mart}, this implies that for all \(f \in \mathcal{C}\) the process \(M^f_{\cdot \wedge \tau_n}\) is an \((\F^P, P)\)-martingale. By the tower rule, \(M^f_{\cdot \wedge \tau_n}\) is also an \((\F, P)\)-martingale and the implication (iii) \(\Longrightarrow\) (i) is proven.
\end{proof}

\subsubsection{A Countable Set of Test Functions}\label{sec: count set}
Next, we show that GMPs are completely determined by a countable set of test functions.
\begin{proposition}\label{coro:tau}
	Let \(x \in \mathbb{B}_\Delta\).
	There exists a countable set \(\mathcal{D}\subseteq \mathcal{C}\) such that a probability measure \(P\) on \((\Omega, \mathscr{F})\) solves the GMP \((A, b, a, K, \varepsilon_x, \tau_{\Delta}-)\)
	if and only if \(P \circ X^{-1}_0 = \varepsilon_x\) and for all \(n \in \mathbb{N}\) and \(f \in \mathcal{D}\) the process \(M^f_{\cdot \wedge \tau_n}\) is an \((\F, P)\)-martingale. 
\end{proposition}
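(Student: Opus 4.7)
The plan is to exploit Lemma \ref{lem: equi JS} to reduce the test-function class drastically: items (i) and (ii) show that a probability measure $P$ with $P \circ X_0^{-1} = \varepsilon_x$ solves the GMP if and only if $M^f_{\cdot \wedge \tau_n}$ is an $(\F, P)$-martingale for every $n \in \mathbb{N}$ and every $f = g(\la \cdot, y^*\ra)$ with $g \in C^2_c(\mathbb{R})$ and $y^* \in D(A^*)$. The case $x = \Delta$ is immediate from Remark \ref{rem: trivial}, so I focus on $x \in \mathbb{B}$ and look for a countable $\mathcal{D}$ of such one-dimensional cylindrical functions along which the martingale property propagates to the full uncountable family.

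To construct $\mathcal{D}$, note that $\mathbb{B}^*$ is separable (since $\mathbb{B}$ is separable and reflexive) and $A^*$ is closed, so $D(A^*)$ equipped with the graph norm $\|y^*\|_\ast \triangleq \|y^*\| + \|A^* y^*\|$ is a separable Banach space. Pick a countable set $\{y^*_m\}_{m \in \mathbb{N}} \subset D(A^*)$ dense in the graph norm. For each $K \in \mathbb{N}$, the space $C^2_K \triangleq \{g \in C^2_c(\mathbb{R}) : \mathrm{supp}(g) \subseteq [-K, K]\}$ is separable in the $C^2$-norm $\|g\|_{C^2} \triangleq \|g\|_\infty + \|g'\|_\infty + \|g''\|_\infty$; pick a countable dense subset $\{g_{K, l}\}_{l \in \mathbb{N}} \subset C^2_K$ and set
\[
\mathcal{D} \triangleq \{g_{K, l}(\la \cdot, y^*_m\ra) : K, l, m \in \mathbb{N}\} \subset \mathcal{C}.
\]

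The main step is to show that if $M^f_{\cdot \wedge \tau_n}$ is an $(\F, P)$-martingale for all $f \in \mathcal{D}$ and $n \in \mathbb{N}$, then the same holds for every $f = g(\la \cdot, y^*\ra)$ with $g \in C^2_c(\mathbb{R})$ and $y^* \in D(A^*)$. Given such $(g, y^*)$, pick $K \in \mathbb{N}$ with $\mathrm{supp}(g) \subseteq [-K, K]$ and a sequence $(g_{K, l_j}, y^*_{m_j})_j$ with $g_{K, l_j} \to g$ in $C^2$-norm and $y^*_{m_j} \to y^*$ in graph norm; set $f_j \triangleq g_{K, l_j}(\la \cdot, y^*_{m_j}\ra)$. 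The uniform bound \eqref{uni bound} from the proof of Lemma \ref{lem: mart}, specialised to one linear form, depends only on $n$, $\|g_{K, l_j}\|_{C^2}$, $\|y^*_{m_j}\|_\ast$, and suprema of $b$, $a$ and $K$ over $\{\|x\| \leq n\}$ together with the quantities in \eqref{eq: assp K} and \eqref{eq: int ball around origin}; since $(\|y^*_{m_j}\|_\ast)_j$ is bounded, these bounds are uniform in $j$. Pointwise, $M^{f_j}_{t \wedge \tau_n}(\omega) \to M^f_{t \wedge \tau_n}(\omega)$ for every $\omega \in \Omega$: the evaluation $f_j(X_{t \wedge \tau_n})$ converges by joint continuity of $(g, y^*, y) \mapsto g(\la y, y^*\ra)$, the $A^*$-drift and diffusion terms converge thanks to graph-norm and norm convergence of $y^*_{m_j}$, and the $K$-integral converges by dominated convergence with the dominating function supplied by the Taylor estimate \eqref{taylor} combined with \eqref{eq: assp K} and \eqref{eq: int ball around origin}. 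Dominated convergence with the uniform bound then upgrades pointwise to $L^1$-convergence, and the elementary fact recalled in the proof of Corollary \ref{coro: mart C} (an $\F$-adapted $L^1$-limit of $(\F, P)$-martingales is an $(\F, P)$-martingale) completes the argument.

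The main obstacle is producing a $K$-integrable dominating function for the jump integral in $\mathcal{K} f_j$ that is uniform in $j$. This hinges on the observation that $(y^*_{m_j})_j$ is a bounded sequence in $\mathbb{B}^*$, which activates the bounded-sequence hypotheses on $b$ in (ii) of Section \ref{2.1} and on $K$ in \eqref{eq: assp K}--\eqref{eq: int ball around origin}; once this is acknowledged, the Taylor estimate \eqref{taylor} supplies the envelope and the rest of the approximation is mechanical.
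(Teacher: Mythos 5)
Your proof is correct, and it reaches the conclusion by a genuinely different route at the one step where the paper has to work hardest: producing a countable set of functionals along which the martingale property propagates to all of \(D(A^*)\). The paper imports from Kunze a countable set \(D \subseteq D(A^*)\) that is only weak-\(*\) dense "in graph", closes it under rational convex combinations, and then runs a Mazur-type argument (weak = weak-\(*\) by reflexivity, convex norm-closed = weakly closed) plus uniform boundedness and weak closedness of \(A^*\) to extract norm convergence \(x^*_k \to y^*\) together with weak-\(*\) convergence \(A^* x^*_k \rightharpoonup A^* y^*\) along a subsequence. You instead observe that the graph of \(A^*\) is a closed subspace of \(\mathbb{B}^* \times \mathbb{B}^*\), which is separable because \(\mathbb{B}\) is separable and reflexive, so \(D(A^*)\) is separable in the graph norm; this gives norm convergence of both \(y^*_{m_j} \to y^*\) and \(A^* y^*_{m_j} \to A^* y^*\), which is stronger than what the paper obtains and makes the convergence of the \(A^*\)-drift term immediate. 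The reduction to one-dimensional test functions via Lemma \ref{lem: equi JS}, the uniform bound \eqref{uni bound}, and the \(L^1\)-limit argument are the same as in the paper. One point where your wording overreaches slightly: for the jump integral there is in general no single \(K(z,\cdot)\)-integrable function dominating the whole sequence of Taylor remainders, since \(\sup_j \la y, y^*_{m_j}\ra^2\) can only be controlled by \(\|y\|^2\) times a constant, and \(\|y\|^2\) need not be \(K\)-integrable near the origin (this is exactly why \eqref{eq: assp K} is phrased the way it is). The convergence still holds, e.g.\ by noting that \eqref{eq: assp K} applied to the normalized differences \((y^*_{m_j}-y^*)/\|y^*_{m_j}-y^*\|\) yields \(\la \cdot, y^*_{m_j}\ra \to \la \cdot, y^*\ra\) in \(L^2(\1_{\{\|y\|\le\epsilon\}}K(z,\dd y))\), followed by a generalized dominated convergence (Pratt) argument; the paper is equally terse on this step, so I record it as a refinement rather than a gap.
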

\begin{proof}
	In view of Remark \ref{rem: trivial}, it suffices to consider \(x \in \mathbb{B}\).
	As pointed out in the proof of \cite[Lemma 4.1]{EJP2924}, there exists a countable subset \(D\) of \(D (A^*)\) such that for each \(x^* \in D(A^*)\) there exists a sequence \((x^*_n)_{n \in \mathbb{N}} \subset D\) such that \(x^*_n\rightharpoonup x^*\) and \(A^* x^*_n\rightharpoonup A^*x^*\) as \(n \to \infty\). 
	Here, \(\rightharpoonup\) denotes weak-\(*\) convergence.
		Furthermore, we can w.l.o.g. assume that all convex combinations of elements of \(D\) with rational coefficients belong to \(D\).
	Let \(\mathcal{P}\) be a countable dense subset of \(C^2_c(\mathbb{R})\) when equipped with usual norm \(\|\cdot\|_\infty + \|\partial \cdot\|_\infty + \|\partial^2 \cdot\|_\infty\), see \cite{llavona1986approximation}.
	Define the countable set 
	\begin{align*}
	\mathcal{D} \triangleq \left\{ g(\la \cdot, y^*\ra)\colon g \in \mathcal{P}, y^* \in D\right\}.
	\end{align*}
	Clearly, \(\mathcal{D}\subseteq \mathcal{C}\).
	Hence, if \(P \in \mathcal{M}(A, b, a, K, \varepsilon_x, \tau_{\Delta}-)\), then \(M^f_{\cdot \wedge \tau_n}\) is an \((\F, P)\)-martingale for all \(f \in \mathcal{D}\), see Corollary \ref{coro: mart C}. 
	
	We fix \(n \in \mathbb{N}\).
	Let us suppose that \(P\) is a probability measure on \((\Omega, \mathscr{F})\) such that \(P \circ X^{-1}_0 = \varepsilon_x\) and for all \(f \in \mathcal{D}\) the process \(M^f_{\cdot \wedge \tau_n}\) is an \((\F, P)\)-martingale.
	We now prove that \(P\in \mathcal{M}(A, b, a, K, \varepsilon_x, \tau_n)\).
	Let \(f\in C^2_c(\mathbb{R})\) and \(y^* \in D(A^*)\).
	Fix a sequence \((y^*_k)_{k \in \mathbb{N}}\subset D\) so that \(y^*_k \rightharpoonup y^*\) and \(A^* y ^*_k \rightharpoonup A^*y^*\) as \(k \to \infty\). 
	Since \(\mathbb{B}\) is reflexive, the weak topology on \(\mathbb{B}^*\) coincides with the weak-\(*\) topology, see \cite[Theorem V.4.2]{conway2013course}. Thus, by \cite[Corollary V.1.5]{conway2013course}, the norm closure of the set of convex combinations of elements of \(\{y^*_k, k \in \mathbb{N}\}\) with rational coefficients is weak-\(*\) closed. 
	Therefore, there exists a sequence
	\begin{align*}
	x^*_k \triangleq \sum_{i = 1}^{N_k} \lambda^k_i y^*_i \in D,\quad \lambda^k_i \in \mathbb{Q}_+,\ \sum_{i = 1}^{N_k} \lambda^k_i = 1,
	\end{align*}
	such that \(x^*_k \to y^*\) as \(k \to \infty\), where \(\to\) denotes norm convergence in \(\mathbb{B}^*\).
	Let us now argue that we also have, possibly along a subsequence, \(A^* x^*_k  \rightharpoonup A^* y^*\) as \(k \to \infty\).
	First, note that \((A^* x_k^*)_{k \in \mathbb{N}}\) is bounded by the uniform boundedness principle, see \cite[Section 6.4]{aliprantis2013infinite}. Since in reflexive Banach spaces each bounded sequence has a weakly convergent subsequence, see \cite[Theorem III.3.7]{werner2007funktionalanalysis}, the sequence \((A^* x_k^*)_{k\in \mathbb{N}}\) has a weakly convergent subsequence.  Since closed operators are also weakly closed, see \cite[Problem 5.12]{kato1980}, we conclude that, possibly along a subsequence, \(A^* x^*_k\) converges weakly to \(A^* y^*\) as \(k \to \infty\).  Now, since for \(\mathbb{B}^*\) the weak topology and the weak-\(*\) topology coincide, we have, possibly along a subsequence, \(A^* x^*_k  \rightharpoonup A^* y^*\) as \(k \to \infty\). In the following, we will always refer to this subsequence.
	
	There exists a sequence \((g_{m})_{m \in \mathbb{N}} \subset \mathcal{P}\) such that \(g_{m} \to g\) as \(m \to \infty\), where the convergence is norm convergence w.r.t. \(\|\cdot\|_\infty + \|\partial \cdot\|_\infty + \|\partial^2 \cdot\|_\infty\).
	In particular, we have
	\begin{align} \label{sup dense assp}
	\sup_{m \in \mathbb{N}} \left(\|g_{m}\|_\infty + \|\partial g_{m}\|_\infty +  \|\partial^2 g_{m}\|_\infty\right) < \infty.
	\end{align}
	Now, define \(f_{m, k}\colon x \mapsto g_{m}(\langle x, x^*_k\ra)\) and \(f_m\colon x \mapsto g_m(\la x, y^*\ra)\).   We note that \(f_{m, k} \in \mathcal{D}\), which implies that \(M^{f_{m, k}}_{\cdot \wedge \tau_n}\) is an \((\F, P)\)-martingale.
	In view of \eqref{uni bound}, we deduce from our assumptions on the coefficients \(b, a, K\) together with the fact that weak-\(*\) convergent sequences in Banach spaces are bounded by the uniform boundedness principle, that \(M^{f_{m, k}}_{t \wedge \tau_n(\omega)}(\omega)\) is bounded uniformly in \(k\) and \(\omega\).
	Since \(x^*_k \to y^*\) and \(A^* x^*_k \rightharpoonup A^* y^*\) as \(k \to \infty\) imply that \begin{align}\label{eq: app 1}
	M^{f_{m, k}}_{t \wedge \tau_n} \to M^{f_m}_{t \wedge \tau_n}\end{align} as \(k \to \infty\), the dominated convergence theorem implies that \eqref{eq: app 1} holds also
	in \(L^1\).
	We conclude that \(M^{f_m}_{\cdot \wedge \tau_n}\) is an \((\F, P)\)-martingale. 
	In view of  \eqref{uni bound} and \eqref{sup dense assp}, \(M^{f_m}_{t \wedge \tau_n(\omega)} (\omega)\) is bounded uniformly in \(m\) and \(\omega\).
	Hence, using  \(g_{m} \to g\) as \(m \to \infty\) and again the dominated convergence theorem, we obtain
	\[
	M^{f_{m}}_{t \wedge \tau_{n}} \to M^{f}_{t \wedge \tau_n} 
	\]
	in \(L^1\) as \(m \to \infty\).
	Thus, also \(M^f_{\cdot \wedge \tau_n}\) is an \((\F, P)\)-martingale. 
	By Lemma \ref{lem: equi JS}  it is sufficient to consider test functions of the type \(g(\la \cdot, y^*\ra)\) for \(g \in C^2_c (\mathbb{R})\) and \(y^* \in D(A^*)\). 
	We conclude that \(P \in \mathcal{M}(A, b, a, K, \varepsilon_x, \tau_n)\).
\end{proof}

\subsection{Proof of Theorem \ref{theo:markov}}\label{sec: proof MP}
We suppose that the GMP \((A, b, a, K, \tau_{\Delta}-)\) is well-posed and denote \(P_{x} \in \mathcal{M}(A, b, a, K, \varepsilon_{x}, \tau_{\Delta}-)\) for \(x \in \mathbb{B}_\Delta\).
Let us start with an elementary observation.
\begin{lemma}\label{lem: cg}
	Let \(Y\) be a right-continuous measurable process taking values in a Polish space. Then \(\sigma(Y_t, t \in [0, \infty))\) is countably generated. 
\end{lemma}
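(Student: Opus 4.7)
The plan is to reduce the uncountable family $\{Y_t : t \in [0,\infty)\}$ to the countable subfamily indexed by the rationals, using right-continuity, and then invoke the fact that the Borel $\sigma$-field of a Polish space is itself countably generated.

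First I would fix a countable family $\mathcal{B} \subseteq \mathscr{B}(E)$ with $\sigma(\mathcal{B}) = \mathscr{B}(E)$; such a family exists because a Polish space $E$ has a countable base, and the base generates the Borel $\sigma$-field. Consequently, for every $s \in [0,\infty)$ the $\sigma$-field $\sigma(Y_s) = Y_s^{-1}(\mathscr{B}(E))$ is generated by the countable collection $\{Y_s^{-1}(B) : B \in \mathcal{B}\}$, so
\[
\sigma(Y_s, s \in \mathbb{Q}_+) = \sigma\bigl(\{Y_s^{-1}(B) : s \in \mathbb{Q}_+,\, B \in \mathcal{B}\}\bigr),
\]
which is generated by a countable family.

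Next I would use right-continuity to show $\sigma(Y_t, t \in [0,\infty)) = \sigma(Y_s, s \in \mathbb{Q}_+)$. The nontrivial inclusion is $\subseteq$: fix $t \in [0,\infty)$ and choose rationals $s_n \downarrow t$ with $s_n \geq t$ (taking $s_n = t$ eventually if $t \in \mathbb{Q}_+$). By right-continuity, $Y_t(\omega) = \lim_{n \to \infty} Y_{s_n}(\omega)$ for every $\omega$. Since $E$ is Polish, hence metrizable, pointwise limits of measurable maps into $E$ are again measurable (this follows from the standard argument that preimages of closed sets can be written as a countable intersection of $\epsilon$-neighbourhoods applied to the tail of the sequence). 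Thus $Y_t$ is $\sigma(Y_{s_n}, n \in \mathbb{N})$-measurable, and in particular $\sigma(Y_t) \subseteq \sigma(Y_s, s \in \mathbb{Q}_+)$. Taking the union over $t$ gives the claim.

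Combining the two steps, $\sigma(Y_t, t \in [0,\infty))$ is countably generated. The only potential subtlety is the measurability of pointwise limits into $E$, but this is a standard metric-space fact and not a real obstacle; everything else is bookkeeping with generating families.
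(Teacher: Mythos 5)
Your proof is correct and follows essentially the same route as the paper: reduce $\sigma(Y_t, t \in [0,\infty))$ to $\sigma(Y_s, s \in \mathbb{Q}_+)$ via right-continuity and then use that the Borel $\sigma$-field of a Polish space is countably generated. The paper states this in one line; you have simply filled in the standard details.
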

\begin{proof}
	The claim follows from the identity \(\sigma(Y_t, t\in [0, \infty)) = \sigma (Y_t, t \in \mathbb{Q}_+)\), which is due to right-continuity of \(Y\).
\end{proof}
The following lemma is also used in the proof that well-posedness implies local well-posedness, see Section \ref{sec: loc uni} below. The idea to use Kuratowski's theorem is due to \cite[Exercise 6.7.4]{SV}.
\begin{lemma}\label{lem: kern Borel}
	The mapping \(\mathbb{B}_\Delta \ni x \mapsto P_x\) is Borel, i.e. the map \(\mathbb{B}_\Delta \ni x \mapsto P_x(G)\) is Borel for all \(G \in \mathscr{F}\).
\end{lemma}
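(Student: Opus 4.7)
The plan is to realize the assignment \(x \mapsto P_x\) as the graph-section of a Borel set inside \(\mathbb{B}_\Delta \times \mathcal{P}(\Omega)\) and then invoke Kuratowski's Borel graph theorem, as suggested in the paper. Since \(\Omega\) is Lusin, the space \(\mathcal{P}(\Omega)\) of probability measures on \((\Omega, \mathscr{F})\), equipped with the \(\sigma\)-field generated by the evaluations \(P \mapsto P(G)\), \(G \in \mathscr{F}\), is itself standard Borel. I will set
\[
S \triangleq \bigl\{(x, P) \in \mathbb{B}_\Delta \times \mathcal{P}(\Omega) : P \in \mathcal{M}(A, b, a, K, \varepsilon_x, \tau_\Delta-)\bigr\}.
\]
By well-posedness, the section \(S_x\) equals \(\{P_x\}\) for every \(x \in \mathbb{B}_\Delta\), so \(S\) is the graph of \(x \mapsto P_x\). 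Once \(S\) is shown to be Borel, the version of Kuratowski's theorem which states that a function into a standard Borel space is Borel whenever its graph is Borel (cf.\ \cite[Exercise 6.7.4]{SV}) yields the Borel measurability of the assignment, and composing with the Borel evaluation maps \(P \mapsto P(G)\) gives the claim.

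The core work is showing \(S\) is Borel. By Proposition \ref{coro:tau}, membership in \(S\) is equivalent to two families of conditions: (a) the initial-law identity \(P \circ X_0^{-1} = \varepsilon_x\), and (b) the requirement that for every \(n \in \mathbb{N}\) and every \(f\) in the countable family \(\mathcal{D}\), the process \(M^f_{\cdot \wedge \tau_n}\) is an \((\F, P)\)-martingale. For (a) I fix a countable measure-determining family \((\varphi_k) \subset C_b(\mathbb{B}_\Delta)\), available since \(\mathbb{B}_\Delta\) is separable metric, and rewrite (a) as the countable system \(E^P[\varphi_k(X_0)] = \varphi_k(x)\); each equality is jointly Borel in \((x, P)\).

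For (b) the key observation is that Lemma \ref{lem: mart} yields a deterministic bound on \(|M^f_{\cdot \wedge \tau_n}|\), so integrability is automatic and the martingale property is equivalent to
\[
E^P\bigl[\1_A (M^f_{t \wedge \tau_n} - M^f_{s \wedge \tau_n})\bigr] = 0 \qquad \text{for all } s \leq t \text{ in } \mathbb{Q}_+ \text{ and all } A \in \mathcal{G}_s,
\]
where \(\mathcal{G}_s\) is a countable \(\pi\)-system generating \(\mathscr{F}_s\). I will build such a \(\pi\)-system from the cylinders \(\{X_{r_1} \in B_{k_1}, \ldots, X_{r_m} \in B_{k_m}\}\) with \(r_i \in ([0, s] \cap \mathbb{Q}_+) \cup \{s\}\) and \(B_{k_i}\) drawn from a fixed countable \(\pi\)-system generating \(\mathscr{B}(\mathbb{B}_\Delta)\); the càdlàg property of paths in \(\Omega\) guarantees \(\sigma(\mathcal{G}_s) = \mathscr{F}_s\). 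Each individual equality cuts out a Borel subset of \(\mathcal{P}(\Omega)\), and the countable intersection over \(n\), \(f\), \((s,t)\) and \(A\) remains Borel; together with (a) this gives Borel measurability of \(S\).

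The most delicate step will be the reduction in (b): because \(\F\) is not right-continuous and \(\mathscr{F}_s\) is a priori generated by uncountably many coordinate maps, one must carefully exploit the càdlàg structure of paths in \(\Omega\) to produce a countable generator. Apart from this, the argument only combines standard measure-theoretic facts (uniform boundedness from Lemma \ref{lem: mart}, the countable-test-function characterisation of Proposition \ref{coro:tau}, standard-Borel structure of \(\mathcal{P}(\Omega)\)) with the Kuratowski graph theorem.
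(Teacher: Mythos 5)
Your proposal is correct and follows essentially the same route as the paper: both reduce membership in the solution set to the countable system of conditions supplied by Proposition \ref{coro:tau} (together with countable determining classes for \(\mathscr{F}_s\), restricted to rational times, and for the initial law) and then invoke the Kuratowski/Lusin--Souslin machinery, exactly as suggested by the cited exercise in Stroock--Varadhan. The only difference is presentational: you phrase the final step as the Borel graph theorem applied to \(S \subseteq \mathbb{B}_\Delta \times \mathcal{P}(\Omega)\), whereas the paper shows that \(\{P_x, x \in \mathbb{B}_\Delta\}\) is a Borel subset of the Polish space of laws on the Skorokhod space and then inverts the injective Borel map sending \(Q\) to the \(x\) with \(Q \circ X_0^{-1} = \varepsilon_x\); these are two equivalent packagings of the same argument.
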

\begin{proof}
	Denote the set of probability measures on \((D([0, \infty), \mathbb{B}_\Delta), \mathscr{B}(D([0, \infty), \mathbb{B}_\Delta)))\) by \(\hat{\mathcal{P}}\) and the set of probability measure on \((\mathbb{B}_\Delta, \mathscr{B}(\mathbb{B}_\Delta))\) by \(\mathcal{P}^*\) and equip both with the topology of convergence in distribution. It is well-known that \(\hat{\mathcal{P}}\) and \(\mathcal{P}^*\) are Polish, see \cite[Theorem 15.15]{aliprantis2013infinite}. 
	
	Recall that \(\Omega\) is a Borel subset of \(D([0, \infty), \mathbb{B}_\Delta)\).
	Due to Proposition \ref{coro:tau} and the uniqueness assumption, there exists a countable set \(\mathcal{D} \subseteq \mathcal{C}\) such that the set \(\{P_x, x \in \mathbb{B}_\Delta\}\) consists of all \(Q \in \hat{\mathcal{P}}\) such that \(Q \circ X_0^{-1} \in \{\varepsilon_x, x \in \mathbb{B}_\Delta\}\)
and
	\begin{align}\label{count}
	Q (\Omega) = 1,\quad \E^Q\left[\1_G M^f_{t \wedge \tau_n}\right] = \E^Q\left[\1_G M^f_{s \wedge \tau_n}\right],
	\end{align}
	for \(f \in \mathcal{D}, s < t, G \in \mathscr{F}_s, n \in \mathbb{N}\). 
	Since \(M^f_{\cdot \wedge \tau_n}\) has right-continuous paths, we can restrict ourselves to \(s, t \in \mathbb{Q}_+\). Moreover, since \(\mathscr{F}_s = \sigma(X_r, r \in [0, s])\) is countably generated by Lemma \ref{lem: cg}, it suffices to take only countably many \(G \in \mathscr{F}_s\) into consideration.
	Hence, by \cite[Theorem 15.13]{aliprantis2013infinite}, the subset of elements of \(\hat{\mathcal{P}}\) which satisfies \eqref{count} is Borel.
	It follows from \cite[Corollary 2.57]{aliprantis2013infinite} that \(\varepsilon_{x_k}\) converges in distribution to \(\varepsilon_x\) as \(k \to \infty\) if and only if \(x_k \to x\) as \(k \to \infty\). This implies that \(\mathbb{B}_\Delta \ni x \mapsto \varepsilon_x \in \mathcal{P}^*\) is a continuous injection. Thus, by \cite[Theorem 8.3.7]{cohn13}, the set \(\{\varepsilon_x, x \in \mathbb{B}_\Delta\}\) is a Borel subset of \(\mathcal{P}^*\) and by Kuratowski's theorem, see \cite[Proposition 8.3.5]{cohn13}, the inverse of \(\mathbb{B}_\Delta \ni x \mapsto \varepsilon_x \in \mathcal{P}^*\) is Borel.
	We note that the map \(D([0, \infty), \mathbb{B}_\Delta) \ni \omega \mapsto \omega(0) \in \mathbb{B}_\Delta\) is continuous, see \cite[VI.2.3]{JS}. Thus, by \cite[Theorem 15.14]{aliprantis2013infinite}, the map \(\hat{\mathcal{P}} \ni Q \mapsto Q \circ X_0^{-1} \in \mathcal{P}^*\) is continuous. We conclude that the set \(\{P_x, x \in \mathbb{B}_\Delta\}\) is a Borel subset of \(\hat{\mathcal{P}}\).  
	Define \(\Phi\colon \{P_x, x \in \mathbb{B}\} \to \mathbb{B}_\Delta\) such that \(\Phi(Q)\) is the (unique) \(x\in \mathbb{B}_\Delta\) such that \(Q \circ X_0^{-1} = \varepsilon_x\).
	As a composition of \(\hat{\mathcal{P}} \ni Q \mapsto Q \circ X_0^{-1} \in \mathcal{P}^*\) and the inverse of \(\mathbb{B}_\Delta \ni x \mapsto \varepsilon_x \in \mathcal{P}^*\), the map \(\Phi\) is Borel. Since \(\Phi\) is an injection from a Borel subset of a Polish space into a Polish space, Kuratowski's theorem yields that the inverse map \(\Phi^{-1}\) is Borel. This concludes our proof.
\end{proof}
Suppose that \(\xi\) is an \(\F\)-stopping time. 
It is implied by \cite[Theorem 1.6]{aries2007optimal} that \(\mathscr{F}_\xi = \sigma(X_{t \wedge \xi}, t \in [0, \infty)).\)
Hence, by Lemma \ref{lem: cg}, the \(\sigma\)-field \(\mathscr{F}_\xi\) is countably generated. Moreover, recalling Lemma \ref{lem: cond prob}, for any \(P \in \mathcal{M}(A, b, a, K, \eta, \tau_{\Delta}-)\) a regular conditional probability of \(P\) given \(\mathscr{F}_\xi\) exists.

We stress that the strong Markov property as claimed in Theorem \ref{theo:markov} is a direct consequence of \cite[Problem 2.6.9, Theorem 2.6.10]{KaraShre}, the well-posedness assumption, \(\mathscr{F}_{\tau_\Delta -}= \mathscr{F}\) and the following lemma, which can be seen as a version of \cite[Lemma 5.4.19]{KaraShre} for GMPs.

\begin{lemma}\label{lem: Mark prep}
	Let \(\xi\) be a bounded \(\F\)-stopping time and let \(P \in \mathcal{M}(A, b, a, K, \eta, \tau_{\Delta}-)\). There exists a \(P\)-null set \(N \in \mathscr{F}_\xi\) such that for all \(\omega \in \complement N\) it holds that 
	\[P\left(\theta^{-1}_\xi \cdot \big|\mathscr{F}_\xi\right) (\omega)\in \mathcal{M}(A, b, a, K, \varepsilon_{X_{\xi(\omega)}(\omega)}, \tau_{\Delta}-).\]
\end{lemma}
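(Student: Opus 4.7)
The plan is to follow the classical Stroock-Varadhan blueprint, suitably adjusted to the present infinite-dimensional non-conservative setup via the tools of Section~\ref{sec: prep}. By \cite[Theorem 1.6]{aries2007optimal} one has \(\mathscr{F}_\xi = \sigma(X_{\cdot \wedge \xi})\), which is countably generated by Lemma~\ref{lem: cg}, so Lemma~\ref{lem: cond prob} yields a regular conditional probability \(\omega \mapsto P(\cdot \mid \mathscr{F}_\xi)(\omega)\) satisfying the additional consistency clause. I set
\[
Q_\omega(F) \triangleq P(\theta^{-1}_\xi F \mid \mathscr{F}_\xi)(\omega), \qquad F \in \mathscr{F},
\]
and aim to show that \(Q_\omega \in \mathcal{M}(A, b, a, K, \varepsilon_{X_\xi(\omega)}, \tau_\Delta-)\) for \(\omega\) outside a \(P\)-null set in \(\mathscr{F}_\xi\). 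By Proposition~\ref{coro:tau} (whose countable test family \(\mathcal{D} \subseteq \mathcal{C}\) is easily seen from its proof to be independent of the initial point), it suffices to verify (a) \(Q_\omega \circ X_0^{-1} = \varepsilon_{X_\xi(\omega)}\) and (b) for every \(f \in \mathcal{D}\) and \(n \in \mathbb{N}\) the process \(M^f_{\cdot \wedge \tau_n}\) is an \((\F, Q_\omega)\)-martingale. Claim (a) follows immediately from \(X_0 \circ \theta_\xi = X_\xi \in \mathscr{F}_\xi\) combined with the consistency clause of Lemma~\ref{lem: cond prob}.

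For (b), I invoke Lemma~\ref{lem: cg} (countable generation of each \(\mathscr{F}_s\)), right-continuity of the paths, the pathwise identity \(M^f_{t \wedge \tau_n}(\theta_\xi) = M^f_{\xi + t \wedge \tau'_n} - M^f_\xi\) with \(\tau'_n \triangleq \tau_n \circ \theta_\xi\) (a stopping time with respect to the shifted filtration \(\mathbf{G} \triangleq (\mathscr{F}_{\xi + r})_{r \geq 0}\)), and a countability argument uniting the exceptional null sets to reduce (b) to the family of identities
\[
E^P\!\left[\1_{H \cap \theta^{-1}_\xi G}\!\left(M^f_{\xi + t \wedge \tau'_n} - M^f_{\xi + s \wedge \tau'_n}\right)\right] = 0,
\]
for \(f \in \mathcal{D}\), \(n \in \mathbb{N}\), rationals \(s < t\), and \(H, G\) ranging over countable generating systems of \(\mathscr{F}_\xi\) and \(\mathscr{F}_s\) respectively. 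I propose to establish each such identity by applying optional sampling to the bounded (by Lemma~\ref{lem: mart}) \((\F, P)\)-martingale \(M^f_{\cdot \wedge \tau_m}\) at the bounded \(\F\)-stopping times \(\sigma^r_m \triangleq \xi + r \wedge \tau'_n\), \(r \in \{s, t\}\). Since \(H \cap \theta^{-1}_\xi G \in \mathscr{F}_{\xi + s} = \mathscr{G}_s\) and the shifted process \(t \mapsto M^f_{(\xi + t \wedge \tau'_n) \wedge \tau_m}\) is a \(\mathbf{G}\)-martingale (by time-shift of the \(\F\)-martingale followed by \(\mathbf{G}\)-optional sampling at \(\tau'_n\)), this directly yields the analogous identity with the auxiliary inner stopping \(\wedge \tau_m\), without the need for further decompositions.

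The main obstacle is the passage \(m \to \infty\): Lemma~\ref{lem: mart}'s bound on \(M^f_{\cdot \wedge \tau_m}\) grows with \(m\), so a direct dominated convergence argument is not available. The key input is Lemma~\ref{lem:tau} (continuous explosion), applied both to the original process and, crucially, to the shifted process: the latter gives \(\tau'_n < \tau_\Delta(\theta_\xi) = \tau_\Delta - \xi\) strictly on \(\{X_\xi \in \mathbb{B}\} = \{\xi < \tau_\Delta\}\), hence \(\xi + t \wedge \tau'_n < \tau_\Delta\) on this event. Consequently, for \(P\)-a.e.\ such \(\omega\) there exists \(m_0(\omega)\) with \((\xi + r \wedge \tau'_n)(\omega) \leq \tau_m(\omega)\) for all \(m \geq m_0(\omega)\), so the integrands with and without the \(\wedge \tau_m\)-stopping coincide for \(m\) large; on the complementary event \(\{X_\xi = \Delta\}\) the shifted path is identically \(\Delta\) (consistent with Remark~\ref{rem: trivial}) and every \(M^f\)-term vanishes. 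A careful convergence argument based on this pointwise stabilization, combined with pathwise finiteness of \(\int_0^{\xi + t \wedge \tau'_n}|\mathcal{K}f(X_{u-})|\,\dd u\) on \(\{\xi < \tau_\Delta\}\) and the decay of \(P(\xi + r \wedge \tau'_n > \tau_m)\) as \(m \to \infty\), then upgrades the \(\wedge \tau_m\)-identity to the required one.
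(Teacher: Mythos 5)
Your strategy is the same as the paper's: reduce via Lemmata \ref{lem: cond prob} and \ref{lem: cg} and Proposition \ref{coro:tau} to countably many martingale identities for the regular conditional probabilities, obtain the initial condition from the consistency clause, and prove each identity through the shift identity (your \(\xi + r\wedge\tau_n'\) is exactly the paper's stopping time \(\sigma_r = r\wedge\tau_n(\theta_\xi)+\xi\)) followed by optional sampling of a uniformly integrable stopped martingale; Lemma \ref{lem:tau} applied to the shifted path plays the same role in both arguments. Up to the final limit the plan is sound.

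That final limit \(m\to\infty\), which you yourself single out as the main obstacle, is where the proposal has a gap, and your diagnosis of the obstacle is incorrect. You assert that dominated convergence is unavailable because the bound of Lemma \ref{lem: mart} on \(M^f_{\cdot\wedge\tau_m}\) grows with \(m\), and you then offer only ``pointwise stabilization plus decay of \(P(\xi+r\wedge\tau'_n>\tau_m)\)'', which by itself does not yield convergence of expectations: a vanishing exceptional event is useless without a uniform bound or uniform integrability of the integrands on that event. But the quantity to be dominated is not \(M^f_{\cdot\wedge\tau_m}\); it is the \emph{increment} \(M^f_{(\xi+t\wedge\tau'_n)\wedge\tau_m}-M^f_{(\xi+s\wedge\tau'_n)\wedge\tau_m}\), which only involves the path on a subinterval of \([\xi,\xi+\tau'_n]\). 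There the shifted path stays in the ball of radius \(n\) (up to the single right endpoint, irrelevant for the Lebesgue integral), so the increment is bounded by \(2\|g\|_\infty+(t\wedge n)\sup_{\|x\|\le n}|\mathcal{K}f(x)|\) uniformly in \(m\) and \(\omega\) --- i.e.\ by the bound of Lemma \ref{lem: mart} for the \emph{fixed} index \(n\), transported through the shift. With this observation dominated convergence closes the argument. The paper performs the equivalent manoeuvre in a slightly different order: it keeps the uniformly bounded integrand \((M^f_{t\wedge\tau_n}-M^f_{s\wedge\tau_n})\circ\theta_\xi\) intact, inserts \(\1_{\{\xi<\tau_m\}}\) and passes to the limit by dominated convergence, and on \(\{\xi<\tau_m\}\) observes that \(\sigma_r=\sigma_r\wedge\tau_{2m}\), so that optional stopping can be applied to the uniformly integrable martingale \(M^f_{\cdot\wedge\tau_{2m}}\) with no limiting issue left.
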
 
\begin{proof}
	Since \(\mathscr{F}_\xi = \sigma(X_{t \wedge \xi}, t \in [0, \infty))\) is countably generated, Lemma \ref{lem: cond prob} yields the existence of a \(P\)-null set \(N^* \in \mathscr{F}_\xi\) such that for all \(\omega \in \complement N^*\)
	\begin{align*}
	P\left(\theta^{-1}_{\xi} \{X_0 = X_{\xi(\omega)}(\omega)\} \big| \mathscr{F}_\xi\right) (\omega) &=
	P\left(X_\xi = X_{\xi(\omega)}(\omega)\big|\mathscr{F}_\xi\right) (\omega) 
	\\&=  \1_{\{X_\xi = X_{\xi(\omega)}(\omega)\}} (\omega) 
	= 1.
	\end{align*}

	By \cite[Proposition 7.8]{Kallenberg}, for all \(t \in [0, \infty)\) the random time \[\sigma_t (\alpha) \triangleq t \wedge \tau_n(\theta_{\xi(\alpha)} \alpha) + \xi(\alpha)\] is an \(\F^+\)-stopping time. Take \(f \in \mathcal{C}\) and \(s < t\). We now show that \(P\)-a.s.
	\begin{align}\label{eq: stopped to show s + xi}
	E^P \left[\left(M^f_{t \wedge \tau_n} - M^f_{s \wedge \tau_n}\right) \circ \theta_{\xi}| \mathscr{F}_{s + \xi}\right] = 0.
	\end{align}
	First, note that on \(\{\xi \geq \tau_{\Delta}\}\) we have
	\begin{align*}
	\left(M^f_{t \wedge \tau_n} - M^f_{s \wedge \tau_n}\right) \circ \theta_{\xi} = 0.
	\end{align*}
	Thus, \(P\)-a.s.
	\begin{align*}
	E^P& \left[\left(M^f_{t \wedge \tau_n} - M^f_{s \wedge \tau_n}\right) \circ \theta_{\xi}| \mathscr{F}_{s + \xi}\right] \\&\hspace{2.5cm}= E^P \left[\left(M^f_{t \wedge \tau_n} - M^f_{s \wedge \tau_n}\right) \circ \theta_{\xi}\1_{\{\xi < \tau_{\Delta}\}}| \mathscr{F}_{s + \xi}\right]
	\\&\hspace{2.5cm}= \lim_{n \to \infty} E^P \left[\left(M^f_{t \wedge \tau_n} - M^f_{s \wedge \tau_n}\right) \circ \theta_{\xi}\1_{\{\xi < \tau_{n}\}}| \mathscr{F}_{s + \xi}\right]
	\\&\hspace{2.5cm}= \lim_{n \to \infty} E^P \left[\left(M^f_{\sigma_t} - M^f_{\sigma_s}\right)\1_{\{\xi < \tau_n\}}| \mathscr{F}_{s + \xi}\right].
	\end{align*}
	For \(\alpha \in \{\xi < \tau_n\}\) we have 
	\begin{align*}
	\tau^*_n (\theta_{\xi(\alpha)} \alpha) = \tau^*_n (\alpha) - \xi(\alpha)
	\end{align*}
	and hence for \(r \in \{s, t\}\)
	\begin{align*}
	\sigma_r (\alpha) = \begin{cases}
	r \wedge n + \xi (\alpha),& r \wedge n + \xi (\alpha) \leq \tau^*_n(\alpha),\\
	\tau^*_n(\alpha),&r \wedge n + \xi(\alpha) \geq \tau^*_n(\alpha).
	\end{cases}
	\end{align*}
	Therefore, \(\sigma_r = \sigma_r \wedge \tau_{2n}\) on \(\{\xi < \tau_n\}\).
	This yields that \(P\)-a.s.
	\begin{align*}
	E^P \left[\left(M^f_{\sigma_t} - M^f_{\sigma_s}\right)\1_{\{\xi < \tau_n\}}| \mathscr{F}_{s + \xi}\right] &= E^P \left[\left(M^f_{\sigma_t \wedge \tau_{2n}} - M^f_{\sigma_s \wedge \tau_{2n}}\right)\1_{\{\xi < \tau_n\}}| \mathscr{F}_{s + \xi}\right]\\&= E^P \left[M^f_{\sigma_t \wedge \tau_{2n}} - M^f_{\sigma_s \wedge \tau_{2n}}| \mathscr{F}_{s + \xi}\right]\1_{\{\xi < \tau_n\}}.
	\end{align*}
	Since \(M^f_{\cdot \wedge \tau_{2n}}\) is a uniformly integrable \((\F, P)\)-martingale, \(\sigma_t\) and \(\sigma_s\) are \(\F^+\)-stopping times and \(s + \xi\) is an \(\F\)-stopping time, the optional stopping theorem, see \cite[Theorem 1.3.22]{KaraShre}, yields that \(P\)-a.s.
	\begin{align*}
	E^P &\left[M^f_{\sigma_t \wedge \tau_{2n}} - M^f_{\sigma_s \wedge \tau_{2n}}| \mathscr{F}_{s + \xi}\right] \1_{\{\xi < \tau_n\}} \\&\hspace{2cm}= \left(M^f_{\sigma_t \wedge \tau_{2n} \wedge (s + \xi)} - M^f_{\sigma_s \wedge \tau_{2n}\wedge (s + \xi)}\right)\1_{\{\xi < \tau_n\}}
	\\&\hspace{2cm}= \left(M^f_{\sigma_s} - M^f_{\sigma_s} \right) \1_{\{\xi < \tau_n\}} = 0.
	\end{align*}
	Therefore, we conclude that \eqref{eq: stopped to show s + xi} holds.

	By \cite[Lemma 6.5]{Kallenberg}, we have \(\theta_\xi^{-1} (\mathscr{F}_s) \subseteq \mathscr{F}_{s + \xi}\). 
	Thus, for all \(G \in \mathscr{F}_s\), \(P\)-a.s.
	\begin{align*}
	E^P &\left[ \left(\left(M^f_{t \wedge \tau_n} - M^f_{s \wedge \tau_n}\right)\1_G\right)\circ \theta_\xi |\mathscr{F}_{\xi}\right] \\&= E^P \left[ E^P \left[\left(M^f_{t \wedge \tau_n} - M^f_{s \wedge \tau_n}\right)\circ \theta_\xi|\mathscr{F}_{s + \xi}\right]\1_{\theta^{-1}_\xi G} |\mathscr{F}_{\xi}\right] = 0.
	\end{align*}
Hence, we find a \(P\)-null set \(N \in \mathscr{F}_\xi\) such that for all \(\omega \in \complement N\) the equality
	\begin{align} \label{eq: p null set hold}
	E^P &\left[ \left((M^f_{t \wedge \tau_n} - M^f_{s \wedge \tau_n})\1_G\right)\circ \theta_\xi |\mathscr{F}_{\xi}\right] (\omega)
= 0
	\end{align}
	holds for all \(n\in \mathbb{N}\), all non-negative rationals \(s < t\), all \(f \in \mathcal{D}\), where \(\mathcal{D} \subseteq \mathcal{C}\) is the countable set from Proposition \ref{coro:tau}, and all \(G \in \mathscr{E}_s\), where \(\mathscr{E}_s\) is a countable determining class of \(\mathscr{F}_s\).
	
For all \(\omega \in \complement N\), the process \(M^f_{\cdot \wedge \tau_n}\) is an \((\F, P ( \theta_\xi^{-1} \cdot |\mathscr{F}_\xi) (\omega) )\)-martingale for all \(f \in \mathcal{D}\) and \(n \in \mathbb{N}\): Since \(\mathscr{E}_s\) is a determining class, the uniqueness theorem for measures yields that \eqref{eq: p null set hold} also holds for all \(G \in \mathscr{F}_s\) and, by the right-continuity of \(M^f_{\cdot \wedge \tau_{n}}\), \eqref{eq: p null set hold} also holds for all irrationals \(s <t\). Hence, by Proposition \ref{coro:tau}, we conclude the proof.
\end{proof}

Let us show that \(P_\eta\) as defined in \eqref{eq: int sol} is the unique solution to the GMP \((A, b, a, K, \eta, \tau_{\Delta}-)\).
First, note that \(P_\eta \circ X^{-1}_0 = \eta\).
Second, for all \(f \in \mathcal{C}, s < t\), and \(G \in \mathscr{F}_s\) it holds that
\[
E^{P_\eta} \left[ \1_G \left(M^f_{t \wedge \tau_n} - M^f_{s \wedge \tau_n} \right)\right] = \int E^{P_x} \left[ \1_G \left(M^f_{t \wedge \tau_n} - M^f_{s \wedge \tau_n} \right)\right] \eta(\dd x) = 0,
\]
since \(M^f_{\cdot \wedge \tau_n}\) is an \((\F, P_x)\)-martingale for all \(x \in \mathbb{B}_\Delta\), see Corollary \ref{coro: mart C}.
Therefore, we conclude that \(P_\eta \in \mathcal{M}(A, b, a, K, \eta, \tau_{\Delta}-)\).

To prove uniqueness, suppose that \(Q\) solves the GMP \((A, b, a, K, \eta, \tau_{\Delta}-)\).
Due to Lemma \ref{lem: Mark prep}, \(Q\)-a.s. \(Q(\cdot |\mathscr{F}_0)\) solves the GMP \((A, b, a, K, \varepsilon_{X_0}, \tau_{\Delta}-)\). Hence, by the well-posedness assumption, 
\(Q\)-a.s. \(Q(\cdot |\mathscr{F}_0) = P_{X_0}\). 
Finally, for all \(G \in \mathscr{F}\)
\[
Q (G)= E^Q [Q(G|\mathscr{F}_0)] = E^Q [P_{X_0} (G)] = \int P_x (G) \eta(\dd x).
\]
This proves the uniqueness.

The very last claim of Theorem \ref{theo:markov} follows by integration. More precisely, the strong Markov property of the family \(\{P_x, x \in \mathbb{B}_\Delta\}\) implies that for all \(F \in \mathscr{F}\) and \(G \in \mathscr{F}_\xi\)
\begin{align*}
E^{P_\eta} \left[\1_{G \cap \{\xi < \infty\}} P_{X_\xi} (F) \right] &= \int E^{P_x} \left[\1_{G \cap \{\xi < \infty\}} P_{X_\xi} (F) \right] \eta(\dd x) 
\\&= \int E^{P_x} \left[\1_{G \cap \{\xi < \infty\}} P_{x} \left(\theta_{\xi}^{-1} F|\mathscr{F}_\xi\right) \right] \eta(\dd x) 
\\&= \int E^{P_x} \left[\1_{G \cap \{\xi < \infty\}} \1_{\theta_{\xi}^{-1} F} \right] \eta(\dd x) 
\\&= E^{P_\eta} \left[\1_{G \cap \{\xi < \infty\}} \1_{\theta_{\xi}^{-1} F} \right],
\end{align*}
where we use the fact \(\{\xi < \infty\} \in \mathscr{F}_\xi\).
Hence, we find a \(P\)-null set \(N\in \mathscr{F}_\xi\) such that for \(\omega \in \complement N \cap \{\xi < \infty\}\) the equality \eqref{eq: markov initial arbi} holds for all \(F\) in a countable determining class of the countable \(\sigma\)-field \(\mathscr{F}= \sigma(X_{t}, t \in [0, \infty))\), see Lemma \ref{lem: cg}. A monotone class argument concludes the proof.
\qed

\subsection{Proof of Theorem \ref{theo:main1}}\label{sec: pf CMG}
The strategy of proof is based on ideas used in the finite dimensional case without explosion to study equivalence of laws of semimartingales, see \cite{J79,KLS-LACOM1, KLS-LACOM2}.
More precisely, it consists of three parts. First, we show that well-posedness is equivalent to complete local well-posedness. Second, we construct the candidate density process. Third, we apply Girsanov-type theorems to conclude the proof.
\subsubsection{Local Well-Posedness}\label{sec: loc uni}
The following definition is inspired by the concept of local uniqueness as introduced by Jacod and M\'emin \cite{JM76}.
\begin{definition}
	We say that the GMP \((A, b, a, K, \tau_\Delta -)\) is \emph{locally well-posed}, if for all \(x \in \mathbb{B}_\Delta\) there exists a \(P\in \mathcal{M}(A, b, a, K, \varepsilon_{x}, \tau_\Delta-)\) and for all \(\F\)-stopping times \(\xi\) we have 
	\(
	P = Q \textit{ on } \mathscr{F}_\xi
	\)
	whenever \(Q \in \mathcal{M}(A, b, a, K, \varepsilon_{x}, \xi)\).
	Moreover, if for all probability measures \(\eta\) on \((\mathbb{B}_\Delta, \mathscr{B}(\mathbb{B}_\Delta))\) there exists a \(P \in \mathcal{M}(A, b, a, K, \eta, \tau_\Delta-)\) and for all \(\F\)-stopping times \(\xi\) the equality \(P = Q\) holds on \(\mathscr{F}_\xi\) whenever \(Q \in \mathcal{M}(A, b, a, K, \eta, \xi)\), then we call the GMP \((A, b, a, K, \tau_\Delta-)\) \emph{completely locally well-posed}.
\end{definition}
We now show that local well-posedness is equivalent to well-posedness.
\begin{proposition}\label{prop:locuni}
	For a GMP \((A, b, a, K, \tau_{\Delta}-)\) the following are equivalent:
	\begin{enumerate}
		\item[\textup{(i)}] It is locally well-posed.
		\item[\textup{(ii)}] It is completely locally well-posed.
		\item[\textup{(iii)}] It is well-posed.
		\item[\textup{(iv)}] It is completely well-posed.
	\end{enumerate}
\end{proposition}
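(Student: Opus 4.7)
The easy half runs as follows. The implications $(\mathrm{ii})\Rightarrow(\mathrm{i})$ and $(\mathrm{iv})\Rightarrow(\mathrm{iii})$ are immediate. For $(\mathrm{i})\Rightarrow(\mathrm{iii})$ and $(\mathrm{ii})\Rightarrow(\mathrm{iv})$, the observation $\tau_n\nearrow\tau_\Delta$ together with the definition of $\mathscr{F}_{\tau_\Delta-}$ and the identity $\mathscr{F}=\mathscr{F}_{\tau_\Delta-}$ yields
\[
\mathscr{F}=\sigma\Bigl(\bigcup_{n\in\mathbb N}\mathscr{F}_{\tau_n}\Bigr),
\]
and since every GMP solution is in particular a solution of the stopped MP $(\cdot,\tau_n)$ for each $n$, local uniqueness on each $\mathscr{F}_{\tau_n}$ forces agreement on $\mathscr{F}$. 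The implication $(\mathrm{iii})\Rightarrow(\mathrm{iv})$ is the final assertion of Theorem \ref{theo:markov}, where $P_\eta=\int P_x\,\eta(dx)$ is exhibited as the unique GMP solution for arbitrary initial law $\eta$. The only substantial step is therefore $(\mathrm{iv})\Rightarrow(\mathrm{ii})$.

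Fix a probability measure $\eta$, let $P_\eta$ denote the unique GMP solution with initial law $\eta$, let $\xi$ be an $\F$-stopping time, and let $Q\in\mathcal{M}(A,b,a,K,\eta,\xi)$. Following the classical pasting strategy of \cite{J79,JS}, the plan is to construct a second GMP solution $\tilde Q$ with initial law $\eta$ that agrees with $Q$ on $\mathscr{F}_\xi$; complete well-posedness will then force $\tilde Q=P_\eta$, so that $Q=P_\eta$ on $\mathscr{F}_\xi$. Define the concatenation
\[
(\omega\otimes_\xi \omega')(t)\triangleq \omega(t)\,\mathbf{1}_{\{t<\xi(\omega)\}}+\omega'(t-\xi(\omega))\,\mathbf{1}_{\{t\ge \xi(\omega)\}},
\]
which lies in $\Omega$ on the measurable set $\{\omega'(0)=X_{\xi(\omega)}(\omega)\}$; the absorbed case $X_{\xi(\omega)}=\Delta$ is covered by the trivial solution $P_\Delta$ of Remark \ref{rem: trivial}. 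Exploiting the Borel measurability of $x\mapsto P_x$ established in Lemma \ref{lem: kern Borel}, set
\[
\tilde Q(G)\triangleq \int Q(d\omega)\int P_{X_{\xi(\omega)}(\omega)}(d\omega')\,\mathbf{1}_G(\omega\otimes_\xi \omega'),\qquad G\in\mathscr{F}.
\]
For $G\in\mathscr{F}_\xi$ the integrand depends on $\omega$ alone, hence $\tilde Q=Q$ on $\mathscr{F}_\xi$ and in particular $\tilde Q\circ X_0^{-1}=\eta$.

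The core of the argument is the verification that $\tilde Q\in\mathcal{M}(A,b,a,K,\eta,\tau_\Delta-)$. By Proposition \ref{coro:tau} it suffices to check, for a countable subset of $\mathcal{C}$ and each $n\in\mathbb N$, that $M^f_{\cdot\wedge \tau_n}$ is an $(\F,\tilde Q)$-martingale. I would decompose $M^f_{\cdot\wedge \tau_n}=M^f_{\cdot\wedge \tau_n\wedge \xi}+(M^f_{\cdot\wedge \tau_n}-M^f_{\cdot\wedge \tau_n\wedge \xi})$. The first summand is $\mathscr{F}_\xi$-measurable, so its $Q$-martingale property transfers verbatim to $\tilde Q$ via the agreement on $\mathscr{F}_\xi$; for the increment on $\of\xi\wedge\tau_n,\tau_n\gs$, taking $\tilde Q$-conditional expectations given $\mathscr{F}_{\xi\wedge \tau_n}$ reduces by construction to integrals against $P_{X_{\xi\wedge \tau_n}}$, under which $M^f_{\cdot\wedge \tau_n}$ is a genuine martingale by Corollary \ref{coro: mart C}. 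The uniform bound \eqref{uni bound} legitimises the exchange of conditioning and integration throughout. Once this is done, assumption $(\mathrm{iv})$ yields $\tilde Q=P_\eta$, closing the loop.

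The main obstacle is the careful execution of the pasting at the joining time $\xi$: one must verify that $\omega\otimes_\xi \omega'$ really lies in $\Omega$ without introducing a spurious jump at $\xi$, handle the joint measurability of the inner integral, and push the conditional computation on the increment $\of \xi\wedge \tau_n,\tau_n\gs$ through all subtleties of the measure-theoretic setup. The continuous explosion provided by Lemma \ref{lem:tau} is the key ingredient which prevents pathological behaviour at $\xi=\tau_\Delta$, and restricting to the sets $\{\xi<\tau_n\}$ before passing $n\to\infty$ (together with the countable reduction in Proposition \ref{coro:tau}) should render the remaining technical details essentially mechanical.
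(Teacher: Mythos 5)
Your proposal is correct and follows essentially the same route as the paper: the hard direction is handled by pasting \(Q\) at \(\xi\) with the unique solutions \(P_{X_{\xi}}\) (using the measurability of \(x \mapsto P_x\) from Lemma \ref{lem: kern Borel} and the countable reduction of Proposition \ref{coro:tau}) and then invoking uniqueness of the GMP. The only organizational difference is that you carry the pasting argument in (iv)\,\(\Rightarrow\)\,(ii) for an arbitrary initial law \(\eta\), whereas the paper performs it in (iii)\,\(\Rightarrow\)\,(i) for Dirac initial laws and passes to general \(\eta\) via a regular conditional probability given \(\mathscr{F}_0\); the substance --- including the identification of the stopped martingale after the shift, which the paper handles with the Courr\`ege--Priouret decomposition of stopping times (Lemma \ref{JS lemma}) --- is the same.
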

\begin{proof}
	We prove the following implications:
	\begin{center}
	(iii) \(\Longleftarrow\) (iv) \(\Longleftarrow\) (iii) \(\Longleftarrow\) (i) \(\Longleftarrow\) (ii) \(\Longleftarrow\) (i) \(\Longleftarrow\) (iii). 
	\end{center}
	First of all, note that (iv) \(\Longrightarrow\) (iii) is trivial. 
	
	The implication (iii) \(\Longrightarrow\) (iv) is implied by Theorem \ref{theo:markov}.\\
	\noindent
	Now, suppose that (i) holds. 
	Recalling Remark \ref{rem: trivial}, to prove (iii) it suffices to prove that the GMP \((A, b, a, K, \varepsilon_x, \tau_\Delta-)\) has a unique solution for all \(x \in \mathbb{B}\).
	By the definition of local well-posedness, for all \(x\in \mathbb{B}\), the GMP \((A, b, a, K, \varepsilon_{x}, \tau_{\Delta}-)\) has a solution.
	Suppose that \(P, Q \in \mathcal{M}(A, b, a, K, \varepsilon_{x}, \tau_\Delta-)\) for \(x \in \mathbb{B}\).
	By the definition of the GMP and local well-posedness, for all \(n \in \mathbb{N}\) we have
	\(
	P = Q \textup{ on } \mathscr{F}_{\tau_n}.
	\)
	Hence, \(P = Q\) on the \(\pi\)-system \(\bigcup_{n \in \mathbb{N}} \mathscr{F}_{\tau_n-}\). By a monotone class argument, \(P = Q\) on \(\bigvee_{n \in \mathbb{N}} \mathscr{F}_{\tau_n-} = \mathscr{F}_{\tau_\Delta-} = \mathscr{F}\). We have shown (i) \(\Longrightarrow\) (iii). 
	
	The implication (ii) \(\Longrightarrow\) (i) is trivial.\\
	\noindent
	Suppose now that (i) holds. Denote \(P_x \in \mathcal{M}(A, b, a, K, \varepsilon_x, \tau_{\Delta}-)\) for \(x \in \mathbb{B}_\Delta\).
	Let \(\eta\) be a probability measure on \((\mathbb{B}_\Delta, \mathscr{B}(\mathbb{B}_\Delta))\). First, Theorem \ref{theo:markov} yields that the GMP \((A, b, a, K, \eta, \tau_{\Delta}-)\) has the unique solution \(P_\eta = \int P_x \eta(\dd x)\). 
	Let \(\xi\) be an arbitrary \(\F\)-stopping time and \(Q_\eta \in \mathcal{M}(A, b, a, K, \eta, \xi)\).
	Due to similar arguments as used in the proof of Lemma \ref{lem: Mark prep}, the regular conditional probability \(Q_\eta(\cdot |\mathscr{F}_0)\) solves \(Q_\eta\)-a.s. the GMP \((A, b, a, K, \varepsilon_{X_0}, \xi)\). Hence, since we assume (i), \(Q_\eta\)-a.s. 
	\(
	Q_\eta(\cdot |\mathscr{F}_0) = P_{X_0}\) on \(\mathscr{F}_\xi.
	\)
	Now, we obtain that for all \(G \in \mathscr{F}_\xi\)
	\[
	Q_\eta(G) = E^{Q_\eta} \left[ Q_\eta (G|\mathscr{F}_0)\right] = E^{Q_\eta} \left[P_{X_0} (G)  \right] = \int P_x (G) \eta(\dd x) = P_\eta(G).
	\]
	We conclude that (ii) holds.
	
	Let us now turn to last implication and suppose that (iii) holds. In the following, we adapt ideas used in the proof of \cite[Theorem III.2.40]{JS} in a finite-dimensional setting without explosion.
	Again, we denote \(P_x \in \mathcal{M}(A, b, a, K, \varepsilon_x, \tau_{\Delta}-)\) for \(x \in \mathbb{B}_\Delta\).
	Following Dellacherie and Meyer \cite{DellacherieMeyer78}, we denote for two \(\omega, \omega' \in \Omega\) and \(t \in [0, \infty)\), by \(\omega/t/\omega'\) the element in \(\Omega\) defined by 
	\begin{align*}
	&\textup{ if } t \leq \tau_\Delta(\omega),\ X_s(\omega/t/\omega') = \begin{cases}
	X_s(\omega),&s <t,\\
	X_{s - t}(\omega'),& s \geq t,
	\end{cases}
	\\
	&\textup{ if } t > \tau_\Delta(\omega),\ \omega/t/\omega' = \omega.
	\end{align*}
	Now, let \(\alpha, \beta\) be two \(\F\)-stopping times and define \(V\colon \Omega \times\Omega \to [0, \infty]\) by 
	\begin{align}\label{V}
	V(\omega, \omega') = \begin{cases} (\alpha \vee \beta) (\omega/\alpha(\omega)/\omega') - \alpha(\omega/\alpha(\omega)/\omega'),& \alpha(\omega) < \infty, \omega'_0 = \omega_{\alpha(\omega)},\\
	0,&\textup{otherwise}.
	\end{cases}
	\end{align}
	The following lemma restates a special case of Courr\`ege and Priouret's \cite{zbMATH03218236} decomposition theorem for stopping times as given by Dellacherie and Meyer \cite[Theorem IV.103]{DellacherieMeyer78}. 
	\begin{lemma}\label{JS lemma}
		The map \(V\) is \(\mathscr{F}_\alpha \otimes \mathscr{F}\)-measurable such that \(V(\omega, \cdot)\) is an \(\F\)-stopping time for any \(\omega \in \Omega\), and \(\alpha(\omega) \vee \beta(\omega) = \alpha(\omega) + V(\omega, \theta_{\alpha(\omega)}\omega)\) for all \(\omega \in \{\alpha < \infty\}\).
	\end{lemma}
	Let \(\xi\) be an \(\F\)-stopping time and suppose that \(Q_{x} \in \mathcal{M}(A, b, a, K, \varepsilon_{x}, \xi)\). 
	Recalling Lemma \ref{lem: kern Borel}, we set for \(G \in \mathscr{F}\)
	\begin{align*}
	\mathsf{Q}_{x} (G) \triangleq Q_x(G \cap \{\xi = \infty\}) + \iint \1_{\{\xi(\omega) < \infty\}}\1_{G'}(\omega, \omega')P_{X_{\xi(\omega)}(\omega)} (\dd \omega')Q_{x}(\dd \omega),
	\end{align*}
	where \(G' \in \mathscr{F}_\xi \otimes \mathscr{F}\) is so that \[G \cap \{\xi < \infty\} 
	= \{\omega \in \Omega \colon \xi(\omega) < \infty, (\omega, \theta_{\xi (\omega)}\omega) \in G'\}.\]
	We stress that we can find \(G'\) since the trace of \(\mathscr{F}\) on \(\{\xi < \infty\}\) equals the trace of \(\mathscr{F}_\xi \vee \theta_{\xi}^{-1} (\mathscr{F})\) on \(\{\xi < \infty\}\), see \cite[Lemma III.2.44]{JS}.
	\begin{lemma}
		\(\mathsf{Q}_{x}\) is a probability measure on \((\Omega, \mathscr{F})\).
	\end{lemma}
	\begin{proof}
		Clearly, \[
		\hat{\mathsf{Q}}_{x}(\cdot) \triangleq \iint \1_{\{\xi(\omega) < \infty\}}\1_{\cdot}(\omega, \omega')P_{X_{\xi(\omega)}(\omega)} (\dd \omega')Q_{x}(\dd \omega) 
		\]
		defines a measure on \((\Omega \times \Omega, \mathscr{F}_\xi \otimes \mathscr{F})\). The total mass is given by
		\(
		\hat{\mathsf{Q}}_{x} (\Omega \times \Omega) =Q_{x} (\xi < \infty).
		\)
		We have to show that \(\mathsf{Q}_x(G)\) is defined unambiguously for all \(G \in \mathscr{F}\). 
		For \(i = 1, 2\), let \(G^i \in \mathscr{F}_\xi \otimes \mathscr{F}\) be such that 
		\begin{align*}
		G \cap \{\xi < \infty\} = \{\omega \in \Omega \colon \xi (\omega) < \infty, (\omega, \theta_{\xi(\omega)}\omega) \in G^i\}.
		\end{align*}
		By symmetry, it suffices to show that 
		\begin{align*}
		\hat{\mathsf{Q}}_x (G^1 \cup G^2) = \hat{\mathsf{Q}}_x (G^1), 
		\end{align*}
		which is equivalent to 
		\begin{align*}
		\hat{\mathsf{Q}}_x (G^2 \backslash G^1) = 0.
		\end{align*}
		Thus, since it holds that
		\begin{align*}
	 \{\omega \in \Omega& \colon \xi (\omega) < \infty, (\omega, \theta_{\xi(\omega)}\omega) \in G^2 \backslash G^1\} \\&= G \cap \{\xi < \infty\} \cap
	 \{\omega \in \Omega \colon (\omega, \theta_{\xi(\omega)}\omega) \in \complement G^1\} 
	 \\&= \{\xi < \infty\} \cap \{\omega \in \Omega \colon (\omega, \theta_{\xi(\omega)}\omega) \in G^1\}  \cap \{\omega \in \Omega \colon (\omega, \theta_{\xi(\omega)}\omega) \in \complement G^1\}  \\&= \emptyset,
		\end{align*}
		 it suffices to prove \(\hat{\mathsf{Q}}_{x}(G') = 0\) whenever \(G \triangleq \emptyset\).
		Suppose that \(\hat{\mathsf{Q}}_{x} (G') > 0\). Then, by the definition of \(\hat{\mathsf{Q}}_x\), there exists \((\omega, \omega') \in G'\) such that \(\xi(\omega) < \infty\) and \(\omega_{\xi(\omega)} = \omega'_0\). 
		We have for all \(t \in [0, \xi(\omega)]\) that
		\(
		X_t (\omega/\xi(\omega)/\omega') = X_t (\omega).
		\)
		Hence, by Galmarino's test, see \cite[Theorem IV.100]{DellacherieMeyer78}, we have 
		\(
		\xi (\omega/ \xi(\omega)/\omega') = \xi(\omega).
		\)
		Since \(\{\omega^* \in \Omega \colon (\omega^*, \omega') \in G'\} \in \mathscr{F}_\xi\), 
		we deduce again from Galmarino's test that \((\omega/\xi(\omega)/\omega', \omega') \in G'\).
		Moreover, by the definition of \(\omega/\xi(\omega)/\omega'\) and \(\omega_{\xi(\omega)} = \omega'_0\) we obtain
		\[
		\theta_{\xi(\omega/\xi(\omega)/\omega')}(\omega/\xi(\omega)/\omega') = \theta_{\xi(\omega)}(\omega/\xi(\omega)/\omega') = \omega'.
		\]
		Hence, we conclude \((\omega/\xi(\omega)/\omega', \theta_{\xi(\omega/\xi(\omega)/\omega')} (\omega/\xi(\omega)/\omega')) \in G'\). This, however, implies the contradiction \(\omega/\xi(\omega)/\omega' \in \emptyset\). Therefore, \(\hat{\mathsf{Q}}_{x} (G') = 0\).
	\end{proof}
	
	We claim that \(\mathsf{Q}_{x} \in \mathcal{M}(A, b, a, K, \varepsilon_{x}, \tau_\Delta-)\). To see this, note first the following
	\begin{lemma}
		For all \(n \in \mathbb{N}\) and \(f \in \mathcal{C}\) the process \(M^f_{\cdot \wedge \tau_n}\) is an \((\F, \mathsf{Q}_{x})\)-martingale.
	\end{lemma}
	\begin{proof}
		Let \(n \in \mathbb{N}\) and \(f \in \mathcal{C}\).
		We show that \(\E^{\mathsf{Q}_{x}}[M^f_{\gamma \wedge\tau_n}] = 0\) for all bounded \(\F\)-stopping times \(\gamma\), which is equivalent to our claim, see \cite[Proposition II.1.4]{RY}.
		Due to Corollary \ref{coro: mart C}, see Remark \ref{rem: trivial}, for all \(y \in \mathbb{B}_\Delta\), the process \(M^f_{\cdot \wedge \tau_n}\) is an \((\F, P_y)\)-martingale.
		Moreover, by Lemma \ref{lem: mart}, the process \(M^f_{\cdot \wedge \tau_n \wedge \xi}\) is an \((\F, Q_{x})\)-martingale.  
		Let \(V\) be defined as in \eqref{V} with \(\alpha\) replaced by \(\xi\) and \(\beta\) replaced by \(\gamma\), and let \(V^n\) defined as \(V\) in \eqref{V} with \(\alpha\) replaced by \(\xi\) and \(\beta\) replaced by \(\tau_n\).
		We compute 
		\begin{align*}
		\E^{\mathsf{Q}_{x}}\left[M^f_{\gamma \wedge \tau_n}\right] &= \E^{\mathsf{Q}_{x}}\left[M^f_{\gamma \wedge \tau_n \wedge \xi} + M^f_{\gamma \wedge \tau_n} - M^f_{\gamma \wedge \tau_n \wedge \xi}\right] 
		\\&= \E^{Q_{x}}\left[M^f_{\gamma \wedge\tau_n \wedge \xi}\right] + \E^{\mathsf{Q}_{x}}\left[\left(M^f_{\gamma \wedge\tau_n} - M^f_{\xi}\right)\1_{\{\gamma \wedge \tau_n> \xi\}} \right] 
		\\&=  \E^{\mathsf{Q}_{x}}\left[\left(M^f_{(\gamma \vee \xi) \wedge (\tau_n\vee \xi)} - M^f_{\xi}\right)\1_{\{\gamma \wedge\tau_n> \xi\}} \right] 
		\\&= \E^{\mathsf{Q}_{x}}\left[\left(\left[M^f \circ \theta_\xi\right]_{V(\cdot, \theta_\xi) \wedge V^n(\cdot, \theta_\xi)} \right)\1_{\{\gamma \wedge \tau_n> \xi\}} \right] 
		\\&= \int  \1_{\{\gamma(\omega) \wedge \tau_n(\omega) > \xi(\omega)\}} \E^{P_{X_{\xi(\omega)}(\omega)}}\left[M^f_{V(\omega, \cdot) \wedge V^n(\omega, \cdot)}\right]Q_{x}(\dd \omega).
		\end{align*}
		Note that for \(\omega \in \Omega\) with \(\tau_n(\omega) > \xi(\omega)\), and for \(P_{X_{\xi(\omega)}(\omega)}\)-a.a. \(\omega' \in \Omega\)
		it holds that \begin{align*}
		\tau^*_n(\omega/\xi(\omega)/\omega') &= \tau^*_n(\omega') + \xi(\omega),\\
\xi(\omega /\xi(\omega)/\omega') &= \xi(\omega),\end{align*}
		which implies
		\begin{align*}
		V^n(\omega, \omega') &= \begin{cases}
		n - \xi (\omega),&n - \xi(\omega) \leq \tau^*_n(\omega'),\\
		\tau^*_n(\omega'),&\tau^*_n(\omega') + \xi(\omega) \leq n.
		\end{cases}
		\end{align*}
		For these \(\omega\) and \(\omega'\) we conclude that \(V^n(\omega, \omega') = V^n(\omega, \omega') \wedge \tau_n(\omega')\).
		Hence, for \(\omega \in \Omega\) with \(\gamma(\omega)\wedge \tau_n(\omega) > \xi(\omega)\) it holds that
		\begin{align*}
		\E^{P_{X_{\xi(\omega)}(\omega)}} \left[ M^f_{V(\omega, \cdot)\wedge V^n(\omega, \cdot)}\right] =
		\E^{P_{X_{\xi(\omega)}(\omega)}} \left[ M^f_{V(\omega, \cdot) \wedge V^n(\omega, \cdot) \wedge \tau_n}\right] = 0.
		\end{align*}
		Thus, we conclude the proof.
	\end{proof}
Since we also have
	\(
	\mathsf{Q}_{x}(X_0 = x) = Q_{x}(X_0 = x) = 1,
	\)
	it follows that \(\mathsf{Q}_{x}\in \mathcal{M}(A, b, a, K, \varepsilon_{x}, \tau_\Delta-)\). The assumption of uniqueness yields \(\mathsf{Q}_{x} = P_{x}\) on \(\mathscr{F}_{\tau_\Delta-} = \mathscr{F}\). 
	Since for \(G \in \mathscr{F}_\xi\) we may choose \(G'\) to be \(G \times \Omega\), for all \(G \in \mathscr{F}_\xi\) it holds that
	\(
	Q_{x}(G) = \mathsf{Q}_{x}(G) = P_{x}(G).
	\)
	This implies (i), and the proposition is proven.
\end{proof}
\subsubsection{A Candidate Density Process}\label{sec: cand density}
We introduce the random measure \(\mu^{X}\) on \(([0, \infty) \times \mathbb{B}_\Delta, \mathscr{B}([0, \infty)) \otimes \mathscr{B}(\mathbb{B}_\Delta))\) by 
\begin{align}\label{mu X}
\mu^{X} (\omega, \dd t, \dd x) \triangleq \sum_{s \in [0, \infty)} \1_{\{\Delta X_s(\omega) \not =0\}}  \varepsilon_{(s, \Delta X_s(\omega))}(\dd t, \dd x),\ \omega \in \Omega.
\end{align}
Let \(P\) be a probability measure on \((\Omega, \mathscr{F})\).
\begin{lemma}\label{lem: sigma finite 1}
	The random measure \(\mu^X\) is \(\F^P\)-optional and the associated Dol\'eans measure \(M^P_{\mu^X}\) is \(\mathscr{P}^P\)-\(\sigma\)-finite.
\end{lemma}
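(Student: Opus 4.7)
The plan is to enumerate the jump set of $X$ by $\F$-stopping times and then deduce both assertions from this representation together with the structural statement for random measures of the form \eqref{eq: rmj}.

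For each $n \in \mathbb{N}$ I would set recursively $T_0^n \triangleq 0$ and
\[
T_{k+1}^n \triangleq \inf\{t > T_k^n \colon d(X_t, X_{t-}) \geq 1/n\}.
\]
Since $X$ is $\F$-adapted and \cadlag in $(\mathbb{B}_\Delta, d)$, the $T_k^n$ are $\F^P$-stopping times by a standard d\'ebut argument (note that $\F^P$ satisfies the usual conditions). Moreover, for every $\omega \in \Omega$ the \cadlag map $t \mapsto X_t(\omega)$ has at most finitely many jumps of $d$-size $\geq 1/n$ on any bounded interval, so $T_k^n(\omega) \nearrow \infty$ as $k \to \infty$. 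Hence the jump set $D = \{(\omega, t) : X_t(\omega) \neq X_{t-}(\omega)\}$ coincides with $\bigcup_{n, k} \of T_k^n \gs$ and is $\F^P$-thin. Defining $\beta_t(\omega) \triangleq \Delta X_t(\omega)$ on $D$ and $\beta_t(\omega) \triangleq 0$ elsewhere, the measure $\mu^X$ takes the form \eqref{eq: rmj}, and the remark following that display immediately gives $\F^P$-optionality of $\mu^X$.

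For the $\mathscr{P}^P$-$\sigma$-finiteness of $M^P_{\mu^X}$ I would exhibit an explicit predictable covering. Put $A_n \triangleq \{x \in \mathbb{B}_\Delta : d(x, 0) \geq 1/n\}$ and
\[
\Omega_{n, k} \triangleq \of 0, T_k^n \of \times A_n, \qquad \Omega_0 \triangleq \Omega \times [0, \infty) \times \{0\}.
\]
Each $\Omega_{n, k}$ belongs to $\mathscr{P}^P \otimes \mathscr{B}(\mathbb{B}_\Delta)$ because $\of 0, T_k^n \of$ is $\F^P$-adapted and left-continuous in $t$, hence $\F^P$-predictable; $\Omega_0$ is trivially predictable. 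By the very construction of the $T_k^n$, at most the first $k - 1$ jumps of $d$-size $\geq 1/n$ lie inside $\of 0, T_k^n \of$, so $\mu^X(\omega, \Omega_{n, k}) \leq k - 1$ uniformly in $\omega$, giving $M^P_{\mu^X}(\Omega_{n, k}) \leq k - 1 < \infty$. Furthermore $M^P_{\mu^X}(\Omega_0) = 0$, since $\mu^X$ charges only points with $\Delta X_s \neq 0$. Coverage follows case by case: for $(\omega, t, x) \in \Omega \times [0, \infty) \times \mathbb{B}_\Delta$ with $x = 0$ one has $(\omega, t, x) \in \Omega_0$; if $x \neq 0$, pick any $n \geq 1/d(x, 0)$ so that $x \in A_n$ and then any $k$ with $T_k^n(\omega) > t$, which exists since $T_k^n(\omega) \to \infty$ on every path, and conclude $(\omega, t, x) \in \Omega_{n, k}$.

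The one delicate point is the use of the left-open graph $\of 0, T_k^n \of$ rather than the closed graph $\of 0, T_k^n \gs$: for a general $\F^P$-stopping time the closed graph is only $\F^P$-optional and would not qualify for $\mathscr{P}^P$-$\sigma$-finiteness. Dropping the endpoint costs one jump in the count (yielding the bound $k - 1$ instead of $k$), while the pointwise divergence $T_k^n(\omega) \to \infty$ for every $\omega$ — not merely almost surely — is precisely what guarantees the literal equality $\Omega_0 \cup \bigcup_{n, k} \Omega_{n, k} = \Omega \times [0, \infty) \times \mathbb{B}_\Delta$ demanded by the definition.
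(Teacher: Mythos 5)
Your overall strategy is sound and close to the paper's: an explicit enumeration of the jump times to get thinness and hence optionality of \(\mu^X\), and a covering by stopped stochastic intervals times annuli for the \(\sigma\)-finiteness. However, the step you single out as ``the one delicate point'' is resolved exactly backwards, and as written the \(\sigma\)-finiteness argument fails. For an arbitrary \(\F^P\)-stopping time \(T\) it is the \emph{closed} interval \(\of 0, T\gs\) that is \(\F^P\)-predictable --- these intervals, together with the sets \(A \times \{0\}\) for \(A \in \mathscr{F}^P_0\), generate \(\mathscr{P}^P\) --- whereas \(\of 0, T\of\) is in general only optional: its indicator \(\1_{\{t < T\}}\) is right-continuous but \emph{not} left-continuous (it drops from \(1\) to \(0\) at \(t = T\)), and \(\of 0, T\of\) is predictable if and only if its complement \(\of T, \infty\of\) is, i.e. if and only if \(T\) is a predictable time. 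Your \(T^n_k\) are jump times of \(X\); for a solution of the GMP the compensator of \(\mu^X\) is absolutely continuous in time, so these times are totally inaccessible and \(\of 0, T^n_k\of\) is genuinely not predictable. Hence your sets \(\Omega_{n,k}\) need not lie in \(\mathscr{P}^P \otimes \mathscr{B}(\mathbb{B}_\Delta)\), and the proposed family does not witness \(\mathscr{P}^P\)-\(\sigma\)-finiteness.

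The repair is immediate: take \(\of 0, T^n_k\gs \times A_n\) instead, which is predictable and carries at most \(k\) atoms of \(\mu^X(\omega, \cdot)\) per path; the coverage argument is unaffected since \(T^n_k(\omega) \to \infty\) pointwise. With that one correction your proof goes through and essentially coincides with the paper's, which stops at the first time the counting process \(\mu^X([0,t] \times \{x \colon \|x\| \geq p^{-1}\})\) exceeds \(n\) and likewise uses the closed interval \(\of 0, \gamma(n,p)\gs\). Your derivation of optionality via an explicit thin-set construction, in place of the paper's citation of Dellacherie--Meyer, is fine.
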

\begin{proof}
	Thanks to \cite[Theorem IV.88B, Remark below]{DellacherieMeyer78}, since \(\mathbb{B}_\Delta\) is Polish, the set \(\{\Delta X \not = 0\}\) is \(\F^P\)-thin. Hence, \(\mu^X\) is of the form \eqref{eq: rmj} and \cite[II.1.15]{JS} yields that \(\mu^X\) is \(\F^P\)-optional.
	
	It remains to show that \(M^P_{\mu^X}\) is \(\mathscr{P}^P\)-\(\sigma\)-finite. This follows as in \cite[Example 2, pp. 160]{liptser1989theory}.
	More precisely, for \(n, p \in \mathbb{N}\) set 
	\[	\gamma (n, p) \triangleq \inf( t \in [0, \infty) \colon \mu^X([0, t] \times \{x \in \mathbb{B}_\Delta \colon \|x\| \geq p^{-1}\}) > n).
	\]	
	The random time \(\gamma(n, p)\) is an \(\F^P\)-stopping time. 
	Hence, the set \(\of 0, \gamma(n, p)\gs\) is \(\mathscr{P}^P\)-measurable and the set 
	\[
	\Omega_{n, p} \triangleq \of 0, \gamma(n, p)\gs \times \{z \in \mathbb{B}_\Delta \colon \|z\| \geq p^{-1}\}
	\]
	is \(\mathscr{P}^P \otimes \mathscr{B}(\mathbb{B}_\Delta)\)-measurable. Clearly, we have \begin{align*}
	\Omega_{n, p} &\nearrow \Omega \times [0, \infty) \times \{z \in \mathbb{B}_\Delta \colon \|z\| \geq p^{-1}\}, \quad n \to \infty 
	\\&\nearrow \Omega \times [0, \infty) \times \mathbb{B}_\Delta,\quad p \to \infty,
	\end{align*} and 
	\(
	\1_{\Omega_{n, p}} \star \mu^X_\infty \leq n.
	\)
	Now, we find a sequence \((n_k)_{k \in \mathbb{N}} \subset \mathbb{N}\) such that \(\bigcup_{p \in \mathbb{N}} \Omega_{n_p, p} = \Omega \times [0, \infty) \times E\).
	Hence, \(M^P_{\mu^X}\) is \(\mathscr{P}^P\)-\(\sigma\)-finite and the proof is completed. 
\end{proof}

For the rest of this section we suppose that \(P \in \mathcal{M}(A, b, a, K, \eta, \tau_{\Delta}-)\) and \(\eta(\mathbb{B}) = 1\).
In this case we can explicitly determine a \(P\)-version of the \(\F^P\)-predictable compensator of \(\mu^X\).
We define the random measure \(\nu^X\) on \(([0, \infty) \times \mathbb{B}_\Delta, \mathscr{B}([0, \infty)) \otimes \mathscr{B}(\mathbb{B}_\Delta))\) by 
\[
\nu^X (\omega, \dd t, \dd x) \triangleq 
\1_\mathbb{B} (x) K(X_{t-}(\omega), \dd x) \dd t, 
\]
with the convention that \(K(\Delta, \cdot) \triangleq 0\).
\begin{lemma}\label{lem: nu comp}
	The random measure \(\nu^X\) is \(\F^P\)-predictable and the associated Dol\'eans measure \(M^P_{\nu^X}\) is \(\mathscr{P}^P\)-\(\sigma\)-finite.
\end{lemma}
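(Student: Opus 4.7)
The plan is to treat the two assertions separately. For $\F^P$-predictability of $\nu^X$, I would first observe that the $\mathbb{B}_\Delta$-valued process $(\omega, s) \mapsto X_{s-}(\omega)$ is left-continuous and $\F^P$-adapted, hence $\F^P$-predictable. A routine monotone class argument then reduces the claim to showing that for each fixed $G \in \mathscr{B}(\mathbb{B}_\Delta)$ and each non-negative $\mathscr{P}^P$-measurable $V$, the process
\[
(\omega, t) \mapsto \int_0^t V(\omega, s) K(X_{s-}(\omega), G \cap \mathbb{B}) \dd s
\]
is $\F^P$-predictable. This is immediate because $z \mapsto K(z, G \cap \mathbb{B})$ is Borel on $\mathbb{B}_\Delta$ under the convention $K(\Delta, \cdot) = 0$, so its composition with the predictable process $X_-$ is predictable, and Lebesgue integration in time preserves predictability.

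For the $\mathscr{P}^P$-$\sigma$-finiteness of $M^P_{\nu^X}$, I would imitate the construction in Lemma \ref{lem: sigma finite 1} but exploit the localizing times $\tau_n$ in place of the jump-counting times used there. For $n, p \in \mathbb{N}$ set
\[
\Omega_{n, p} \triangleq \of 0, \tau_n \of \times \{x \in \mathbb{B}_\Delta \colon \|x\| \geq p^{-1}\} \cup \of \tau_\Delta, \infty \of \times \mathbb{B}_\Delta \cup \Omega \times [0, \infty) \times \{0\}.
\]
Each piece lies in $\mathscr{P}^P \otimes \mathscr{B}(\mathbb{B}_\Delta)$, as $\of 0, \tau_n \of$ and $\of \tau_\Delta, \infty \of = \complement \of 0, \tau_\Delta \of$ are predictable for any stopping time. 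On $\of 0, \tau_n \of$ the process $X_-$ is norm-bounded by $n$, so assumption \eqref{eq: int ball around origin} yields
\[
M^P_{\nu^X}\bigl(\of 0, \tau_n \of \times \{x \in \mathbb{B}_\Delta \colon \|x\| \geq p^{-1}\}\bigr) \leq n \sup_{\|z\| \leq n} K(z, \{x \in \mathbb{B} \colon \|x\| \geq p^{-1}\}) < \infty.
\]
The second piece has $M^P_{\nu^X}$-measure zero because $X_{s-} = \Delta$ for all $s > \tau_\Delta$ by the definition of $\Omega$, combined with the convention $K(\Delta, \cdot) = 0$ and the Lebesgue-negligibility of the single time $\{\tau_\Delta\}$; the third piece vanishes by $K(\cdot, \{0\}) = 0$.

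The only mildly delicate point, which I expect to be the main obstacle, is verifying that $\bigcup_{n, p} \Omega_{n, p}$ genuinely equals $\Omega \times [0, \infty) \times \mathbb{B}_\Delta$, and not merely covers the space $M^P_{\nu^X}$-a.e. Since $\tau_n \nearrow \tau_\Delta$ pointwise, taking unions first over $n$ and then over $p$ collapses the first piece to $\of 0, \tau_\Delta \of \times (\mathbb{B}_\Delta \setminus \{0\})$, where $\|\Delta\| = \infty$ ensures that $\Delta$ itself belongs to every ball complement. Combined with $\of \tau_\Delta, \infty \of \times \mathbb{B}_\Delta$ this exhausts all $(\omega, t, x)$ with $x \neq 0$, and the third piece finally absorbs the $x = 0$ slice, completing the argument.
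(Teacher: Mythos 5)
Your treatment of the predictability of \(\nu^X\) follows essentially the same route as the paper: reduce to product-form integrands, use that \(z \mapsto K(z, G\cap\mathbb{B})\) composed with the predictable process \(X_-\) is predictable, and that integration in time preserves predictability. Your covering of the \(x=0\) slice and of the post-explosion region in the \(\sigma\)-finiteness part is also correct (and slightly more careful than the paper about the \(\{0\}\) slice). The genuine problem is your assertion that \(\of 0, \tau_n\of\) and \(\of\tau_\Delta,\infty\of = \complement\of 0,\tau_\Delta\of\) are ``predictable for any stopping time''. This is false. For a stopping time \(\tau\) the closed interval \(\of 0,\tau\gs\) is always predictable (its indicator is left-continuous and adapted), but \(\of 0,\tau\of = \of 0,\tau\gs\setminus\of\tau\gs\) is predictable if and only if the graph \(\of\tau\gs\) is, i.e.\ if and only if \(\tau\) is a predictable time. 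The first approach times \(\tau_n\) are entrance times of a \cadlag process into a closed set and are in general totally inaccessible rather than predictable (the first jump time of a Poisson process is the standard counterexample). Consequently your sets \(\Omega_{n,p}\) are not known to lie in \(\mathscr{P}^P\otimes\mathscr{B}(\mathbb{B}_\Delta)\), and the definition of \(\mathscr{P}^P\)-\(\sigma\)-finiteness is not verified.

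The repair is immediate and is what the paper does: replace \(\of 0,\tau_n\of\) by \(\of 0,\tau_n\gs\), which is always predictable and on which one still has \(\|X_{s-}\|\leq n\), so the bound via \eqref{eq: int ball around origin} is unaffected; the union \(\bigcup_n\of 0,\tau_n\gs\cup\of\tau_\Delta,\infty\of\) still exhausts \(\Omega\times[0,\infty)\). For the piece \(\of\tau_\Delta,\infty\of\) you must invoke that \(\tau_\Delta\) \emph{is} an \(\F^P\)-predictable time, announced by \((\tau_n)_{n\in\mathbb{N}}\) --- this is Remark \ref{rem: exp time pred}, which rests on Lemma \ref{lem:tau} and hence on the standing assumption that \(P\) solves the GMP with \(\eta(\mathbb{B})=1\); it is not a fact about arbitrary stopping times. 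With these two corrections your argument coincides with the paper's proof.
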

\begin{proof}
	Let \(W\) be a non-negative \(\mathscr{P}^P \otimes \mathscr{B}(\mathbb{B}_\Delta)\)-measurable function \(W\). It is well-known that there exists an increasing sequence \((W^k)_{k \in \mathbb{N}}\) of simple non-negative \(\mathscr{P}^P \otimes \mathscr{B}(\mathbb{B}_\Delta)\)-measurable functions such that \(\lim_{k \to \infty} W^k(\omega, t, x) = W(\omega, t, x)\) for all \((\omega, t, x) \in \Omega \times [0, \infty) \times \mathbb{B}_\Delta\). Set \(G_m \triangleq \{x \in \mathbb{B}_\Delta \colon \|x\| \geq m^{-1}\}\) for \(m \in \mathbb{N}\).
	Note that \(\1_{\gs \tau_{\Delta}, \infty\of} \star \nu^X_\infty = 0\) by the definition of \(\Delta\) as an absorbing state. 
	Recall \eqref{eq: gamma}. It holds that \begin{align}\label{eq: decomp Gamma}
	\bigcup_{n \in \mathbb{N}}\of 0, \tau_n\gs = \left(\of 0, \tau_{\Delta}\of \ \cap\ \Gamma \times [0, \infty)\right) \cup \left( \of 0, \tau_{\Delta}\gs\ \cap\ \complement \Gamma \times [0, \infty)\right).
	\end{align}
	Since \(X_{\tau_{\Delta}-} = \Delta\) on \(\Gamma \cap \{\tau_{\Delta} < \infty\}\), 
	it follows from the convention~\(K(\Delta, \cdot) = 0\) that \(\1_{\of \tau_{\Delta}\gs} \1_\Gamma \star \nu^X_\infty = 0\). Hence,  
	using the monotone convergence theorem, we obtain
	\[
	W \star \nu^X_t (\omega) = \lim_{k \to \infty} \lim_{n \to \infty} \lim_{m \to \infty} W^k\1_{\of 0, \tau_n\gs} \1_{G_m}\star \nu^X_t (\omega)
	\]
	for all \((\omega, t) \in \Omega \times [0, \infty)\).
	Due to \eqref{eq: int ball around origin}, we have
	\begin{align*}
	\int W^k (\cdot, \cdot, x)& \1_{\of 0, \tau_n\gs}  \1_{G_m \cap \mathbb{B}} (x) K(X_{-}, \dd x) \\&\leq \textup{const. } \1_{[0, n]} \sup_{\|z\| \leq n}K(z, G_m \cap \mathbb{B} ) < \infty,
	\end{align*}
	which implies that \(W^k \1_{\of 0, \tau_n\gs} \1_{G_m} \star \nu^X\) is a real-valued continuous \(\F^P\)-adapted process.
	Thus, it is \(\F^P\)-predictable and, as a pointwise limit, so is the process \(W \star \nu^X\). In other words, we have shown that the random measure \(\nu^X\) is \(\F^P\)-predictable. 
	
	Define 
	\[
	\Omega_{n, m} \triangleq \of 0, \tau_n\gs \cup \of \tau_\Delta, \infty\of \ \times \{z \in \mathbb{B}_\Delta\colon \|z\| \geq m^{-1}\},
	\]
	which is a \(\mathscr{P}^P\otimes \mathscr{B}(\mathbb{B}_\Delta)\)-measurable set, see Remark \ref{rem: exp time pred} and \cite[Proposition I.2.12]{JS}.
	We have 
	\[
	\1_{\Omega_{n, m}} \star \nu^X_\infty 
	\leq n \sup_{\|x\| \leq n}K(x, \{z \in \mathbb{B}\colon \|z\| \geq m^{-1}\}) < \infty,
	\]
	see \eqref{eq: int ball around origin}.
	Now, since \begin{align*}
	\Omega_{n, m} &\nearrow \Omega \times [0, \infty) \times \{z \in \mathbb{B}_\Delta \colon \|z\| \geq m^{-1}\},\quad n \to \infty\\ &\nearrow \Omega \times [0, \infty) \times \mathbb{B}_\Delta,\quad m \to \infty,\end{align*}
	we find a sequence \((n_k)_{k \in \mathbb{N}} \subset \mathbb{N}\) such that \(\bigcup_{m \in \mathbb{N}} \Omega_{n_m, m} = \Omega \times [0, \infty) \times E\).
	Thus, \(M^P_{\nu^X}\) is \(\mathscr{P}^P\)-\(\sigma\)-finite.
\end{proof}
\begin{lemma}\label{lem: comp}
	The random measure \(\nu^X\) is the \(\F^P\)-predictable \(P\)-compensator of~\(\mu^X\).
\end{lemma}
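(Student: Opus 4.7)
The plan is to verify the defining property of the compensator, namely $M^P_{\mu^X}(W) = M^P_{\nu^X}(W)$ for every non-negative $\mathscr{P}^P \otimes \mathscr{B}(\mathbb{B}_\Delta)$-measurable $W$. Lemmata \ref{lem: sigma finite 1} and \ref{lem: nu comp} already furnish $\mathscr{P}^P$-$\sigma$-finiteness of both Dol\'eans measures and $\F^P$-predictability of $\nu^X$, and the uniqueness (up to $P$-evanescence) of the compensator then delivers the claim. A monotone class argument, together with the observation that $\mathscr{B}(\mathbb{B}_\Delta)$ is generated by the cylinder sets of the form $\{x \in \mathbb{B} \colon \la x, y^*_j\ra \in B_j, j = 1, \ldots, d\}$ with $y^*_j \in D(A^*)$ and $B_j \in \mathscr{B}(\mathbb{R})$ (which follows from the norm density of $D(A^*)$ in $\mathbb{B}^*$ combined with Hahn--Banach separation of points of the separable space $\mathbb{B}$), will reduce the task to integrands of the product form
\begin{align*}
W(\omega, t, x) = \1_A(\omega)\, \1_{(u, v]}(t)\, g(\la x, y^*_1\ra, \ldots, \la x, y^*_d\ra),
\end{align*}
for $A \in \mathscr{F}^P_u$, $u < v$, $y^*_j \in D(A^*)$ and $g \in C^2_c(\mathbb{R}^d)$ with $g(0) = 0$.

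Next I would localize at $\tau_n$. By Lemma \ref{lem:tau}, $P$-a.s.\ $X_{\tau_\Delta-} = \Delta$ on $\{\tau_\Delta < \infty\}$, so $X$ has no jump at $\tau_\Delta$; in particular $\mu^X$ is $P$-a.s.\ concentrated on $\bigcup_n \of 0, \tau_n \gs$, as one sees from the decomposition \eqref{eq: decomp Gamma} combined with $P(\Gamma) = 1$. On the other hand, the convention $K(\Delta, \cdot) = 0$ combined with the absorbing nature of $\Delta$ confines $\nu^X$ to the same set, so it is enough to verify
\begin{align*}
E^P\bigl[W\, \1_{\of 0, \tau_n \gs} \star \mu^X_\infty\bigr] = E^P\bigl[W\, \1_{\of 0, \tau_n \gs} \star \nu^X_\infty\bigr]
\end{align*}
for each $n \in \mathbb{N}$ and then pass to the limit via monotone convergence. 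On $\of 0, \tau_n \gs$ the processes $\widehat{X}_{\cdot \wedge \tau_n}$ and $X_{\cdot \wedge \tau_n}$ are $P$-indistinguishable, and Lemma \ref{lem: equi JS}(v) exhibits $Y^n \triangleq (\la \widehat X_{\cdot \wedge \tau_n}, y^*_j\ra)_{j = 1}^d$ as an $(\F^P, P)$-semimartingale whose jump measure $\mu^{Y^n}$ admits the $\F^P$-predictable compensator
\begin{align*}
\nu^{Y^n}([0, \cdot], G) = \int_0^{\cdot \wedge \tau_n} \int \1_G(\la x, y^*_1\ra, \ldots, \la x, y^*_d\ra)\, K(\widehat{X}_{s-}, dx)\, ds
\end{align*}
for Borel $G \subseteq \mathbb{R}^d$ with $0 \notin G$. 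Because $g(0) = 0$, we may rewrite $W\, \1_{\of 0, \tau_n \gs} \star \mu^X_\infty = \1_A \1_{(u,v]}(\cdot)\, g \star \mu^{Y^n}_\infty$, and analogously for $\nu^X$ versus $\nu^{Y^n}$; the target identity then reduces to the defining property of $\nu^{Y^n}$ as the predictable compensator of $\mu^{Y^n}$.

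The main obstacle I anticipate is the monotone class step that passes from cylinder functions based on finitely many $y^*_j \in D(A^*)$ to arbitrary Borel sets in $\mathbb{B}_\Delta$: one must verify that a countable subfamily of $D(A^*)$ separates points of $\mathbb{B}$ so that the induced cylinder $\sigma$-field equals $\mathscr{B}(\mathbb{B})$, and then confirm that the Dol\'eans measure identity extends by the Dynkin system theorem. Everything else is bookkeeping facilitated by the integrability assumptions \eqref{eq: assp K}--\eqref{eq: int ball around origin}, which guarantee that the relevant integrals are finite on bounded sets and thus justify the various applications of monotone convergence.
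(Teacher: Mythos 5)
Your proposal is correct and follows essentially the same route as the paper: a monotone class reduction to cylindrical test sets built from finitely many $y^*_j \in D(A^*)$, localization at $\tau_n$ via Lemma \ref{lem:tau} and the convention $K(\Delta,\cdot)=0$, and identification of the localized Dol\'eans measures through the finite-dimensional compensator of $\mu^{Y^n}$ supplied by Lemma \ref{lem: equi JS}. The only cosmetic difference is that the paper takes stochastic intervals $\of 0,\xi\gs$ (rather than predictable rectangles) as the time component of the generating class, which makes the required $\sigma$-finiteness of the generator immediate via the sets $\of 0,\tau_n\gs \times \{\|x\|\geq m^{-1}\}$.
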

\begin{proof}
	As already pointed out before, there exists an \(\F^P\)-predictable \(P\)-compensator \(\nu\) of \(\mu^X\) which is unique up to a \(P\)-null set. We now show that \(\nu = \nu^X\) up to a \(P\)-null set. 
	
	Note that
		\begin{align*}
	M^P_\nu (\of 0, \infty\of \times \{\Delta\}) = M^P_{\mu^X} (\of 0, \infty\of \times \{\Delta\}) &= 0,\\
	M^P_{\nu^X} (\of 0, \infty\of \times \{\Delta\}) &= 0,\end{align*}
	see Lemma \ref{lem:tau}. Hence, since \(\mathscr{B}(\mathbb{B}_\Delta)\cap \mathbb{B} = \mathscr{B}(\mathbb{B})\),
	by a monotone class argument, it suffices to prove \(M^P_{\nu} = M^P_{\nu^X}\) on \(\mathcal{Z}_1 \times \mathcal{Z}_2\triangleq \{Z_1 \times Z_2\colon Z_1 \in \mathcal{Z}_1, Z_2 \in \mathcal{Z}_2\}\), where \(\mathcal{Z}_1\) is an intersection stable generator of \(\mathscr{P}^P\) which includes a sequence \((Z^1_n)_{n \in \mathbb{N}}\) such that \(\bigcup_{n \in \mathbb{N}} Z^1_n = \Omega \times [0, \infty)\), and \(\mathcal{Z}_1\) is an intersection stable generator of \(\mathscr{B}(\mathbb{B})\) which includes a sequence \((Z^2_n)_{n \in \mathbb{N}}\) such that \(\bigcup_{n \in \mathbb{N}} Z^2_n = \mathbb{B}\), and \(\mathcal{Z}_1 \times \mathcal{Z}_2\) includes a sequence \((Z_n)_{n \in \mathbb{N}}\) with \(M^P_{\nu}(Z_n) = M^P_{\nu^X}(Z_n) < \infty\) for all \(n \in \mathbb{N}\) and \(\bigcup_{n \in \mathbb{N}} Z_n = \Omega \times [0, \infty) \times \mathbb{B}\).
	 
	 Let \(\mathcal{Z}_1\) be the collection of sets \(A \times \{0\}\) for \(A \in \mathscr{F}^P_0\) and \(\of 0, \xi\gs\) for all \(\F^P\)-stopping times \(\xi\) and let \(\mathcal{Z}_2\) be the collection of all sets
	\begin{align}\label{eq:cs G}
	G \triangleq \{x \in \mathbb{B} \colon (\langle x, y^*_1\rangle, ..., \la x, y^*_d \ra) \in A\} \in \mathscr{B}(\mathbb{B}),
	\end{align}
	for \(A \in \mathscr{B}(\mathbb{R}^d), y^*_1, ..., y^*_d \in D(A^*)\) and \(d \in \mathbb{N}\). Recalling the proofs of the Lemmata \ref{lem: sigma finite 1} and \ref{lem: nu comp} and in view of \cite[Theorem I.2.2]{JS} and \cite[Proposition 1.1.1]{hytönen2016analysis}, we note that \(\mathcal{Z}_1 \times \mathcal{Z}_2\) has all necessary properties.
 
 Note that \(M^P_{\nu} (A \times \{0\} \times G) = M^P_{\nu^X} (A \times \{0\} \times G) = 0\) for all \(A \in \mathscr{F}^P_0\) and \(G \in \mathscr{B}(\mathbb{B})\), see \cite[Definition II.1.3]{JS}.
	Fix an \(\F^P\)-stopping time \(\xi\) and the cylindrical set \(G\) given by \eqref{eq:cs G}.
	Denote \(Y^n \triangleq (\langle \widehat{X}_{\cdot \wedge \tau_n}, y^*_1\ra, ..., \la \widehat{X}_{\cdot \wedge \tau_n}, y^*_d\ra).\)
	Recall \eqref{eq: decomp Gamma} and the fact that \(P\)-a.s. \(\1_{\of \tau_{\Delta}, \infty\of} \star \mu^X_\infty = 0\) by the definition of \(\Delta\) as absorbing state and Lemma \ref{lem:tau}.
	Hence, thanks to Lemma \ref{lem: equi JS} and the monotone convergence theorem, we have 
	\begin{align*}
	\E^P\bigg[ \1_{\of 0, \xi\gs \times G} \star \mu^X_\infty\bigg] &= \lim_{n \to \infty} \E^P\bigg[ \1_{\of 0, \xi \wedge \tau_n\gs \times G} \star \mu^X_\infty\bigg]
	\\&= \lim_{n \to \infty} \E^P\bigg[ \1_{\of 0, \xi \wedge \tau_n\gs \times A} \star \mu^{Y^n}_\infty\bigg]
	\\&= \lim_{n \to \infty} E^P \bigg[\1_{\of 0, \xi \wedge \tau_n\gs \times G} \star \nu^X_\infty\bigg]
	\\&= \E^P \bigg[\1_{\of 0, \xi\gs \times G} \star \nu^X_\infty\bigg].
	\end{align*}
	We conclude that \(M^P_{\nu^X} = M^P_\nu\) on \(\mathcal{Z}_1 \times \mathcal{Z}_2\).
Thus, the claim follows. 
\end{proof}

As a consequence of Lemma \ref{lem: equi JS}, for all \(y^* \in D(A^*)\), the process \(\la \widehat{X}_{\cdot \wedge \tau_n}, y^*\ra\) is an \((\F^P, P)\)-semimartingale.
Hence, we obtain the existence of a family of continuous local \((\F^P, P)\)-martingales \(\{\widehat{X}^{n, c}( y^*), y^* \in D(A^*)\}\) such that \(\widehat{X}^{n, c}(y^*)\) is the continuous local \((\F^P, P)\)-martingale part of \(\langle \widehat{X}_{\cdot \wedge \tau_n}, y^*\ra\) whenever \(y^* \in D(A^*)\). Moreover, \(D(A^*) \ni y^* \mapsto \widehat{X}^{n, c}(y^*)\) is linear by the uniqueness of the continuous local martingale part.
This family can be extended to \(\mathbb{B}^*\) as the following lemma shows.
\begin{lemma}\label{lem: cy cont mart}
	For all \(n \in \mathbb{N}\) there exists a unique cylindrical continuous local \((\F^P, P)\)-martingale \(\widehat{X}^{n, c} = \{\widehat{X}^{n, c}(y^*), y^* \in \mathbb{B}^*\}\) such that for all \(y^* \in D(A^*)\) the process \(\widehat{X}^{n, c}(y^*)\) is the continuous local \((\F^P, P)\)-martingale part of \(\la \widehat{X}_{\cdot \wedge \tau_n}, y^*\ra\), \(\mathbb{B}^* \ni y^* \mapsto \widehat{X}^c(y^*)\) is linear and 
	\[
	\lle \widehat{X}^{n, c}(y^*) \rre = \int_0^{\cdot \wedge \tau_n} \la a(\widehat{X}_{s-}) y^*, y^*\ra \dd s,\quad y^* \in \mathbb{B}^*.
	\]
\end{lemma}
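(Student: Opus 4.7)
The plan is to extend the family $\{\widehat{X}^{n,c}(y^*) : y^* \in D(A^*)\}$ already constructed from the semimartingale decomposition of $\la \widehat{X}_{\cdot \wedge \tau_n}, y^*\ra$ to all of $\mathbb{B}^*$ by a density argument. The key structural fact I will exploit is that $D(A^*)$ is norm-dense in $\mathbb{B}^*$, which follows from the reflexivity of $\mathbb{B}$ via \cite[Theorem 5.29]{kato1980}, as already noted in Section \ref{2.1}.

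First, I will collect the properties of the family on $D(A^*)$ using Lemma \ref{lem: equi JS}. For $y^* \in D(A^*)$, the continuous local $(\F^P, P)$-martingale part $\widehat{X}^{n,c}(y^*)$ of $\la \widehat{X}_{\cdot\wedge\tau_n}, y^*\ra$ has predictable quadratic variation $\lle \widehat{X}^{n,c}(y^*)\rre = \int_0^{\cdot\wedge\tau_n}\la a(\widehat{X}_{s-})y^*, y^*\ra \dd s$. Setting $C_n \triangleq \sup_{\|x\|\le n}\|a(x)\|$ (finite by assumption (iii) in Section \ref{2.1}) and using $\|\widehat{X}_{s-}\|\le n$ on $\of 0, \tau_n\gs$, I obtain the uniform bound $\lle \widehat{X}^{n,c}(y^*)\rre_\infty \le nC_n \|y^*\|^2$, so each $\widehat{X}^{n,c}(y^*)$ is a uniformly integrable continuous $L^2$-martingale. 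Linearity of $y^*\mapsto \widehat{X}^{n,c}(y^*)$ on $D(A^*)$ (up to $P$-indistinguishability) follows from linearity of the duality pairing together with the uniqueness of the continuous martingale part in the canonical decomposition of a special semimartingale.

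Second, I will extend the map to $\mathbb{B}^*$. Applying the previous bound to $y^*-z^* \in D(A^*)$ yields
\begin{equation*}
E^P\bigl[\lle \widehat{X}^{n,c}(y^*)-\widehat{X}^{n,c}(z^*)\rre_\infty\bigr] \le nC_n\|y^*-z^*\|^2, \qquad y^*,z^*\in D(A^*),
\end{equation*}
so by Doob's $L^2$-inequality, $y^*\mapsto \widehat{X}^{n,c}(y^*)$ is Lipschitz from $D(A^*)$ into the Banach space of continuous $L^2$-bounded martingales equipped with $\|M\|^2 \triangleq E^P[\sup_t M_t^2]$. Norm-density of $D(A^*)$ then provides a unique bounded linear extension $\widehat{X}^{n,c}\colon \mathbb{B}^* \to \mathcal{M}^2_c$, and independence of the approximating sequence (up to $P$-indistinguishability) follows by the standard trick of mixing two approximating sequences into a single Cauchy sequence.

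Third, I will verify the outstanding properties and uniqueness. Linearity on $\mathbb{B}^*$ is immediate from linearity on $D(A^*)$. For the quadratic variation identity, fix $y^*\in\mathbb{B}^*$ and $(y^*_k)\subset D(A^*)$ with $y^*_k \to y^*$; dominated convergence (with dominating constant $C_n\sup_k\|y^*_k\|^2$) delivers $\int_0^{\cdot\wedge\tau_n}\la a(\widehat{X}_{s-})y^*_k, y^*_k\ra\dd s \to \int_0^{\cdot\wedge\tau_n}\la a(\widehat{X}_{s-})y^*, y^*\ra\dd s$ in $L^1$, while the polarisation identity together with Kunita--Watanabe yields $\lle \widehat{X}^{n,c}(y^*_k)\rre_t \to \lle \widehat{X}^{n,c}(y^*)\rre_t$ in $L^1$. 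Uniqueness follows because any competing cylindrical family must coincide with $\widehat{X}^{n,c}$ on the dense set $D(A^*)$, and the Lipschitz bound above (which only uses the prescribed quadratic variation) forces agreement throughout $\mathbb{B}^*$. The only real technical point is that the local boundedness of $a$ on balls of radius $n$ is precisely what makes the Lipschitz estimate strong enough to push all three required properties through the density extension; no deeper obstacle arises.
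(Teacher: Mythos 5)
Your proof is correct, but it takes a genuinely different route from the paper. The paper disposes of this lemma in two lines: it notes that reflexivity makes \(D(A^*)\) weak-\(*\) dense in \(\mathbb{B}^*\), hence a point-separating subspace, and then invokes the abstract extension theorem for cylindrical continuous local martingales in \cite[Corollary 29]{Ondrejat2005} (the existence and linearity of \(\widehat{X}^{n,c}(y^*)\) for \(y^*\in D(A^*)\) having already been recorded in the paragraph preceding the lemma, exactly as in your first step via Lemma \ref{lem: equi JS}). You instead build the extension by hand: the local boundedness of \(a\) on the ball of radius \(n\) gives \(E^P[\lle\widehat{X}^{n,c}(y^*-z^*)\rre_\infty]\le nC_n\|y^*-z^*\|^2\), so the map is bounded linear from the norm-dense subspace \(D(A^*)\) (dense by \cite[Theorem 5.29]{kato1980}, again via reflexivity) into the complete space of continuous \(L^2\)-bounded martingales, and the unique continuous linear extension inherits linearity, the quadratic-variation identity (by your Kunita--Watanabe/polarisation passage to the limit) and uniqueness. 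Both arguments are sound; the paper's buys brevity and rests on a result that only needs the index set to separate points (and so would survive in settings where norm-density is unavailable), while yours is self-contained and makes explicit that the uniform bound on \(a\) over \(\{\|x\|\le n\}\) is the quantitative input driving the extension. The only point worth flagging is cosmetic: your Lipschitz estimate is really an estimate for the linear map on the subspace \(D(A^*)\), and well-definedness of the extension is the standard bounded-linear-operator extension rather than a genuinely metric argument; this changes nothing in substance.
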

\begin{proof}
	Since \(\mathbb{B}\) is assumed to be reflexive, \(D(A^*)\) is weak-\(*\) dense in \(\mathbb{B}^*\).  
	Thus, by \cite[Corollary 5.108]{aliprantis2013infinite}, \(D(A^*)\) separates points of \(\mathbb{B}^*\)
	and the claim follows from \cite[Corollary 29]{Ondrejat2005}.
\end{proof}

Finally, we are in the position to define our candidate density process. We denote by \(\mathcal{E}(Y)\) the stochastic exponential of a semimartingale \(Y\), see \cite[Section I.4.d]{JS}.
\begin{lemma}\label{lem: candidate density}
	Let \((\widehat{X}^{n, c})_{n \in \mathbb{N}}\) be the sequence of cylindrical continuous local \((\F^P, P)\)-martingales as given in Lemma \ref{lem: cy cont mart}.
	There exists a non-negative local \((\F^P, P)\)-martingale \(Z^*\) such that for all \(n \in \mathbb{N}\)
	\begin{align}\label{eq: Z}
	Z^*_{\cdot \wedge \tau_n} = Z^n \triangleq \mathcal{E} \left( \int_0^{\cdot \wedge \tau_n} \langle  \dd \widehat{X}^{n, c}_s, c(\widehat{X}_{s-})\rangle + (Y - 1) \star (\mu^X- \nu^X)_{\cdot \wedge \tau_n}\right).
	\end{align}
	The process \(Z^n\) is a positive uniformly integrable \((\F^P, P)\)-martingale such that \(Z^n_- > 0\) except on a \(P\)-evanescence set.
\end{lemma}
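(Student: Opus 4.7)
My plan is to first verify well-definedness and the basic structure of $Z^n$ for each fixed $n$, then upgrade it to a uniformly integrable martingale, and finally stitch the $Z^n$ together into $Z^*$ via consistency under stopping.

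\textbf{Well-definedness and positivity.} For the continuous integrand, on $\of 0,\tau_n\gs$ the process $\widehat{X}_-$ stays in the bounded set $\{\|x\|\le n\}$, on which $\la a(\cdot) c(\cdot),c(\cdot)\ra$ is bounded by hypothesis. Hence $(c(\widehat X_{s-})\1_{\{s\le\tau_n\}})_{s\ge 0}$ lies in $L^2_{\textup{loc}}(a(\widehat X_-))$, and Lemma \ref{lem: definition si} yields the continuous local $(\F^P,P)$-martingale $N^{n,c}\triangleq\int_0^{\cdot\wedge\tau_n}\la \dd \widehat X^{n,c}_s,c(\widehat X_{s-})\ra$ with bounded predictable quadratic variation $\lle N^{n,c}\rre_\infty\le C_n n$. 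For the jump term, Lemmas \ref{lem: sigma finite 1} and \ref{lem: comp} identify $\nu^X$ as the $\F^P$-predictable $P$-compensator of $\mu^X$, and the boundedness of $x\mapsto\int(1-\sqrt{Y(x,y)})^2 K(x,\dd y)$ on bounded subsets of $\mathbb{B}$, combined with the weighted tail integrability hypotheses on $Y$, yields $(Y-1)\1_{\of 0,\tau_n\gs}\in G_{\textup{loc}}(\mu^X)$; so $N^{n,d}\triangleq(Y-1)\star(\mu^X-\nu^X)_{\cdot\wedge\tau_n}$ is a well-defined purely discontinuous local $(\F^P,P)$-martingale. Since its jumps $Y(X_-,\Delta X)-1$ strictly exceed $-1$ (as $Y>0$), $Z^n=\mathcal{E}(N^{n,c}+N^{n,d})$ is a positive local $(\F^P,P)$-martingale, and the product formula for $\mathcal{E}$ together with $Y>0$ shows that also $Z^n_-$ is strictly positive except on a $P$-evanescent set.

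\textbf{Uniform integrability.} Since $\tau_n\le n$, boundedness of $\la ac,c\ra$ on $\{\|x\|\le n\}$ makes $\lle N^{n,c}\rre_\infty$ bounded, so Novikov's criterion gives UI of $\mathcal{E}(N^{n,c})$. Analogously, the bounds on $\int(1-\sqrt{Y})^2 K(\cdot,\dd y)$ and on the weighted tails of $Y(x,\cdot)K(x,\cdot)$ on bounded sets imply that the Hellinger-type characteristic of $N^{n,d}$ up to $\tau_n$ is a bounded random variable. Applying a standard criterion for Dol\'eans-Dade exponentials with bounded characteristics (in the spirit of \cite{JM76}) to $N^{n,c}+N^{n,d}$ then yields that $Z^n$ is a true uniformly integrable $(\F^P,P)$-martingale. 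A direct alternative, if a black-box criterion is inconvenient, is to truncate $Y$ to $Y_k\triangleq(Y\wedge k)\vee k^{-1}$ so that the corresponding $|\Delta N^{n,d}|$ becomes bounded, obtain UI of the truncated exponential by Novikov combined with the bounded Hellinger integral, and pass $k\to\infty$ using monotone convergence and the supermartingale bound $E^P[Z^n_\infty]\le 1$ to upgrade the positive supermartingale $Z^n$ to a closed martingale with $E^P[Z^n_\infty]=1$.

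\textbf{Assembly of $Z^*$.} Consistency of stochastic integrals and of stochastic exponentials under stopping gives $Z^{n+1}_{\cdot\wedge\tau_n}=Z^n$ up to $P$-indistinguishability. Since $\tau_n\nearrow\tau_\Delta$ $P$-a.s., we may unambiguously define $Z^*_t\triangleq Z^n_t$ whenever $t\le\tau_n$, producing a process on $\of 0,\tau_\Delta\of$. We extend $Z^*$ to $[\tau_\Delta,\infty)$ by $Z^*_t\triangleq\lim_{n\to\infty}Z^n_{\tau_n}$ (which exists $P$-a.s.\ as the terminal value of the non-negative supermartingale $(Z^n_{\tau_n})_{n\in\mathbb{N}}$) and keep it constant thereafter. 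The resulting $Z^*$ is $\F^P$-adapted, non-negative, satisfies $Z^*_{\cdot\wedge\tau_n}=Z^n$ for every $n$, and is a local $(\F^P,P)$-martingale with $(\tau_n)$ as localizing sequence. The main obstacle is the uniform integrability step for $Z^n$: positivity and local martingality alone only yield a supermartingale, and promoting it to a closed martingale crucially depends on the bounded Hellinger characteristics afforded by stopping at $\tau_n$ and on the integrability hypotheses imposed on $a$, $c$, $Y$ and $K$.
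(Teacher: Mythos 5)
Your well-definedness, positivity and uniform-integrability steps match the paper's argument: the bounds on $\la ac,c\ra$ and on $\int(1-\sqrt{Y})^2K(\cdot,\dd y)$ over $\{\|x\|\le n\}$ give the two bounded characteristics, positivity follows from the jumps of the exponent being $>-1$ via \cite[Theorem I.4.61]{JS}, and the paper invokes exactly the kind of Novikov/Hellinger criterion you describe (\cite[Lemma 8.8, Theorem 8.25]{J79}).

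The genuine gap is in the assembly step. You assert that the glued process $Z^*$ is a local $(\F^P,P)$-martingale ``with $(\tau_n)$ as localizing sequence.'' This cannot be right as stated: a localizing sequence must increase to $\infty$ $P$-a.s., whereas $\tau_n\nearrow\tau_\Delta$, which may be finite with positive probability (the non-conservative case is precisely the one of interest here). The fact that $Z^*_{\cdot\wedge\tau_n}=Z^n$ is a martingale for every $n$ only yields, via Fatou, that $Z^*$ is a non-negative supermartingale; it does not by itself exclude a loss of mass at the predictable time $\tau_\Delta$. (Had one set $Z^*_{\tau_\Delta}\triangleq 0$ instead of the left limit, the same stopping identities would hold but $Z^*$ would genuinely fail to be a local martingale.) The missing argument, which is where the paper spends its effort, runs as follows: write the Doob--Meyer decomposition $Z^*=1+M+B$ with $B$ predictable of finite variation; since $Z^*_{\cdot\wedge\tau_n}$ is a martingale, $B$ vanishes on $\bigcup_n\of 0,\tau_n\gs$, and since $\tau_\Delta$ is an $\F^P$-predictable time announced by $(\tau_n)$ (Remark \ref{rem: exp time pred}), $B=\Delta B_{\tau_\Delta}\1_{\of\tau_\Delta,\infty\of}$; then the predictable projection identity $^p(\Delta Z^*)={}^p(\Delta B)$ combined with $\Delta Z^*_{\tau_\Delta}=0$ (which holds precisely because you defined $Z^*_{\tau_\Delta}$ as the limit of $Z^n_{\tau_n}$) forces $B_{\tau_\Delta}=0$, so $Z^*=1+M$. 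Your closing remark that the uniform integrability of $Z^n$ is ``the main obstacle'' therefore misplaces the difficulty; the delicate point is the local martingale property of $Z^*$ across the explosion time.
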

\begin{proof}
	The process \([c(\widehat{X}_-)]_{\cdot \wedge \tau_n}\) is \(\F\)-predictable and \(P\)-a.s.
	\begin{align}\label{eq: ui bound 1}
	\int_0^{t \wedge \tau_n} \la a(\widehat{X}_{s-})c(\widehat{X}_{s-}), c(\widehat{X}_{s-})\ra \dd s \leq n \sup_{\|x\|\leq n} |\la a(x) c(x), c(x)\ra|.
	\end{align}
	The r.h.s. is finite due to the assumptions that \(\la ac, c\ra\) is bounded on bounded subsets of \(\mathbb{B}\). Hence, for all \(n \in \mathbb{N}\), we can define a continuous \((\F^P, P)\)-martingale
	\(
	\int_0^{\cdot \wedge \tau_n} \langle  \dd \widehat{X}^{n, c}_s,c(\widehat{X}_{s-})\ra
	\)
	as in Lemma \ref{lem: definition si}. 
	
	For all \(n \in \mathbb{N}\), we have 
	\begin{equation}\label{eq: ui bound 2}
	\begin{split}
	\left(1 - \sqrt{Y}\right)^2 \star \nu^X_{t \wedge \tau_n} &= \int_0^{t \wedge \tau_n} \int \left(1 - \sqrt{Y(X_{s-}, x)}\right)^2 K(X_{s-}, \dd x)\dd s 
	\\&\leq n \sup_{\|y\| \leq n} \left( \int \left( 1 - \sqrt{Y(y, x)}\right)^2 K(y, \dd x)\right)
	\end{split}
	\end{equation}
	which is finite due to our assumption that \(y \mapsto \int (1 - \sqrt{Y(y, x)})^2 K(y, \dd x)\) is bounded on bounded subsets of \(\mathbb{B}\). 
	Moreover, we have
	\( 
	(Y(X_{-}, \Delta X) - 1) \1_{\{\Delta X \not= 0\}} \geq -1.
	\)
	Hence, by \cite[Proposition II.1.33]{JS}, for all \(n \in \mathbb{N}\) the stochastic integral process \((Y - 1) \star (\mu^X - \nu^X)_{\cdot \wedge \tau_n}\) is a well-defined discontinuous local \((\F^P, P)\)-martingale. We conclude that \(Z^n\) is well-defined.
	Since \(P\)-a.s. for all \(n \in \mathbb{N}\) and all \(t \in [0, \infty)\)
	\begin{equation}\label{eq:jump N}
	\begin{split}
	\Delta &\left( \int_0^{\cdot \wedge \tau_n} \la  \dd \widehat{X}^{n, c}_s,c(\widehat{X}_{s-})\rangle + (Y - 1) \star (\mu^X - \nu^X)_{\cdot \wedge \tau_n} \right)_t 
	\\&\qquad\quad= (Y(X_{t-}, \Delta X_t) - 1) \1_{\{\Delta X_{t} \not = 0\}} \1_{\of 0, \tau_n\gs} (\cdot, t) > - 1,
	\end{split}
	\end{equation}
	we deduce from \cite[Theorem I.4.61]{JS} that \(Z^n > 0\) and \(Z^n_- > 0\) up to a \(P\)-evanescence set.
	Moreover, \cite[Lemma 8.8, Theorem 8.25]{J79}, \eqref{eq: ui bound 1} and \eqref{eq: ui bound 2} yield that \(Z^{n}\) is a uniformly integrable \((\F^P, P)\)-martingale.
	
	Next, we extend the sequence \((Z^n)_{n \in \mathbb{N}}\). By the uniqueness of \(\widehat{X}^{n, c}\), we have \(\widehat{X}^{n + 1, c} = \widehat{X}^{n, c}\) on \(\of 0, \tau_n\gs\). This yields that \(Z^{n + 1} = Z^n\) on \(\of 0, \tau_n\gs\) and we may set 
	\[
	Z^* \triangleq \begin{cases} Z^n,&\textup{ on } \of 0, \tau_n\gs,\\
	\liminf_{n \to \infty} Z^n_{\tau_n},&\textup{ otherwise}.
	\end{cases}
	\]
	We have \(P\)-a.s. \(Z^*_t = \liminf_{n \to \infty} Z^n_{t \wedge \tau_n}\). Hence, by Fatou's lemma and the \((\F^P, P)\)-martingale property of \(Z^{n}\), the process \(Z^*\) is \(P\)-indistinguishable from a non-negative \((\F^P, P)\)-supermartingale. 
	To show that \(Z^*\) is a local \((\F^P, P)\)-martingale we follow the proof of \cite[Lemma 12.43]{J79}.
	By the Doob-Meyer decomposition theorem for supermartingales, there exists a local \((\F^P, P)\)-martingale \(M\) and a \cadlag \(\F^P\)-predictable process of finite variation, both starting at 0, such that up to \(P\)-indistinguishability
	\(
	Z^* = 1 + M + B.
	\)
	Since, for all \(n \in \mathbb{N}\), \(P\)-a.s. \(Z^*_{\cdot \wedge \tau_n} = Z^n\), we have \(B = 0\) on \(\bigcup_{n \in \mathbb{N}} \of 0, \tau_n\gs\) up to a \(P\)-evanescence set and 	recall that \(\tau_{\Delta}\) is an \(\F^P\)-predictable time which is \(P\)-announced by the sequence \((\tau_n)_{n \in \mathbb{N}}\), see Remark \ref{rem: exp time pred}. Thus, \(P\)-a.s. \(B = \Delta B_{\tau_{\Delta}} \1_{\of \tau_{\Delta}, \infty\of}\). 
	For an \((- \infty, \infty]\)-valued \(\mathscr{F} \otimes \mathscr{B}([0, \infty))\)-measurable process \(Y\) we denote by \(^p Y\) the \(\F^P\)-predictable projection of \(Y\), see \cite[Theorem I.2.28]{JS}. 
	Thanks to \cite[Corollary I.2.31]{JS} it holds that \(^p (\Delta M) = 0\) up to \(P\)-evanescence. Hence, \(^p (\Delta Z^*) =\ ^p(\Delta B) = B_{\tau_\Delta} \1_{\of \tau_{\Delta}\gs}\) up to \(P\)-evanescence. However, since \(P\)-a.s. \(\Delta Z^*_{\tau_{\Delta}} = 0\) by construction, we obtain from \cite[Theorem I.2.28]{JS} that 
	\(P\)-a.s. \(B_{\tau_{\Delta}} = 0\). Hence, \(Z^* = 1 + M\) up to \(P\)-indistinguishability and our claim is proven.
\end{proof}
\subsubsection{Proof of Theorem \ref{theo:main1}}
Let us denote by \(Z^*\) the non-negative local \((\F^P, P)\)-martingale given as in Lemma \ref{lem: candidate density}.
Now, by Lemma \ref{lem: candidate density}, for all \(n \in \mathbb{N}\) the process \(Z^*_{\cdot \wedge \tau_n}\) is a positive uniformly integrable \((\F^P, P)\)-martingale starting at 1. Thus, we can define by \(Q_n (G)  \triangleq \E^P[\1_G Z^*_{\tau_n}]\) for \(G \in \mathscr{F}\) a sequence of probability measures on \((\Omega, \mathscr{F})\).
Suppose that we can show that \(Q_n = Q\) on \(\mathscr{F}_{\tau_n}\) for all \(n \in \mathbb{N}\). Let \(\rho\) be an \(\F\)-stopping time. For all \(G \in \mathscr{F}_\rho\) it holds that \(G \cap \{\tau_n > \rho\}\in \mathscr{F}_{\tau_n} \cap \mathscr{F}_{\rho} = \mathscr{F}_{\tau_n \wedge \rho}\). Hence, we may conclude from the optional stopping theorem that for all \(n \in \mathbb{N}\)
\begin{align*}
Q(G \cap \{\tau_n> \rho\}) &= \E^P\left[ Z^*_{\tau_n} \1_{G \cap \{\tau_n > \rho\}}\right] 
\\&= \E^P \left[\E^P\left[Z^*_{\tau_n} |\mathscr{F}^P_{\tau_n \wedge \rho}\right]\1_{G \cap \{\tau_n > \rho\}}\right] 
\\&= \E^P\left[Z^*_{\tau_n \wedge \rho} \1_{G \cap \{\tau_n> \rho\}}\right]
\\&= \E^P\left[Z^*_{\rho} \1_{G \cap \{\tau_n> \rho\}}\right].
\end{align*}
Now, letting \(n \to \infty\) and noting that \(\tau_n \nearrow \tau_{\Delta}\) as \(n \to \infty\) yields that 
\[
Q(G \cap \{\tau_{\Delta} > \rho\}) = E^P \left[ Z^*_\rho \1_{G \cap \{\tau_{\Delta} > \rho\}}\right].
\]
Thanks to \cite[Lemma 7, Appendix 1]{DellacherieMeyer78} there exists an \(\F^+\)-optional process \(\widehat{Z}\) such that \(\widehat{Z} = Z^*\) up to \(P\)-indistinguishability. Moreover, since, due to \cite[Theorem IV.59]{DellacherieMeyer78}, each \(\F^P\)-stopping time is \(P\)-a.s. equal to an \(\F^+\)-stopping time, \(\widehat{Z}\) is a local \((\F^+, P)\)-martingale thanks to the tower rule. 
We conclude that \((Q, P)\) is a CMG pair and that \(\widehat{Z}\) is the corresponding CMG density.

It remains to prove that \(Q_n = Q\) on \(\mathscr{F}_{\tau_n}\) for all \(n \in \mathbb{N}\). By Lemma \ref{lem: Q n sol} below, it holds that \(Q_n \in \mathcal{M}(A, b', a, K', \eta, \tau_n)\).
Then, since the GMP \((A, b', a, K', \tau_{\Delta}-)\) is completely locally well-posed, see Proposition \ref{prop:locuni}, the desired identity holds and the theorem is proven.
\qed
\begin{lemma}\label{lem: Q n sol}
	\(Q_n \in \mathcal{M}(A, b', a, K', \eta, \tau_{n})\).
\end{lemma}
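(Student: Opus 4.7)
The plan is to invoke the semimartingale characterization of GMP-solutions provided by Lemma \ref{lem: equi JS}(v), and to compute the \(Q_n\)-characteristics of the finite-dimensional projections \((\la \widehat{X}_{\cdot \wedge \tau_n}, y^*_1\ra, \ldots, \la \widehat{X}_{\cdot \wedge \tau_n}, y^*_d\ra)\), for arbitrary \(y^*_1, \ldots, y^*_d \in D(A^*)\), by means of a classical finite-dimensional Girsanov-type theorem. First, since \(Z^n\) is a positive uniformly integrable \((\F^P, P)\)-martingale with \(Z^n_0 = 1\), the measure \(Q_n\) is a well-defined probability measure on \((\Omega, \mathscr{F})\) which is equivalent to \(P\) on \(\mathscr{F}_{\tau_n}\); consequently \(Q_n \circ X^{-1}_0 = \eta\), while \(Q_n(\tau_n < \tau_\Delta) = 1\) follows, via this local equivalence, from the corresponding \(P\)-statement of Lemma \ref{lem:tau}. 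These already supply the initial requirements of Lemma \ref{lem: equi JS}(v).

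Next I would write \(Z^n = \mathcal{E}(N^n)\) with
\[
N^n = \int_0^{\cdot \wedge \tau_n} \la \dd \widehat{X}^{n,c}_s, c(\widehat{X}_{s-})\ra + (Y - 1) \star (\mu^X - \nu^X)_{\cdot \wedge \tau_n},
\]
and apply a classical Girsanov theorem for finite-dimensional semimartingales, e.g.\ \cite[Theorems III.3.11 and III.3.24]{JS} or \cite[Theorem 12.21]{J79}, to read off the \(Q_n\)-characteristics of \((\la \widehat{X}_{\cdot \wedge \tau_n}, y^*_j\ra)_{j = 1}^d\) from those identified in Lemma \ref{lem: equi JS}(v). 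Since \(Q_n \sim P\) on \(\mathscr{F}_{\tau_n}\), the continuous characteristic \(C^{ij}\) is preserved. The \(Q_n\)-predictable compensator of \(\mu^X\) is \(Y \cdot \nu^X\), and its image under \(x \mapsto (\la x, y^*_j\ra)_{j = 1}^d\) agrees, by the definition of \(K'\) in \eqref{eq: b', Y'}, with the jump characteristic prescribed in Lemma \ref{lem: equi JS}(v) for the coefficients \((A, b', a, K')\). The Girsanov drift correction consists of two contributions: the continuous covariation of \((\la \widehat{X}_{\cdot \wedge \tau_n}, y^*_j\ra)\) with \(N^n\), which equals \(\int_0^{\cdot \wedge \tau_n} \la a(\widehat{X}_{s-}) c(\widehat{X}_{s-}), y^*_j\ra \dd s\) by the covariance identity of Lemma \ref{lem: cy cont mart}, and a jump contribution \(\int_0^{\cdot \wedge \tau_n}\!\int k^j(\la x, y^*_1\ra, \ldots, \la x, y^*_d\ra)(Y(\widehat{X}_{s-}, x) - 1) K(\widehat{X}_{s-}, \dd x)\dd s\), where \(k\) is the Euclidean truncation used in \(B^j(k)\).

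A direct algebraic rearrangement then combines these Girsanov corrections with the original \(P\)-drift of Lemma \ref{lem: equi JS}(v) and unfolds the definition of \(b'\) from \eqref{eq: b', Y'} into precisely the drift prescribed in Lemma \ref{lem: equi JS}(v) for the coefficients \((A, b', a, K')\); Lemma \ref{lem: equi JS} then yields \(Q_n \in \mathcal{M}(A, b', a, K', \eta, \tau_n)\). The main technical point I anticipate is this drift bookkeeping, for the Girsanov jump correction is phrased in terms of the Euclidean truncation \(k\), whereas \(b'\) is built from the Banach-space truncation \(h\); one has to verify that the two truncations recombine correctly when integrating against \(K\) and \(Y \cdot K\). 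All appearing integrals are finite on each \(\of 0, \tau_n\gs\) by the hypotheses on \(c\) and \(Y\) and by \eqref{eq: bdd cond b'} preceding Theorem \ref{theo:main1}, so no further integrability issues arise.
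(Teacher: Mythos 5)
Your proposal is correct and follows essentially the same route as the paper's proof: the paper likewise reduces to the finite-dimensional projections \(\la \widehat{X}_{\cdot\wedge\tau_n}, y^*\ra\) via Lemma \ref{lem: equi JS} (using the one-dimensional statement (iv) rather than (v), an immaterial difference), computes the \(Q_n\)-compensator of \(\mu^X\) with \cite[Theorem III.3.17]{JS} and the drift correction with \cite[Theorem III.3.11]{JS} via \(\lle Z^c, Y^c\rre\) and \(\lle k(x)\star(\mu^Y-\nu), Z^*_{\cdot\wedge\tau_n}\rre\), and carries out exactly the truncation bookkeeping between \(k\) and \(h\) that you flag, using that the correction term in \(b'\) is a Pettis integral.
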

\begin{proof}
	First of all, note that \(Q_n = P\) on \(\mathscr{F}_0\). Hence, \(Q_n \circ X^{-1}_0 = \eta\).
	Let \(y^* \in D(A^*)\) and \(n \in \mathbb{N}\). Then, Lemma \ref{lem: equi JS} yields that the real-valued process \(Y \triangleq \la \widehat{X}_{\cdot \wedge \tau_n}, y^*\ra\) is an \((\F^P, P)\)-semimartingale whose semimartingale characteristics are given by \eqref{eq:smc}.
	We now show that \(Y\) is an \((\F^{Q_n}, Q_n)\)-semimartingale with characteristics 
	\begin{align*}
	B^{Q_n} &=  \int_0^{\cdot\wedge \tau_n} \left( \langle \widehat{X}_{s-}, A^* y^*\rangle + \langle b'(\widehat{X}_{s-}), y^*\rangle\right)\dd s
	\\&\quad\quad + \int_0^{\cdot\wedge \tau_n} \int \left(k(\langle x, y^*\rangle) - \langle h(x), y^*\rangle\right) K'(\widehat{X}_{s-}, \dd x) \dd s
	\\
	C^{Q_n}&= \int_0^{\cdot\wedge \tau_n} \langle a(\widehat{X}_{s-}) y^*, y^*\rangle \dd s,
	\\
	\nu^{Q_n}([0, \cdot ], G) &= \int_0^{\cdot \wedge\tau_n} \int \1_G(\langle x, y^*\rangle) K'(\widehat{X}_{s-}, \dd x) \dd s,\ \  G \in \mathscr{B}(\mathbb{R}), 0 \not \in G.	
	\end{align*}
	Since \(Q_n\) is equivalent to \(P\), we have \(\F^P = \F^{Q_n}\).
	Hence, \(Y\) is an \((\F^{Q_n}, Q_n)\)-semimartingale thanks to \cite[Theorem III.3.13]{JS}.
	In particular, the Dol\'eans measure \(M^{Q_n}_{\mu^Y}\) is \(\mathscr{P}^{Q_n}\)-\(\sigma\)-finite.
	Moreover, the quadratic variation process of the continuous local martingale part of \(Y\) is the same under \(P\) and \(Q_n\), see \cite[Theorem III.3.11]{JS}.
	Therefore, the formula for \(C^{Q_n}\) follows.
	It remains to prove the formulas for \(B^{Q_n}\) and \(\nu^{Q_n}\).
	
	We start with \(\nu^{Q_n}\) and use a Girsanov-type theorem for integer-valued random measures. 
	Let \(\mu^X\) be given by \eqref{mu X} and \(W\) be a non-negative \(\mathscr{F} \otimes \mathscr{B}([0, \infty))\otimes \mathscr{B}(\mathbb{B}_\Delta)\)-measurable function. We denote by \(M^P_{\mu^X}(W |\mathscr{P}^P \otimes \mathscr{B}(\mathbb{B}_\Delta))\) a \(\mathscr{P}^P \otimes \mathscr{B}(\mathbb{B}_\Delta)\)-measurable real-valued function such that
	\begin{align*}
	M^P_{\mu^X}(UW) = M^P_{\mu^X}(U M^P_{\mu^X}(W|\mathscr{P}^P\otimes \mathscr{B}(\mathbb{B}_\Delta)))
	\end{align*}
	for all bounded non-negative \(\mathscr{P}^P\otimes \mathscr{B}(\mathbb{B}_\Delta)\)-measurable functions \(U\).
	If \(W(\omega, t, x) = W(\omega, t)\) is a local \((\F^P, P)\)-martingale, then \(M^P_{\mu^X} (W |\mathscr{P}^P\otimes \mathscr{B}(\mathbb{B}_\Delta))\) exists and it \(M^P_{\mu^X}\)-a.e. equals \(W_- + M^P_{\mu^X}(\Delta W|\mathscr{P}^P\otimes \mathscr{B}(\mathbb{B}_\Delta))\), see \cite[Problem 3.2.9, Theorem 3.3.1]{liptser1989theory}.
	Recalling \eqref{eq:jump N}, on \(\of 0, \tau_n\gs\), up to \(P\)-evanescence, it holds that
	\begin{align*}	\Delta Z^*= Z^*_- (Y (X_{-}, \Delta X) - 1) \1_{\{\Delta X \not = 0\}}.
	\end{align*}
	Hence, \(M^P_{\mu^X}\)-a.e.
	\begin{align*}
	M^P_{\mu^X} (\Delta Z^*_{\cdot \wedge \tau_n} |\mathscr{P}^P \otimes \mathscr{B}(\mathbb{B}_\Delta)) = Z^*_-(Y - 1) \1_{\of 0, \tau_n\gs}.
	\end{align*}
	We deduce from the Girsanov-type theorem for integer-valued random measure given by \cite[Theorem III.3.17]{JS} that \(\mu^X\) has a \(\mathscr{P}^{Q_n}\)-\(\sigma\)-finite Dol\'eans measure \(M^{Q_n}_{\mu^X}\) and that its \(Q_n\)-compensator is given by
	\begin{align}\label{eq:comp mu X Q}
	\nu^{X, Q_n} (\omega, \dd t, \dd x) \triangleq (1 - (Y(X_{t-}(\omega), x) - 1) \1_{\of 0, \tau_n\gs} (\omega, t)) \nu^X(\omega, \dd t, \dd x).
	\end{align}
	
	Let \(W\) be a non-negative \(\mathscr{P}^{Q_n}\otimes \mathscr{B}(\mathbb{R})\)-measurable function. 
	Then, using the formula \eqref{eq:comp mu X Q}, we obtain
	\begin{align*}
	M^{Q_n}_{\mu^Y} (W) &= M^{Q_n}_{\mu^X} (\1_{\of 0, \tau_n\gs}W (\cdot, \cdot, \la \cdot, y^*\ra)) 
	\\&= M^{Q_n}_{\nu^{X, Q_n}} (\1_{\of 0, \tau_n\gs} W (\cdot, \cdot, \la \cdot, y^*\ra)) 
	\\&= M^{Q_n}_{\nu^{Q_n}} (W).
	\end{align*}
	This proves the claimed formula for \(\nu^{Q_n}\).
	
	We now verify the formula for \(B^{Q_n}\) by using a Girsanov-type theorem for local martingales.
	Denote the continuous local \((\F^P, P)\)-martingale part of \(Y\) by \(Y^c\), the continuous local \((\F^P, P)\)-martingale part of \(Z^*_{\cdot \wedge \tau_n}\) by \(Z^{c}_{\cdot \wedge \tau_n}\), and by \(\lle \cdot, \cdot\rre\) the predictable quadratic covariation.
	Using the formula \eqref{eq: qv}, the polarization identity and that \(a\) is symmetric, we obtain
	\begin{align*} 
	\lle Z^c_{\cdot\wedge \tau_n}, Y^c\rre = \int_0^{\cdot\wedge \tau_n} Z^*_{s-}  \la a(\widehat{X}_{s-} ) c(\widehat{X}_{s-}), y^*\ra \dd s.
	\end{align*}
	Hence, the Girsanov-type theorem  \cite[Theorem III.3.11]{JS} yields that 
	\begin{align}\label{eq: clmp Q}
	Y^{c} - \int_0^{\cdot\wedge \tau_n} \la a(\widehat{X}_{s-}) c(\widehat{X}_{s-}), y^*\ra \dd s
	\end{align}
	is a continuous local \((\F^{Q_n}, Q_n)\)-martingale. 
	
	Next, consider the discontinuous local \((\F^P, P)\)-martingale \(M \triangleq k(x) \star (\mu^Y- \nu)\), where \(\nu\) is the \(P\)-compensator of \(\mu^Y\). 
	Since the jumps of \(M\) are bounded by \(\|k\|_\infty\), the quadratic covariation process \([M, Z^*_{\cdot \wedge \tau_n}]\) has \(\F^P\)-locally \(P\)-integrable variation, see \cite[Lemma III.3.14]{JS}.
	Hence, there exists a \(P\)-compensator \(\lle M, Z^*_{\cdot\wedge\tau_n}\rre\) for \([M, Z^*_{\cdot\wedge \tau_n}]\), which we compute next.
	Note that \(P\)-a.s.
	\begin{align*}
	[M, Z^*_{\cdot\wedge \tau_n}] &= \sum_{s \in [0, \cdot \wedge\tau_n]} \Delta M_s \Delta Z^*_s 
	\\&= \sum_{s \in [0, \cdot\wedge \tau_n]} k(\la \Delta X_s, y^*\ra) Z^*_{s-} (Y(X_{s-}, \Delta X_{s}) - 1) \1_{\{\Delta X_s \not = 0\}}
	\\&= Z^*_- k(\la \cdot, y^*\ra)(Y(X_-, \cdot) - 1) \star \mu^X_{\cdot\wedge \tau_n}.
	\end{align*}
	Since \([M, Z^*_{\cdot \wedge \tau_n}]\) has \(\F^P\)-locally \(P\)-integrable variation, the process \[|Z^*_-k(\la\cdot, y\ra) (Y(X_-, \cdot) - 1)| \star \mu^X_{\cdot \wedge  \tau_n}\] is \(\F^P\)-locally \(P\)-integrable. Hence, by the properties of the \(P\)-compensator \(\nu^X\) of \(\mu^X\) as stated in \cite[Theorem II.1.8]{JS}, we conclude
	\[
	\lle M, Z^*_{\cdot \wedge  \tau_n}\rre = Z^*_- k(\la \cdot, y^*\ra)(Y(X_-, \cdot) - 1) \star \nu^X_{\cdot \wedge \tau_n}.
	\]
	Again thanks to the Girsanov-type theorem \cite[Theorem III.3.11]{JS}, the process
	\[
	k(x) \star (\mu^Y - \nu) - k(\la \cdot, y^*\ra)(Y(X_-, \cdot) - 1) \star \nu^X_{\cdot\wedge \tau_n}
	\]
	is the discontinuous local \((\F^{Q_n}, Q_n)\)-martingale whose jumps equal \(k(\Delta Y)\).
	Together with \eqref{eq: clmp Q} and the uniqueness of the first characteristic, we conclude that the first \(Q_n\)-characteristic of \(Y\) is given by
	\begin{align}\label{eq: fc}
	B(k) + \int_0^{\cdot \wedge  \tau_n} \la a(\widehat{X}_{s-}) c(\widehat{X}_{s-}), y^*\ra \dd s + k(\la \cdot, y^*\ra)(Y(X_-, \cdot) - 1) \star \nu^X_{\cdot \wedge \tau_n},
	\end{align}
	where \(B(k)\) is given as in Lemma \ref{lem: equi JS} (iv).
	Now, using that \(\widehat{X}_{\cdot\wedge \tau_n} = X_{\cdot \wedge \tau_n}\) up to \(Q_n\)-indistinguishability and that the integral \(\int h(y) (Y(\cdot, y) - 1) K(\cdot, \dd y)\) was defined as a Pettis integral, i.e. in particular
	\[
	\left \la \int h(y) (Y(\cdot, y) - 1) K(\cdot, \dd y), y^*\right\ra = \int \la h(y), y^*\ra (Y(\cdot, y) - 1) K(\cdot, \dd y), 
	\]
	it follows that \eqref{eq: fc} equals \(B^{Q_n}\) up to a \(Q_n\)-evanescence set. 
	We conclude the claim from Lemma \ref{lem: equi JS}.
\end{proof}
\subsection{Proof of Proposition \ref{theo:main2}}\label{sec: pf coro uni}
Suppose that the GMP \((A, b', a, K',  \tau_{\Delta}-)\) is well-posed.
Denote by \(P\) and \(P'\) two solutions to the GMP \((A, b, a, K, \eta, \tau_{\Delta}-)\), let \(Q\) be the unique solution to the GMP \((A, b', a, K', \eta, \tau_{\Delta}-)\) and fix \(n \in \mathbb{N}\).
By Theorem \ref{theo:main1}, we can relate \(P\) and \(P'\) via the CMG formula \eqref{eq: CMG formula}. More precisely, we have for all \(G \in \mathscr{F}_{\tau_n}\) 
\begin{align}\label{eq: CMG uni}
E^P\big[Z_{\tau_n} \1_{G}\big] = Q(G) = E^{P'} \big[Z'_{\tau_n} \1_{G}\big],
\end{align}
where \(Z\) and \(Z'\) are the corresponding CMG densities. We stress that these are positive, see Lemma \ref{lem: candidate density}.
Here, \eqref{eq: CMG uni} requires Lemma \ref{lem:tau}, but the equality also follows from Proposition \ref{prop:locuni} and Lemma \ref{lem: Q n sol}. Our goal is to show that there exists an \(\mathscr{F}_{\tau_n}\)-measurable random variable \(Z^*_{\tau_n}\) such that \(P\)-a.s. \(Z^*_{\tau_n} = Z_{\tau_n}\) and \(P'\)-a.s. \(Z^*_{\tau_n} = Z'_{\tau_n}\). In this case, for all \(G \in \mathscr{F}_{\tau_n}\), \eqref{eq: CMG uni} implies that
\begin{align*}
E^Q\left[\frac{1}{Z^*_{\tau_n}} \1_{\{Z^*_{\tau_n} > 0\} \cap G}\right] = \begin{cases}  P(G),\\ P'(G),\end{cases}
\end{align*}
which proves that \(P\) and \(P'\) coincide on \(\mathscr{F}_{\tau_n}\). By a monotone class argument, we obtain that \(P = P'\) on \(\mathscr{F}_{\tau_{\Delta}-} = \mathscr{F}\).

By Lemma \ref{lem: candidate density} and \cite[Theorem I.4.61]{JS}, up to a null set, the CMG densities are given by
\begin{align*}
\exp &\left( \int_0^{\tau_n} \la \dd \widehat{X}^{n, c}_s, c(\widehat{X}_{s-}) \rangle + (Y - 1) \star (\mu^X- \nu^X)_{\tau_n}\right) 
\\&\qquad \times \exp \left( - \frac{1}{2} \int_0^{\tau_n} \langle a(X_{s-}) c(X_{s-}), c(X_{s-})\rangle \dd s \right)
\\&\qquad \times\prod_{s \in [0, \tau_n]} (1 + \Delta M_s) \exp (- \Delta M_s),	
\end{align*}
where
\begin{align*}
\Delta M_s \triangleq (Y(X_{s-}, \Delta X_s) - 1) \1_{\{\Delta X_s \not = 0\}},
\end{align*}
and we interpret the stochastic integrals either relative to \(P\) or to \(P'\). It suffices to study the stochastic integrals.
We start with \((Y - 1) \star (\mu^X- \nu^X)_{\tau_n}\).
\begin{lemma}\label{lem: V approx}
	For a \(\mathscr{P} \otimes \mathscr{B}(\mathbb{B}_\Delta)\)-measurable \(V \in G_\textup{loc}(\mu^X)\), where \(\mathscr{P}\) denotes the \(\F\)-predictable \(\sigma\)-field, set
	\[
	V^k (\omega, s, y) \triangleq V(\omega, s, y) \1_{\{\|y\| \geq k^{-1}\}} \1_{\{|V(\omega, s, y)| \leq k\}}.
	\]
	Let \(\xi\) be an \(\F\)-stopping time.
	We have 
	\begin{equation*}\begin{split}
	\sum_{s \in [0, \xi\wedge \tau_n]} V^{k} (\cdot, s, \Delta X_s) - \int_0^{\xi \wedge \tau_n} \int V^{k}(\cdot&, s, x) K(X_{s-}, \dd x) \dd s \\&\xrightarrow{\quad k \to \infty\quad}V \star (\mu^X- \nu^X)_{\xi \wedge \tau_n},
	\end{split}
	\end{equation*}
	where the convergence is in \(P\)-probability if the stochastic integral in relative to \(P\) and in \(P'\)-probability if the stochastic integral is relative to \(P'\).
\end{lemma}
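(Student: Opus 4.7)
The strategy is to reduce the claim to standard dominated-convergence arguments for stochastic integrals against compensated integer-valued random measures, as developed in Section II.1 of \cite{JS}. The same argument applies whether the stochastic integral is interpreted relative to $P$ or relative to $P'$: both measures are solutions to the same GMP, so by Lemma \ref{lem: comp} the $\F$-predictable compensator of $\mu^X$ is the common random measure $\nu^X$, and the definition of $G_\textup{loc}(\mu^X)$ and of the compensated integral carry over identically.

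The first step is to verify that each truncation $V^k$ lies in $G_\textup{loc}(\mu^X)$ and that the compensated integral actually splits into an algebraic difference of two well-defined processes. Since $|V^k|\leq k$ and $V^k$ vanishes on $\{\|y\|<k^{-1}\}$, condition \eqref{eq: int ball around origin} yields
\[
\int_0^{t\wedge\tau_n}\!\int|V^k(\cdot,s,x)|\,K(X_{s-},\dd x)\dd s \;\leq\; k n \sup_{\|z\|\leq n} K(z,\{y\in\mathbb{B}\colon\|y\|\geq k^{-1}\})\;<\;\infty,
\]
so $V^k\star\nu^X_{\cdot\wedge\tau_n}$ is a finite-variation process, and $V^k\star\mu^X_{\cdot\wedge\tau_n}$ is a pathwise finite sum (only finitely many jumps of size at least $k^{-1}$ can occur before $\tau_n$, essentially as in Lemma \ref{lem: sigma finite 1}). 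Their pathwise difference therefore agrees with the compensated stochastic integral $V^k\star(\mu^X-\nu^X)_{\cdot\wedge\tau_n}$, which is precisely the expression on the left-hand side of the lemma.

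The second step is to establish $V^k\star(\mu^X-\nu^X)_{\xi\wedge\tau_n}\to V\star(\mu^X-\nu^X)_{\xi\wedge\tau_n}$ in probability. By linearity this amounts to $(V-V^k)\star(\mu^X-\nu^X)_{\xi\wedge\tau_n}\to 0$. One has $V-V^k\in G_\textup{loc}(\mu^X)$, and by construction $V^k(\omega,s,y)\to V(\omega,s,y)$ pointwise with $|V^k|\leq|V|$; hence the square-variation process in \eqref{eq: jump jump int} evaluated at $V-V^k$ converges to $0$ pointwise and is dominated by the corresponding process for $V$. The dominated-convergence theorem for stochastic integrals against compensated random measures, as in \cite[II.1.33]{JS}, then yields that $(V-V^k)\star(\mu^X-\nu^X)$ tends to $0$ in the sense of local martingales, and Lenglart's inequality upgrades this to uniform convergence in probability on $\of 0,\xi\wedge\tau_n\gs$.

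The only subtle point is the splitting performed in the first step: stochastic integrals $V\star(\mu^X-\nu^X)$ are generally not expressible as $V\star\mu^X-V\star\nu^X$, because neither of the latter two need be defined separately. The boundedness and support properties of $V^k$ are exactly what makes the split legitimate here. Once this bookkeeping is in place, the convergence step reduces to textbook application of dominated convergence and Lenglart, performed identically under $P$ or under $P'$ since only the structure of $(\mu^X,\nu^X)$ enters.
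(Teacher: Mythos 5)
Your proof is correct and follows essentially the same route as the paper: the same bound obtained from \eqref{eq: int ball around origin} justifies splitting \(V^k\star(\mu^X-\nu^X)_{\xi\wedge\tau_n}\) into the explicit jump sum minus the ordinary integral (the paper cites \cite[Proposition II.1.28]{JS} here), and the remaining half is the convergence \(V^k\star(\mu^X-\nu^X)_{\xi\wedge\tau_n}\to V\star(\mu^X-\nu^X)_{\xi\wedge\tau_n}\) in probability. The only difference is cosmetic: you prove that convergence step directly (dominated convergence on the quadratic variation, using that \(\nu^X(\{s\},\cdot)=0\) so \eqref{eq: jump jump int} reduces to the \(\ell^2\)-norm of the jumps, followed by Lenglart's inequality), whereas the paper delegates it to \cite[Proposition 2.35, Theorem 2.36]{Bichteler1983}; both justifications are valid.
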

\begin{proof}
	Since \(\nu^X(\{s\}, \dd x) = 0\) for all \(s \in [0, \infty)\), \(V \in G_\textup{loc}(\mu^X)\) implies \(|V| \in G_\textup{loc}(\mu^X)\), see \eqref{eq: jump jump int}.
	Hence, \cite[Proposition 2.35, Theorem 2.36]{Bichteler1983} imply that
	\begin{align*}
	\lim_{k \to \infty} V^k \star (\mu^X- \nu^X)_{\xi \wedge \tau_n} = V \star (\mu^X- \nu^X)_{\xi \wedge \tau_n}
	\end{align*}
	in probability.
	Observe that
	\[
	|V^{k}| \star \nu^X_{\xi \wedge \tau_n} \leq kn \sup_{\|z\| \leq n}\int K( z, \{x \in \mathbb{B} \colon \|x\| \geq k^{-1}\}) < \infty.
	\]
	Thus, by \cite[Proposition II.1.28]{JS}, we have a.s.
	\begin{align*}
	V^{k} \star (&\mu^X- \nu^X)_{\xi \wedge \tau_n} \\&= \sum_{s \in [0, \xi\wedge \tau_n]} V^{k} (\cdot, s, \Delta X_s) - \int_0^{\xi \wedge \tau_n} \int V^{k}(\cdot, s, x) K(X_{s-}, \dd x) \dd s. 
	\end{align*}
	This completes the proof.
\end{proof}
We deduce from Lemma \ref{lem: V approx} that we find an \(\mathscr{F}_{\tau_n}\)-measurable random variable which equals \((Y - 1) \star (\mu^X- \nu^X)_{\tau_n}\) \(P\)-a.s. if the stochastic integral is relative to \(P\) and \(P'\)-a.s. if the stochastic integral is relative to \(P'\).

Let us now turn to the stochastic integral \(\int_0^{\tau_n} \la \dd \widehat{X}^{n, c}_s, c(\widehat{X}_{s-})\ra\). We note the following

\begin{lemma}\label{lem: simple fct} There exists a sequence of simple \(\F\)-predictable processes
	\begin{align}\label{eq: simple process}
	f^p \triangleq \sum_{i = 1}^{m_{p}} \left( \lambda_0^{i, p} \1_{\{0\}} + \sum_{j = 0}^{\infty} \lambda_j^{i, p} \1_{(t_{j}^{i, p}, t^{i, p}_{j + 1}]} \right)y^*_{i, p},\quad p \in \mathbb{N},
	\end{align} such that
	\begin{align*}
	E^P &\left[ \int_0^{\tau_n} \left\langle a(X_{s-}) \left(f^p_s - c(X_{s-})\right), f^p_s - c(X_{s-})\right\rangle \dd s \right]
	\triangleq\| f^p - c(X_-)\|^2_a 
	\to 0
	\end{align*}
	as \(p \to \infty\).
	Here, \(\lambda^{i, p}_0\) is real-valued, bounded and \(\mathscr{F}_{0}\)-measurable, \(\lambda^{i, p}_j\) is real-valued, \(\mathscr{F}_{t^{i, p}_j}\)-measurable and such that \(\sup_{j \in \mathbb{N}} |\lambda^{i, p}_j (\omega)| \leq C < \infty\) for all \(\omega \in \Omega\), for all \(j \in \mathbb{N}\) we have \(t^{i, p}_j < t^{i, p}_{j+1}\), \(t_j^{i, p} \nearrow \infty\) as \(j \to \infty\) and \(y^*_{i, p} \in D(A^*)\).
\end{lemma}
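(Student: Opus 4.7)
The plan is to build $f^p$ via three layers of approximation: replace $c$ by a Borel simple function $c^m$ with values in $D(A^*)$ satisfying $\|c^m - c\|_{\mathbb{B}^*} \leq 1/m$ pointwise; truncate $c^m$ to finitely many of its values; and approximate the resulting scalar coefficient processes by simple $\F$-predictable time-step functions. All three layers are controlled via the same observation: by Lemma \ref{lem:tau} we have $P(\tau_n < \tau_\Delta) = 1$, so on $\of 0, \tau_n \gs$ we have $\|X_{s-}\| \leq n$, and by local boundedness of $a$ the constant $C_n \triangleq \sup_{\|x\| \leq n}\|a(x)\|_{\mathrm{op}}$ is finite, yielding $\langle a(X_{s-}) y^*, y^*\rangle \leq C_n \|y^*\|^2$ throughout $\of 0, \tau_n \gs$. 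Hence every $\mathbb{B}^*$-norm approximation transfers to a $\|\cdot\|_a$-approximation at the price of a factor $C_n$.

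For the first layer, since $\mathbb{B}^*$ is separable and $D(A^*)$ is norm-dense in $\mathbb{B}^*$ (by reflexivity of $\mathbb{B}$ together with density of $A^*$'s domain), I partition $\mathbb{B}^*$ into disjoint Borel sets $(B_k^m)_{k \in \mathbb{N}}$ of diameter at most $1/(2m)$ and pick in each one a representative $z_k^m \in D(A^*)$ with $\|z_k^m - y^*\| \leq 1/m$ for all $y^* \in B_k^m$. Setting $c^m(x) \triangleq \sum_k z_k^m \1_{B_k^m}(c(x))$ gives $\|c^m - c\|_{\mathbb{B}^*} \leq 1/m$ pointwise and hence $\|c^m(X_-) - c(X_-)\|_a^2 \leq C_n n / m^2$. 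For the truncation, let $c^{m,N}(x) \triangleq \sum_{k=1}^N z_k^m \1_{B_k^m}(c(x))$; by the disjointness of the $B_k^m$ one has
\[
\|c^m(X_-) - c^{m,N}(X_-)\|_a^2 = E^P\!\left[\int_0^{\tau_n} \sum_{k > N} \langle a(X_{s-}) z_k^m, z_k^m\rangle \1_{B_k^m}(c(X_{s-})) \dd s\right],
\]
and dominated convergence with majorant $\|c^m(X_-)\|_a^2 \leq 2\|c(X_-)\|_a^2 + 2C_n/m^2$ (integrable on $\of 0, \tau_n \gs$ since $\la ac, c\ra$ is bounded on bounded sets and $\tau_n \leq n$) yields $\|c^{m,N}(X_-) - c^m(X_-)\|_a \to 0$ as $N \to \infty$.

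For the final layer, each $\beta_k^m(\omega, s) \triangleq \1_{B_k^m}(c(X_{s-}(\omega)))$ is a $[0,1]$-valued $\mathscr{P}$-predictable process, and a standard monotone class argument produces simple $\F$-predictable approximants $\beta_k^{m,p}$ of the form appearing in \eqref{eq: simple process} with $|\lambda_j^{k,p}| \leq 1$ and $\beta_k^{m,p} \to \beta_k^m$ in $L^2([0,n] \times \Omega)$ as $p \to \infty$. Then $f^{m,N,p} \triangleq \sum_{k=1}^N z_k^m \beta_k^{m,p}$ is itself of the form \eqref{eq: simple process}, and Cauchy–Schwarz together with the inequality $\|z_k^m\|^2_{a(x)} \leq C_n \|z_k^m\|^2$ yields
\[
\|f^{m,N,p} - c^{m,N}(X_-)\|_a^2 \leq N C_n \sum_{k=1}^N \|z_k^m\|^2 \cdot E^P\!\left[\int_0^n(\beta_k^{m,p} - \beta_k^m)^2 \dd s\right] \xrightarrow{p \to \infty} 0
\]
for each fixed $m, N$. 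A diagonal extraction in $m, N, p$ then produces the desired sequence $(f^p)_{p \in \mathbb{N}}$. The main conceptual point is the first layer, namely the construction of a Borel $D(A^*)$-valued simple approximant $c^m$ with uniform pointwise control on $\|c^m - c\|_{\mathbb{B}^*}$; once this is in hand, the truncation and time-discretization are routine, and the $C_n$-bound is what makes them commute with the mixed norm $\|\cdot\|_a$.
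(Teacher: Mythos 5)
Your proof is correct, and it differs from the paper's in how the first (finite--rank) approximation layer is obtained and in when \(D(A^*)\) enters. The paper's proof is citation-driven: it invokes the proof of \cite[Proposition 9]{Mikulevicius1998} to produce finite sums \(\sum_j f^{j,m} y^*_{j,m}\) with bounded predictable scalar coefficients and \(y^*_{j,m}\in\mathbb{B}^*\) converging to \(c(X_-)\) in \(\|\cdot\|_a\), then applies \cite[Proposition 3.2.6]{KaraShre} to time-discretize the scalar coefficients, and only at the very end replaces each \(y^*\in\mathbb{B}^*\) by a norm-approximating sequence from \(D(A^*)\) via dominated convergence. You instead build the finite-rank approximant by hand: a Borel partition of the separable space \(\mathbb{B}^*\) into sets of diameter \(O(1/m)\) with representatives chosen in \(D(A^*)\) gives a countably-valued \(c^m\) with the pointwise bound \(\|c^m-c\|\le 1/m\), which you convert into a \(\|\cdot\|_a\)-bound using \(\langle a(X_{s-})y^*,y^*\rangle\le C_n\|y^*\|^2\) on \(\of 0,\tau_n\gs\); truncation and time-discretization then proceed as in the paper. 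Your route is more self-contained (no appeal to the internals of Mikulevi\v{c}ius's construction) and gets \(D(A^*)\)-valuedness for free from the start, at the price of leaning on the uniform operator-norm bound \(C_n=\sup_{\|x\|\le n}\|a(x)\|\), which is available here precisely because \(a\) is bounded on bounded sets and the integral is stopped at \(\tau_n\); the paper's argument only needs \(\|c(X_-)\|_a<\infty\) for its first step. All the individual estimates (the disjointness identity for the truncation error, the majorant \(2\langle ac,c\rangle+2C_n/m^2\), the crude \(N\sum_k\) Cauchy--Schwarz bound for the last layer, and the final diagonal extraction) check out.
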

\begin{proof} It is shown in the proof of \cite[Proposition 9]{Mikulevicius1998} that there exists a sequence \((\hat{f}^m)_{m \in \mathbb{N}} = (\sum_{j = 1}^{N_m} f^{j, m} y^*_{j, m})_{m \in \mathbb{N}}\), where \(f^{j, m}\) are bounded \(\F\)-predictable processes and \(y^*_{j, m} \in \mathbb{B}^*\), such that 
	\begin{align*}
	\lim_{m \to \infty} \|\hat{f}^m - c(X_-)\|_a 
	= 0.
	\end{align*}
	By \cite[Proposition 3.2.6]{KaraShre}, where we stress that for \(\F\)-predictable processes the proposition holds without augmenting the filtration, for each \(m \in \mathbb{N}\) there exists a sequence \((\hat{f}^{m, k})_{k \in \mathbb{N}}\)
	of the type \eqref{eq: simple process} with \(y^*_{i, p} \in \mathbb{B}^*\)
	such that 
	\begin{align*}
	\lim_{k \to \infty} \|\hat{f}^{m, k} - \hat{f}^m\|_a 
	= 0.
	\end{align*}
	Now, for each \(m \in \mathbb{N}\) we find a \(k_m\in \mathbb{N}\) such that 
	\begin{align*}
	\|\hat{f}^{m, k_m} - \hat{f}^m\|_a \leq \frac{1}{m}.
	\end{align*}
	Thus, we have 
	\begin{align*}
	\|\hat{f}^{m, k_m} - c(X_-)\|_a &\leq \|\hat{f}^{m, k_m} - \hat{f}^m\|_a + \|\hat{f}^m - c(X_-)\|_a \\&\leq \frac{1}{m} + \|\hat{f}^m - c(X_-)\|_a \to 0
	\end{align*}
	as \(m \to \infty\). 
	Let \(y^* \in \mathbb{B}^*\). Then, since \(D(A^*)\) is dense in \(\mathbb{B}^*\), we find a sequence \((y^*_k)_{k \in \mathbb{N}} \subset D(A^*)\) such that \(y^*_k \to y^*\) as \(k \to \infty\). 
	For the processes \(\tilde{f} \triangleq \big(\lambda_0 \1_{\{0\}} + \sum_{j = 0}^\infty \lambda_j \1_{(t_j, t_{j + 1}]} \big) y^*\) and \(\tilde{f}^k \triangleq \big(\lambda_0 \1_{\{0\}} + \sum_{j = 0}^\infty \lambda_j \1_{(t_j, t_{j + 1}]} \big) y^*_k\), the dominated convergence theorem yields that 
	\(\|\tilde{f} - \tilde{f}^k\|_a \to 0\) as \(k \to \infty\). Therefore, using a similar argument as above, the lemma is proven.
\end{proof}
Since \(\widehat{X}_{\cdot \wedge \tau_n} = X_{\cdot \wedge \tau_n}\) up to \(P\)- and \(P'\)-indistinguishability, for all \(s \in [0, \infty)\) and \(y^* \in D(A^*)\), we have 
\begin{align*}
\widehat{X}^{n, c}(y^*)_{s \wedge \tau_n} = \la X_{s \wedge \tau_n}, y^*\ra &- \la X_0, y^*\ra - \la h, y^*\ra \star (\mu^{X} - \nu^{X})_{s \wedge \tau_n} 
\\&- \int_0^{s \wedge \tau_n} \left( \langle X_{r-}, A^* y^*\rangle + \langle b(X_{r-}), y^*\rangle \right)\dd r 
\\&- (\la \cdot, y^*\ra -\la h, y^*\ra) \star \mu^{X}_{s \wedge \tau_n},
\end{align*}
up to a null set, see \cite[Theorem II.2.34]{JS}. 
In this equality \(\widehat{X}^{n, c}(y^*)_{s \wedge \tau_n}\) and the stochastic integral \(\la h, y^*\ra \star (\mu^{X} - \nu^{X})_{s \wedge \tau_n}\) are interpreted relative to the same probability measure.
Thus, by Lemma \ref{lem: V approx}, for each \(s \in [0, \infty)\) and \(y^* \in D(A^*)\) we find an \(\mathscr{F}_{\tau_n}\)-measurable random variable \(X(s, y^*)\) which equals \(P\)-a.s. \(\widehat{X}^{n, c}(y^*)_{s \wedge \tau_n}\) relative to \(P\) and \(P'\)-a.s. \(\widehat{X}^{n, c}(y^*)_{s \wedge \tau_n}\) relative to \(P'\).

Let \((f^p)_{p \in \mathbb{N}}\) be as in Lemma \ref{lem: simple fct}. We have 
\begin{align}\label{eq: approx seq prob}
\int_0^{\tau_n} \left\langle a(X_{s-}) \left(f^p_s - c(X_{s-})\right), f^p_s - c(X_{s-})\right\rangle \dd s \to 0
\end{align}
as \(p \to \infty\) in \(P\)-probability. Since the CMG densities are positive, see Lemma \ref{lem: candidate density}, it follows from \eqref{eq: CMG uni} that \(P\) and \(P'\) are equivalent on \(\mathscr{F}_{\tau_n}\). Thus, \eqref{eq: approx seq prob} holds also in \(P'\)-probability, see \cite[Exercise A.8.11]{Bichteler02}.

We deduce from Lemma \ref{lem: definition si} that as \(p \to \infty\) the sequence of \(\mathscr{F}_{\tau_n}\)-measurable random variables
\begin{align*}
\sum_{i = 1}^{m_p} \sum_{j = 0}^\infty \lambda^{i, p}_j \1_{\{t_j^{i, p} < \tau_n\}} \left(X\left(t^{i, p}_{j + 1}, y^*_{i, p}\right) - X\left(t^{i, p}_{j}, y^*_{i, p}\right)\right) 
\end{align*}
converges in \(P\)-probability to the stochastic integral \(\int_0^{\tau_n} \la \dd \widehat{X}^{n, c}_s, c(\widehat{X}_{s-})\ra\) relative to \(P\) and in \(P'\)-probability to the stochastic integral \(\int_0^{\tau_n} \la \dd \widehat{X}^{n, c}_s, c(\widehat{X}_{s-})\ra\) relative to \(P'\).
Therefore, we can conclude the proof of the first statement.

The second statement follows by symmetry. More precise, one has to consider \(-c\) instead of \(c\) and \(Y^{-1}\) instead of \(Y\).
\qed

\subsection{Proof of Proposition \ref{coro:main2}}\label{sec: pf coro cons existence}
Let us first prove the implication (i) \(\Longrightarrow\) (ii).
Thanks to Proposition \ref{theo:main2}, (i) implies that the GMP \((A, b', a, K', \tau_\Delta-)\) is well-posed, i.e. by Proposition \ref{prop:locuni} also completely and locally well-posed. Denote by \(Q_x\) the unique solution to the GMP \((A, b', a, K', \varepsilon_x, \tau_\Delta-)\). Then, since by Lemma \ref{lem: Q n sol} the probability measure \(Q^n_x\) is a solution to the GMP \((A, b', a, K', \varepsilon_x,  \tau_n)\), we conclude that 
\(
Q^n_x = Q_x\) on \(\mathscr{F}_{\tau_n}.
\)
Therefore, for all \(t \in [0, \infty)\),
\[
\lim_{n \to \infty} Q^n_x (\tau_n > t) = \lim_{n \to \infty} Q_x (\tau_n > t) = 1, 
\]
by the assumption that the GMP \((A, b', a, K', \varepsilon_x, \tau_\Delta-)\) is conservative. This proves the implication (i) \(\Longrightarrow\) (ii).

We now suppose that (ii) holds. 
The following proposition is an extension of \cite[Theorem 1.3.5]{SV} to a \cadlag setting.
\begin{proposition}\label{theo:TET}
	Let \((\rho_n)_{n \in \mathbb{N}}\) be an increasing sequence of \(\mathbf{D}\)-stopping times and for each \(n \in \mathbb{N}\) suppose that \(P^o_n\) is a probability measure on \((\mathbb{D}, \mathscr{D}_{\rho_n})\). Assume that \(P^o_{n +1} = P^o_n\) on \(\mathscr{D}_{\rho_n}\) for all \(n \in \mathbb{N}\). If \(\lim_{n \to \infty} P^o_n(\rho_n \leq t) = 0\) for all \(t \in [0, \infty)\), then there exists a unique probability measure \(P^o\) on \((\mathbb{D}, \mathscr{D})\) such that \(P^o = P^o_n\) on \(\mathscr{D}_{\rho_n}\).
\end{proposition}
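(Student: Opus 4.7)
The proposition is a càdlàg analogue of \cite[Theorem 1.3.5]{SV}, and my strategy is to adapt their argument in three stages: extend each $P^o_n$ to the full $\sigma$-algebra $\mathscr{D}$ via a stopped-path pushforward; establish a Cauchy-in-total-variation property of the extensions on each $\mathscr{D}_t$; and finally glue the limits into the desired probability measure $P^o$.

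I would first define the stopped-path map $\pi_n \colon \mathbb{D} \to \mathbb{D}$, $\pi_n(\omega) \triangleq \omega(\cdot \wedge \rho_n(\omega))$, and set $\bar P^o_n \triangleq P^o_n \circ \pi_n^{-1}$ on $(\mathbb{D}, \mathscr{D})$. By Galmarino's test \cite[Theorem IV.100]{DellacherieMeyer78}, $\pi_n^{-1}(A) = A$ for every $A \in \mathscr{D}_{\rho_n}$, so $\bar P^o_n$ restricted to $\mathscr{D}_{\rho_n}$ coincides with $P^o_n$. Since $(\rho_n)$ is increasing, Galmarino also yields $\rho_m(\pi_n(\omega)) = \rho_m(\omega)$ for $m \leq n$, and together with the consistency hypothesis one obtains $\bar P^o_n = \bar P^o_m$ on $\mathscr{D}_{\rho_m}$ whenever $m \leq n$.

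Next, for each $t \geq 0$ and $B \in \mathscr{D}_t$ and any $n \geq m$, the sets $B \cap \{\rho_m > t\}$ and $\{\rho_m \leq t\}$ lie in $\mathscr{D}_{\rho_m}$ (using the standard fact $A \cap \{\rho \geq t\} \in \mathscr{D}_\rho$ for $A \in \mathscr{D}_t$ and any $\F$-stopping time $\rho$), and consistency gives
\[
|\bar P^o_n(B) - \bar P^o_m(B)| \leq 2\, P^o_m(\rho_m \leq t) \xrightarrow{m \to \infty} 0
\]
by the hypothesis. Thus $(\bar P^o_n|_{\mathscr{D}_t})_n$ is Cauchy in total variation and converges to a probability measure $P^o_t$ on $\mathscr{D}_t$, and the family $(P^o_t)_{t \geq 0}$ is consistent. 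I would then define $P^o$ on the algebra $\mathscr{A} \triangleq \bigcup_n \mathscr{D}_{\rho_n}$ by $P^o(A) \triangleq P^o_n(A)$ for $A \in \mathscr{D}_{\rho_n}$, which is unambiguous by consistency and finitely additive. Countable additivity on $\mathscr{A}$ is the main step: for a decreasing sequence $A_k \downarrow \emptyset$ with $A_k \in \mathscr{D}_{\rho_{n_k}}$ one decomposes $A_k = (A_k \cap \{\rho_{n_k} \leq t\}) \cup (A_k \cap \{\rho_{n_k} > t\})$; the first piece lies in $\mathscr{D}_t$ and its $P^o$-measure equals $P^o_t$ of the same set, which tends to zero by $\sigma$-additivity of $P^o_t$ as $A_k \downarrow \emptyset$, while the second piece is handled by interpolating through a fixed level $\rho_N$ and using consistency together with the hypothesis $P^o_N(\rho_N \leq t) \to 0$ to reduce to a decreasing tail under a single fixed measure. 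Carathéodory's theorem then uniquely extends $P^o$ to $\sigma(\mathscr{A})$, and since $P^o(\rho_n \leq t) = P^o_n(\rho_n \leq t) \to 0$ implies $\rho_n \to \infty$ $P^o$-almost surely along a subsequence (Borel-Cantelli), each $X_t$ is $\sigma(\mathscr{A})^{P^o}$-measurable and the extension to $\mathscr{D} = \sigma(X_t, t \geq 0)$ is unique. Uniqueness of $P^o$ itself is immediate: any two candidates coincide on $\mathscr{A}$ by the extension requirement, and hence on $\mathscr{D}$ by the same completion argument.

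\textbf{Main obstacle.} I expect the hard step to be the countable additivity of $P^o$ on $\mathscr{A}$. In the continuous setting of \cite{SV} one exploits tightness in $C([0,T], E)$ relative to the uniform topology, which does not transfer directly to the Skorokhod topology on $\mathbb{D}$. The Cauchy-in-total-variation route outlined above replaces that tightness argument, but the subtle point is controlling the ``tail'' piece $A_k \cap \{\rho_{n_k} > t\}$ under the moving sequence of measures $P^o_{n_k}$; this requires first reducing to a fixed level $\rho_N$ and a fixed measure $\bar P^o_N$ via consistency, and then exploiting $A_k \downarrow \emptyset$ under this single measure together with the smallness of $P^o_N(\rho_N \leq t)$ to close the estimate.
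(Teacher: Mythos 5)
Your first half is sound and runs parallel to the paper's: the paper defines \(P^*_n(G) \triangleq \lim_k P^o_k(G\cap\{\rho_k>n\})\) on \(\mathscr{D}_n\) and proves its countable additivity with exactly the \(\{\rho_m\le n\}\)/\(\{\rho_m>n\}\) decomposition you use; your total-variation Cauchy argument via the stopped-path pushforwards \(\bar P^o_n\) is an equivalent route to the same consistent family \((P^o_t)_{t\ge 0}\). The gap sits in the step you yourself flag as the main obstacle: continuity at \(\emptyset\) on the algebra \(\bigcup_n\mathscr{D}_{\rho_n}\). Your first piece \(A_k\cap\{\rho_{n_k}\le t\}\) is handled correctly, but the tail piece cannot be ``reduced to a decreasing tail under a single fixed measure'': after interpolating through a fixed \(\rho_N\) you are left with \(P^o_{n_k}\bigl(A_k\cap\{\rho_N>t\}\bigr)\), and the set \(A_k\cap\{\rho_N>t\}\) still lies only in the moving \(\sigma\)-field \(\mathscr{D}_{\rho_{n_k}}\), so consistency provides no fixed measure under which to let \(k\to\infty\); the only bounds it yields (\(P^o_{n_k}(A_k)\le P^o_{n_{k_0}}(A_{k_0})\) for \(k\ge k_0\)) merely restate monotonicity and are circular.

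The gap is not cosmetic. Take \(\rho_n\equiv n\): the hypothesis \(P^o_n(\rho_n\le t)\to 0\) is then vacuous, and your argument, which uses no property of \(\mathbb{D}\) beyond measurability, would prove that every consistent family of probability measures on an increasing sequence of \(\sigma\)-fields extends to the generated \(\sigma\)-field. That is false in general (e.g.\ \(\Omega=\mathbb{N}\), \(\mathscr{F}_n=\sigma(\{1\},\dots,\{n\})\), \(P_n\) the unit mass on the atom \(\{n+1,n+2,\dots\}\): any extension would be identically zero on singletons). The paper closes precisely this hole with a Tulcea-type argument: it computes the atoms \(A_n(\omega)=\{\omega^o\in\mathbb{D}\colon\omega^o=\omega\text{ on }[0,n]\}\) of \(\mathscr{D}_n\), shows that decreasing nonempty finite intersections of such atoms have nonempty intersection by concatenating paths, and uses regular conditional probabilities \(P^*_{n+1}(\cdot\,|\,\mathscr{D}_n)\) (available because \(\mathbb{D}\) is Polish and \(\mathscr{D}_n\) is countably generated) to invoke Tulcea's extension theorem \cite[Theorem 1.1.9]{SV}. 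You need this step, or an equivalent appeal to the standard Borel structure of the path space, to obtain \(P^o\) on \(\mathscr{D}\). Your uniqueness argument and the final identification \(P^o=P^o_n\) on \(\mathscr{D}_{\rho_n}\) (via \(\mathscr{D}_{\rho_n\wedge k}\) and a monotone class argument) are fine once existence is secured.
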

\begin{proof}
	We follow closely the proof of \cite[Theorem 1.3.5]{SV}.
	If \(P^o\) exists, it is unique on \(\mathscr{D}_t\) due to the identity
	\begin{align*}
	P^o(G) = \lim_{n \to \infty} P^o_n(G \cap \{\rho_n> t\}),\quad G \in \mathscr{D}_t.
	\end{align*}
	Thus, by a monotone class argument, \(P^o\) is unique on \(\bigvee_{t \in [0, \infty)} \mathscr{D}_t = \mathscr{D}\). 
	
	For all \(G \in \mathscr{D}_n\) we have that \(G \cap \{\rho_k > n\} \in \mathscr{D}_{\rho_k}\). Thus, it holds that 
	\begin{align*}
	P^o_k(G \cap \{\rho_k > n\}) = P^o_{k + 1} (G \cap \{\rho_k > n\}) \leq P^o_{k + 1} (G \cap \{\rho_{k +1} > n\}),\quad G \in \mathscr{D}_n.
	\end{align*}
	Due to this observation, we can define 
	\begin{align*}
	P^*_n (G) \triangleq \lim_{k \to \infty} P^o_k(G \cap \{\rho_k > n\}),\quad G \in \mathscr{D}_n.
	\end{align*}
	Clearly, \(P^*_n\) is finitely additive. Thus, if for any sequence \((G_k)_{k \in \mathbb{N}} \subset \mathscr{D}_n\) such that \(G_k \searrow \emptyset\) as \(k \to \infty\) it holds that \(\lim_{k \to \infty} P^*_n (G_k) = 0\), \(P^*_n\) is a measure on \((\mathbb{D}, \mathscr{D}_n)\), see \cite[Proposition 1.2.6]{cohn13}.
	In fact, in this case it is easily seen that \(P^*_n\) is even a probability measure. 
	Let \((G_k)_{k \in \mathbb{N}} \subset \mathscr{D}_n\) be such that \(G_k \searrow \emptyset\) as \(k \to \infty\).
	Then, we have 
	\begin{align*}
	P^*_n (G_k) &= P^*_n(G_k \cap \{\rho_m \leq n\}) + P^*_n(G_k \cap \{\rho_m > n\})
	\\&= \lim_{i \to \infty} P_{i + m}^o (G_k \cap \{\rho_m \leq n\} \cap \{\rho_{i + m} > n\})
	\\&\qquad\qquad+ \lim_{i \to \infty} P^o_{i + m}(G_k \cap \{\rho_m > n\} \cap \{\rho_{i + m} > n\})
		\\&= \lim_{i \to \infty} P_{i + m}^o (G_k \cap \{\rho_m \leq n\} \cap \{\rho_{i + m} > n\})
	+ P^o_{m}(G_k \cap \{\rho_m > n\})
	\\&\leq \lim_{i \to \infty} P^o_{i + m}(\rho_m \leq n) + P^o_m(G_k \cap \{\rho_m > n\})
	\\&= P^o_m(\rho_m \leq n) + P^o_m(G_k \cap \{\rho_m > n\}),
	\end{align*}
	where we use that \(P^o_i = P^o_m\) on \(\mathscr{D}_{\rho_m}\) for \(i \geq m\) and that  \(G_k \cap \{\rho_m> n\} \in \mathscr{D}_{\rho_m}\). Letting first \(k \to \infty\) and then \(m \to \infty\) yields that \(\lim_{k \to \infty} P^*_n (G_k) = 0\). Thus, \(P^*_n\) is a probability measure on \((\mathbb{D}, \mathscr{D}_n)\).
	
	For any \(G \in \mathscr{D}_{n}\) note that 
	\begin{align*}
	0 \leq  \lim_{k \to \infty} P^o_{k} (G \cap \{\rho_k > n + 1 \} \cap \{\rho_k \leq n\}) \leq \lim_{k \to \infty} P^o_k(\rho_k \leq n) = 0, 
	\end{align*}
and hence that 
	\begin{align*}
P^*_{n + 1} (G) &= \lim_{k \to \infty}P^o_{k} (G \cap \{\rho_k > n + 1\} \cap \{\rho_k > n\})
= P^*_n (G).
\end{align*}
	Summarizing this, we have \(P^*_{n + 1} = P^*_n\) on \(\mathscr{D}_{n}\) for all \(n \in \mathbb{N}\). Our next step is to extend the family \((P^*_n)_{n \in \mathbb{N}}\).
	
	Note that for \(\omega \in \mathbb{D}\)
	\begin{align*}
	A_n(\omega) \triangleq \bigcap_{B \in \mathscr{D}_n\colon \omega \in B} B = \{\omega^o \in \mathbb{D}\colon \omega^o(s) = \omega(s), s \in [0, n]\}.
	\end{align*}
	Let \((\omega_n)_{n \in \mathbb{N}} \subset \mathbb{D}\) be such that \(\bigcap_{k = 1}^N A_k(\omega_k)\not = \emptyset\) for all \(N \in \mathbb{N}\). Then, \(\omega_{n^o} = \omega_{n}\) on \([0, n^o]\) for all \(n^o \leq n \leq N\) and \(N \in \mathbb{N}\), and we have that 
	\begin{align*}
	\omega_1 (0) + \sum_{n = 1}^\infty \omega_n \1_{(n - 1, n]} \in \bigcap_{k = 1}^\infty A_k(\omega_k).
	\end{align*}
	Since \(\mathbb{D}\) is a Polish space, \(\mathscr{D}\) is its Borel \(\sigma\)-field and \(\mathscr{D}_n = \sigma(X_s, s \in [0, n])\) is countably generated, a regular conditional probability \(P^*_n (\cdot |\mathscr{D}_{n -1})\) exists, see \cite[Theorem II.89.1]{RW1}.
	There exists a \(P_{n+1}^*\)-null set \(N \in \mathscr{D}_n\) such that for all \(\omega \in \complement N\) we have 
	\begin{align*}
	P^*_{n + 1} (A_n (\omega)|\mathscr{D}_n) (\omega) = \1_{A_n(\omega)} (\omega) = 1.
	\end{align*}
	Thus, for all \(\omega \in \complement N\) and all \(B \in \mathscr{D}_{n+1}\) such that \(B \cap A_n(\omega) = \emptyset\), we have 
	\begin{align*}
	P^*_{n + 1}(B|\mathscr{D}_{n}) (\omega) = P^*_{n + 1} (B \cap A_n(\omega) |\mathscr{D}_{n}) (\omega) = 0.
	\end{align*}

We can apply Tulcea's extension theorem, see \cite[Theorem 1.1.9]{SV}, and obtain that there exists a probability measure \(P^o\) on \(\mathscr{D}\) such that \(P^o = P^*_1\) on \(\mathscr{D}_1\) and
	\begin{align*}
	P^o(B) = \int P_n^* (B|\mathscr{D}_{n - 1})(\omega) P^o(\dd \omega), \quad B \in \mathscr{D}_n,\quad n \geq 2.
	\end{align*}
	
	It remains to show that \(P^o = P^o_n\) on \(\mathscr{D}_{\rho_n}\). 
	We check first that \(P^o = P^*_n\) on \(\mathscr{D}_n\) for all \(n \in \mathbb{N}\). 
	If \(P^o = P^*_{n-1}\) on \(\mathscr{D}_{n-1}\), then 
	\begin{align*}
	P^o(B) &= \int P_n^*(B|\mathscr{D}_ {n - 1})(\omega) P^o(\dd \omega)\\&= \int P_n^*(B |\mathscr{D}_{n-1})(\omega) P^*_{n-1}(\dd \omega)
	\\&= \int P_{n}^*(B|\mathscr{D}_{n-1})(\omega) P^*_n (\dd \omega)
	\\&= P_n^*(B).
	\end{align*}
	The claim follows by induction.
	
	For \(G \in \mathscr{D}_{\rho_n} \cap \mathscr{D}_k = \mathscr{D}_{\rho_n \wedge k}\) we have
	\begin{align*}
	P^o(G) = P^*_k(G) = P^o_n(G), 
	\end{align*}
	since
	\begin{align*}
	|P^o_n(G) - P^*_k(G)| &= \left|\lim_{i \to \infty} P^o_{i + n} (G) - \lim_{i \to \infty} P^o_{i + n} (G \cap \{\rho_{i + n} > k\})\right| 
	  \\&= \lim_{i \to \infty} P^o_{i + n} (G \cap \{\rho_{i + n} \leq k\}) = 0.
	\end{align*}
	Since \(\bigvee_{k \in \mathbb{N}} \mathscr{D}_{\tau_n\wedge k} = \mathscr{D}_{\tau_n}\), a monotone class argument completes the proof.
\end{proof}

We can consider \(Q^n_x\) as a probability measure on \((\mathbb{D}, \mathscr{D}_{\tau_n})\) and \(Q_x^{n+1} = Q_x^n\) on \(\mathscr{D}_{\tau_n}\) follows from the optional stopping theorem. Thus, the sequence \((Q^n_x)_{n \in \mathbb{N}}\) satisfies the prerequisites of Proposition \ref{theo:TET}. We conclude from Lemma \ref{lem: Q n sol} that a solution \(Q_x\) to the MP \((A, b', a, K', \varepsilon_x)\) exists. Of course, this implies that the GMP \((A, b', a, K', \varepsilon_x, \tau_{\Delta}-)\) has a conservative solution.
By Proposition \ref{theo:main2}, the GMP \((A, b', a, K', \tau_{\Delta}-)\) is well-posed and by Proposition \ref{prop:locuni} completely well-posed.

It remains to show that for any initial law \(\eta\) which is supported on \((\mathbb{B}, \mathscr{B}(\mathbb{B}))\) the GMP \((A, b', a, K', \eta, \tau_\Delta-)\) has a conservative solution.
Due to Theorem \ref{theo:markov}, the unique solution \(Q_\eta\) to the GMP \((A, b', a, K', \eta, \tau_\Delta-)\) is given by 
\(
Q_\eta = \int Q_x \eta(\dd x).
\)
Hence, we have 
\[
Q_\eta(\tau_{\Delta} = \infty) = \int Q_x (\tau_{\Delta} = \infty) \eta(\dd x) = \int \eta(\dd x) = 1.
\]
This concludes the proof.
\qed

\subsection{Proof of Proposition \ref{coro: well-posed equivalence}}\label{sec: pf coro equivalence bdd coe}
Let us suppose that (i) holds. Let \(x \in \mathbb{B}\).
Assume that \(P_x\) solves the MP \((A, b, a, K, \varepsilon_x)\). 
In conservative cases all previous results can be proven for \(\F^P\) replaced by \(\mathbf{D}^+\). In particular, the stochastic integrals can be defined for \(\mathbf{D}^+\), see \cite{JS,Mikulevicius1998}, and it is not necessary to introduce the process \(\widehat{X}\).
Let \(Z^*\) be as in Lemma \ref{lem: candidate density}. 
By hypothesis, for all \(t \in [0, \infty)\) the random variable 
\begin{align*}
\int_0^t \la a(\widehat{X}_{s-}) &c(\widehat{X}_{s-}), c(\widehat{X}_{s-}) \ra \dd s + \int_0^t \int \left(1 - \sqrt{Y(\widehat{X}_{s-}, y)}\right)^2 K(\widehat{X}_{s-}, \dd y) \dd s 
\end{align*}
is bounded.
Hence, by \cite[Lemma 8.8, Theorem 8.25]{J79}, \(Z^*\) is a positive \((\mathbf{D}^+, P_x)\)-martingale. 
For all \(t \in [0, \infty)\), define a probability measure \(Q_x^t\) on \((\mathbb{D}, \mathscr{D}_{t})\) by \(Q^t_x (G) = E^{P_x} [Z^*_t \1_G]\) for \(G \in \mathscr{D}_{t}\).
Clearly, \(Q_x^t = Q^s_x\) on \(\mathscr{D}_s\) for \(s \in [0,t]\), by the \((\mathbf{D}^+, P_x)\)-martingale property of \(Z^*\).
Thus, due to Proposition \ref{theo:TET}, there exists a probability measure \(Q_x\) on \((\mathbb{D}, \mathscr{D})\) such that \(Q_x = Q^t_x\) on \(\mathscr{D}_{t}\) for all \(t \in [0, \infty)\). In particular, for \(G \in \mathscr{D}_{t+}\) we have 
\begin{align*}
Q_x(G) = Q_x^{t + \epsilon} (G) = E^{P_x} \left[Z^*_{t + \epsilon} \1_G\right]
\end{align*}
for all \(\epsilon > 0\). Thus, by the downward theorem, see \cite[Theorem II.51.1]{RW1}, letting \(\epsilon \searrow 0\) yields that the Radon-Nikodym density of \(Q_x\) w.r.t. \(P_x\) on \(\mathscr{D}_{t+}\) is given by \(Z^*_t\).
Since the Girsanov-type theorems used in the proof of Lemma \ref{lem: Q n sol} hold for any right-continuous filtration and any pair of locally absolutely continuous probability measures, we may use the same arguments as in the proof of Lemma \ref{lem: Q n sol} together with Lemma \ref{lem: equi JS} to conclude that \(Q_x\) solves the MP \((A, b', a, K', \varepsilon_x)\). Thus, since uniqueness for this MP holds by Proposition \ref{theo:main2}, we conclude that (ii) holds.

The converse implication follows by symmetry. More precisely, the same arguments as above with \(-c\) instead of \(c\), \(Y^{-1}\) instead of \(Y\), \(b'\) instead of \(b\) and \(K'\) instead of \(K\) yield the claim.
\qed

\subsection{Proof of Proposition \ref{coro:main1}}\label{sec: pf coro bdd coefficients}
(i). 
Suppose that the GMP \((A, b', a, K', \tau_{\Delta}-)\) is well-posed.
Due to Theorem \ref{theo:main1} and Lemma \ref{lem: candidate density}, for all \(t \in [0, \infty)\), there exists a non-negative random variable \(Z_t\) such that for all \(G \in \mathscr{F}_t\)
\begin{align}\label{eq: equivalence}
Q(G \cap\{\tau_\Delta > t\}) = E^P \left[Z_t \1_{G\cap \{\tau_\Delta > t\}}\right].
\end{align}
In particular, if \(P\) is conservative, \(P\)-a.s. \(Z_t > 0\). Thus, \(P\) and \(Q\) are locally equivalent if \(P\) and \(Q\) are conservative.

If \(P\) and \(Q\) are locally equivalent and \(P\) is conservative, then for all \(t \in [0, \infty)\) it holds that \(Q(\tau_\Delta < t) = 0\). Hence, since \(\{\tau_{\Delta} < \infty\} = \bigcup_{s \in \mathbb{Q}_+} \{\tau_{\Delta} < s\}\), we can conclude \(Q(\tau_{\Delta} = \infty) = 1.\)
Similarly, if \(P\) and \(Q\) are locally equivalent and \(Q\) is conservative, we have \(P(\tau_\Delta = \infty) = 1\).

These arguments are symmetric, i.e. in the case where the GMP \((A, b, a, K, \tau_{\Delta}-)\) is well-posed one has to consider \(-c\) instead of \(c\) and \(Y^{-1}\) instead of \(Y\). This concludes the proof of (i).

If \(Q\) is conservative and \(G \in \mathscr{F}_t\) is \(P\)-null, \eqref{eq: equivalence} immediately implies that \(G\) is also \(Q\)-null,
which proves (ii).
\qed
\subsection{Proof of Theorem \ref{theo: GMP SDE}}\label{sec: pf equi GMP SDE}
Let \(Y\) be a solution process to the SDE and \(y^* \in D(A^*)\). Then, the real-valued process \(\la Y, y^*\ra\) is a semimartingale with characteristics \((B^Y(k), C^Y, \nu^Y)\) given by 
\begin{align*}
B^Y (k) &= \int_0^\cdot \left (\la Y_{s-}, A^* y^*\ra + \la b(Y_{s-}), y^*\ra \right)\dd s 
\\&\qquad\quad+ \int_0^\cdot \int \left( k (\la \delta (Y_{s-}, x), y^*\ra) - \la h(\delta (Y_{s-}, x)), y^*\ra \right) F(\dd x) \dd s,\\
C^Y &= \int_0^\cdot \la \sigma (Y_{s-}) U \sigma^*(Y_{s-}), y^*\ra \dd s,\\
\nu^Y(\dd t, G) &= \int \1_G (\la \delta (Y_{t-}, x), y^*\ra) F(\dd x) \dd t,\quad G \in \mathscr{B}(\mathbb{R}), 0 \not \in G,
\end{align*}
see \cite[Theorem 14.80]{J79}.
Denote by \(P\) the law of \(Y\) and recall that we consider \(P\) as a probability measure on \((\mathbb{D},\mathscr{D})\). Furthermore, recall that we denote the coordinate process on \((\mathbb{D},\mathscr{D})\) by \(X\). Then, \cite[Theorem 10.37, Proposition 10.38, Proposition 10.39]{J79} yield that \(\la X, y^*\ra\) is a \((\mathbf{D}^P, P)\)-semimartingale with characteristics \((B(k), C, \nu)\) given by
\begin{align*}
B (k) &= \int_0^\cdot \left \la X_{s-}, A^* y^*\ra + \la b(X_{s-}), y^*\ra \right)\dd s 
\\&\qquad\quad+ \int_0^\cdot \int \left( k (\la \delta (X_{s-}, x), y^*\ra) - \la h(\delta (X_{s-}, x)), y^*\ra \right) F(\dd x) \dd s,\\
C &= \int_0^\cdot \la \sigma (X_{s-}) U \sigma^*(X_{s-}), y^*\ra \dd s,\\
\nu(\dd t, G) &= \int \1_G (\la\delta (X_{t-}, x), y^*\ra) F(\dd x) \dd t,\quad G \in \mathscr{B}(\mathbb{R}), 0 \not \in G.
\end{align*}
Hence, Lemma \ref{lem: equi JS} yields that \(P\) is a solution to the claimed MP.

Conversely, suppose that \(P\) solves the MP. Due to Lemma \ref{lem: equi JS} there exists a family of continuous local \((\mathbf{D}^P, P)\)-martingales \(\{X^c(y^*), y^* \in D(A^*)\}\) such that \(D(A^*) \ni y^* \mapsto X^c(y^*)\) is linear, and for each \(y^* \in D(A^*)\) the process \(X^c(y^*)\) is the continuous local \((\mathbf{D}^P, P)\)-martingale part of \(\la X, y^*\ra\). In particular, for all \(y^* \in D(A^*)\)
\[
\lle X^c(y^*)\rre = \int_0^\cdot \la \sigma (X_{s-}) U \sigma^*(X_{s-}) y^*, y^*\ra \dd s.
\]
Moreover, by Lemma~\ref{lem: comp}, the \(\mathbf{D}^P\)-predictable \(P\)-compensator of \(\mu^X\) is given by 
\[
\nu^X(\dd t, G) = \int \1_G (\delta (X_{t-}, x)) F(\dd x) \dd t,\quad G \in \mathscr{B}(\mathbb{B}), 0 \not \in G.
\]

Recalling Lemma \ref{lem: equi JS} and the canonical decomposition of a semimartingale, see \cite[Theorem II.2.34]{JS}, we deduce from the representation theorem for cylindrical local martingales as given by \cite[Theorem 2]{Ondrejat2005}
and the representation theorem for integer-valued random measures given by \cite[Theorem 14.56]{J79} that we find an extension of \((\mathbb{D}, \mathscr{D}^P, \mathbf{D}^P, P)\) which supports a cylindrical Brownian motion \(W\) with covariance \(U\), a Poisson random measure \(\mu\) with intensity measure \(p\) and a solution process to the SDE driven by \((W, \mu)\), which law coincides with \(P\).
The theorem is proven.
\qed
\subsection{Proof of Lemma \ref{lem: linear well-posed}}\label{sec: pf lemma well-posed}
Let \(W\) be a cylindrical Brownian motion with covariance operator \(a\) and \(\mu\) be a Poisson random measure with intensity measure \(p\).
Then, the process 
\begin{align}\label{eq: Levy process}
L_t \triangleq bt + W_t + h(x) \star (\mu - p)_t + (x - h(x)) \star \mu_t, \quad t \in [0, \infty),
\end{align}
is an \(\mathbb{H}\)-valued L\'evy process, see \cite{mandrekar2014stochastic, peszat2007stochastic}. 
Here, the integrals in \eqref{eq: Levy process} can be defined in Pettis' sense, see \cite[Chapter 3]{mandrekar2014stochastic}. In particular, for all \(y \in \mathbb{H}\) we have 
\begin{equation} \label{eq: pettis}
\begin{split}
\la x \star (\mu - p), y\ra &= \la x, y\ra \star (\mu - p), \\
\la (x - h(x)) \star \mu, y\ra &= \la x - h(x), y \ra \star \mu.
\end{split}
\end{equation}
The law of \(L\) can be seen as a probability measure on \((\mathbb{D},\mathscr{D})\). We denote it by \(Q\). 
In view of the L\'evy-Khinchine formula for \(\mathbb{H}\)-valued L\'evy processes, see, for instance, 
\cite[Theorem 4.27]{peszat2007stochastic}, and a classical lemma of Gnedenko and Kolmogorov, see \cite[Lemma II.2.44]{JS}, \(Q\) is uniquely characterized by \(b, a\) and \(p\).

Let \(l\) be as in the statement of Corollary \ref{coro: appl uni}, i.e. the identity on \(\mathbb{H}\).
For each solution process \(Y\) to the SDE \((a, p, A, b, l, \beta, \varepsilon_x)\) with driving noise \(W\) and \(\mu\) we can write 
\[
\la Y, y\ra = \la x, y\ra + \int_0^\cdot \la Y_{s-}, A^* y\ra\dd s + \la L, y\ra,\quad y \in D(A^*),
\]
i.e. in short notation
\begin{align}\label{eq: short SDE}
\dd Y_t =A Y_{t-}\dd t + \dd L_t,\quad Y_0 = x.
\end{align}
Here, \(\beta\) denotes the mapping \(\beta (x, y) = y\) for \(x, y \in \mathbb{H}\). 
By \cite[Proposition 8.7, Corollary 10.9]{doi:10.1080/17442501003624407}, the SDE \eqref{eq: short SDE} has a solution for all \(x \in \mathbb{H}\) and
all solution processes on the same probability space w.r.t. the same driving noise \(L\) are indistinguishable.

Next, we use a Yamada-Watanabe-type idea to prove that the laws of the solution processes coincide. 
Let \(Y^1\) and \(Y^2\) be two solution processes to \eqref{eq: short SDE} which are possibly defined on different probability spaces.
Take \(k = 1, 2\). The driving noise of \(Y^k\) is denoted by \(L^k\).
We denote by \(P^k\) the law of \((Y^k, L^k)\), considered as a probability measure on \((\mathbb{D} \times \mathbb{D}, \mathscr{D} \otimes \mathscr{D})\).
It is well-known, see, for instance, \cite[Theorem 5.3.19]{KaraShre}, that there exists a regular conditional probability \(Q^k\) (given the second coordinate) such that
\(
P^k (\dd \omega, \dd \alpha) = Q^k(\alpha, \dd \omega) Q(\dd \alpha).
\)
We set
\begin{align*}
\Omega^\star &\triangleq \mathbb{D} \times \mathbb{D} \times \mathbb{D},\\ \mathscr{F}^\star &\triangleq \mathscr{D} \otimes \mathscr{D} \otimes \mathscr{D},\\ Q^\star(\dd \omega^1, \dd \omega^2, \dd \alpha) &\triangleq Q^1(\alpha, \dd \omega^1) Q^2(\alpha, \dd \omega^2) Q(\dd \alpha).
\end{align*}
Denote the coordinate process on \(\Omega^\star\) by \(X = (X^1, X^2, X^3)\).
Now, it is easy to see that 
\(
Q^\star \circ (X^k, X^3)^{-1} = P^k\) and \(Q^\star \circ (X^3)^{-1} = Q.
\)
In particular, \(Q^\star \circ (X^k_0)^{-1} = \varepsilon_x\), and, for all \(y \in D(A^*)\), \(Q^\star\)-a.s.
\begin{align*}
\la X^k, y\ra = \la x, y\ra &+ \int_0^\cdot \la X^k_{s-}, A^* y\ra \dd s + \la X^3, y\ra.
\end{align*}
We denote 
\(
\mathscr{F}^\star_t \triangleq \sigma (X^1_s, X^2_s, X^3_s, s \in [0, t])\) and \(\F^\star \triangleq (\mathscr{F}^\star_t)_{t \in [0, \infty)}.\)
\begin{lemma}
	For all \(t \in [0, \infty)\) and \(s < t\) the random variable \(X^3_t - X^3_s\) is \(Q^\star\)-independent of \(\mathscr{F}^\star_s\).
\end{lemma}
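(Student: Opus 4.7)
The plan is to reduce the claim to the independent-increments property of the L\'evy process $L$ under each law $P^k$, transported through the regular conditional kernels $Q^k(\alpha, \dd \omega)$. On the original filtered probability space carrying the solution process $Y^k$, the driver $L^k$ defined by \eqref{eq: Levy process} is an $\mathbf{F}^o$-adapted L\'evy process with characteristic triplet $(b, a, F)$, so for every $s < t$ the post-$s$ increment process $(L^k_{s+u} - L^k_s)_{u \geq 0}$ is $P^o$-independent of $\mathscr{F}^o_s$, and in particular of $\sigma(Y^k_r, L^k_r; r \leq s)$. Transported to $(\mathbb{D} \times \mathbb{D}, \mathscr{D} \otimes \mathscr{D}, P^k)$ with coordinates $(X, A)$, this says that $(A_{s+u} - A_s)_{u \geq 0}$ is $P^k$-independent of $\sigma(X_r, A_r; r \leq s)$.

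The technical heart of the proof is the following measurability fact: for every bounded $\sigma(X_r; r \leq s)$-measurable $f$, the function
\[
\Phi_{k, f}(\alpha) \triangleq \int f(\omega)\, Q^k(\alpha, \dd \omega)
\]
is $Q$-a.s.~$\sigma(\alpha_r; r \leq s)$-measurable. Indeed, since $f(X)$ is $\sigma(X_r, A_r; r \leq s)$-measurable, the pair $(f(X), (A_r)_{r \leq s})$ is $P^k$-independent of $(A_{s+u} - A_s)_{u \geq 0}$. A standard conditioning argument then yields $E^{P^k}[f(X)\,|\,A] = E^{P^k}[f(X)\,|\,\sigma(A_r; r \leq s)]$ $P^k$-a.s., and since the left-hand side coincides $P^k$-a.s.~with $\Phi_{k, f}(A)$ by the defining property of the regular conditional probability $Q^k$, the claim follows.

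Given this, take bounded $\sigma(X^k_r, X^3_r; r \leq s)$-measurable $\phi^k$ for $k = 1, 2$ and a bounded Borel $\psi$, and set $\Phi^k(\alpha) \triangleq \int \phi^k(\omega, \alpha)\, Q^k(\alpha, \dd \omega)$. Since $\phi^k(\cdot, \alpha)$ depends on $\alpha$ only through $\alpha|_{[0, s]}$, the measurability fact implies that $\Phi^k$ is $Q$-a.s.~$\sigma(\alpha_r; r \leq s)$-measurable. The definition of $Q^\star$ together with Fubini then yields
\[
E^{Q^\star}\bigl[\phi^1 \phi^2 \cdot \psi(X^3_t - X^3_s)\bigr] = E^Q\bigl[\Phi^1(\alpha) \Phi^2(\alpha) \cdot \psi(\alpha_t - \alpha_s)\bigr],
\]
and because $Q$ is the law of the L\'evy process $L$, the increment $\alpha_t - \alpha_s$ is $Q$-independent of $\sigma(\alpha_r; r \leq s)$, so the right-hand side factors as $E^Q[\Phi^1 \Phi^2] \cdot E^Q[\psi(L_t - L_s)] = E^{Q^\star}[\phi^1 \phi^2] \cdot E^{Q^\star}[\psi(X^3_t - X^3_s)]$.

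To conclude, I will extend this factorisation from product-form test functions to all $\mathscr{F}^\star_s$-measurable $\phi$ by a $\pi$-$\lambda$ argument: the collection $\{B_1 \cap B_2 : B_1 \in \sigma(X^1_r, X^3_r; r \leq s),\ B_2 \in \sigma(X^2_r; r \leq s)\}$ is an intersection-stable generator of $\mathscr{F}^\star_s$, since any cylinder set in $\mathscr{F}^\star_s$ splits according to whether its coordinates concern $X^1, X^3$ or $X^2$. The main obstacle is the measurability fact for $\Phi^k$: once one knows that the regular conditional law $Q^k(\alpha, \cdot)$ restricted to events before time $s$ depends only on $\alpha|_{[0, s]}$, the L\'evy property of $Q$ together with the generating-class argument finishes the proof mechanically.
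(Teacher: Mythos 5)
Your proposal is correct and follows essentially the same route as the paper: the key step in both is that the conditional kernel $Q^k(\alpha,\cdot)$, restricted to events prior to time $s$, is ($Q$-a.s.) measurable with respect to $\sigma(\alpha_r;\, r\le s)$ — obtained from the conditioning identity $P^k(A\mid\mathscr{G}^2\vee\mathscr{G}^3)=P^k(A\mid\mathscr{G}^2)$ when $\mathscr{G}^1\vee\mathscr{G}^2$ is independent of $\mathscr{G}^3$ — after which the factorisation reduces to the independent increments of $L$ under $Q$. The only cosmetic difference is that the paper tests against characteristic functions and invokes a lemma of Kallenberg, whereas you use general bounded test functions and a $\pi$-$\lambda$ argument; both are sound.
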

\begin{proof}
	We adapt the proof of \cite[Theorem IX.1.7]{RY}.
	Let us first recall the following fact, see \cite[Theorem 15.5]{bauer2002wahrscheinlichkeitstheorie}:
	If \(\mathscr{G}^i\), \(i = 1, 2, 3\), are three \(\sigma\)-fields and \(P\) is a probability measure such that \(\mathscr{G}^1 \vee \mathscr{G}^2\) is \(P\)-independent of \(\mathscr{G}^3\), then for all \(A \in \mathscr{G}^1\) it holds \(P\)-a.s.
	\(
	P(A|\mathscr{G}^2) = P(A |\mathscr{G}^2 \vee \mathscr{G}^3).
	\)
	
	Denote by \(X = (X^1, X^2)\) the coordinate process on \((\mathbb{D} \times \mathbb{D}, \mathscr{D}\otimes\mathscr{D})\).
	Applying the fact above with \(\mathscr{G}^1 \triangleq \sigma (X^1_r, r \in [0, s]), \mathscr{G}^2 \triangleq \sigma(X^2_r, r \in [0, s]), \mathscr{G}^3 \triangleq \sigma(X^2_r - X_s^2, r \in [s, \infty))\) and \(P \triangleq P^1\), we obtain for all \(A \in \mathscr{G}^1\) that \(P^1\)-a.s.
	\(
	Q^1(\cdot, A) = P^1(A|\mathscr{G}^2 \vee \mathscr{G}^3) = P^1(A|\mathscr{G}^2). 
	\)
	Hence, \(Q^1(\cdot, A)\) is \(\mathscr{G}^2\)-measurable up to a \(Q\)-null set. Similarly, we obtain that for all \(A \in \mathscr{G}^1\) the mapping \(Q^2(\cdot, A)\) is \(\mathscr{G}^2\)-measurable up to a \(Q\)-null set.
	
	Now, we consider again \((\Omega^\star, \mathscr{F}^\star, \F^\star, Q^\star)\). 
	Let \(A_1 \in \sigma(X^1_r, r \in [0, s]),\) \(A_2 \in \sigma(X^2_r, r \in [0, s])\) and \(B \in \sigma(X^3_r, r \in [0, s])\). Then, we obtain for all \(y \in \mathbb{H}\)
	\begin{align*}
	E^{Q^\star} &\left[ \exp \left(\sqrt{-1} \la X^3_t - X^3_s, y\ra \right) \1_{A^1} \1_{A^2} \1_{B} \right] 
	\\&= \int_B \exp \left(\sqrt{-1} \la \omega(t) - \omega(s), y \ra\right) Q^1(\omega, A^1) Q^2(\omega, A^2) Q(\dd \omega)
	\\&= \int \exp \left(\sqrt{-1} \la \omega(t) - \omega(s), y \ra \right)Q(\dd \omega) Q^\star(A^1 \times A^2 \times B)
	\\&= E^{Q^\star} \left[ \exp \left(\sqrt{-1} \la X^3_t - X^3_s, y \ra \right)\right] Q^\star(A^1 \times A^2 \times B).
	\end{align*}
	By \cite[Lemma 2.6]{Kallenberg}, this implies our claim.
\end{proof}

Therefore, \(X^3\) is a L\'evy process on \((\Omega^\star, \mathscr{F}^\star, \F^\star, Q^\star)\) (and also on its augmentation, see the proof of \cite[Theorem II.68.2]{RW1}) with law \(Q\), and \(X^1\) and \(X^2\) solve ≥\eqref{eq: short SDE} w.r.t. the same driving noise \(X^3\). 
We conclude that \(Q^\star\)-a.s. \(X^1 = X^2\). 
Hence, \(P^1 = P^2\) follows readily. The claim of the lemma is due to Theorem \ref{theo: GMP SDE}.
\qed
\bibliographystyle{agsm}
\bibliography{References}

\end{document}